\documentclass[12pt,english]{article}
\usepackage[margin=1in]{geometry}
\usepackage[T1]{fontenc}
\usepackage[utf8]{inputenc}

\usepackage{amsmath}
\usepackage{amsthm}
\usepackage{amssymb}

\usepackage{graphicx}
\usepackage{float}
\usepackage{xcolor}
\usepackage{shuffle}
\usepackage{comment}
\usepackage{stmaryrd}
\PassOptionsToPackage{normalem}{ulem}
\usepackage{ulem}
\usepackage{shuffle}
\usepackage{mathrsfs}

\usepackage{mathtools}
\usepackage{colortbl,dcolumn}
\usepackage[backend=biber,style=numeric,sorting=nyt]{biblatex}
\addbibresource{ref.bib}
\usepackage{enumerate}

\usepackage{hyperref}
\hypersetup{
     colorlinks   = true,
     citecolor    = blue,
     linkcolor    = blue
}

\makeatletter

\numberwithin{equation}{section}

\theoremstyle{plain}
\newtheorem{theorem}{Theorem}[section]
\newtheorem{lemma}[theorem]{Lemma}
\newtheorem{proposition}[theorem]{Proposition}
\newtheorem{corollary}[theorem]{Corollary}

\theoremstyle{definition}
\newtheorem{definition}[theorem]{Definition}
\newtheorem{remark}[theorem]{Remark}
\newtheorem{hypothesis}[theorem]{Hypothesis}

\newtheorem{condition}[theorem]{Condition}
\newtheorem*{definition*}{Definition}

\usepackage{babel}


\date{}

\makeatother

\def\xigeng#1{\textcolor{blue}{Xi: #1}}
\def\sheng#1{\textcolor{red}{Sheng: #1}}

\begin{document}
\title{Hitting Probabilities for Hypoelliptic Differential Equations Driven by Fractional Brownian Motion}
\author{Xi Geng\thanks{School of Mathematics and Statistics, University of Melbourne,  Parkville VIC 3010, Australia.
Email: xi.geng@unimelb.edu.au.}$\ \ $and  Sheng Wang\thanks{School of Mathematics and Statistics, University of Melbourne, Parkville VIC 3010, Australia.
Email: shewang4@student.unimelb.edu.au.}}
\maketitle
\begin{abstract}
The main goal of this article is to derive a two-sided estimate for hitting probabilities of a hypoelliptic stochastic differential equation (SDE) driven by fractional Brownian motion (fBM) with Hurst parameter $H\in(1/4,1)$ in terms of Newtonian-type capacities that are defined with respect to the (sub-Riemannian) control distance associated with the vector fields. As a starting point, we first establish the existence and smoothness of joint densities for the finite-dimensional distributions of the solution in the general context of hypoellitpic SDEs driven by Gaussian rough paths. We then turn to the fBM setting and derive a local upper bound for the joint density in terms of the control distance. As an application of these results, we establish our main estimate on hitting probabilities which generalises a well-known elliptic result of \cite{BNOT} to the hypoelliptic case. 

\tableofcontents{}
\end{abstract}

\section{Introduction}\label{section1}
Consider a stochastic differential equation (SDE) \begin{equation}\label{1aaaa}
dY_t=V(Y_t)~dX_t+V_0(Y_t)~dt,~~Y_0=y_0\in \mathbb{R}^N,
\end{equation}
where $X$ is a certain stochastic process in $\mathbb{R}^d$ and  $V_0, \cdots, V_d$ are suitably regular vector fields on $\mathbb{R}^N$. When $X$ is a semimartingale (e.g. Brownian motion) with the stochastic integral being interpreted in the sense of It\^o, the SDE \eqref{1aaaa} has been well studied within the framework of It\^o's calculus.   
In a non-semimartingale setting, a typical situation which has gained much interest is when $X$ is a fractional Brownian Motion (fBM). If the Hurst parameter $H>1/2$, the stochastic integral in \eqref{1aaaa} can be defined in the sense of Young's integration, and the well-posedness of the SDE was established in \cite{hu2007differential,lyons1994differential}. If $1/4<H<1/2$, the well-posedness of the SDE \eqref{1aaaa} requires techniques from rough path theory, which was originally introduced in the seminal work of Lyons \cite{lyons1998differential} in 1998. Controlled differential equations of the type (\ref{1aaaa}) with driven process $X$ being a general rough path are more commonly known as rough differential equations (RDEs).
Over the past decades, applications of rough path techniques to the study of quantitative properties of RDEs have been extensively developed by various authors in the literature. 

On the other hand, as a counterpart to this analytic development there has been growing interest in the study of probabilistic properties of the SDE \eqref{1aaaa}. An important aspect of this is the nondegeneracy and regularity of solutions. In the 1960s,  H\"ormander \cite{hormander1967hypoelliptic} proved the existence and smoothness of fundamental solution to the heat equation associated with a hypoelliptic second order differential operator. Later on, in the 1970s Malliavin \cite{malliavin1978stochastic} obtained a probabilistic proof of H\"ormander's result by showing that the solution to the SDE (\ref{1aaaa}) driven by Brownian motion admits a smooth density under H\"ormander's hypoellipticity condition on the vector fields. Along this pathway, Malliavin developed a powerful technique known as the Mallivian calculus, which has far-reaching applications to various regularity problems in stochastic analysis. 
A quantitative version of the Doob-Meyer decomposition was obtained by Norris \cite{norris2006simplified} which simplified Malliavin's proof and served as a key ingredient in this methodology. In the context of fBm with $H>1/2$ and elliptic vector fields, Nualart-Hu \cite{hu2007differential} established the existence of a smooth density for the soltuion. This result was extended to the hypoelliptic case by Baudoin-Hairer \cite{baudoin2007version}. More recently, by using rough path techniques Cass-Friz \cite{cass2010densities} obtained the existence of density under hypoelliptic vector fields where the driving process $X$ can be realised as a wide range of Gaussian processes, including fBM with $H>1/4$. The smoothness of density was established by Cass-Hairer-Litterer-Tindel \cite{CHLT} utilising a pathwise Norris' lemma in \cite{HP} together with a novel interpolation inequality.  

A natural stream of questions along this direction are related to the study of quantitative properties of solutions. For instance, in the diffusion context, the seminal paper of Kusuoka-Strook \cite{kusuoka1987applications} established global upper and lower estimates for the transition density under H\"ormander's condition. Cass-Litterer-Lyons \cite{CLL} derived the integrability of the Jacobian flow of the solution to RDEs driven by Gaussian rough paths, resulting in a well-known Weibull-type upper bound on the tail distribution of the solution. In a complementary direction, Boedihardjo-Geng \cite{boedihardjo2024lack} showed that the Cass-Litterer-Lyons estimate is optimal by proving a matching lower bound for a large class of rough integrals. For systems driven by fBm with $H>1/4$ under elliptic vector fields, Baudoin-Nualart-Ouyang-Tindel \cite{BNOT} obtained a global Gaussian upper bound on density of the solution, while a local lower bound was later obtained by Geng-Ouyang-Tindel \cite{geng2022precise} in the hypoelliptic case.  In \cite{BNOT}, the authors also obtained existence, smoothness and quantitative estimates of the joint density (over a pair of times) in the elliptic case, which enabled them to establish a two-sided estimate for hitting probabilities of the solution in terms of Newtonian capacities.


In this article, we consider a hypoelliptic differential equation driven by fBM in the rough path regime $H\in(1/4,1)$. Our main objective is to establish a quantitative relationship between hitting probabilities of the solution and Newtonian-type capacities associated with the sub-Riemannian metric induced by the vector fields. This is a natural generalisation of  the elliptic result obtained by Baudoin-Nualart-Ouyang-Tindel \cite{BNOT}. In what follows, we provide an overview of our main results towards this goal. The main theorem is stated in Theorem \ref{GW3}.

\subsection{Existence and smoothness of joint density} 

Consider an RDE \eqref{1aaaa} driven by fBM with Hurst parameter $H\in(1/4,1)$. While the existence and smoothness of density at a fixed time $t$ is well-known under H\"ormander's condition, a natural question arises: does the joint distribution $(Y_{t_1},\ldots,Y_{t_m})$ admit a (smooth) density? If so, what quantitative estimates can one obtain for the joint density? As seen in \cite{BNOT,dalang2007hitting} for instance, joint density estimates play an important role in the analysis of hitting probabilities, which is often studied using potential-theoretic techniques in the Markovian setting. 

Our first main result provides an affirmative answer to this question. We should point out that the existence of joint density is not an immediate consequence of the existence of marginal density, due to the lack of Markovian properties in the current setting. To formulate the result precisely, we first present the basic assumptions. 

\begin{hypothesis}\label{h1} The vector fields $V_0,\cdots,V_d$ belong to $C_b^\infty(\mathbb{R}^N;\mathbb{R}^N)$ (i.e. smooth with bounded derivatives of all orders).
\end{hypothesis}

To state the key non-degeneracy condition, we need to introduce some notation. 
Given a word $I=(i_1,\cdots,i_k)$ over the letters $\left\{0,1,2,\cdots,d\right\}$, we set
$$
V_I=[V_{i_1},[V_{i_2},\cdots[V_{i_{k-1}},V_{i_k}]\cdots]],
$$
where $[V_i,V_j]$ denotes the Lie bracket between two vector fields; as a   differential operator it is defined by $V_iV_j-V_jV_i$  and one identifies a vector field with a differential operator (taking directional derivatives). The length of a word $I$ is denoted as $|I|$. For each $l\geqslant 1$ and $x\in\mathbb{R}^N$, we introduce the following family of vectors at $x$:
\begin{equation*}  
\mathcal{W}_l(x)=\left\{V_I(x):~|I|=k\leqslant l,~1\leqslant i_k\leqslant d~\text{and}~0\leqslant i_r\leqslant d~\text{for }1\leqslant r\leqslant k-1\right\}.
\end{equation*}
\begin{hypothesis}\label{1.2} There exist an integer $\bar{l}\geqslant1$ and a positive real number $\lambda$ such that
$$
\sum_{W\in\mathcal{W}_{\bar{l}}(x)}\langle W,u\rangle^2_{\mathbb{R}^N}\geqslant\lambda\|u\|^2_{\mathbb{R}^N}
$$
holds uniformly for all $x,u\in \mathbb{R}^N$. The number $\bar{l}$ is referred to as the \textit{hypoellipticity constant}. 
\end{hypothesis}

This is often known as the \textit{uniform H\"ormander condition} which plays a basic role in studying regularity properties of the solution as well as quantitative estimates for the density. 
Our first result is state as follows (see Theorem \ref{GW4} for a more precise formulation).
\begin{theorem}\label{GW1}
    Consider an RDE 
    \begin{equation}\label{1}
d\mathbf{Y}_t=V(Y_t)~d\mathbf{X}_t+V_0(Y_t)~dt,~~Y_0=y_0\in \mathbb{R}^N.
\end{equation}Here $\mathbf{X}$ is a Gaussian rough path including fBM with Hurst parameter $H\in(1/4,1)$, Ornstein-Uhlenbeck process, Brownian bridge as examples (the precise conditions on $\mathbf{X}$ are provided in Section \ref{sec:GRP} below). The vector fields $\mathcal{V}=\{V_i\}_{i=1}^d$ are assumed to satisfy Hypotheses $\ref{h1}$ and $\ref{1.2}$. Let $Y$ be the solution to the the RDE. Then for any $m\geqslant 1$ and $0<t_1<\cdots<t_m\leqslant T$, the distribution of $(Y_{t_1},\cdots,Y_{t_m})$ admits a smooth joint density with respect to the Lebesgue measure on $\mathbb{R}^{mN}$.
\end{theorem}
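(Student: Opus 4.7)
The plan is to apply the standard Malliavin calculus criterion for smooth densities: it suffices to show that the random vector $F:=(Y_{t_1},\ldots,Y_{t_m})\in\mathbb{R}^{mN}$ belongs to $\mathbb{D}^{\infty}$ and that its Malliavin covariance matrix
$$
\gamma_F = \bigl(\langle DY_{t_i},DY_{t_j}^{T}\rangle_{\mathcal{H}}\bigr)_{1\leqslant i,j\leqslant m}
$$
is a.s.\ invertible with $(\det\gamma_F)^{-1}\in L^{p}(\Omega)$ for every $p\geqslant 1$. That each marginal $Y_{t}$ lies in $\mathbb{D}^{\infty}$ under Hypotheses \ref{h1}--\ref{1.2} is due to Cass-Hairer-Litterer-Tindel \cite{CHLT}, and this property is stable under taking tuples, so the main content is the non-degeneracy together with a negative-moment estimate for $\det\gamma_F$.

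To establish non-degeneracy I would exploit the variation-of-constants representation $DY_{t_i}=J_{t_i}\int_{0}^{t_i}J_{s}^{-1}V(Y_s)\,d(\,\cdot\,)$, which allows one to write $\gamma_F=\mathcal{M}\mathcal{M}^{*}$ for an operator $\mathcal{M}:\mathcal{H}\to\mathbb{R}^{mN}$ with a natural ``lower-triangular in time'' block structure. The strategy is a backward induction on the sampling times $t_m>t_{m-1}>\cdots>t_1$. Setting $t_0:=0$, for each $j=1,\ldots,m$ introduce the \emph{incremental} Malliavin matrix
$$
\Gamma^{(j)} := \bigl\langle DY_{t_j}|_{[t_{j-1},t_j]},\, DY_{t_j}|_{[t_{j-1},t_j]}^{T}\bigr\rangle_{\mathcal{H}_{[t_{j-1},t_j]}},
$$
which measures the non-degeneracy of $Y_{t_j}$ using only the noise fluctuations on $[t_{j-1},t_j]$. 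Applying the hypoelliptic marginal estimate of \cite{CHLT} to the RDE restarted at $Y_{t_{j-1}}$ yields the invertibility of $\Gamma^{(j)}$ together with $L^{p}$ bounds on $(\det\Gamma^{(j)})^{-1}$. A block Schur-complement computation (equivalently, Gaussian elimination in the block matrix $\gamma_F$) then produces a pointwise lower bound for $\det\gamma_F$ in terms of the $\det\Gamma^{(j)}$ and powers of the Jacobian determinants $|\det J_{t_{j-1}\to t_j}|$, after which negative moments of $\det\gamma_F$ follow from H\"older's inequality combined with the Cass-Litterer-Lyons integrability of the Jacobian flow \cite{CLL}.

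The principal obstacle is to make the time-localised Malliavin derivative $DY_{t_j}|_{[t_{j-1},t_j]}$ rigorous when $H<1/2$. In this regime the Cameron-Martin space $\mathcal{H}$ of fBM does not split as an orthogonal sum of pieces indexed by disjoint intervals, and hence the block ``lower-triangular in time'' structure of $\mathcal{M}$ is not automatic. One route is to pass to the underlying white noise through the Volterra representation $X_t=\int_0^{t}K_H(t,s)\,dW_s$: the white-noise Malliavin derivative $D^{W}_{s}Y_{t_j}$ vanishes for $s>t_j$, which restores the triangular block structure and makes the Schur-complement step straightforward. An alternative is to adapt the pathwise Norris lemma of \cite{HP} and the interpolation technique of \cite{CHLT} so as to produce time-localised hypoelliptic non-degeneracy bounds directly on $\mathcal{H}$. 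Either way, the joint problem is ultimately reduced to $m$ iterated applications of the marginal hypoelliptic result; I would pursue the Volterra-kernel route first as the most transparent.
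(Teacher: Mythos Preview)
You are right that the joint problem reduces to $m$ applications of the marginal hypoelliptic machinery, but the paper's reduction mechanism differs from both of your routes and avoids white noise and Schur complements entirely. The paper works directly with the joint quadratic form $\langle a,\gamma(F)a\rangle=\sum_k\int_{[0,T]^2} f^k_uf^k_v\,dR(u,v)$ in $\mathcal{H}_X$, where $f^k$ is piecewise defined on the intervals $[t_{i-1},t_i]$. The idea missing from your outline is a linear change of variable $a\mapsto\hat a$ built from the Jacobians---for $m=2$, $(\hat a_1^*,\hat a_2^*)=(a_1^*J_{t_1,0}+a_2^*J_{t_2,0},\,a_2^*J_{t_2,t_1})$---after which $f^k_s=\hat a_i^*J_{t_{i-1},s}V_k(Y_s)$ on $[t_{i-1},t_i]$. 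The CHLT interpolation inequality (their Corollary~6.10) on each subinterval then bounds $\|f^k\|_{\infty,[t_{i-1},t_i]}$ by a power of $\langle a,\gamma(F)a\rangle$ times the integrable random constant $R_X$; Norris iteration extends this to all Lie brackets, and the uniform H\"ormander condition at each $Y_{t_{i-1}}$ yields $\|\hat a\|^2\lesssim\langle a,\gamma(F)a\rangle^{q}R_X^{q'}$, which transfers back to $\|a\|$ via Jacobian moments. Everything is unconditional and covers the full class of Gaussian drivers in Conditions~\ref{con1}--\ref{concov}, not only Volterra processes.

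Your preferred Volterra route has a genuine gap at the line ``applying the hypoelliptic marginal estimate of \cite{CHLT} to the RDE restarted at $Y_{t_{j-1}}$ yields $L^p$ bounds on $(\det\Gamma^{(j)})^{-1}$.'' Passing to white noise does give the block-triangular structure and the Schur bound $\det\gamma_F^W\geqslant\det\gamma^W(Y_{t_1})\prod_{j\geqslant 2}\det\Gamma^{(j)}$, but $\Gamma^{(j)}=\int_{t_{j-1}}^{t_j}D^W_rY_{t_j}(D^W_rY_{t_j})^*\,dr$ is the \emph{conditional} Malliavin matrix given $\mathcal{F}^W_{t_{j-1}}$. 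Conditionally, the fBM increment on $[t_{j-1},t_j]$ decomposes as $L_{t_{j-1},t}+Q_{t_{j-1},t}$ with $Q$ measurable but only $H$-H\"older, so the restarted RDE is driven by a Gaussian process plus a rough (not $dt$-regular) perturbation---a situation the CHLT theorem does not cover. Obtaining negative moments of $\det\Gamma^{(j)}$ therefore requires redeveloping conditional small-ball estimates, conditional H\"older-roughness integrability, and a conditional interpolation inequality, all uniformly in the realisation of $Q$. The paper does carry out such a programme in Section~\ref{section4}, but only for the truncated signature and only for fBM, and it is lengthy; the change of variable $a\mapsto\hat a$ is precisely what allows Theorem~\ref{GW4} to bypass all of this at its stated generality.
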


\subsection{Local upper bound for joint density} 

A crucial ingredient in the analysis of hitting probabilities is a precise local estimate for the joint density of solution over a pair of times $(s,t)$. In what follows, we assume that $V_0 = 0$ and the driving process is a $d$-dimensional fBM with Hurst parameter $H\in (1/4,1)$. In other words, we consider the following RDE
\begin{equation}\label{density1}
d\mathbf{Y}_t=V(Y_t)~d\mathbf{B}_t,~~Y_0=y_0\in \mathbb{R}^N,
\end{equation}
where $\mathbf{B}$ is the canonical rough path lifting of $B$. It is known from Theorem \ref{GW1} that $(Y_s,Y_t)$ admits a smooth joint density. To state our second main result, we need to introduce the following essential definition.

\begin{definition}The \textit{control distance function} associated with the vector fields $\{V_i\}_{i=1}^d$ is defined by\begin{equation}\label{eq:ODE}
d(x,y):=\inf\left\{\big\|\dot{h}\big\|_{L^2([0,1])}: \Pi_1(x,h)=y\right\}.
\end{equation}Here $\Pi_1(x,h)$ denotes the solution to the equation (\ref{density1}) at time $t=1$ with driving path $h$ and initial location $x$. The infimum ranges over all those $h:[0,1]\rightarrow\mathbb{R}^d$ which are absolutely continuous with $L^2$-derivatives and satisfy the condition in \eqref{eq:ODE}.
\end{definition}
We use $B_d(x,r)=\left\{z\in \mathbb{R}^N: d(x,z)<r\right\}$ to denote the ball centered at $x$ with radius $r$ under the control distance $d$ and  $|B_d(x,r)|_{\text{Vol}}$ to denote its Lebesgue measure. Our second main result provides a (sharp) local upper estimate of the joint density of $(Y_s,Y_t)$. 
\begin{theorem}\label{GW2}
Suppose that the vector fields $\mathcal{V}=\{V_i\}_{i=1}^d$ satisfy Hypotheses \ref{h1} and \ref{1.2}.  Let $Y$ be the solution to the RDE (\ref{density1}) on $[0,T]$ and let $p(s,t,x,y)$ denote the joint density of $(Y_s,Y_t)$. For any $\delta>0$, there exist constants $C,\varepsilon>0$ depending on $H,\mathcal{V},T,\delta$, such that
\begin{equation}\label{upper123}
p(s,t,x,y)\leqslant \frac{C}{|B_d(x,(t-s)^H)|_{\mathrm{Vol}}}
\end{equation}
holds uniformly for all $s<t\in (\delta,T]$, $x,y\in\mathbb{R}^N$ with $d(x,y)\leqslant (t-s)^H$ and $t-s<\varepsilon$.
\end{theorem}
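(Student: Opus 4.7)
The plan is to adapt the Malliavin-calculus approach to joint density bounds, used in BNOT for the elliptic case, to the hypoelliptic setting by replacing the Euclidean scaling with the sub-Riemannian scaling encoded in the control distance balls. Starting from a $2N$-fold Malliavin integration by parts formula, one obtains a representation of the form
\begin{equation*}
p(s,t,x,y) = \mathbb{E}\bigl[\mathbf{1}_{\{Y_s > x,\, Y_t > y\}} \cdot H_{s,t}\bigr],
\end{equation*}
where the weight $H_{s,t}$ is built from the inverse of the joint Malliavin covariance matrix $\gamma_{(Y_s,Y_t)}$ together with Malliavin derivatives of $(Y_s,Y_t)$. By a box-localisation variant of the formula (differentiating an indicator of $(x + [0,r]^N) \times (y + [0,r]^N)$) and H\"older's inequality, the estimation of $p(s,t,x,y)$ reduces to controlling
\begin{equation*}
\mathbb{P}\bigl(Y_s \in x + [0,r]^N,\, Y_t \in y + [0,r]^N\bigr)^{1/p} \cdot \|H_{s,t}\|_{L^q}
\end{equation*}
for a parameter $r$, which will eventually be chosen of order $(t-s)^H$.

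The first factor is bounded by a simple volume argument using the marginal density bounds of Geng-Ouyang-Tindel. The second factor, the Malliavin weight, is the core of the argument and is handled via the Schur complement decomposition
\begin{equation*}
\det \gamma_{(Y_s,Y_t)} = \det \gamma_{Y_s} \cdot \det \gamma_{Y_t \mid Y_s}.
\end{equation*}
The conditional Malliavin matrix $\gamma_{Y_t \mid Y_s}$ is then analysed via the stochastic Taylor expansion
\begin{equation*}
Y_t - Y_s \approx \sum_{|I| \leqslant \bar{l}} V_I(Y_s)\, \mathbf{B}^I_{s,t}.
\end{equation*}
Under Hypothesis \ref{1.2}, the vectors $\{V_I(Y_s):|I|\leqslant\bar{l}\}$ span $\mathbb{R}^N$, while the iterated integrals $\mathbf{B}^I_{s,t}$ contribute anisotropically at scales $(t-s)^{H|I|}$. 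Together these produce a lower bound on $\det\gamma_{Y_t\mid Y_s}$ of order $(t-s)^{2HQ(Y_s)}$ in $L^p$, with $Q$ being the local homogeneous dimension. Combined with the asymptotic $|B_d(x,r)|_{\mathrm{Vol}}\asymp r^{Q(x)}$ for small $r$ and the Cass-Litterer-Lyons integrability of Malliavin derivatives, one recovers the desired bound after optimising $r\asymp (t-s)^H$.

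The main obstacle is the lower bound on the conditional Malliavin matrix $\gamma_{Y_t \mid Y_s}$ in the fBM/rough-path setting. Unlike the Markovian case, fBM has long-range correlation, so that $Y_t - Y_s$ is not a functional of $B|_{[s,t]}$ alone; one must restrict Malliavin derivatives to the Cameron-Martin subspace corresponding to noise on $[s,t]$ and carry out the analysis on this reduced space, keeping track of the additional Gaussian conditioning induced by the past. Moreover, the hypoelliptic step requires a pathwise Norris-type lemma in the spirit of Hairer-Pillai, tuned to the different scales associated with each Lie bracket length. The restrictions $t-s<\varepsilon$ and $d(x,y)\leqslant (t-s)^H$ enter precisely here: the former controls the Taylor remainders via Cass-Litterer-Lyons integrability, and the latter places $(x,y)$ in the regime where the control ball scaling faithfully reflects the geometry of $\gamma_{Y_t\mid Y_s}$.
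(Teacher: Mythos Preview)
Your proposal follows the BNOT template: integrate by parts on $(Y_s,Y_t)$, split the joint Malliavin matrix via a Schur complement, and lower-bound the conditional block $\gamma_{Y_t\mid Y_s}$ by a Norris-type iteration on the stochastic Taylor expansion. The paper explicitly rejects this route in the introduction, and for a concrete reason that your proposal glosses over: the claimed lower bound $\det\gamma_{Y_t\mid Y_s}\gtrsim (t-s)^{2HQ(Y_s)}$ is not what Norris iteration actually delivers. Each application of the pathwise Norris lemma replaces an estimate $\|f\|_\infty<\varepsilon$ by $\|A\|_\infty\lesssim R^\eta\varepsilon^r$ with $r<1$; after $\bar{l}$ steps the exponent becomes $r^{\bar{l}}$, so the bound you get is $(t-s)^{-c}$ for some $c$ strictly larger (often much larger) than the sharp value. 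This would still give \emph{some} density bound, but not the precise factor $|B_d(x,(t-s)^H)|^{-1}$ stated in the theorem. A second, independent gap: you invoke $|B_d(x,r)|\asymp r^{Q(x)}$, but the theorem does not assume equiregularity, so this asymptotic is neither pointwise correct nor uniform in $x$; the ball volume is governed by the Nagel--Stein--Wainger polynomial $\sum_I|\lambda_I(x)|r^{d(I)}$ and cannot be replaced by a single power.

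The paper's route avoids both obstacles by never attempting a sharp lower bound on $\gamma_{Y_t\mid Y_s}$. Instead it lifts the problem to the truncated log-signature $\mathbf{U}_{s,t}$ on the free nilpotent group, where exact dilation gives the sharp conditional density bound $\rho_{s,t}(u)\leqslant C(t-s)^{-H\nu}\exp(-\|u\|_{\mathrm{CC}}^2/C(t-s)^{2H})\cdot\Psi_{s,t}$ with $\nu$ the group's homogeneous dimension (Proposition~\ref{densiofrohst}). The passage back to $\mathbb{R}^N$ is done via a Riemannian disintegration of the Lebesgue measure on $\mathfrak{g}^{(l)}$ along the fibres $M_{x,y}=\{u:x+F_l(u,x)=y\}$ of the Taylor map, together with Kusuoka--Stroock's diffeomorphism $S_{x,y}:M_{x,y}\to M_{x,x}$ (Lemma~\ref{lem:CoVBridge}); it is precisely the bridge-space estimate $m_{x,x}(\{\|v\|_{\mathrm{CC}}\leqslant\eta\})\asymp\eta^\nu/|B_d(x,\eta)|$ that converts $(t-s)^{-H\nu}$ into the correct $|B_d(x,(t-s)^H)|^{-1}$, without ever needing $|B_d(x,r)|$ to be a pure power. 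The Taylor remainder is handled separately as an additive $O(t-s)$ term, which is absorbed on the cone $d(x,y)\leqslant(t-s)^H$.
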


\begin{remark}\label{remark}
    If the vector fields are elliptic (e.g. $\bar{l}=1$ in Hypothesis \ref{1.2}), the control distance is locally equivalent to the Euclidean distance: there exist $C,\delta>0$ such that 
    \begin{equation}\label{equ}
      \frac{1}{C}\|x-y\|_{\mathbb{R}^N}\leqslant d(x,y) \leqslant C\|x-y\|_{\mathbb{R}^N},  
    \end{equation}
    for all $x,y\in \mathbb{R}^N$ with $\|x-y\|_{\mathbb{R}^N}<\delta$.
\end{remark}
Some related  quantitative density estimates in the literature are discussed as follows. 
\begin{itemize}
    \item In the diffusion context ($B$ is the Brownian motion), Kusuoka-Strook \cite{kusuoka1987applications} obtained a global upper and lower bound for the transition density $p(t,x,y)$ under the uniform H\"ormander condition:
    $$
    \frac{1}{M|B_d(x,t^{1/2})|_{\mathrm{Vol}}} e^{-Md(x,y)^2/t}\leqslant p(t,x,y)\leqslant\frac{M}{|B_d(x,t^{1/2})|_{\mathrm{Vol}}} e^{-d(x,y)^2/Mt}. 
    $$
    \item For the fBM driven case with elliptic vector fields, 
    Baudoin-Nualart-Ouyang-Tindel \cite{BNOT} established the following upper estimate for the joint density $p(s,t,x,y)$ of $(Y_s,Y_t)$:
    $$
    p(s,t,x,y)\leqslant\frac{C_M}{(t-s)^{nH}}\left(\frac{|t-s|^H}{|x-y|}\wedge 1\right)^p
    $$for all $x,y\in [-M,M]^N$.
    \item For the fBM driven case with hypoelliptic vector fields (i.e. under the uniform H\"ormander condition), Geng-Ouyang-Tindel \cite{geng2022precise} established the following local lower bound for the density:
    $$
    p(t,x,y)\geqslant \frac{C}{|B_d(x,t^H)|_{\mathrm{Vol}}}
    $$
    for all $(t,x,y)\in (0,1]\times \mathbb{R}^N\times \mathbb{R}^N$ satisfying $d(x,y)\leqslant t^H$ and $t$ small.
\end{itemize}

If one attempts to replicate the argument in \cite{BNOT} for the hypoelliptic case, one would only obtain a non-sharp upper bound for the joint density which does not yield optimal capacity estimates for hitting probabilities. We will use Kusuoka-Stroock's method in \cite{kusuoka1987applications}, which respects the sub-Riemannian metric associated with the vector fields and hence yields sharp estimates.

\subsection{Hitting probabilities and hypoelliptic capacities}

Finally, as an application of Theorem~\ref{GW2} we establish a quantitative relationship between hitting probabilities of sets in $\mathbb{R}^N$ by the solution to the RDE \eqref{density1} and the associated Newtonian-type capacities. A classical problem in potential theory is to derive conditions under which a process hits a given set $A$ with positive probability. Typically, such conditions are described in terms of certain capacities. In our setting, we shall look for the following type of relations:
\[
\mathbb{P}(Y([0,T]) \cap A \neq \emptyset) > 0 \quad \text{iff} \quad \mathrm{Cap}_\alpha(A) > 0.
\]Here $\mathrm{Cap}_\alpha(A)$ is a suitable notion of $\alpha$-dimensional capacity, which is defined in terms of the control distance function $d$ (see \eqref{eq:ODE}).

Let us now define the capacity precisely. Given any real number $\alpha$, the \textit{$\alpha$-dimensional Newtonian kernel} is defined by \begin{equation}\label{Kalpha}
K_\alpha(r) :=
\begin{cases}
r^{-\alpha}, & \text{if } \alpha > 0; \\
\log(N_0/r), & \text{if } \alpha = 0; \\
1, & \text{if } \alpha < 0,
\end{cases}
\end{equation}
where $N_0>0$ is a suitable constant whose value is not particularly important. Let $m(\cdot,\cdot)$ be a given metric on $\mathbb{R}^N$. For any probability measure $\mu$ on $\mathbb{R}^N$ with compact support,  its \textit{$\alpha$-dimensional Newtonian energy with respect to the metric $m$} is defined by
\begin{equation}\label{Kam}
\mathcal{E}_\alpha(\mu) := \int_{\mathbb{R}^N\times\mathbb{R}^N} K_\alpha(m(x,y))\,\mu(dx)\,\mu(dy).
\end{equation} 
\begin{definition}\label{def:Cap}
   Let $A$ be a Borel subset of $\mathbb{R}^N$. The \textit{$\alpha$-dimensional capacity of $A$ with respect to the metric $m$} is the reciprocal of the minimal energy:\[
\mathrm{Cap}_\alpha(A) := \left[\inf_{\mu \in \mathcal{P}(A)} \mathcal{E}_\alpha(\mu)\right]^{-1},
\]where $\mathcal{P}(A)$ denotes the set of probability measures that are compactly supported within $A$.
\end{definition}

To relate the capacity to dynamical properties of the process, the choice of the metric $m(\cdot,\cdot)$ is connected with both the dimensionality of the state space and the way the process propagates. 
For elliptic systems, the process diffuses uniformly in all directions, and therefore the Euclidean distance accurately measures its propagation. Taking $m$ to be the Euclidean distance, i.e. $m(x,y)=\|x-y\|_{\mathbb{R}^N}$, a considerable amount of work has been done to characterise hitting probabilities for a wide range of stochastic processes (see \cite{Fuk84} and references therein). 

In the Markovian setting, this is effectively studied by means of techniques from classical potential theory. In a non-Markovian setting, Baudoin-Nualart-Ouyang-Tindel \cite{BNOT} proved the following quantitative estimate. Let $Y$ be the solution to the RDE \eqref{density1} driven by a $d$-dimensional fBM with Hurst parameter $H \in (1/4,1)$. Suppose that the vector fields are uniformly elliptic (i.e. satisfying Hypothesis \ref{1.2} with $\bar{l}=1$). Fix $0 < a < b \leqslant 1$, $M > 0$ and $\eta > 0$. Then there exist positive constants $C_1, C_2$ depending on $a, b, M,\eta$, such that the estimate
\begin{equation}\label{equ(10)}
C_1\,\mathrm{Cap}_{N - 1/H}(A) \leqslant \mathbb{P}(Y([a,b]) \cap A \neq \emptyset) \leqslant C_2\,\mathrm{Cap}_{N - 1/H - \eta}(A)
\end{equation}holds uniformly for all compact sets $A \subseteq [-M, M]^N$. Here the capacity is defined with respect to the Euclidean distance. This generalises the well-known result that ``a $d$-dimensional fBM hits points iff $d < 1/H$'' (see \cite{Xia09} and references therein). 

On the other hand, for hypoelliptic systems, the geometry induced by the dynamics is not uniform in all directions since those directions along Lie brackets of the vector fields are harder to explore. The control distance $d$ defined by \eqref{eq:ODE} captures the underlying sub-Riemannian geometry through which the process propagates. We now define the intrinsic dimension associated with this geometry (see \cite[Section 20.1]{subRiemanbook2019}). 

Let $\mathcal{V}=\{V_1,\cdots,V_d\}$ be a family of smooth vector fields which satisfies Hypothesis \ref{1.2}. Let $\mathcal{D}$ denote the $C^\infty(\mathbb{R}^N;\mathbb{R})$-module generated by $\mathcal{V}$. For each $k\geqslant 1$, we define the submodule $\mathcal{D}^k$ of $C^\infty(\mathbb{R}^N;\mathbb{R}^N)$ recursively by 
\[
\mathcal{D}^1:= \mathcal{D},\ \mathcal{D}^{k+1}:= \mathcal{D}^k + [\mathcal{D},\mathcal{D}^k]. 
\]We also set $\mathcal{D}_x^k := \{W_x:W\in\mathcal{D}^k\}$. The uniform H\"ormander condition ensures that for each $x\in\mathbb{R}^N$ there is a smallest integer $r(x)\leqslant \bar{l}$ such that $\mathcal{D}^{r(x)}_x = \mathbb{R}^N$. This gives rise to a \textit{canonical flag of} $\mathcal{D}$ at $x$: 
\[
\{0\}=:\mathcal{D}^0_x\subseteq \mathcal{D}^1_x \subseteq \cdots \subseteq \mathcal{D}^{r(x)}_x = \mathbb{R}^N.
\]The list $\{\dim \mathcal{D}_x^k \}_{1\leqslant k\leqslant r(x)}$ of integers is called the \textit{growth vector} of $\mathcal{V}$ at $x$. The vector fields in $\mathcal{V}$ are said to be \textit{equiregular} if the growth vector is constant when $x$ varies over $\mathbb{R}^N$. 

\begin{definition}\label{defofhomodim}
Suppose that $\mathcal{V}$ is equiregular. The \textit{homogeneous dimension} of $\mathcal{V}$ is defined by
\[
Q := \sum_{k=1}^{r(x)} k \left( \dim \mathcal{D}^k_x - \dim \mathcal{D}^{k-1}_x \right),
\]which is independent of $x$ by assumption.
\end{definition}

We now state our final result which is also the main theorem of the present article. In what follows, we choose $m(\cdot,\cdot)$ to be the control distance $d$ in Definition \ref{def:Cap} of the capacity.
\begin{theorem}\label{GW3}
Suppose that the family $\mathcal{V}$ of vector fields satisfies Hypotheses \ref{h1}, \ref{1.2} and is equiregular with homogeneous dimension $Q$.
Let \( Y \) be the solution to the RDE \eqref{density1} driven by a \( d \)-dimensional fractional Brownian motion \( B \) with Hurst parameter \( H \in (1/4,1) \). For any fixed real numbers \( 0 < a < b \), \( M > 0 \), \( \eta_1 > 0 \) and \( 0 < \eta_2 < 1/H \), there exist  positive constants
\[
C_1 = C_1(a, b, \mathcal{V}, H, M, \eta_1, Q), \quad C_2 = C_2(a, b, \mathcal{V}, H, M, \eta_2, Q)
\]
such that the estimate
\begin{equation}\label{gcupplowboun123}
C_2 \, \mathrm{Cap}_{Q - 1/H + \eta_2}(A)
\ \leqslant\
\mathbb{P}(Y([a,b]) \cap A \neq \emptyset)
\ \leqslant\
C_1 \, \mathrm{Cap}_{Q - 1/H - \eta_1}(A)
\end{equation}holds uniformly for all compact sets \( A \subseteq B_d(o,M) \). 
In addition, if $Q=N$ one can take $\eta_2=0$.
\end{theorem}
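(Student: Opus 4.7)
The plan is to adapt the potential-theoretic scheme of \cite{BNOT} to the sub-Riemannian geometry induced by the control distance $d$. Two global inputs drive the analysis: the local joint density bound of Theorem~\ref{GW2}, and the volume identity $|B_d(x,r)|_{\mathrm{Vol}} \asymp r^{Q}$ valid uniformly in $x \in B_d(o,M)$ and for small $r$ under equiregularity (a standard consequence of the ball-box theorem; see \cite[Section~20.1]{subRiemanbook2019}). Together these produce the effective dimension $Q$ appearing in the Newtonian kernel. A third ingredient is a Weibull-type modulus-of-continuity estimate for $Y$ measured in the control distance, obtained by combining the Cass--Litterer--Lyons tail estimate of \cite{CLL} with the local comparison between rough-path H\"older norm and $d$.

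For the \emph{upper bound} we cover $A$ by a family of $d$-balls $\{B_d(x_i,r_i)\}$ and estimate each
\[
p_i := \mathbb{P}\bigl(Y([a,b]) \cap B_d(x_i,r_i) \neq \emptyset\bigr)
\]
separately. Partition $[a,b]$ into sub-intervals $I_k$ of length $\Delta \asymp r_i^{1/H}$; for each $I_k$ with left endpoint $s_k$ we write
\[
\mathbb{P}\bigl(Y(I_k) \cap B_d(x_i,r_i) \neq \emptyset\bigr) \leq \mathbb{P}\bigl(Y_{s_k} \in B_d(x_i, 2r_i)\bigr) + \mathbb{P}\bigl(\sup_{t \in I_k} d(Y_{s_k}, Y_t) > r_i\bigr).
\]
The first term is $\lesssim r_i^{Q}$ by integrating the (uniformly bounded, on compacts) smooth marginal density from Theorem~\ref{GW1} over a $d$-ball of volume $\asymp r_i^Q$. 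The second term is super-polynomially small in $r_i$ by the Weibull modulus estimate. Summing over the $(b-a)/\Delta \asymp r_i^{-1/H}$ sub-intervals gives $p_i \lesssim r_i^{Q-1/H}$, and the standard conversion of Hausdorff-type covering sums into Newtonian $(Q - 1/H - \eta_1)$-capacity (with the $\eta_1$-slack absorbing the sub-polynomial correction, exactly as in \cite{BNOT}) yields the right-hand inequality of \eqref{gcupplowboun123}.

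For the \emph{lower bound} we apply a second-moment / Paley--Zygmund argument driven by a Frostman-type test functional. For $\mu \in \mathcal{P}(A)$ and $\varepsilon>0$, define
\[
J_\varepsilon := \int_a^b \int_A \phi_\varepsilon\bigl(Y_s - x\bigr)\,\mu(dx)\,ds,
\]
where $\phi_\varepsilon$ is a standard mollifier on $\mathbb{R}^N$. Smoothness of the marginal density from Theorem~\ref{GW1}, combined with the local lower density bound of \cite{geng2022precise}, gives $\mathbb{E}[J_\varepsilon] \to \int_a^b\int_A p(s,x)\,\mu(dx)\,ds \geq c > 0$ as $\varepsilon \downarrow 0$, uniformly in $\mu \in \mathcal{P}(A)$. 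Fubini and Theorem~\ref{GW2} then produce
\[
\lim_{\varepsilon\downarrow 0}\mathbb{E}[J_\varepsilon^2] = \int_a^b\!\!\int_a^b\!\!\int_A\!\!\int_A p(s,t,x,y)\,\mu(dx)\mu(dy)\,ds\,dt,
\]
which we split at the critical scale $|t-s| = d(x,y)^{1/H}$: on the regime $|t-s| \geq d(x,y)^{1/H}$, Theorem~\ref{GW2} gives $p \lesssim (t-s)^{-QH}$, whose time integral produces precisely $K_{Q-1/H}(d(x,y))$; the off-diagonal regime $|t-s| < d(x,y)^{1/H}$ is treated via a Chebyshev bound against polynomial moments of $d(Y_s,Y_t)$, and the resulting loss is absorbed into $\eta_2$. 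Paley--Zygmund then yields $\mathbb{P}(Y([a,b]) \cap A^\varepsilon \neq \emptyset) \gtrsim \mathcal{E}_{Q-1/H+\eta_2}(\mu)^{-1}$, and optimising over $\mu$ together with a compactness argument to pass from $A^\varepsilon$ to $A$ completes the lower estimate.

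The principal obstacle is the off-diagonal regime $d(x,y) > (t-s)^H$ in the second-moment computation, where Theorem~\ref{GW2} is silent and one must rely on auxiliary tail estimates sensitive to the sub-Riemannian geometry; this is precisely what forces the $\eta_2$-loss in the lower bound. In the elliptic case $Q = N$, the control distance is locally equivalent to the Euclidean one (Remark~\ref{remark}) and the full-range joint density bound of \cite{BNOT} is available, so the off-diagonal contribution can be controlled without slack, yielding the sharp $\eta_2 = 0$ statement claimed at the end of the theorem.
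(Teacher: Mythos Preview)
Your overall architecture (covering argument for the upper bound; second-moment / Paley--Zygmund for the lower bound) matches the paper's, but two of your attributions are off in ways that matter.

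\textbf{Lower bound: the $\eta_2$-loss is not where you think it is.} The paper upgrades Theorem~\ref{GW2} to a \emph{global} joint-density estimate (Theorem~\ref{thm:JntDenUp}), valid for all $(x,y)$, with an exponential tail in $d(x,y)/(t-s)^H$ plus a harmless additive $C_2|t-s|$. Integrating this over $[a,b]^2$ already yields $\iint p_{s,t}(x,y)\,ds\,dt \le C\,K_{Q-1/H}(d(x,y))$ with \emph{no} loss; the off-diagonal regime $|t-s|<d(x,y)^{1/H}$ is handled by the exponential factor, not by a separate Chebyshev argument. The $\eta_2>0$ actually enters at the mollification step: the control distance $d$ is not translation-invariant, so one cannot pass from $\mathcal{E}_{Q-1/H}(\mu_\varepsilon)$ back to $\mathcal{E}_{Q-1/H}(\mu)$ by the Fourier-analytic trick of \cite{BNOT}. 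The paper instead constructs (Lemma~\ref{lemmcase2capuppboun}) a special measure $\bar\mu\in\mathcal{P}(A)$ for which $\mathcal{E}_{Q-1/H}(\bar\mu_\varepsilon)\lesssim \mathrm{Cap}_{Q-1/H+\eta_2}(A)^{-1}$ uniformly in $\varepsilon$, via a Frostman-type truncation and ball-growth estimates under $d$. Your sketch omits this entirely, and your explanation of the elliptic case $Q=N$ should accordingly be that $d$ becomes locally Euclidean (Remark~\ref{remark}), restoring translation invariance and the \cite{BNOT} argument.

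\textbf{Upper bound: the modulus estimate in $d$ needs the sharp exponent.} You invoke ``CLL plus local comparison between rough-path H\"older norm and $d$'', but the crude comparison $d(x,y)\le K\|x-y\|^{1/\bar l}$ only gives $d(Y_s,Y_t)\lesssim |t-s|^{H/\bar l}$, which is too weak: in the splitting $\mathbb{P}(R>\varepsilon^\alpha)$ of the paper's proof you need $\alpha$ arbitrarily close to $1$ (to make $\alpha Q$ close to $Q$), and with exponent $H/\bar l$ this term does not decay when $\bar l>1$. The paper proves the sharp estimate $\mathbb{E}[d(Y_s,Y_t)^p]\le C|t-s|^{pH}$ (Lemma~\ref{controlmoment}) by comparing $Y$ to its step-$\bar l$ Euler scheme and using Kusuoka--Stroock's bound $d(y,\,y+\mathcal{E}_{(\mathcal V)}(y,\mathbf g))\le C\|\mathbf g\|_{\mathrm{CC}}$ together with the CC-norm scaling of the signature. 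This is the non-trivial ingredient you are missing.
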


\begin{remark}
Theorem \ref{GW3} is a natural extension of the elliptic result (\ref{equ(10)}) of \cite{BNOT}. In view of (\ref{equ(10)}), the estimate \eqref{gcupplowboun123} is nearly optimal since $\eta_1,\eta_2$ are arbitrarily small numbers.
\end{remark}

\subsection{Summary of contributions}
In what follows, we summarise the novelty and contributions of the current work.

\vspace{2mm}\noindent (A) To prove the existence and smoothness of joint density for $(Y_{t_1},Y_{t_2},\cdots,Y_{t_m})$, instead of successively conditioning on  $\mathcal{F}_{t_m},\cdots,\mathcal{F}_{t_1}$ with step-by-step applications of Malliavin calculus, we analyse the joint Malliavin covariance matrix directly. Then, by a suitable change of variable, the required nondegeneracy of the joint Malliavin matrix is seen as a consequence of the nondegeneracy of the vector fields over finitely many points, which is ensured by Hypothesis~\ref{1.2}.

\vspace{2mm}\noindent (B) By invoking a disintegration formula from Riemannian manifold, we are able to relate the joint density of $(Y_s,Y_t)$ with the distribution of the truncated signature process $\mathbf{B}_{s,t}$ on free nilpotent Lie groups. The problem thus reduces to establishing a smooth upper bound for the conditional density of the signature of fBM which is more manageable due to its explicit structure.

\vspace{2mm}\noindent (C) We employ the control distance \eqref{eq:ODE} (rather than Euclidean) in Definition \ref{def:Cap} of the capacity. It accurately captures the hypoelliptic behaviour of the dynamics and therefore effectively controls the hitting probabilities through the main estimate \eqref{gcupplowboun123}.


\vspace{2mm}\noindent (D) In the hypoelliptic case, the Newtonian kernel \( K(d(x,y)) \) lacks translation invariance  which prevents the use of Fourier analysis and invalidates the elliptic argument. To address this, we slightly increase the energy dimension to \( \alpha + \eta \) under which one can effectively estimate the energy of smoothened measures. 

\vspace{2mm}\noindent\textbf{Organisation}. 
In Section \ref{section2}, we discuss some basic notions and preliminary tools that are needed for our later analysis. In Section \ref{section3}, we prove the existence of smooth joint densities for the finite-dimensional distributions of the solution. In Section \ref{section4}, we derive a local upper estimate for the joint density, which is 
a crucial ingredient for establishing our main hitting probability estimate. In Section \ref{section5}, we develop the proof of the main Theorem \ref{GW3}. 

\section{Preliminaries}\label{section2}
In this section, we first review some basic notions on Gaussian rough paths and then restrict to the fractional Brownian motion setting where more tools are discussed. 

\subsection{Gaussian rough paths}\label{sec:GRP}

We begin by recalling the following definition. Given a mean-zero, real-valued Gaussian process $\{Z_t:t\in[0,T]\}$ in $\mathbb{R}$, let $R(s,t):=\mathbb{E}[Z_sZ_t]$ denote its covariance function.
\begin{definition}
Let $1 \leqslant \rho < 2$. The covariance function $\phi$ is said to have \textit{finite (2D) $\rho$-variation} if
\begin{equation*}
    V_{\rho}(R;[0,T]\times[0,T])^{\rho} 
    := \sup_{\mathcal{D},\mathcal{D}'} 
    \sum_{\substack{[s,t]\in \mathcal{D} \\ [u,v]\in \mathcal{D}'}} 
    \big| R \left(s,t;v,u\right) \big|^{\rho} < \infty,
\end{equation*}
where $R \left(s,t;v,u\right):=\mathbb{E}[(Z_t-Z_s)(Z_u-Z_v)]$ and $\mathcal{D},\mathcal{D}'$ are any finite partitions of $[0,T]$. The function $\phi$ is said to have \textit{finite (2D) H\"older-controlled $\rho$-variation}, if there exists $C>0$ such that 
$$
V_\rho(R;[s,t] \times [s,t]) \leqslant C (t-s)^{1/\rho}
$$for all $0 \leqslant s \leqslant t \leqslant T$. 
\end{definition}
\begin{theorem}[see \cite{FV1}]\label{thm:GLift}
    Let $(X_t)_{0\leqslant t\leqslant T}=(X^1_t,\cdots,X^d_t)$ be a mean-zero Gaussian process with i.i.d. components. Let $R$ denote the covariance function of any of its components. Suppose that $R$ is of finite 2D $\rho$-variation for some $\rho\in[1,2)$. Then the process $X$ admits a canonical lifting to a geometric $p$-rough path $\mathbf{X}$ for all $p>2\rho$. In addition, its (intrinsic) Cameron-Martin space $\bar{\mathcal{H}}$ can be embedded into the space $C^{\rho\text{-var}}([0,T],\mathbb{R}^d)$, more precisely, one has
    $$
    \|h\|_{\bar{\mathcal{H}}}\geqslant\frac{|h|_{\rho\text{-var};[0,T]}}{\sqrt{V_\rho(R;[0,T]\times[0,T])}}.
    $$
\end{theorem}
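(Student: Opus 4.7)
The plan is to separate the two assertions and treat them in parallel. For the existence of the canonical lift, I would construct $\mathbf{X}$ as a limit in the inhomogeneous $p$-variation rough-path topology of the iterated integrals of smooth approximations (say, piecewise linear interpolations $X^{(n)}$ along dyadic partitions of $[0,T]$). For the Cameron--Martin embedding, I would exploit the isometry between $\bar{\mathcal{H}}$ and the first Wiener chaos of $X$ and estimate the $\rho$-variation of $h$ through a duality argument.

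The first main step would be to establish second-moment bounds on iterated integrals depending only on the 2D $\rho$-variation of $R$. When the coordinates of the $k$-th level iterated integral $\mathbb{X}^{(n),k}_{s,t}$ are pairwise distinct, independence and Wick's formula express the second moment as a $2k$-fold integral of products of increments of $R$ against a simplex, and Towghi's 2D Young inequality bounds it by $C_k\,V_\rho(R;[s,t]^2)^{k}$. The general case (with possibly repeated indices) is handled by the same Wick expansion combined with a Cauchy--Schwarz step. Because each component $\mathbb{X}^{(n),k}_{s,t}$ lives in the $k$-th Wiener chaos, Gaussian hypercontractivity promotes the $L^2$ bound to a uniform $L^q$ bound for every $q<\infty$ at the cost of a constant depending only on $k$ and $q$.

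The second step is to apply the $p$-variation Kolmogorov criterion for rough paths from Friz--Victoir: the moment bounds above, together with the H\"older-controlled structure of $V_\rho$ (which is standard to arrange by reducing to the controlled case), guarantee that $\{\mathbf{X}^{(n)}\}$ is Cauchy in the $p$-variation rough-path metric for any $p>2\rho$. The strict inequality $p>2\rho$ supplies the slack that the Kolmogorov criterion needs at every level $k\leqslant\lfloor p\rfloor$. Independence of the lift from the approximating scheme then follows by a standard diagonal argument, yielding the canonical geometric $p$-rough path $\mathbf{X}$.

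For the Cameron--Martin embedding I would argue as follows. Each $h\in\bar{\mathcal{H}}$ corresponds to an $\tilde h$ in the first Wiener chaos with $\|h\|_{\bar{\mathcal{H}}}=\|\tilde h\|_{L^2(\Omega)}$, and componentwise $h(t)-h(s)=\mathbb{E}[\tilde h\,(X_t-X_s)]$. For any partition $\{t_i\}$ of $[0,T]$ and any real sequence $(\sigma_i)$ with $\|\sigma\|_{\ell^{\rho'}}\leqslant 1$ (where $\rho'=\rho/(\rho-1)$), Cauchy--Schwarz on $L^2(\Omega)$ followed by H\"older in the index variables yields
\[
\Bigl|\sum_i \sigma_i\bigl(h(t_{i+1})-h(t_i)\bigr)\Bigr|\leqslant \|\tilde h\|_{L^2(\Omega)}\Bigl(\sum_{i,j}\sigma_i\sigma_j\,R(t_i,t_{i+1};t_j,t_{j+1})\Bigr)^{1/2}\leqslant \|\tilde h\|_{L^2(\Omega)}\sqrt{V_\rho(R;[0,T]^2)}.
\]
Taking the supremum over $\sigma$ recovers the $\ell^\rho$-norm of the increments of $h$, and then the supremum over partitions delivers the claimed inequality. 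The main obstacle is Step~1: setting up the induction on levels so that Towghi's theorem can be invoked with the correct regularity at each stage, since the integrand at level $k$ only inherits finite 2D $\rho$-variation, and one must verify that $1/\rho+1/\rho>1$ remains satisfied (equivalently $\rho<2$) to close the 2D Young estimate at every step of the recursion.
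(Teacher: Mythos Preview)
The paper does not supply its own proof of this theorem: it is quoted directly from \cite{FV1} as a background result and no argument is given. Your proposal is therefore not being compared to anything in the present paper, but rather to the original Friz--Victoir construction it cites.

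That said, your outline is essentially the Friz--Victoir argument. The lift is built exactly as you describe: moment bounds on iterated integrals of piecewise-linear approximations via Wick's formula and the 2D Young/Towghi estimate, hypercontractivity to pass from $L^2$ to $L^q$, and a Kolmogorov-type tightness criterion in $p$-variation to extract the limit for $p>2\rho$. Your Cameron--Martin embedding argument is also the standard one: the duality between $\ell^\rho$ and $\ell^{\rho'}$ combined with Cauchy--Schwarz in $L^2(\Omega)$ and H\"older on the double sum $\sum_{i,j}\sigma_i\sigma_j R(t_i,t_{i+1};t_j,t_{j+1})$ gives the inequality directly. One small point: in Step~1 you mention reducing to the H\"older-controlled case, but the statement as quoted only assumes finite 2D $\rho$-variation, not H\"older control; the Kolmogorov criterion in $p$-variation form works under superadditivity of $V_\rho(R;\cdot)^\rho$ rather than H\"older scaling, so that reduction is unnecessary here.
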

\begin{remark}
    Writing $\bar{\mathcal{H}}_B$ for the Cameron-Martin space of fBm for $H\in(1/4,1/2)$, the variation embedding in \cite{FV2} gives the stronger result that
    $$
    \bar{\mathcal{H}}_B\hookrightarrow C^{q\text{-var}}([0,T],\mathbb{R}^d)\quad\quad\forall~q>(H+1/2)^{-1}.
    $$
 \end{remark}

 In what follows, we always assume that $(X_t)_{0\leqslant t\leqslant T}=(X^1_t,\cdots,X^d_t)$ is a mean-zero Gaussian process with i.i.d. components. Let $R$ be the covariance function of any of its components. To study regularity properties of rough differential equations driven by a Gaussian process, one needs the following standard assumption.
\begin{condition}\label{con1} We assume that $R$ has finite H\"older controlled $\rho$-variation for some $\rho\in[1,2)$. In particular, $X$ admits a canonical lifting to a geometric $p$-rough path $\mathbf{X}$. We also assume that $\bar{\mathcal{H}}$, the Cameron-Martin space associated with $X$, has \textit{Young complementary regularity} with respect to $X$ in the following sense: one has the continuous embedding
  $$
  \bar{\mathcal{H}}\hookrightarrow C^{q\text{-var}}([0,T],\mathbb{R}^d)
  $$for some $q\geqslant1$ satisfying $1/p+1/q>1$.
\end{condition}
\begin{remark}
Complementary Young regularity for fBM holds when $H>1/4$ by taking $\rho\in[1,3/2)$.
\end{remark}

The following two conditions are fundamental in the study of non-degeneracy (existence and smoothness of density) for RDEs driven by a  Gaussian process (see \cite{CHLT}). Let $Z_t$ denote any component of $X_t$ (recall that they are i.i.d. Gaussian).

\begin{condition}\label{coninf}
    We assume that there exists $\alpha>0$ such that
    \begin{equation}\label{nonde}
     \inf_{0\leqslant s<t\leqslant T}\frac{1}{(t-s)^\alpha}\mathrm{Var}(Z_{s,t}|\mathcal{F}_{0,s}\vee\mathcal{F}_{t,T})>0.
    \end{equation}
    The \textit{index of nondeterminism} is the smallest $\alpha$ for which (\ref{nonde}) is true.
\end{condition}

\begin{condition}\label{concov}
For every $[u,v]\subseteq[s,t]\subseteq[0,S]\subseteq[0,T]$, one has
$$
\textcolor{black}{\mathrm{Cov}(Z_{s,t},Z_{u,v}|\mathcal{F}_{0,s}\vee\mathcal{F}_{t,S})\geqslant0.}
$$ 
\end{condition}

\begin{remark}\label{rconinf}
    It was shown in \cite[Lemma 4.1]{CHLT} that for fBm with Hurst parameter $H\in(0,1/2)$, the relation (\ref{nonde}) holds for any $\alpha\in(1,2]$. The case when $H\in(1/2,1)$ was handled in \cite{PP} where $\alpha$ is shown to be  $H+1$. For the Ornstein-Uhlenbeck process and the Brownian bridge, one has $\alpha=1$. 
\end{remark}

\begin{definition}\label{holderroughness}
    Let $\theta\in(0,1)$. A path $X:[0,T]\to \mathbb{R}^d$ is called \textit{$\theta$-H\"older rough} if there exists a constant $c>0$ such that for every $s\in[0,T]$, every $\varepsilon$ in $(0,T/2]$, and every unit vector $\zeta$ in $\mathbb{R}^d$, there exists $t$ in $[0,T]$ such that $\varepsilon/2<|t-s|<\varepsilon$ and 
    $$
    |\langle\zeta,X_{s,t}\rangle|>c\varepsilon^\theta.
    $$
    The largest such constant is called the \textit{modulus of $\theta$-H\"older roughness} and is denoted as $L_\theta(X)$.
\end{definition}
\begin{remark}\label{holderintegrability}
    It was proven in \cite[Corollary 5.10]{CHLT} that under Condition \ref{coninf}, one has  $L_\theta(X)\in L^p$ for all $p>1$.
\end{remark}

\subsection{The Mandelbrot-van Ness representation}

Let \( B = (B_t)_{0 \leqslant t \leqslant T} \) be a one-dimensional fBM with Hurst parameter \( H \in (0,1) \). Its \textit{canonical Hilbert space} (non-intrinsic Cameron-Martin space) \( \mathcal{H}_B \) is defined to be the completion of the linear span of \( \{ \mathbf{1}_{[0,t]} : t \in [0,T] \} \) with respect to the inner product
\begin{equation}\label{eq:HBInner}
\langle \mathbf{1}_{[0,t]}, \mathbf{1}_{[0,s]} \rangle_{\mathcal{H}_B} = R_H(t,s),
\end{equation}
where $$R_H(t,s) := \tfrac{1}{2}(t^{2H} + s^{2H} - |t - s|^{2H})$$ denotes the covariance function of \( B \). Note that the canonical Hilbert space of the standard Brownian motion \( W = (W_t)_{t \in \mathbb{R}} \) is \( \mathcal{H}_W = L^2(\mathbb{R}) \) with
\[
\langle \mathbf{1}_{[0,t]}, \mathbf{1}_{[0,s]} \rangle_{\mathcal{H}_W} = \min(t, s).
\]

The fBM admits the so-called \textit{Mandelbrot-van Ness representation} which will be useful to us. More precisely, the covariance \( R_H \) admits the integral form
\begin{equation}\label{RHdecom}
R_H(t,s) = \int_{-\infty}^{t \wedge s} k_H(t,u)\,k_H(s,u) \, du,
\end{equation}
where \( k_H \) is the  kernel defined by
\[
k_H(t,u) := c_H \left((t-u)^{H - 1/2}_+ - (-u)^{H - 1/2}_+ \right)
\]
and \( c_H \) is an explicit constant ensuring \eqref{RHdecom} to hold. This leads to the representation
\begin{equation}\label{deffBmH}
B_t = \int_{-\infty}^t k_H(t,s) \, dW_s,\quad0\leqslant t\leqslant T
\end{equation}
which expresses \( B_t \) as a Wiener integral with respect to \( W \). This representation defines an isometric embedding of $\mathcal{H}_B$ into $\mathcal{H}_W$ via
\[
\mathscr{K}(\mathbf{1}_{[0,t]}) := k_H(t,\cdot)\,\mathbf{1}_{(-\infty,t]}(\cdot),\quad 0\leqslant t\leqslant T.
\]
Indeed, according to (\ref{RHdecom}) one has
\[
\langle \mathbf{1}_{[0,t]}, \mathbf{1}_{[0,s]} \rangle_{\mathcal{H}_B} = \langle \mathscr{K}(\mathbf{1}_{[0,t]}), \mathscr{K}(\mathbf{1}_{[0,s]}) \rangle_{\mathcal{H}_W}.
\]

\textcolor{black}{For any step function $g=\sum_{i=1}^na_i\mathbf{1}_{[t_{i-1},t_i)}$ on $[0,T]$, we define
\[
\mathcal{I}(g)=\sum_{i=1}^na_i(B_{t_i}-B_{t_{i-1}}).
\]
By the definition of the inner product (\ref{eq:HBInner}), one has
\begin{equation}
    \mathbb{E}[\mathcal{I}(f)\mathcal{I}(g)]=\langle f, g \rangle_{\mathcal{H}_B}
\end{equation}
for any step functions $f,g$. As a result, the map $\mathcal{I}$ can be extended to  an isometry $\mathcal{I}:\mathcal{H}_B\to L^2(\Omega)$. The (intrinsic) \textit{Cameron-Martin} space $\bar{\mathcal{H}}_B$ is defined to be 
\[
\bar{\mathcal{H}}_B:=\left\{\bar{h}~\middle|~\bar{h}_t=\mathbb{E}[B_t\mathcal{I}(h)],~0\leqslant t\leqslant T, ~h\in\mathcal{H}_B\right\},
\]
where the inner product in $\bar{\mathcal{H}}_B$ is induced by 
$
\langle\bar{f},\bar{g}\rangle_{\bar{\mathcal{H}}_B}=\langle f,g\rangle_{\mathcal{H}_B}.
$
For any $h\in\mathcal{H}_B$, we use $\bar{h}$ to denote its corresponding element in $\bar{\mathcal{H}}_B$.}

To avoid ambiguity, we adopt the notation \( \mathbf{D}_W \) (resp. \( \mathbf{D}_B \)) for the Malliavin derivative with respect to \( W \) (resp. \( B \)), and denote the corresponding Sobolev spaces by \( \mathbb{D}^{k,p}_W \) (resp. \( \mathbb{D}^{k,p}_B \)), where $k$ is the differentiability index and $p$ is the integrability index. Here $F\in\mathbb{D}^{k,p}_W$ means
$$
\sum_{j=0}^k\left(\mathbb{E}\left[ \|\mathbf{D}^j_W F\|^p_{(\mathcal{H}_{W})^{\otimes j}}\right] \right)^{1/p}<\infty,
$$
and analogously for $\mathbb{D}^{k,p}_B$. The connection between \( \mathbb{D}^{k,p}_W \) and \( \mathbb{D}^{k,p}_B \) is described by the following result.

\begin{proposition}\label{MS}
Let \( (W_u)_{-\infty< u\leqslant T} \) be a Brownian motion on the canonical Wiener space \( (\Omega, \mathcal{F}^W, \mathbb{P}) \). Given fixed $H\in(0,1)$, we define the process \( (B_u)_{0\leqslant u\leqslant T} \) by
\begin{equation*}
B_u = \int_{-\infty}^u k_H(u,s) \, dW_s,\quad 0\leqslant u\leqslant T,
\end{equation*}
where \( k_H \) is the Volterra kernel associated with the Hurst parameter $H$. Then the following relations hold true:
\begin{itemize}
    \item[(i)] \( \mathbb{D}^{1,2}_B = (\mathscr{K})^{-1}(\mathbb{D}^{1,2}_W) \);
    \item[(ii)] For all \( F \in \mathbb{D}^{1,2}_B \), one has \( \mathbf{D}_W F = \mathscr{K} \mathbf{D}_B F \).
\end{itemize}
\end{proposition}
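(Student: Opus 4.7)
The plan is to prove (i) and (ii) simultaneously by first verifying the chain-rule identity $\mathbf{D}_W F = \mathscr{K}(\mathbf{D}_B F)$ on the dense class of smooth cylindrical $B$-functionals, and then extending to the whole of $\mathbb{D}^{1,2}_B$ via the isometry property of $\mathscr{K}$ together with closability of the Malliavin derivative.

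First, I would fix $F = f(B_{t_1},\ldots,B_{t_n})$ with $f \in C_b^\infty(\mathbb{R}^n)$ and $0 \leqslant t_1 < \cdots < t_n \leqslant T$. Directly from the definition of the $B$-Malliavin derivative one has
\[
\mathbf{D}_B F = \sum_{i=1}^{n} \partial_i f(B_{t_1},\ldots,B_{t_n})\, \mathbf{1}_{[0,t_i]} \in \mathcal{H}_B.
\]
On the other hand, the Mandelbrot-van Ness representation (\ref{deffBmH}) expresses each $B_{t_i}$ as the $W$-Wiener integral of $\mathscr{K}(\mathbf{1}_{[0,t_i]}) = k_H(t_i,\cdot)\mathbf{1}_{(-\infty,t_i]}$, so $F$ is also a smooth cylindrical $W$-functional whose $W$-Malliavin derivative is
\[
\mathbf{D}_W F = \sum_{i=1}^{n} \partial_i f(B_{t_1},\ldots,B_{t_n})\, \mathscr{K}(\mathbf{1}_{[0,t_i]}) \in \mathcal{H}_W.
\]
Linearity of $\mathscr{K}$ immediately gives $\mathbf{D}_W F = \mathscr{K}(\mathbf{D}_B F)$ for every such cylindrical $F$, and the isometry $\|\mathscr{K} h\|_{\mathcal{H}_W} = \|h\|_{\mathcal{H}_B}$ promotes this to the norm identity $\|F\|_{\mathbb{D}^{1,2}_W} = \|F\|_{\mathbb{D}^{1,2}_B}$ on this subclass.

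For arbitrary $F \in \mathbb{D}^{1,2}_B$, I would pick a sequence of smooth cylindrical $F_n \to F$ in $\mathbb{D}^{1,2}_B$; by the norm equality the sequence is also Cauchy in $\mathbb{D}^{1,2}_W$, and closability of $\mathbf{D}_W$ then delivers $F \in \mathbb{D}^{1,2}_W$ with $\mathbf{D}_W F = \lim_n \mathscr{K}(\mathbf{D}_B F_n) = \mathscr{K}(\mathbf{D}_B F)$, the last equality using continuity of $\mathscr{K}$. This proves (ii) and simultaneously the inclusion $\mathbb{D}^{1,2}_B \subseteq (\mathscr{K})^{-1}(\mathbb{D}^{1,2}_W)$. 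The reverse inclusion follows symmetrically: starting from an $\mathcal{F}^B$-measurable $F \in \mathbb{D}^{1,2}_W$ with $\mathbf{D}_W F \in \mathscr{K}(\mathcal{H}_B)$, one approximates $F$ by cylindrical $W$-functionals, conditions on $\mathcal{F}^B$ to produce approximations in the cylindrical $B$-class, and invokes the isometry in the other direction to conclude $F \in \mathbb{D}^{1,2}_B$. The main delicate point I expect is precisely the $\sigma$-algebra bookkeeping behind this reverse step: since $\mathscr{K}$ has closed but in general proper range inside $\mathcal{H}_W$, one has to verify that the $W$-Malliavin derivative of an $\mathcal{F}^B$-measurable functional is automatically valued in $\mathscr{K}(\mathcal{H}_B)$, which is transparent on cylindrical $B$-functionals and is transferred to the general case by passing to the limit. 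Apart from this measurability check, the argument is a routine chain-rule computation coupled with a standard density and closability argument.
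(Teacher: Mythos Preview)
The paper does not supply its own proof of this proposition; it is stated as a background result from the Malliavin calculus for Volterra-type Gaussian processes (the standard reference being Nualart's monograph). Your argument is the correct and standard one: verify the chain rule $\mathbf{D}_W F=\mathscr{K}(\mathbf{D}_B F)$ on smooth cylindrical $B$-functionals via the explicit Wiener-integral representation of $B_{t_i}$, then pass to the closure using the isometry $\|\mathscr{K} h\|_{\mathcal{H}_W}=\|h\|_{\mathcal{H}_B}$ and closability of both derivative operators.

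The measurability issue you flag for the reverse inclusion is genuine but resolves routinely, and your sketch of it is slightly more involved than necessary. Rather than approximating and conditioning, one can argue directly: if $F$ is $\sigma(B)$-measurable and lies in $\mathbb{D}^{1,2}_W$, then for any $g\in \mathscr{K}(\mathcal{H}_B)^\perp\subseteq\mathcal{H}_W$ the Cameron--Martin shift $W\mapsto W+g$ leaves every $B_t$ (and hence $F$) unchanged, so $\langle \mathbf{D}_W F,g\rangle_{\mathcal{H}_W}=0$ a.s. This forces $\mathbf{D}_W F$ to take values in the closed subspace $\mathscr{K}(\mathcal{H}_B)$, and one may then define $\mathbf{D}_B F:=\mathscr{K}^{-1}(\mathbf{D}_W F)$ and check via the same cylindrical approximation that this agrees with the intrinsic $B$-derivative.
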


We shall consider the Malliavin calculus with respect to the filtration
\[
\mathcal{F}^W_s := \sigma(W_u,~ -\infty < u \leqslant s)
\]
and define the following conditional Malliavin-Sobolev norm:
\begin{equation}\label{conditionalkp0}
\|F\|_{k,p,s} := \left( \sum_{j=0}^{k} \mathbb{E}\left[ \|\mathbf{D}^j_W F\|^p_{(\mathcal{H}_{W,s})^{\otimes j}} \mid \mathcal{F}^W_s \right] \right)^{1/p},
\end{equation}where \( \mathcal{H}_{W,s}:=L^2([s,T]\)  for any \( 0 \leqslant s \leqslant T \).

To ensure compatibility with the Hilbert space \( \mathcal{H}_{W,0} \), we introduce the kernel
\[
L_H(t,s) := c_H (t-s)^{H - 1/2}, \quad \text{for all } 0 \leqslant s \leqslant t \leqslant T.
\]
This defines an operator
\[
\mathscr{L}\left( \mathbf{1}_{[0,t]} \right) := L_H(t, \cdot) \mathbf{1}_{[0,t]}(\cdot), \quad 0 \leqslant t \leqslant T,
\]
and the associated Volterra-type process 
\[
L_t := c_H \int_0^t (t - r)^{H - 1/2} dW_r, \quad 0 \leqslant t \leqslant T.
\]
Note that for any \( 0 \leqslant s \leqslant t \leqslant T \), one has the identities
\begin{equation}\label{innerprodKtoL}
\begin{aligned}
\mathscr{K}(\mathbf{1}_{[0,t]})=&~\mathscr{L}(\mathbf{1}_{[0,t]})\quad\text{on  }[0,T]\\
\left\langle \mathscr{K}(\mathbf{1}_{[0,t]}), \mathscr{K}(\mathbf{1}_{[0,s]}) \right\rangle_{\mathcal{H}_{W,0}}
=&~
\left\langle \mathscr{L}(\mathbf{1}_{[0,t]}), \mathscr{L}(\mathbf{1}_{[0,s]}) \right\rangle_{\mathcal{H}_{W,0}}.
\end{aligned}
\end{equation}
This immediately leads to the following result.

\begin{proposition}\label{DWLDB}
For all \( F \in \mathbb{D}^{1,2}_B \), one has
\[
\mathbf{D}_W F = \mathscr{L} \left( \mathbf{D}_B F \right)
\]
as a \(L^2([0,T]) \)-valued random variable.
\end{proposition}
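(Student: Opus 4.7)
The plan is to combine Proposition~\ref{MS}(ii), which gives $\mathbf{D}_W F=\mathscr{K}(\mathbf{D}_B F)$ as an $\mathcal{H}_W$-valued random variable, with the pointwise agreement between $\mathscr{K}$ and $\mathscr{L}$ on $[0,T]$ already recorded in~\eqref{innerprodKtoL}. The only real task is to promote this agreement from step functions to arbitrary elements of $\mathcal{H}_B$ and then to restrict the identity $\mathbf{D}_W F=\mathscr{K}(\mathbf{D}_B F)$ from $(-\infty,T]$ down to $[0,T]$.

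First, I would verify the pointwise identity $\mathscr{K}(h)\big|_{[0,T]}=\mathscr{L}(h)$ directly on indicators. For $h=\mathbf{1}_{[0,t]}$ and $s\in[0,T]$, one has $(-s)_+^{H-1/2}=0$, so
\[
\mathscr{K}(\mathbf{1}_{[0,t]})(s)=c_H\bigl((t-s)_+^{H-1/2}-(-s)_+^{H-1/2}\bigr)=c_H(t-s)_+^{H-1/2}\mathbf{1}_{[0,t]}(s)=L_H(t,s)\mathbf{1}_{[0,t]}(s),
\]
which is exactly $\mathscr{L}(\mathbf{1}_{[0,t]})(s)$. By linearity the identity extends to the linear span of such indicators.

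Next, I would promote this identity to all of $\mathcal{H}_B$ by a density/continuity argument. Since $\mathscr{K}:\mathcal{H}_B\hookrightarrow\mathcal{H}_W$ is an isometric embedding, the map $h\mapsto \mathscr{K}(h)\big|_{[0,T]}$ is a contraction from $\mathcal{H}_B$ into $L^2([0,T])$. It agrees with $\mathscr{L}$ on the linear span of $\{\mathbf{1}_{[0,t]}:t\in[0,T]\}$, which is dense in $\mathcal{H}_B$ by the very construction of the latter space. Hence $\mathscr{L}$ extends to a bounded operator $\mathcal{H}_B\to L^2([0,T])$ and satisfies $\mathscr{L}(h)=\mathscr{K}(h)\big|_{[0,T]}$ for every $h\in\mathcal{H}_B$.

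For the final step, I would apply Proposition~\ref{MS}(ii) to any $F\in\mathbb{D}^{1,2}_B$ and restrict to $[0,T]$:
\[
\mathbf{D}_W F\big|_{[0,T]}=\mathscr{K}(\mathbf{D}_B F)\big|_{[0,T]}=\mathscr{L}(\mathbf{D}_B F),
\]
which is the claimed identity of $L^2([0,T])$-valued random variables. There is no genuine obstacle here; once the kernel identity~\eqref{innerprodKtoL} is in hand, the argument is a routine density-continuity extension. The only conceptual point worth highlighting is that $\mathscr{K}(\mathbf{D}_B F)$ carries a ``past'' contribution on $(-\infty,0]$ coming from the Mandelbrot--van Ness kernel, which is invisible to $\mathscr{L}$ but irrelevant once one restricts to $[0,T]$.
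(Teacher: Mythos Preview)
Your proposal is correct and follows essentially the same route as the paper's proof, which simply cites Proposition~\ref{MS}(ii) together with the kernel identity~\eqref{innerprodKtoL}. You have supplied the density/continuity details that the paper leaves implicit; your use of the isometry of $\mathscr{K}$ to obtain the contraction property of the restriction map is clean and avoids appealing to the equivalence Lemmas~\ref{6.7} and~\ref{6.6}.
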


\begin{proof}

This follows directly from Proposition~\ref{MS} and \eqref{innerprodKtoL}.
\end{proof}

The following two lemmas are taken from \cite[Proposition 9 and Lemma 2 respectively]{BN}. Two processes are said to be \textit{equivalent} if their laws are mutually absolutely continuous.
\begin{lemma}\label{6.7}
For any $H\in(0,1)$, the process \((B_{t})_{0\leqslant t\leqslant T}\) is equivalent to \((L_{t})_{0\leqslant t\leqslant T}\).
\end{lemma}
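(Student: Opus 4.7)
The plan is to exploit the Mandelbrot-van Ness decomposition to write $B$ as a sum of $L$ and a smoother, independent process, and then apply a Cameron-Martin shift argument.

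First, using the definition of the kernel $k_H$ and splitting the integration domain at $0$, one obtains the decomposition
\[
B_t=c_H\int_0^t(t-s)^{H-1/2}dW_s+c_H\int_{-\infty}^0\bigl[(t-s)^{H-1/2}-(-s)^{H-1/2}\bigr]dW_s=:L_t+\widetilde{B}_t,\qquad t\in[0,T].
\]
Since the stochastic integrals defining $L$ and $\widetilde{B}$ are driven by $(W_s)_{s\geqslant 0}$ and $(W_s)_{s<0}$ respectively, the processes $L$ and $\widetilde{B}$ are independent Gaussian processes. The key structural point is that $\widetilde{B}$ is defined by integrating, for each $t>0$, a kernel in $s<0$ that is $C^\infty$ in $t$; differentiating under the integral sign and checking $L^2(ds)$-integrability on $(-\infty,0)$ shows that $\widetilde{B}$ has $\mathbb{P}$-a.s.\ $C^\infty$ paths on $[0,T]$ (with $\widetilde{B}_0=0$).

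Next, I would show $\widetilde{B}\in\overline{\mathcal{H}}_L$ almost surely, where $\overline{\mathcal{H}}_L$ denotes the Cameron-Martin space of the Volterra process $L$. The space $\overline{\mathcal{H}}_L$ consists of paths of the form $t\mapsto c_H\int_0^t(t-r)^{H-1/2}\dot{g}(r)dr$ with $\dot{g}\in L^2([0,T])$, i.e.\ the range of the fractional integral $I_{0+}^{H+1/2}$ (up to normalisation). Because $t\mapsto\widetilde{B}_t$ is smooth and vanishes at $0$, it lies in this fractional Sobolev-type space; this can be verified either by inverting the operator via the fractional derivative $D_{0+}^{H+1/2}$ on smooth functions, or by appealing directly to the characterisation of $\overline{\mathcal{H}}_L$ recalled in the references.

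Finally, conditioning on $\widetilde{B}$ (which is independent of $L$) and applying the Cameron-Martin theorem for the Gaussian process $L$, the conditional law of $B=L+\widetilde{B}$ given $\widetilde{B}=h$ is equivalent to the law of $L$, with Radon-Nikodym derivative $\exp\bigl(\delta_L(h)-\tfrac12\|h\|_{\overline{\mathcal{H}}_L}^2\bigr)$, where $\delta_L$ denotes the divergence with respect to $L$. Integrating out $\widetilde{B}$ gives mutual absolute continuity of the laws of $B$ and $L$ on $C([0,T],\mathbb{R})$.

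The main obstacle is the second step: verifying $\widetilde{B}\in\overline{\mathcal{H}}_L$ almost surely, which requires a concrete description of the Cameron-Martin space of the Riemann-Liouville process and a pathwise regularity estimate on $\widetilde{B}$ that is strong enough (not merely smoothness, but also control of the fractional derivative up to the boundary). The independence of $L$ and $\widetilde{B}$ and the smoothness of $\widetilde{B}$ are routine consequences of the representation, but matching the regularity of $\widetilde{B}$ with the correct functional space for $L$ is the delicate part.
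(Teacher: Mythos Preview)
The paper does not prove this lemma; it simply records it as \cite[Proposition 9]{BN}. Your proposal therefore supplies more than the paper itself does, and the overall strategy---decompose $B=L+\widetilde{B}$ with $\widetilde{B}$ independent of $L$, then apply a Cameron--Martin shift conditionally on $\widetilde{B}$---is a valid route and is close in spirit to what the cited reference actually does (there the equivalence is obtained via a Hitsuda/Feldman--H\'ajek criterion on the difference of the Volterra kernels rather than by a literal Cameron--Martin translation, but the two formulations are essentially the same computation).

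One point in your sketch is inaccurate, however: $\widetilde{B}$ is \emph{not} $C^\infty$ on the closed interval $[0,T]$. A direct scaling shows $\mathrm{Var}(\widetilde{B}_t)=C t^{2H}$, the same order as $B$ itself, and for $k\geqslant 1$ the formal $k$-th derivative $\widetilde{B}^{(k)}_t$ has variance of order $t^{2H-2k}$, which diverges as $t\downarrow 0$. Thus $\widetilde{B}$ is smooth only on $(0,T]$. Consequently the step you correctly singled out as delicate---verifying $\widetilde{B}\in\overline{\mathcal{H}}_L=I_{0+}^{H+1/2}(L^2[0,T])$ almost surely---cannot be settled by the soft argument ``smooth functions vanishing at $0$ lie in the range of $I_{0+}^{H+1/2}$''. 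One must actually compute $D_{0+}^{H+1/2}\widetilde{B}$ and check square-integrability on $[0,T]$, which amounts to showing that the kernel
\[
\phi(r,s):=D_{0+}^{H+1/2}\Bigl[c_H\bigl((\cdot-s)^{H-1/2}-(-s)^{H-1/2}\bigr)\Bigr](r),\qquad s<0<r,
\]
satisfies $\int_0^T\!\int_{-\infty}^0|\phi(r,s)|^2\,ds\,dr<\infty$. This is precisely the Hilbert--Schmidt condition verified in \cite{BN}, so your approach and the reference converge at the same technical core, and you have located the genuine difficulty correctly; only the justification via smoothness at the origin needs to be replaced by this explicit integrability check.
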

\begin{lemma}\label{6.6}
Let $X^1_t,~X^2_t$ be Gaussian processes and assume that they are equivalent on $[0,T]$. Then there exists $C_T>0$ such that
$$
E\left[\left(\int_0^Tf(s)dX^1_s\right)^2\right]\leqslant C_TE\left[\left(\int_0^Tf(s)dX^2_s\right)^2\right]
$$for any step function $f$.
\end{lemma}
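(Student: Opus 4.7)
The plan is to identify each Wiener integral $\int_0^T f\,dX^i_s$ with the evaluation of a continuous linear functional on the sample path, and then to obtain the desired $L^2$-norm comparison from a closed-graph argument at the level of the first Wiener chaoses of the two equivalent Gaussian measures.

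To set up, fix a separable Banach space $E$ of paths on $[0,T]$ in which both processes are realised (for example, $E=C([0,T])$ when the paths are continuous, or an appropriate H\"older/Besov space otherwise), and let $\mu_i$ denote the law of $X^i$ on $E$. The assumed equivalence on $[0,T]$ means $\mu_1\sim\mu_2$ as centered Gaussian measures on $E$. For a step function $f=\sum_j a_j\mathbf{1}_{[t_{j-1},t_j)}$, the Wiener integral is just the evaluation of the continuous linear functional
\[
\phi_f(\omega):=\sum_j a_j\bigl(\omega(t_j)-\omega(t_{j-1})\bigr)\in E^*
\]
on the path, so the lemma reduces to proving $\|\phi\|_{L^2(\mu_1)}^2\leq C_T\,\|\phi\|_{L^2(\mu_2)}^2$ uniformly for $\phi\in E^*$.

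For the main step, let $\mathcal{G}_i$ denote the first Wiener chaos of $\mu_i$, namely the $L^2(\mu_i)$-closure of $E^*$. I will construct an ``identity'' map $\iota:\mathcal{G}_2\to \mathcal{G}_1$ as follows. Given $\psi\in\mathcal{G}_2$ realised as an $L^2(\mu_2)$-limit of $\phi_n\in E^*$, extract a subsequence along which $\phi_n\to\psi$ both $\mu_2$-a.s.\ and (by $\mu_1\ll\mu_2$) $\mu_1$-a.s. The resulting pointwise limit is a measurable linear functional under $\mu_1$, hence Gaussian with finite second moment, so it defines an element $\iota(\psi)\in\mathcal{G}_1$, and the same subsequential transfer shows that this definition is independent of the approximating sequence. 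To invoke the closed graph theorem, suppose $\psi_n\to 0$ in $\mathcal{G}_2$ and $\iota(\psi_n)\to\eta$ in $\mathcal{G}_1$; along a common subsequence one has $\psi_n=\iota(\psi_n)$ pointwise $\mu_1$-a.s., the left-hand side tends to $0$ $\mu_1$-a.s.\ via equivalence, and the right-hand side tends to $\eta$ $\mu_1$-a.s., forcing $\eta=0$. The closed graph theorem then yields $C_T:=\|\iota\|_{\mathrm{op}}^2<\infty$, and applying this to $\psi=\phi_f\in E^*\subset\mathcal{G}_2$ produces the lemma.

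The main technical obstacle is verifying that $\iota$ is well-defined on all of $\mathcal{G}_2$ rather than merely on the dense subset $E^*$. This is where the subsequential $\mu_2$-a.s.\ to $\mu_1$-a.s.\ convergence transfer combines with the classical fact that measurable linear functionals under Gaussian measures are themselves Gaussian to give both well-definedness and closedness of the graph. The resulting constant $C_T$ depends on the pair $(\mu_1,\mu_2)$ — in particular on $T$ through the horizon on which equivalence is posited — but not on the specific step function $f$.
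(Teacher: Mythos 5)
Your argument is correct, but note that the paper does not actually prove this lemma: it is quoted verbatim from \cite{BN} (Lemma 2 there), so there is no in-paper proof to compare against. Your closed-graph proof rests on exactly the two facts that make the statement true — equivalence of the laws transfers almost-sure (hence in-probability) convergence of the elementary functionals $\phi_f$ from one measure to the other, and for jointly Gaussian variables convergence in probability upgrades to $L^2$-convergence — and the closed graph theorem between the two first chaoses then packages these into the uniform constant $C_T$. A more elementary route to the same conclusion, which is essentially how such statements are usually proved, is the contrapositive: if no such $C_T$ existed, one could normalise and produce step functions $f_n$ with $E[(\int_0^T f_n\,dX^1_s)^2]=1$ while $E[(\int_0^T f_n\,dX^2_s)^2]\to 0$; then $\phi_{f_n}\to 0$ in probability under the law of $X^2$, hence under the equivalent law of $X^1$, contradicting the fact that centered Gaussians of unit variance cannot tend to $0$ in probability. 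Your version proves slightly more (a bounded identity map on the whole first chaos) at the cost of the well-definedness discussion for $\iota$, which is not needed for the lemma itself since $\phi_f$ already lies in $E^*$. Two cosmetic remarks: the Banach-space realisation is dispensable, because the convergence sets of finite linear combinations of increments already belong to the cylinder $\sigma$-algebra on which equivalence is posited; and the step asserting that the $\mu_1$-a.s.\ limit is Gaussian with finite second moment is most cleanly justified by the joint Gaussianity of the approximants $\phi_n$ rather than by appeal to general measurable linear functionals. Neither point affects the validity of the proof.
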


\subsection{Conditional decomposition of fractional Brownian motion}
Let \( W_t = (W^1_t, \dots, W^d_t) \) be a \( d \)-dimensional Brownian motion defined on \((-\infty, T] \). The Mandelbrot-van Ness representation defines an fBM with Hurst parameter \( H \in (0,1) \) as a Volterra-type process given by
\[
B_t = \int_{-\infty}^t k_H(t,u)\, dW_u, \quad 0 \leqslant t \leqslant T.
\]
For any \( 0 \leqslant s \leqslant u \leqslant T \), we decompose the increment \( B_{s,u} := B_u - B_s \) into two parts according to their \( \mathcal{F}^W_s \)-measurability:
\[
B_{s,u} = L_{s,u} + Q_{s,u},
\]
where
\begin{equation}
\begin{aligned}\label{defXY}
L_{s,u} &:= c_H \int_s^u (u - r)^{H - 1/2}\, dW_r,\\  
Q_{s,u} &:= c_H \int_{-\infty}^s \left[ (u - r)^{H - 1/2} - (s - r)^{H - 1/2} \right]\, dW_r.
\end{aligned}
\end{equation}
Clearly, \( (L_{s,u})_{s\leqslant u\leqslant T} \) is independent of \( \mathcal{F}^W_s \) while \( (Q_{s,u})_{s\leqslant u\leqslant T} \) is \( \mathcal{F}^W_s \)-measurable.

The following simple observation will be used for several times in the sequel.

\begin{lemma} \label{decom}
For any $\varepsilon > 0$ and $s\geqslant 0$, the joint distribution of the process
\[
\left( L_{s, s+\varepsilon u}, Q_{s, s+\varepsilon u} \right)_{0 \leqslant u \leqslant 1}
\]
coincides with that of
\[
\left( \varepsilon^H L_{0,u}, \varepsilon^H Q_{0,u} \right)_{0 \leqslant u \leqslant 1}.
\]
\end{lemma}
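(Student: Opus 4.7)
The plan is to prove the identity by a deterministic scaling change-of-variable applied to the Wiener integrals in \eqref{defXY}. Both sides are zero-mean Gaussian processes, so in principle it would suffice to compare covariances; however, the cleanest route is to exhibit a pathwise identity in distribution via a single auxiliary Brownian motion. Specifically, set
\[
\tilde{W}_r := \varepsilon^{-1/2}\bigl(W_{s+\varepsilon r} - W_s\bigr), \qquad r\in\mathbb{R}.
\]
By the scaling and stationary-increment properties of two-sided Brownian motion, $\tilde{W}$ is itself a standard two-sided Brownian motion, and in particular $\tilde{W}\stackrel{d}{=}W$.

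Next, perform the change of variable $r=s+\varepsilon r'$ in the defining Wiener integrals of \eqref{defXY}. Using $dW_{s+\varepsilon r'}=\varepsilon^{1/2}\,d\tilde{W}_{r'}$ in the Wiener-integral sense (justified by the Itô/Wiener isometry applied to simple integrands and then extended by density), the independent part becomes
\[
L_{s,s+\varepsilon u}
= c_H\int_{0}^{u}\bigl(\varepsilon(u-r')\bigr)^{H-1/2}\varepsilon^{1/2}\,d\tilde{W}_{r'}
= \varepsilon^{H}\,\tilde{L}_{0,u},
\]
where $\tilde{L}_{0,u}$ is defined from $\tilde{W}$ in exactly the same manner as $L_{0,u}$ from $W$. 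An identical computation on the $\mathcal{F}^W_s$-measurable part yields
\[
Q_{s,s+\varepsilon u}
= c_H\int_{-\infty}^{0}\!\bigl[(\varepsilon(u-r'))^{H-1/2}-(-\varepsilon r')^{H-1/2}\bigr]\varepsilon^{1/2}\,d\tilde{W}_{r'}
= \varepsilon^{H}\,\tilde{Q}_{0,u}.
\]

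Since these two identities are obtained by substituting into one and the same auxiliary Brownian motion $\tilde{W}$, the joint process $(L_{s,s+\varepsilon u},Q_{s,s+\varepsilon u})_{0\leqslant u\leqslant 1}$ is equal, almost surely, to $\varepsilon^H(\tilde{L}_{0,u},\tilde{Q}_{0,u})_{0\leqslant u\leqslant 1}$. Invoking $\tilde{W}\stackrel{d}{=}W$ then gives the required distributional equality with $\varepsilon^H(L_{0,u},Q_{0,u})_{0\leqslant u\leqslant 1}$, completing the proof. I expect no serious obstacle here: the only subtleties are verifying that $\tilde{W}$ is genuinely a two-sided Brownian motion and that the Wiener-integral substitution is admissible, both of which are immediate from the linear isometry. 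Should one prefer to avoid the change of variable altogether, the covariance-based alternative produces the factor $\varepsilon^{2H}$ in each of the three kernels (auto-covariances of $L$ and $Q$ together with their cross-covariance, which vanishes on either side by disjointness of the integration domains) through the same substitution $r = s+\varepsilon v$, and is essentially as short.
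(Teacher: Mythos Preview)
Your proof is correct and follows essentially the same approach as the paper, which simply states that the result is ``a simple consequence of scaling invariance.'' You have merely made this explicit by introducing the rescaled Brownian motion $\tilde{W}_r=\varepsilon^{-1/2}(W_{s+\varepsilon r}-W_s)$ and carrying out the change of variable in the Wiener integrals, which is exactly the mechanism the paper has in mind.
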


\begin{proof}
This is a simple consequence of scaling invariance.
\end{proof}

To compare the Malliavin derivative of functionals of Gaussian processes, we introduce the following notion of equivalence.

\begin{definition}\label{defequalm}
Let $X$ and $Y$ be random variables defined on the Wiener space $(\Omega,\mathcal{F}^W,\mathbb{P})$. We say that $X$ and $Y$ are \textit{equivalent in the sense of Malliavin} and simply write
$
X \overset{M}{=} Y,
$
if one has
\[
\left\| \mathbf{D}_W^k X \right\|_{\mathcal{H}_W^{\otimes k}} \overset{d}{=} \left\| \mathbf{D}_W^k Y \right\|_{\mathcal{H}_W^{\otimes k}}
\]for every integer $k \geqslant 0$ provided that both sides are well-defined.
Here \( \mathbf{D}_W^k \) denotes the \( k \)-th Malliavin derivative with respect to the Brownian motion \( W \).
\end{definition}

The following lemma provides a sufficient condition under which two random variables are equivalent in the above sense.

\begin{lemma}\label{equnorm}
Let \( \Phi^1, \Phi^2 \colon C([0,1]) \to \mathbb{R} \) be measurable functionals such that \( \Phi^1(L_{0,\cdot}) \) and \( \Phi^2(Q_{0,\cdot}) \) are smooth in the sense of Malliavin, i.e.,
\[
\mathbb{E}\left[ \left\| \mathbf{D}_W^k \Phi^1(L_{0,\cdot}) \right\|_{\mathcal{H}_W^{\otimes k}}^p \right] < \infty,\quad
\mathbb{E}\left[ \left\| \mathbf{D}_W^k \Phi^2(Q_{0,\cdot}) \right\|_{\mathcal{H}_W^{\otimes k}}^p \right] < \infty
\]for all $k \geqslant 0$ and $\ p \geqslant 1$.
Then for any \( s \geqslant 0 \) and \( \varepsilon > 0 \), one has
\[
\Phi^1(L_{0,\cdot}) \overset{M}{=} \Phi^1\left( \varepsilon^{-H} L_{s,s+\varepsilon \cdot} \right),\quad
\Phi^2(Q_{0,\cdot}) \overset{M}{=} \Phi^2\left( \varepsilon^{-H} Q_{s,s+\varepsilon \cdot} \right).
\]
\end{lemma}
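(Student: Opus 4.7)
The plan is to upgrade the distributional identity in Lemma~\ref{decom} to an almost-sure one, realised by a scaling-translation of the underlying Brownian motion, and then to exploit the fact that this transformation induces a Wiener-space isomorphism that intertwines with the Malliavin derivative. First I would introduce the rescaled Brownian motion $\tilde W_v := \varepsilon^{-1/2}(W_{s+\varepsilon v} - W_s)$, which is a standard Brownian motion on the appropriately shifted half-line and satisfies $\tilde W \overset{d}{=} W$. Via the change of variable $r = s + \varepsilon v$ in the Mandelbrot--van Ness representations \eqref{defXY}, I would derive the pointwise identities
\[
L_{s,s+\varepsilon u}(W) = \varepsilon^H L_{0,u}(\tilde W), \qquad Q_{s,s+\varepsilon u}(W) = \varepsilon^H Q_{0,u}(\tilde W),
\]
so that $\Phi^1(\varepsilon^{-H} L_{s,s+\varepsilon\cdot}(W)) = \Phi^1(L_{0,\cdot}(\tilde W))$ and analogously for $\Phi^2$, at the level of individual sample paths.

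Next I would translate this into Malliavin calculus. The scaling map $\tau_\varepsilon(f)(r) := \varepsilon^{-1/2} f((r-s)/\varepsilon)$ is an isometric embedding of $L^2$-spaces satisfying $\tilde W(f) = W(\tau_\varepsilon f)$ for every test function. For any smooth cylindrical $G(W') = \phi(W'(h_1),\dots,W'(h_n))$, rewriting $G(\tilde W) = \phi(W(\tau_\varepsilon h_1),\dots,W(\tau_\varepsilon h_n))$ and applying the chain rule gives $\mathbf D_W^k G(\tilde W) = \tau_\varepsilon^{\otimes k} \mathbf D_{\tilde W}^k G(\tilde W)$, whence
\[
\|\mathbf D_W^k G(\tilde W)\|_{\mathcal H_W^{\otimes k}} = \|\mathbf D_{\tilde W}^k G(\tilde W)\|_{\mathcal H_{\tilde W}^{\otimes k}}
\]
pointwise in $\omega$. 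This cylindrical identity extends to $G \in \{\Phi^1 \circ L_{0,\cdot},\, \Phi^2 \circ Q_{0,\cdot}\}$ by a density argument using the standing Malliavin-smoothness hypothesis, which ensures these functionals lie in every $\mathbb D^{k,p}_W$.

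Combining the two steps yields
\[
\|\mathbf D_W^k \Phi^1(\varepsilon^{-H} L_{s,s+\varepsilon\cdot})\|_{\mathcal H_W^{\otimes k}} = \|\mathbf D_{\tilde W}^k \Phi^1(L_{0,\cdot})\|_{\mathcal H_{\tilde W}^{\otimes k}}
\]
pointwise on $\Omega$. Since $\tilde W \overset{d}{=} W$, the right-hand side has the same law as $\|\mathbf D_W^k \Phi^1(L_{0,\cdot})\|_{\mathcal H_W^{\otimes k}}$, which is exactly the relation required by Definition~\ref{defequalm}; the argument for $\Phi^2$ is identical, noting only that $\tilde W$ should then be viewed as a Brownian motion on $(-\infty,(T-s)/\varepsilon]$ to accommodate the domain of the Wiener integral defining $Q$. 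The main obstacle I anticipate is the rigorous extension in Step~2 from cylindrical to general smooth functionals --- that is, verifying that $\tau_\varepsilon$-conjugation still transports the Malliavin structure through the non-cylindrical compositions $\Phi^i \circ L_{0,\cdot}$ and $\Phi^i \circ Q_{0,\cdot}$. The Malliavin smoothness assumed in the hypothesis is precisely what is needed to make this approximation argument work.
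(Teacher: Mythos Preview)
Your proposal is correct and, at the level of cylindrical functionals, more structural than the paper's argument. Both proofs begin with the same reduction to smooth cylindrical functionals $f(L_{t_1},\dots,L_{t_l})$ versus $f(K_{t_1},\dots,K_{t_l})$ (with $K_u := \varepsilon^{-H}L_{s,s+\varepsilon u}$), relying on the standing Malliavin-smoothness hypothesis for the density step. The difference lies in how the cylindrical case is handled. The paper observes directly that since $L_{t_i}$ and $K_{t_i}$ are first-chaos Wiener integrals with respect to the \emph{same} $W$, one has $\langle \mathbf D_W L_{t_i},\mathbf D_W L_{t_j}\rangle_{\mathcal H_W}=\mathrm{Cov}(L_{t_i},L_{t_j})=\mathrm{Cov}(K_{t_i},K_{t_j})=\langle \mathbf D_W K_{t_i},\mathbf D_W K_{t_j}\rangle_{\mathcal H_W}$; hence the squared norm $\|\mathbf D_W^k f(\cdot)\|^2_{\mathcal H_W^{\otimes k}}$ is the same deterministic polynomial in the underlying Gaussian vector in both cases, and equality in law of $(L_{t_i})$ and $(K_{t_i})$ finishes the job. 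Your route instead constructs the explicit Brownian rescaling $\tilde W_v=\varepsilon^{-1/2}(W_{s+\varepsilon v}-W_s)$ realising the distributional identity pointwise, and then transports the Malliavin structure through the isometry $\tau_\varepsilon$ of $\mathcal H_W$. This buys a cleaner conceptual explanation (the equivalence is literally a Wiener-space symmetry) and would extend more readily to functionals not built from finitely many marginals, at the cost of checking the change-of-variable identities for both $L$ and $Q$ carefully (in particular extending $\tilde W$ to $(-\infty,0]$ for the $Q$-kernel). Either argument is complete; the obstacle you flag about passing from cylindrical to general smooth functionals is present in both and is handled identically via the $\mathbb D^{k,p}$-membership assumed in the hypothesis.
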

\begin{proof}
To ease notation, we write \( L_u = L_{0,u} \) and \( K_u = \varepsilon^{-H} L_{s,s+\varepsilon u} \) for \( u \geqslant 0 \). It suffices to show that, for any integers \( l \geqslant 1 \), \( k \geqslant 0 \), any  \( f \in C_c^\infty(\mathbb{R}^l; \mathbb{R}) \), and any finite sequence \( 0 < t_1 < t_2 < \cdots < t_l \leqslant 1 \), the following two random variables
\[
\left\| \sum_{1 \leqslant i_1,\dots,i_k \leqslant l} \partial_{x_{i_1} \cdots x_{i_k}} f(L_{t_1}, \dots, L_{t_l}) \, \mathbf{D}_W L_{t_{i_1}} \otimes \cdots \otimes \mathbf{D}_W L_{t_{i_k}} \right\|_{\mathcal{H}_W^{\otimes k}}
\]
and
\[
\left\| \sum_{1 \leqslant i_1,\dots,i_k \leqslant l} \partial_{x_{i_1} \cdots x_{i_k}} f(K_{t_1}, \dots, K_{t_l}) \, \mathbf{D}_W K_{t_{i_1}} \otimes \cdots \otimes \mathbf{D}_W K_{t_{i_k}} \right\|_{\mathcal{H}^{\otimes k}}
\]are equal in distribution. But
this follows from the facts that both processes \( L_t \) and \( K_t \) are Wiener integrals with respect to the same Brownian motion \( (W_u)_{-\infty<u\leqslant T} \), and that
\[
(L_{t_1}, \dots, L_{t_l}) \overset{d}{=} (K_{t_1}, \dots, K_{t_l}).
\]
The same argument applies to the processes \( (Q_{0,u})_{0 \leqslant u \leqslant 1} \) and \( (\varepsilon^{-H} Q_{s,s+\varepsilon u})_{0 \leqslant u \leqslant 1} \), which  completes the proof of the lemma.
\end{proof}

\subsection{Conditional small ball estimates}

Integrability of the modulus of $\theta$-H\"older roughness plays an important role in the study of the existence of a smooth density. In our study, we need a conditional version of this property and in this subsection, we first derive some related conditional small ball estimates.

\begin{lemma}\label{6.4}
Let \( (B_t)_{t \geqslant 0} \) be a \( d \)-dimensional fBM with Hurst parameter \( H \in (0,1) \). Then there exists a positive constant \( C \) such that for any \( \delta > 0 \), any unit vector \( \phi \in \mathbb{R}^d \), and any \( x \in (0,1) \), one has
\[
\mathbb{P}\left( \sup_{u,v \in [0,\delta]} \left| \langle \phi, B_{u,v} \rangle \right| \leqslant x \,\big|\, \mathcal{F}_0^W \right) \leqslant  \exp\left( -C \delta x^{-1/H} \right),
\]
where \( B_{u,v} := B_v - B_u \) denotes the increment of the fBM.
\end{lemma}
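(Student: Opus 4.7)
The plan is to reduce this conditional small-ball estimate to an unconditional one via the Mandelbrot-van Ness decomposition together with Anderson's inequality, and then close the argument using the $H$-self-similarity of the resulting Volterra process and a classical small-ball bound for Gaussian processes with $H$-type increments.

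\textbf{Step 1 (Conditional decomposition).} Using the representation \eqref{deffBmH}, I decompose for each $v\in[0,\delta]$,
\[
B_v = L_{0,v} + Q_{0,v},
\]
where $L_{0,v}=c_H\int_0^v (v-r)^{H-1/2}\,dW_r$ is independent of $\mathcal{F}_0^W$ and $Q_{0,v}=c_H\int_{-\infty}^0[(v-r)^{H-1/2}-(-r)^{H-1/2}]\,dW_r$ is $\mathcal{F}_0^W$-measurable (cf.\ \eqref{defXY} with $s=0$). Setting $\ell(v):=\langle\phi,L_{0,v}\rangle$ and $q(v):=\langle\phi,Q_{0,v}\rangle$, the process $\ell=(\ell(v))_{v\in[0,\delta]}$ is a one-dimensional centered Gaussian Volterra process (type-II fBm) that is independent of $\mathcal{F}_0^W$, and $q$ is deterministic conditionally on $\mathcal{F}_0^W$. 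Thus the event $\{\sup_{u,v\in[0,\delta]}|\langle\phi,B_{u,v}\rangle|\leqslant x\}$ conditionally becomes $\{\|\ell+q\|_{\mathrm{osc}}\leqslant x\}$, where $\|\cdot\|_{\mathrm{osc}}$ denotes the oscillation semi-norm on $C([0,\delta])$.

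\textbf{Step 2 (Anderson's inequality).} The set $K_x:=\{f\in C([0,\delta]):\|f\|_{\mathrm{osc}}\leqslant x\}$ is convex, symmetric, and Borel. Since the law of $\ell$ is a centered Gaussian measure on $C([0,\delta])$ and $\ell$ is independent of $\mathcal{F}_0^W$, Anderson's inequality applied to the shift $-q$ gives
\[
\mathbb{P}\bigl(\|\ell+q\|_{\mathrm{osc}}\leqslant x\,\big|\,\mathcal{F}_0^W\bigr)=\mathbb{P}(\ell\in K_x-q)\leqslant \mathbb{P}(\ell\in K_x)=\mathbb{P}\bigl(\|\ell\|_{\mathrm{osc;}[0,\delta]}\leqslant x\bigr),
\]
which removes the $\mathcal{F}_0^W$-dependent shift and reduces the problem to an unconditional small-ball estimate for the centered Gaussian process $\ell$.

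\textbf{Step 3 (Self-similarity and small ball).} A Wiener-integral change of variables (as in Lemma \ref{decom}) shows that $\ell$ is $H$-self-similar: $(\ell(\delta s))_{s\in[0,1]}\stackrel{d}{=}\delta^H(\tilde\ell(s))_{s\in[0,1]}$ for a process $\tilde\ell$ with the same law as $\ell$ on $[0,1]$. Since $\ell(0)=0$, an oscillation bound forces a supremum bound, hence
\[
\mathbb{P}\bigl(\|\ell\|_{\mathrm{osc;}[0,\delta]}\leqslant x\bigr)\leqslant\mathbb{P}\Bigl(\sup_{u\in[0,1]}|\tilde\ell(u)|\leqslant x\delta^{-H}\Bigr).
\]
Because $\tilde\ell$ is centered Gaussian with $\mathbb{E}[(\tilde\ell(t)-\tilde\ell(s))^2]\asymp|t-s|^{2H}$, the classical small-ball estimate (via entropy/RKHS methods, or by transferring the known small-ball bound for standard fBm through the equivalence in Lemma \ref{6.7} using $L^p$-integrability of the Radon--Nikodym density) yields
\[
\mathbb{P}\Bigl(\sup_{u\in[0,1]}|\tilde\ell(u)|\leqslant y\Bigr)\leqslant\exp(-Cy^{-1/H}).
\]
Substituting $y=x\delta^{-H}$ gives the desired exponent $-C\delta x^{-1/H}$.

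\textbf{Main obstacle.} Steps 1 and 2 are essentially structural once the Mandelbrot-van Ness decomposition is in hand. The delicate ingredient is the classical small-ball estimate for the one-dimensional Volterra process $\ell$: one must verify either directly, or via transfer from the standard fBm through Lemma \ref{6.7}, that the small-ball rate $\exp(-c y^{-1/H})$ persists for the type-II fBm on $[0,1]$. Care is also needed to ensure Anderson's inequality is applied to a genuinely separable centered Gaussian measure on $C([0,\delta])$, which follows from path-continuity of $\ell$.
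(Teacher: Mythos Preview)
Your proposal is correct, but the route differs from the paper's. The paper does not invoke the infinite-dimensional Anderson inequality nor an external small-ball theorem. Instead, after the same scaling reduction to $\delta=1$, it partitions $[0,1]$ into $n\approx x^{-1/H}$ subintervals of length $x^{1/H}$, bounds the event by $\{\max_i |B^1_{t_i}|\leqslant x\}$, and then conditions successively on $\mathcal{F}^W_{t_{n-1}},\ldots,\mathcal{F}^W_{0}$. At each step the Mandelbrot--van Ness decomposition yields a Gaussian increment with $\mathcal{F}^W_{t_{i-1}}$-measurable mean and fixed variance, and the elementary one-dimensional fact that $\mathbb{P}(|Z|\leqslant x)$ is maximal when the mean is zero gives a uniform bound $p<1$ per step; the product of $n$ such factors delivers $e^{-Cn}\leqslant e^{-Cx^{-1/H}/2}$.

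In effect, both arguments exploit an Anderson-type symmetry: the paper applies it pointwise in $\mathbb{R}$ at each partition time and iterates, while you apply it once in path space and then defer to a known small-ball rate. Your version is cleaner conceptually and shorter, but it outsources the actual small-ball work (your Step~4) to a classical result whose standard proof is precisely the partition argument the paper writes out. The paper's version is more elementary and self-contained, requiring no appeal to absolute continuity of laws or $L^p$-integrability of the Radon--Nikodym density between the two fBm types. One cautionary note on your transfer option in Step~4: equivalence of the type-I and type-II laws (Lemma~\ref{6.7}) alone does not guarantee that the density lies in $L^p$ for some $p>1$, so if you rely on that route you should either cite a result ensuring this integrability or, more robustly, use the direct entropy/increment-variance argument for the Riemann--Liouville process.
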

\begin{proof}
Firstly, by the scaling property of fBM, one has
\begin{equation*}
\mathbb{P}\left( \sup_{u,v \in [0,\delta]} \left| \langle \phi, B_{u,v} \rangle \right| \leqslant x \,\big|\, \mathcal{F}_0^W \right)
= \mathbb{P}\left( \sup_{u,v \in [0,1]} \left| \langle \phi, B_{u,v} \rangle \right| \leqslant \frac{x}{\delta^H} \,\big|\, \mathcal{F}_0^W \right).
\end{equation*}
Secondly, for any \( \phi \in \mathbb{R}^d \) with \( \|\phi\|_{\mathbb{R}^d} = 1 \), one notes that
\[
\left( \langle \phi, B_t \rangle \right)_{t \geqslant 0} \overset{d}{=} \left( B_t^1 \right)_{t \geqslant 0},
\]
where \( (B_t^1)_{t \geqslant 0} \) denotes the first component $B$. Therefore, it suffices to establish the desired bound in the case \( \delta = 1 \) and \( \phi = e_1 \), i.e.
\[
\mathbb{P}\left( \sup_{u,v \in [0,1]} \left| B^1_{u,v} \right| \leqslant x \,\big|\, \mathcal{F}_0^W \right) \leqslant  \exp\left( -C x^{-1/H} \right)
\]
for some constant \( C > 0 \).

Fix \( x \in (0,1) \). We define a partition of the interval \( I = [0,1] \) in the follow way. Let
\(n := \left\lfloor x^{-1/H} \right\rfloor \geqslant 1\)
and define the partition sequence \(0=t_0<t_1<\cdots<t_{n+1}=1 \) by
\[
t_i := i x^{1/H}, \quad \text{for } i = 0,1,\ldots,n.
\]
Since $B_0=0$, it is obvious that
\begin{equation}\label{33}
\begin{aligned}
\mathbb{P}\left( \sup_{u,v \in [0,1]} \left| B^1_{u,v} \right| \leqslant x \,\big|\, \mathcal{F}_0^W \right) \leqslant&~ \mathbb{P}\left( \sup_{u \in [0,1]} \left| B^1_{u} \right| \leqslant x \,\big|\, \mathcal{F}_0^W \right)\\
\leqslant&~\mathbb{P}\left(\max_{i=1,\cdots,n}\left| B^1_{t_i} \right| \leqslant x \,\big|\, \mathcal{F}_0^W \right)
\end{aligned}
\end{equation}

To estimate \eqref{33}, we successively condition on the following filtrations in order:
$$
\mathcal{F}_{t_{n-1}}^W,~\mathcal{F}_{t_{n-2}}^W,\cdots,~\mathcal{F}_0^W.
$$
According to Lemma \ref{decom}, the distribution of $B^1_{t_{i-1},t_i}$ conditional on $\mathcal{F}_{t_{i-1}}^W$ is equal to the distribution of $(t_i-t_{i-1})^H B^1_{0,1}$ conditional on $\mathcal{F}_{0}^W$. Furthermore, one has $B^1_{0,1}=L^1_{0,1}+Q^1_{0,1}$. Here $L^1_{0,1},Q^1_{0,1}$ are Gaussian variables, where $L^1_{0,1}$ is independent of $\mathcal{F}_{0}^W$ and $Q^1_{0,1}$ is measurable with respect to $\mathcal{F}_{0}^W$. If we denote $\sigma^2$ as the variance of $L^1_{0,1}$, then one has 
\begin{align*}
\mathbb{P}\left( \left| B^1_{t_{i-1},t_i} \right| \leqslant x \,\big|\, \mathcal{F}_{t_{i-1}}^W \right)&\overset{M}{=}\mathbb{P}\left( (t_i-t_{i-1})^H\left| B^1_{0,1} \right| \leqslant x \,\big|\, \mathcal{F}_0^W \right) \\
&\leqslant\int_{-\frac{x}{\sigma(t_i-t_{i-1})^H}}^{\frac{x}{\sigma(t_i-t_{i-1})^H}}\frac{1}{\sqrt{2\pi}}e^{-y^2/2}dy.
\end{align*}
Here we used the fact that for a Gaussian random variable with fixed variance, the probability \( \mathbb{P}(|Z| \leqslant x) \) is maximised when the mean is zero. Note that the right hand side also serves as an upper bound for \( \mathbb{P}( | B^1_{t_i} | \leqslant x \,|\, \mathcal{F}_{t_{i-1}}^W) \), since \( B^1_{t_i} \) can be viewed as the increment \( B^1_{t_{i-1}, t_i} \) shifted by \( B^1_{t_{i-1}} \), and the bound depends only on the variance. By iterating the above estimate, one obtains that
\begin{align*}
&\mathbb{P}\left(\max_{i=1,\cdots,n}\left| B^1_{t_i} \right| \leqslant x \,\big|\, \mathcal{F}_0^W \right)\\
&\leqslant\mathbb{P}\left(\max_{i=1,\cdots,n-1}\left| B^1_{t_i} \right| \leqslant x \,\big|\, \mathcal{F}_0^W \right)\int_{-\frac{x}{\sigma(t_n-t_{n-1})^H}}^{\frac{x}{\sigma(t_n-t_{n-1})^H}}\frac{1}{\sqrt{2\pi}}e^{-y^2/2}dy\\
&\cdots\\
&\leqslant~\prod_{i=1}^n\int_{-\frac{x}{\sigma(t_n-t_{n-1})^H}}^{\frac{x}{\sigma(t_n-t_{n-1})^H}}\frac{1}{\sqrt{2\pi}}e^{-y^2/2}dy.
\end{align*}
Since $\frac{x}{\sigma(t_n-t_{n-1})^H}\leqslant\frac{1}{\sigma}$ by the construction of the partition, one has
\begin{equation}\label{A}
\mathbb{P}\left(\max_{i=1,\cdots,n}\left| B^1_{t_{i-1},t_i} \right| \leqslant x \,\big|\, \mathcal{F}_0^W \right)\leqslant e^{-Cn}\leqslant e^{-Cx^{-1/H}/2},
\end{equation}
where $C:=-\log\int_{-1/\sigma}^{1/\sigma}\frac{1}{\sqrt{2\pi}}e^{-y^2/2}dy$. The desired estimate thus follows.
\end{proof}

\begin{proposition}\label{6.5}
Let \( (B_t)_{t \geqslant 0} \) be a \( d \)-dimensional fBM with Hurst parameter \( H \in (0,1) \). Then there exist  positive constants \( C_1,C_2 \), such that one has
\[
\mathbb{P}\left(\inf_{\|\phi\|_{\mathbb{R}^d}=1}\sup_{u,v\in [s,s+\delta]}\left|\langle\phi,B_{u,v}\rangle\right|\leqslant x\,\middle|\,\mathcal{F}_s^W\right)\leqslant C_1\exp\left( -C_2 \delta x^{-1/H} \right)
\]for all $\delta> 0$, $s\geqslant0$ and $ x\in(0,1)$.
\end{proposition}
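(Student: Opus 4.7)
The plan is to combine the one-direction estimate of Lemma \ref{6.4} with an $\varepsilon$-net argument on the unit sphere $S^{d-1}$, using a conditional Gaussian concentration bound on $\sup_{u,v}\|B_{u,v}\|$ to control the discretisation error.

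First, by Lemma \ref{decom}, the conditional probability in question is equidistributed with $\mathbb{P}(\inf_{\|\phi\|=1}\sup_{u,v\in[0,1]}|\langle\phi,B_{u,v}\rangle|\leqslant x/\delta^H\mid\mathcal{F}_0^W)$, so it suffices to prove the estimate for $s=0$, $\delta=1$ with $y:=x/\delta^H$. Fix an $\varepsilon$-net $N_\varepsilon\subset S^{d-1}$ with $|N_\varepsilon|\leqslant C_d\varepsilon^{-(d-1)}$ and set $M:=\sup_{u,v\in[0,1]}\|B_{u,v}\|$. For any $\phi\in S^{d-1}$ and its nearest net point $\psi\in N_\varepsilon$, one has $|\langle\phi-\psi,B_{u,v}\rangle|\leqslant\varepsilon M$, so on the event $\{M\leqslant K\}$ the small-ball event $E:=\{\inf_\phi\sup_{u,v}|\langle\phi,B_{u,v}\rangle|\leqslant y\}$ is contained in $\bigcup_{\psi\in N_\varepsilon}\{\sup_{u,v}|\langle\psi,B_{u,v}\rangle|\leqslant y+\varepsilon K\}$. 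A union bound together with Lemma \ref{6.4} (applied with $s=0$ and $\delta=1$) therefore gives
\[
\mathbb{P}(E\mid\mathcal{F}_0^W)\leqslant \mathbb{P}(M>K\mid\mathcal{F}_0^W) + C|N_\varepsilon|\exp\bigl(-C(y+\varepsilon K)^{-1/H}\bigr).
\]
Choosing $K=y^{-1/(2H)}$ and $\varepsilon=y/K$ makes $y+\varepsilon K=2y$, while $|N_\varepsilon|$ remains polynomial in $1/y$, a factor absorbed by $\exp(-C(2y)^{-1/H})$ for $y$ sufficiently small.

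The remaining task, and what I expect to be the main technical point, is to dominate $\mathbb{P}(M>K\mid\mathcal{F}_0^W)$ pointwise in $\mathcal{F}_0^W$. Using the decomposition $B_u=Q_{0,u}+L_{0,u}$ from \eqref{defXY}, where $Q_{0,u}$ is $\mathcal{F}_0^W$-measurable and $L_{0,u}$ is a centred Gaussian process independent of $\mathcal{F}_0^W$, the Borell--TIS inequality applied to $L$ delivers the Gaussian tail $\mathbb{P}(\sup_u\|L_{0,u}\|>t)\leqslant Ce^{-ct^2}$, and hence $\mathbb{P}(M>K\mid\mathcal{F}_0^W)\leqslant C\exp(-cK^2)=C\exp(-cy^{-1/H})$ on the typical event $\{\sup_u\|Q_{0,u}\|\lesssim K\}$. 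The obstacle is the atypical event $\{\sup_u\|Q_{0,u}\|\gtrsim K\}$, on which the triangle bound $M\leqslant 2\sup\|Q\|+2\sup\|L\|$ is insufficient. On this exceptional event I would bypass the truncation entirely by invoking Anderson's inequality directly: the set $\{f:\sup_{u,v}|\langle\psi,f_{u,v}\rangle|\leqslant r\}$ is symmetric and convex in function space, so the per-$\psi$ bound of Lemma \ref{6.4} persists after removing the $\mathcal{F}_0^W$-measurable shift $Q$, and the same $\varepsilon$-net union bound then provides the desired $\mathcal{F}_0^W$-pointwise estimate on this exceptional event as well. Combining the two regimes yields the claim.
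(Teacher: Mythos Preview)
Your reduction to $s=0,\delta=1$ and the $\varepsilon$-net argument on the typical $\mathcal{F}_0^W$-event $\{\sup_u\|Q_{0,u}\|\lesssim K\}$ are fine, and you correctly single out the atypical event as the obstruction. The proposed bypass there, however, does not close. Anderson's inequality indeed gives the per-$\psi$ bound uniformly in $\mathcal{F}_0^W$---that is already what Lemma~\ref{6.4} records---but passing from the net back to the full sphere still requires the discretisation error $\varepsilon M$ to be of order $y$. On the atypical event $M$ is typically of size $\sup_u\|Q_{0,u}\|$, an unbounded $\mathcal{F}_0^W$-measurable quantity; any $\varepsilon$ adapted to it forces $|N_\varepsilon|$ to carry a power of $\sup_u\|Q_{0,u}\|$, and the resulting bound is then not a deterministic constant times $\exp(-Cy^{-1/H})$. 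Nor can Anderson be applied directly to the event $\{\inf_\phi\sup_{u,v}|\langle\phi,B_{u,v}\rangle|\leqslant y\}$: as a subset of $L$-path space it is a union over $\phi$ of centred slabs, symmetric but not convex, so the hypothesis of Anderson's inequality fails.

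The paper avoids truncation altogether. From \cite{HP} it imports the sharper one-direction estimate
\[
\mathbb{P}\Big(\sup_u|\langle\phi,B_u\rangle|\leqslant x\,\Big|\,\mathcal{F}_0^W\Big)\leqslant C_1\exp\!\big(-C_2\sup_u|Q_u^\phi|\big)\,\mathbb{P}\Big(\sup_u|L_u^\phi|\leqslant x\Big),
\]
where the extra factor $\exp(-C_2\sup_u|Q_u^\phi|)$ quantifies that a large drift in direction $\phi$ makes small-ball confinement exponentially costly. It is precisely this $Q$-dependent decay---absent from a bare Anderson bound---that the compactness argument of \cite[Lemma~2]{HP} exploits when passing to the infimum over $\phi$, and which delivers a bound uniform in $\mathcal{F}_0^W$.
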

\begin{proof}
As in the proof of Lemma~\ref{6.4}, it suffices to prove the desired  bound in the case \( \delta = 1 \) and \( s = 0 \), i.e.
\[
\mathbb{P}\left( \inf_{\|\phi\|_{\mathbb{R}^d}=1} \sup_{u,v \in [0,1]} \left| \langle \phi, B_{u,v} \rangle \right| \leqslant x \,\middle|\, \mathcal{F}_0^W \right) \leqslant C_1 \exp\left( -C_2 x^{-1/H} \right).
\]
Since $B_0=0$, one first has
$$
\mathbb{P}\left( \inf_{\|\phi\|_{\mathbb{R}^d}=1}\sup_{u,v \in [0,1]} \left| \langle \phi, B_{u,v} \rangle \right| \leqslant x \,\middle|\, \mathcal{F}_0^W \right)\leqslant \mathbb{P}\left( \inf_{\|\phi\|_{\mathbb{R}^d}=1}\sup_{u\in [0,1]} \left| \langle \phi, B_{u} \rangle \right| \leqslant x \,\middle|\, \mathcal{F}_0^W \right).
$$
The right hand side is what we aim to control in the following argument.

According to \eqref{defXY}, the process \( (B_t)_{t \geqslant 0} \) can be decomposed as  
\[
B_t = L_{0,t} + Q_{0,t},
\]  
where \( (L_{0,t})_{t \geqslant 0} \) and \( (Q_{0,t})_{t \geqslant 0} \) are Gaussian processes, with \( (L_{0,t})_{t \geqslant 0} \) being independent of \( \mathcal{F}_0^W \) and \( (Q_{0,t})_{t \geqslant 0} \) being measurable with respect to \( \mathcal{F}_0^W \). For any unit vector \( \phi \in \mathbb{R}^d \), we define
\[
\langle \phi, B_t \rangle = \langle \phi, L_{0,t} \rangle + \langle \phi, Q_{0,t} \rangle := L_t^\phi + Q_t^\phi.
\]
It follows from  \cite[Equation (3.12)]{HP} that
\[
\mathbb{P}\left( \sup_{u \in [0,1]} \left| \langle \phi, B_u \rangle \right| \leqslant x \,\middle|\, \mathcal{F}_0^W \right) \leqslant C_1 \exp\left( -C_2 \sup_{u \in [0,1]} \left| Q_u^\phi \right| \right) \mathbb{P}\left( \sup_{u \in [0,1]} \left| L_u^\phi \right| \leqslant x \right),
\]
where \( C_1,C_2 > 0 \) do not depend on $B$ and $\phi$. Due to the scaling property of \( L^\phi \), the same argument leading to  \eqref{A} shows that
\[
\sup_{\|\phi\|_{\mathbb{R}^d}=1}\mathbb{P}\left( \sup_{u \in [0,1]} \left| L_u^\phi \right| \leqslant x \right) \leqslant \exp\left( -C_3 x^{-1/H} \right)
\]
for some \( C_3 > 0 \).

Combining the above estimates, it follows that
\begin{equation} \label{compactestimate}
\mathbb{P}\left( \sup_{u \in [0,1]} \left| \langle \phi, B_u \rangle \right| \leqslant x \,\middle|\, \mathcal{F}_0^W \right) \leqslant C_1 \exp\left( -C_2 \sup_{u \in [0,1]} \left| Q_u^\phi \right| - C_3 x^{-1/H} \right).
\end{equation}
One can now apply the compactness argument as \cite[Lemma 2]{HP}, specifically the steps following Equation (3.13) there, to bound the right hand side of \eqref{compactestimate} uniformly over all unit vectors \( \phi \). This yields the estimate
\[
\mathbb{P}\left( \inf_{\|\phi\|_{\mathbb{R}^d} = 1} \sup_{u \in [0,1]} \left| \langle \phi, B_u \rangle \right| \leqslant x \,\middle|\, \mathcal{F}_0^W \right) \leqslant C_4 \exp\left( -C_5 x^{-1/H} \right),
\]
which completes the proof of the result.
\end{proof}

\section{Existence and smoothness of joint density}\label{section3}

Our goal of this section is to prove the existence of a smooth joint density for the solution to the RDE \eqref{1} over multiple times under the hypoellipticity assumption. The main result is stated as follows. 

\begin{theorem}\label{GW4}
Let $(X_t)_{t\in[0,T]}=(X^1_t,\cdots,X^d_t)_{t\in[0,T]}$ be a continuous Gaussian process with i.i.d. components. Assume that every component of $X$ satisfies Condition $\ref{con1}$ for some $\rho\in[1,2)$, Condition $\ref{coninf}$ with index $\alpha<2/\rho$ and Condition $\ref{concov}$. Consider the RDE \begin{equation*}
d\mathbf{Y}_t=V(Y_t)~d\mathbf{X}_t+V_0(Y_t)~dt,~~Y_0=y_0\in \mathbb{R}^N,
\end{equation*}where $\mathbf{X}$ is viewed as geometric rough path with roughness $p>2\rho$. Suppose that the vector fields $\{V_i\}_{i=1}^d$ satisfy Hypothesis $\ref{h1}$ and $\ref{1.2}$. Then for any $m\geqslant 1$ and $0<t_1<\cdots<t_m\leqslant T$, the joint distribution of $(Y_{t_1},\cdots,Y_{t_m})$ admits a smooth joint density with respect to the Lebesgue measure.
\end{theorem}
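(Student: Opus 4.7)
The plan is to apply the standard Malliavin criterion for smooth densities: it suffices to establish (i) $(Y_{t_1},\ldots,Y_{t_m})\in(\mathbb{D}^{\infty})^{mN}$ and (ii) the joint Malliavin covariance matrix $\gamma$ is almost surely invertible with $(\det \gamma)^{-1}\in L^p(\Omega)$ for every $p\geqslant 1$. Part (i) follows from the now-classical Malliavin smoothness of RDEs driven by Gaussian rough paths under Hypothesis~\ref{h1} and Conditions~\ref{con1}--\ref{coninf}, applied componentwise (see Cass-Hairer-Litterer-Tindel \cite{CHLT}).

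The heart of the argument is (ii), and the strategy advertised in contribution (A) is to analyse $\gamma$ directly rather than conditioning successively on $\mathcal{F}_{t_m},\ldots,\mathcal{F}_{t_1}$. First, I would exploit the flow representation
\[
\mathbf{D}_{s}Y_{t_{i}}=J_{0,t_{i}}\,Z_{s}\mathbf{1}_{s\leqslant t_{i}},\qquad Z_{s}:=J_{0,s}^{-1}V(Y_{s}),
\]
so that the $(i,j)$-block of $\gamma$ factorises as $\gamma_{ij}=J_{0,t_{i}}M_{ij}J_{0,t_{j}}^{T}$, with $M_{ij}=\langle Z\mathbf{1}_{[0,t_{i}]},Z\mathbf{1}_{[0,t_{j}]}\rangle_{\bar{\mathcal{H}}}$. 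Since $J_{0,t_{i}}$ is invertible with inverse moments of all orders (as a consequence of the linear Jacobian equation together with the Cass-Litterer-Lyons integrability), invertibility of $\gamma$ reduces to that of $M=(M_{ij})_{1\leqslant i,j\leqslant m}$. The key change of variable is the Abel-type decomposition $\mathbf{1}_{[0,t_{i}]}=\sum_{k\leqslant i}\mathbf{1}_{[t_{k-1},t_{k}]}$ (with $t_{0}:=0$), giving $M=LNL^{T}$, where $L$ is the deterministic lower-triangular matrix of identity $N\times N$ blocks and
\[
N_{kl}=\langle Z\mathbf{1}_{[t_{k-1},t_{k}]},\,Z\mathbf{1}_{[t_{l-1},t_{l}]}\rangle_{\bar{\mathcal{H}}}.
\]
Since $L$ is deterministic and invertible, it suffices to prove a uniform $L^{p}$ lower bound for
\[
Q(v)=\sum_{k,l}v_{k}^{T}N_{kl}v_{l}=\Bigl\|\textstyle\sum_{k=1}^{m}(Z^{T}v_{k})\mathbf{1}_{[t_{k-1},t_{k}]}\Bigr\|_{\bar{\mathcal{H}}}^{2},
\]
as $v=(v_{1},\ldots,v_{m})$ ranges over the unit sphere of $\mathbb{R}^{mN}$.

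The main obstacle I anticipate is the cross-interval interaction in $N$: unlike the Wiener case, the Cameron-Martin inner product of a Gaussian rough path is non-local in time (this is already visible from the $\rho$-variation structure of the covariance), so off-diagonal blocks $N_{kl}$, $k\neq l$, do not vanish. To handle this I would localise to each subinterval $[t_{k-1},t_k]$ using the variation embedding $\bar{\mathcal{H}}\hookrightarrow C^{q\text{-var}}$ from Condition~\ref{con1}, combined with the pathwise Norris lemma of \cite{HP} and the interpolation inequality of \cite{CHLT}; the fact that $\rho$ and the index of nondeterminism $\alpha$ satisfy $\alpha<2/\rho$ is precisely what couples these two ingredients. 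On the $k$-th interval the resulting lower bound is controlled by the non-degeneracy of $\mathcal{W}_{\bar{l}}(Y_{t_{k-1}})$ in the sense of Hypothesis~\ref{1.2}, which yields the claimed ``nondegeneracy of the vector fields over finitely many points'' from contribution~(A); aggregating over the $m$ base points $Y_{t_{0}},\ldots,Y_{t_{m-1}}$ produces the required uniform lower bound on $Q(v)$ with inverse moments of every order. Propagating this back through the factorisation $\gamma=\mathrm{diag}(J_{0,t_{i}})\,L\,N\,L^{T}\,\mathrm{diag}(J_{0,t_{j}})^{T}$ completes (ii) and hence the Malliavin criterion then delivers the smooth joint density.
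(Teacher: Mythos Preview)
Your proposal is correct and follows essentially the same route as the paper's Proposition~\ref{nonden}. Your block factorisation $\gamma=\mathrm{diag}(J_{0,t_i})\,L\,N\,L^{T}\,\mathrm{diag}(J_{0,t_j})^{T}$ is just a repackaging of the paper's direct change of variable $a\mapsto\hat a$ (with $\hat a_i^{*}$ chosen so that the integrand becomes $\hat a_i^{*}\,\mathbf{J}_{t_{i-1},\cdot}V(Y_\cdot)$ on each $[t_{i-1},t_i]$); the two parametrisations differ only by constant Jacobian blocks $J_{t_{k-1}}$, which are harmless by CLL integrability. The remaining steps you outline---the interpolation inequality of \cite{CHLT} on each subinterval (this is where Conditions~\ref{coninf}, \ref{concov} and $\alpha<2/\rho$ enter, exactly as you say), iteration via the pathwise Norris lemma to reach all Lie brackets $V_I$, and then Hypothesis~\ref{1.2} at the $m$ base points $Y_{t_0},\ldots,Y_{t_{m-1}}$---are precisely the paper's Steps~1--4.
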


The rest of this section is devoted to the proof of Theorem \ref{GW4}.

As a standard route from the perspective of Malliavin's calculus, the crucial point is to investigate the non-degeneracy of the Malliavin covariance matrix which we first define. 

\begin{definition}
Let $F\in \mathbb{D}^{1,2}_X(\mathbb{R}^N)$ be a random vector in $\mathbb{R}^N$. The \textit{Malliavin covariance matrix} (or simply \textit{Malliavin matrix}) of $F$ is the $N\times N$ random matrix $\gamma(F)$ whose $(i,j)$-entry is defined by 
$$
\gamma_{ij}(F):=\langle~\mathbf{D}_XF^i,\mathbf{D}_XF^j~\rangle_{\mathcal{H}_X}.
$$
\end{definition}

The Malliavin matrix of the solution $Y_t$ is closely related to the Jacobian flow. Let us first recall that $\mathbf{J}_t := \frac{\partial Y_t^x}{\partial{x}}$ ($Y_t^x$ denotes the solution to (\ref{1}) with initial value $x$) is the unique solution to the linear equation
\begin{equation*}
\mathbf{J}_t = \mathrm{Id}_n + \int_0^t DV_0(Y_s^x)\mathbf{J}_s\,ds + \sum_{j=1}^d \int_0^t DV_j(Y_s^x)\mathbf{J}_s\,dX_s^j,
\end{equation*}
and that the following result holds true (see \cite{CHLT}).

\begin{proposition}\label{3.1}
    Let $Y_t$ be the solution to equation \eqref{1} and suppose that the vector fields $\{V_i\}_{i=1}^d$ satisfy Hypothesis \ref{h1}. Then one has $Y_t\in \mathbb{D}^\infty_{X}(\mathbb{R}^N)$ and
    $$
    (\mathbf{D}^{(k)}_{X} Y_t)(s)=\mathbf{J}_{t, s}V_k(Y_s),~\quad\text{for all } k=1,\cdots,d\text{ and }~~0\leqslant s\leqslant t,
    $$
    where $\mathbf{D}^{(k)}_X$ means the $k$-th component of $\mathbf{D}_X$ and $\mathbf{J}_{t, s}=\mathbf{J}_t\mathbf{J}^{-1}_s$.
\end{proposition}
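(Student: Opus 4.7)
The plan is to derive the first Malliavin derivative of $Y_t$ via Cameron-Martin directional differentiation, identify it with a Duhamel-type expression involving the Jacobian flow $\mathbf{J}$, and then upgrade to $Y_t\in\mathbb{D}^{\infty}_X(\mathbb{R}^N)$ by iterating the procedure together with the Cass-Litterer-Lyons integrability of $\mathbf{J}$. Fix $h\in\bar{\mathcal{H}}$. By Condition~\ref{con1}, the embedding $\bar{\mathcal{H}}\hookrightarrow C^{q\text{-var}}([0,T],\mathbb{R}^d)$ with $1/p+1/q>1$ makes the translated rough path $T_{\varepsilon h}\mathbf{X}$ well-defined as a geometric $p$-rough path, and the perturbed solution $Y^{\varepsilon}$ satisfies
\begin{equation*}
Y^{\varepsilon}_t=y_0+\int_0^t V(Y^{\varepsilon}_s)\,d\mathbf{X}_s+\varepsilon\int_0^t V(Y^{\varepsilon}_s)\,dh_s+\int_0^t V_0(Y^{\varepsilon}_s)\,ds,
\end{equation*}
where the $h$-integral is interpreted in the Young sense. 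Smoothness of the Itô-Lyons map under perturbations in a Young-regular direction (a standard rough path result) implies that $\varepsilon\mapsto Y^{\varepsilon}_t$ is infinitely differentiable at $\varepsilon=0$. Writing $Z^h_t:=\partial_{\varepsilon}|_{\varepsilon=0}Y^{\varepsilon}_t$, the directional derivative solves the affine linear RDE
\begin{equation*}
Z^h_t=\int_0^t DV(Y_s)Z^h_s\,d\mathbf{X}_s+\int_0^t DV_0(Y_s)Z^h_s\,ds+\int_0^t V(Y_s)\,dh_s,
\end{equation*}
and variation of constants against the homogeneous flow $\mathbf{J}$ yields $Z^h_t=\sum_{k=1}^d\int_0^t\mathbf{J}_{t,s}V_k(Y_s)\,dh^k_s$.

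Next, identify $Z^h_t$ with the Malliavin derivative. The standard Cameron-Martin characterisation on Gaussian rough path spaces states that if $h\mapsto Z^h_t$ is linear and $L^2(\Omega)$-bounded on $\bar{\mathcal{H}}$, then $Y_t\in\mathbb{D}^{1,2}_X$ and $\langle\mathbf{D}_X Y_t,h\rangle_{\bar{\mathcal{H}}}=Z^h_t$ for every $h\in\bar{\mathcal{H}}$. The required linearity and boundedness follow from Young estimates on $\int_0^t\mathbf{J}_{t,s}V_k(Y_s)\,dh^k_s$ together with the complementary Young regularity of $\bar{\mathcal{H}}$. Reading off the Riesz representative pointwise in time, and using causality to conclude that $(\mathbf{D}^{(k)}_X Y_t)(s)=0$ for $s>t$, gives exactly the claimed formula
\begin{equation*}
(\mathbf{D}^{(k)}_X Y_t)(s)=\mathbf{J}_{t,s}V_k(Y_s),\qquad 0\leqslant s\leqslant t.
\end{equation*}

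Finally, to upgrade to $Y_t\in\mathbb{D}^{\infty}_X$, iterate the differentiation procedure: differentiating the linear RDE for $Z^h$ in a further direction yields another linear affine RDE whose solution is again expressible as a Duhamel integral against $\mathbf{J}$. Inductively the $n$-th Malliavin derivative is a finite sum of iterated rough/Young integrals involving products of $\mathbf{J}_{t,s_i}$ and (higher) derivatives of $V_k$ evaluated along $Y$. The requisite $L^p$-integrability for every $p\geqslant 1$ is supplied by the Cass-Litterer-Lyons Weibull-type tail bound for $\|\mathbf{J}\|_{p\text{-var};[0,T]}$ (valid under Condition~\ref{con1}) combined with standard $p/q$-variation estimates for the iterated integrals and the boundedness of the derivatives of $V_0,\ldots,V_d$ from Hypothesis~\ref{h1}. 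The main technical obstacle is the first step, namely the rigorous justification that the Itô-Lyons map depends smoothly on Cameron-Martin perturbations of the driver; this is precisely where complementary Young regularity (Condition~\ref{con1}) becomes indispensable, since it allows the perturbation $\varepsilon h$ to be absorbed as a Young-type inhomogeneous term while the underlying rough structure of $\mathbf{X}$ is left untouched.
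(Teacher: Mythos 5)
The paper does not prove Proposition \ref{3.1} at all: it is quoted directly from Cass--Hairer--Litterer--Tindel \cite{CHLT} (building on Cass--Friz), so there is no internal argument to compare against. Your proposal reconstructs essentially the standard proof from that reference: Cameron--Martin translation of the driver using complementary Young regularity, differentiation of the It\^o--Lyons map, variation of constants against the Jacobian flow to get the Duhamel formula, identification with the Malliavin derivative, and the Cass--Litterer--Lyons tail bounds to supply the $L^p$ moments needed for $\mathbb{D}^\infty_X$. This is the right route and the structure is sound. Two points deserve more care than you give them. First, the identification step is stated too loosely: linearity and $L^2$-boundedness of $h\mapsto Z^h_t$ alone do not imply $Y_t\in\mathbb{D}^{1,2}_X$; one needs Sugita's characterisation (ray absolute continuity plus stochastic G\^ateaux differentiability), which is what the pathwise differentiability of $\omega\mapsto Y_t(\omega+h)$ actually delivers here. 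Second, ``reading off the Riesz representative pointwise in time'' is an abuse of notation when $\rho>1$ (e.g.\ fBM with $H<1/2$), since $\mathcal{H}_X$ is then not a function space; the formula $(\mathbf{D}^{(k)}_X Y_t)(s)=\mathbf{J}_{t,s}V_k(Y_s)\mathbf{1}_{[0,t]}(s)$ must be understood through the identification of step functions with elements of $\mathcal{H}_X$, with the pairing realised as a 2D Young integral against the covariance --- exactly as the paper uses it immediately after the proposition. Neither point is a fatal gap, but both need to be made precise for the argument to be complete.
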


We are now in a position to introduce the Malliavin covariance matrix.


Given $0< t_1<\cdots<t_m\leqslant T$, let $F=(Y_{t_1},\cdots,Y_{t_m})$ being viewed as a $N\times m$-dimensional column vector. We write its Malliavin matrix as a block matrix
\begin{equation}\label{defgammaF}
\gamma(F)
=
\begin{bmatrix} 
\langle\mathbf{D}_XY_{t_1},\mathbf{D}_XY_{t_1}\rangle_{\mathcal{H}_X} & \dots  & \langle\mathbf{D}_XY_{t_1},\mathbf{D}_XY_{t_m}\rangle_{\mathcal{H}_X}\\

\vdots & \ddots & \vdots\\

\langle\mathbf{D}_XY_{t_m},\mathbf{D}_XY_{t_1}\rangle_{\mathcal{H}_X} & \dots  & \langle\mathbf{D}_XY_{t_m},\mathbf{D}_XY_{t_m}\rangle_{\mathcal{H}_X} 
\end{bmatrix}
\end{equation}
with blocks
$$
\langle\mathbf{D}_XY_{t_i},\mathbf{D}_XY_{t_j}\rangle_{\mathcal{H}_X}~=~\sum_{k=1}^d~\int_{[0,t_i]\times[0,t_{j}]}\mathbf{J}_{t_i, u}V_k(Y_u)\otimes \mathbf{J}_{t_j, v}V_k(Y_v)d\phi_{u,v}.
$$
According to Proposition $\ref{3.1}$, $(\mathbf{D}^{(k)}_XY_{t_i})(s)=\mathbf{J}_{t_i, s}V_k(Y_s)1_{[0,t_i]}(s)$, and the inner product within $\mathcal{H}_X$ is understood as a $2D$-Young integral with respect to the covariance function $R(u,v):=\mathbb{E}[X^i_uX^i_v]$.
The following result establishes the non-degeneracy of the Malliavin matrix $\gamma(F)$.

\begin{proposition}\label{nonden}
For any $q>1$, there exists  $C_q>0$ such that 
$$
\mathbb{P}\left(\inf_{\|a\|=1}\langle a,\gamma(F)a\rangle<\varepsilon\right)\leqslant C_q \varepsilon^q
$$for all $\varepsilon>0$.
\end{proposition}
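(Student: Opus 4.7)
The plan is to exploit the block structure of the joint Malliavin matrix induced by the ordering $t_1<\cdots<t_m$, reducing the claimed tail bound to the single-time non-degeneracy result of \cite{CHLT} applied on each slab $(t_{i-1},t_i]$, where Hypothesis~\ref{1.2} supplies non-degeneracy of the vector fields at the point $Y_{t_i}$.

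\textbf{Step 1 (block decomposition).} For $a=(a_1,\dots,a_m)\in(\mathbb{R}^N)^m$ with $\|a\|=1$, Proposition~\ref{3.1} gives
\[
\langle a,\gamma(F)a\rangle=\sum_{k=1}^d\int_{[0,T]^2}F_k(u)F_k(v)\,d\phi_{u,v},\quad F_k(u):=\sum_{i:\,t_i\geqslant u}a_i^{\top}\mathbf{J}_{t_i,u}V_k(Y_u).
\]
Using the Jacobian flow identity $\mathbf{J}_{t_j,u}=\mathbf{J}_{t_j,t_i}\mathbf{J}_{t_i,u}$ for $u\leqslant t_i\leqslant t_j$, on each slab $(t_{i-1},t_i]$ one obtains $F_k(u)=b_i^{\top}\mathbf{J}_{t_i,u}V_k(Y_u)$, where $b_m:=a_m$ and $b_i:=a_i+\mathbf{J}_{t_{i+1},t_i}^{\top}b_{i+1}$ for $1\leqslant i\leqslant m-1$. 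The map $a\mapsto(b_1,\dots,b_m)$ is block upper-triangular with identity diagonal blocks; hence it is invertible with inverse norm controlled by $M:=\max_i\|\mathbf{J}_{t_{i+1},t_i}\|$. By the Cass-Litterer-Lyons integrability estimate \cite{CLL}, $M$ has Weibull-type tails, and on $\{\|a\|=1\}$ at least one $\|b_{i^*}\|^2\geqslant c/[m(1+M^2)]$.

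\textbf{Step 2 (reduction to diagonal blocks and single-time estimate).} Following the conditional Malliavin technique of \cite{CHLT} and invoking Condition~\ref{concov}, an iterated conditioning on $\mathcal{F}_{0,t_{i-1}}\vee\mathcal{F}_{t_i,T}$ shows that the cross-block contributions from $(t_{i-1},t_i]\times(t_{j-1},t_j]$ with $i\neq j$ are non-negative, yielding
\[
\langle a,\gamma(F)a\rangle\geqslant\sum_{i=1}^m Q_i(b_i),\quad Q_i(b):=\sum_{k=1}^d\int_{(t_{i-1},t_i]^2}b^{\top}\mathbf{J}_{t_i,u}V_k(Y_u)\,b^{\top}\mathbf{J}_{t_i,v}V_k(Y_v)\,d\phi_{u,v}.
\]
Each $Q_i$ is, up to the random initial condition $Y_{t_{i-1}}$, the single-time Malliavin matrix for the RDE on $(t_{i-1},t_i]$, so the single-time CHLT estimate (based on Hypothesis~\ref{1.2} at $Y_{t_i}$, Condition~\ref{coninf}, and the conditional small-ball bound of Proposition~\ref{6.5}) delivers $\mathbb{P}(\lambda_{\min}(Q_i)<\varepsilon)\leqslant C_q\varepsilon^q$ for every $q>1$.

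\textbf{Step 3 (conclusion and main obstacle).} Combining Steps 1-2 gives the pointwise lower bound
\[
\inf_{\|a\|=1}\langle a,\gamma(F)a\rangle\geqslant\frac{\min_i\lambda_{\min}(Q_i)}{4m(1+M^2)},
\]
from which the tail estimate follows by a union bound over $i=1,\dots,m$ together with Chebyshev applied to $M$. The genuinely delicate step is the block-diagonal lower bound of Step 2: the 2D Young integral over $[0,T]^2$ contains non-trivial off-diagonal contributions, and controlling their sign requires a careful iterated conditioning argument that exploits the Markov-like factorization $\mathbf{J}_{t_j,u}=\mathbf{J}_{t_j,t_i}\mathbf{J}_{t_i,u}$ of the Jacobian flow together with the nonnegative conditional covariance hypothesis.
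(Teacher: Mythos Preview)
Your Step~1 is essentially the same change of variables $a\mapsto(b_1,\dots,b_m)$ (the paper's $\hat a_i$) that the paper uses, and is correct. The genuine gap is Step~2. The claim that the cross-block contributions are non-negative is not justified and is in fact false in the regime of greatest interest: for fBM with $H<1/2$ the covariance $R$ has \emph{negative} increments over disjoint intervals, so for generic random integrands $b_i^\top\mathbf{J}_{t_i,u}V_k(Y_u)$ the cross terms
\[
\int_{(t_{i-1},t_i]\times(t_{j-1},t_j]} F_k(u)F_k(v)\,dR(u,v),\qquad i\neq j,
\]
can be negative. Condition~\ref{concov} only concerns \emph{nested} intervals $[u,v]\subseteq[s,t]$ and says nothing about disjoint slabs, and no amount of conditioning on $\mathcal{F}_{0,t_{i-1}}\vee\mathcal{F}_{t_i,T}$ will turn these random signed quantities into something non-negative. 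Consequently the inequality $\langle a,\gamma(F)a\rangle\geqslant\sum_i Q_i(b_i)$ is unproven, and Step~3 collapses because the $Q_i$ are not even the correct single-time Malliavin matrices (the ambient Gaussian structure on $[0,T]$ does not restrict to the one on $[t_{i-1},t_i]$ for non-Markovian $X$).

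The paper's proof bypasses this obstacle entirely: it never attempts to drop the off-diagonal blocks. Instead it keeps the full non-negative form $\Lambda^k=\int_{[0,T]^2}f^k_uf^k_v\,dR(u,v)\leqslant\langle a,\gamma(F)a\rangle$ and applies the interpolation inequality of \cite[Corollary~6.10]{CHLT}, which bounds $\|f^k\|_{\infty,[t_{i-1},t_i]}$ directly in terms of $\Lambda^k$ (together with $\mathrm{Var}(X_{t_{i-1},t_i}\mid\mathcal{F}_{0,t_{i-1}}\vee\mathcal{F}_{t_i,T})$ and the H\"older norm of $f^k$). This is precisely where Conditions~\ref{coninf} and~\ref{concov} enter. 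From there, iterated applications of Norris' lemma propagate the bound to all Lie brackets $V_I$, and Hypothesis~\ref{1.2} applied at the \emph{starting} points $Y_{t_{i-1}}$ of each slab yields the lower bound on $\sum_i\|\hat a_i\|^2$. The key conceptual point is that the interpolation inequality extracts local sup-norm information from the global quadratic form without any block-diagonal reduction.
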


\begin{proof}
We only consider the case $m=2$ (the general case only involves more notation). Let the random variable $R_X$ be the corresponding quantity $\mathcal{R}$ in the pathwise Norris' lemma (see \cite[Theorem 5.6, Equation (5.8)]{CHLT}).
According to  Remark~\ref{holderintegrability} and Cass--Litterer--Lyons~\cite{CLL}, $R_X$ possesses finite moments of all orders. Our proof is divided into the following four steps.  

\textit{Step 1: Lower bound for $\langle a,\gamma(F)a\rangle$}. We first derive a lower bound for $\langle a,\gamma(F)a\rangle$ in terms of the supremum norm.  
Let $a^*=(a_1^*,a_2^*)$, where each $a_i$ ($i=1,2$) is a column vector in $\mathbb{R}^N$ and the superscript $*$ means transposition.  
By the definition of $\gamma(F)$ in~\eqref{defgammaF}, it is  readily checked that  
\begin{equation*}
    \langle a,\gamma(F)a\rangle = \sum_{k=1}^d \Lambda^k, 
    \quad \text{where} \quad 
    \Lambda^k:=\int_{[0,T]^2} f^k_u f^k_v\, dR(u,v),
\end{equation*}
with  
\begin{equation*}
f^k_s:=
\begin{cases}
\left(a_1^*\mathbf{J}_{t_1, 0}+a_2^*\mathbf{J}_{t_2, 0}\right)\mathbf{J}_{0, s}V_k(Y_s), & 0\leqslant s<t_1, \\
a_2^*\mathbf{J}_{t_2, t_1}\mathbf{J}_{t_1,s}V_k(Y_s), & t_1\leqslant s\leqslant t_2.
\end{cases}
\end{equation*}
To simplify notation, we write $(\hat{a}^*_1,\hat{a}^*_2):=\left(a_1^*\mathbf{J}_{t_1, 0}+a_2^*\mathbf{J}_{t_2, 0},a_2^*\mathbf{J}_{t_2,t_1}\right)$ and  one has
\begin{equation}\label{newfks}
f^k_s=
\begin{cases}
\hat{a}^*_1\mathbf{J}_{0, s}V_k(Y_s), & 0\leqslant s<t_1, \\
\hat{a}^*_2\mathbf{J}_{t_1,s}V_k(Y_s), & t_1\leqslant s\leqslant t_2.
\end{cases}
\end{equation}
By applying \cite[Corollary 6.10]{CHLT}, one sees that $\|f^k\|_{\infty,[t_{i-1},t_i]}$ is upper bounded by the maximum of 
\begin{equation*}
 2\mathrm{Var}\!\left(X_{t_{i-1},t_i}\,\middle|\,
\mathcal{F}_{0,t_{i-1}}\vee\mathcal{F}_{t_i,T}\right)^{-1/2}
\left(\int_{[0,T]^2} f^k_uf^k_v\,dR(u,v)\right)^{1/2}
\end{equation*}
and 
\begin{equation*}
C\left(\int_{[0,T]^2} f^k_uf^k_v\,dR(u,v)\right)^{\tfrac{\gamma}{2\gamma+\alpha}}
\left\|f^k\right\|_{\gamma,[t_{i-1},t_i]}^{\tfrac{\alpha}{2\gamma+\alpha}}
\end{equation*}
where $C>0$ is some constant and $\gamma+1/\rho>1$. In particular, to obtain a upper bound on $\left\|f^k\right\|_{\infty,[t_{i-1},t_i]}$ one needs to control the following three terms separately: 
$$\left\| f^k\right\|_{\gamma,[t_{i-1},t_i]},\quad\Lambda^k,\quad\text{Var}\left(X_{t_{i-1},t_{i}}|\mathcal{F}_{0,t_{i-1}}\vee\mathcal{F}_{t_i,T}\right)^{-1}.
$$
\begin{enumerate}
    \item It is easy to see that $\left\| f^k\right\|_{\gamma,[t_{i-1},t_i]}$ is bounded by $R_X$ according to the definition of $R_X$.
    \item $\Lambda^k$ is bounded above by $\langle a,\gamma(F)a\rangle$.
    \item $\text{Var}\left(X_{t_{i-1},t_{i}}|\mathcal{F}_{0,t_{i-1}}\vee\mathcal{F}_{t_i,T}\right)^{-1}$ is bounded above by a positive constant due to Condition $\ref{coninf}$.
\end{enumerate}
As a result, there exist constants $C,q_1,q_2>0$ such that
$$
\left\| f^k\right\|_{\infty,[t_{i-1},t_i]}\leqslant C \langle a,\gamma(F)a\rangle^{q_1} R_X^{q_2}.
$$
Combining this with \eqref{newfks}, it follows that 
\begin{equation}\label{step1mainresult}
\left\| \hat{a}^*_i\mathbf{J}_{t_{i-1}, \cdot}V_k(Y_\cdot)\right\|_{\infty,[t_{i-1},t_i]}\leqslant C \langle a,\gamma(F)a\rangle^{q_1} R_X^{q_2}
\end{equation}for any $i=1,2$ and $k=1,\cdots,d$.

\textit{Step 2: Iterating the bound (\ref{step1mainresult}) to Lie brackets}. The next observation is that
$$
\hat{a}^*_i\mathbf{J}_{t_{i-1},t}V_I(Y_t)=\hat{a}^*_iV_I(Y_{t_{i-1}}) + \sum_{k=1}^d \int_{t_{i-1}}^t\hat{a}^*_i\mathbf{J}_{t_{i-1},u}V_{(k,I)}(Y_u)dX^k_u + \int_{t_{i-1}}^t\hat{a}^*_i\mathbf{J}_{t_{i-1},u}V_{(0,I)}(Y_u)du
$$
for all $I\in\mathcal{A}_1(\bar{l}-1)$, $i=1,2$ and $t_{i-1}\leqslant t\leqslant t_i$. This is easily obtained through the integration by parts formula. Together with Norris' lemma (see \cite[Theorem 5.6]{CHLT}), the above formula implies that there exist constants $C,q_1,q_2>0$, such that
$$
\left\| \hat{a}^*_i\mathbf{J}_{t_{i-1}, \cdot}V_{(k,I)}(Y_\cdot)\right\|_{\infty,[t_{i-1},t_i]}\leqslant C \left\| \hat{a}^*_i\mathbf{J}_{t_{i-1}, \cdot}V_{I}(Y_\cdot)\right\|_{\infty,[t_{i-1},t_i]}^{q_1} R_X^{q_2}
$$
for any $i=1,2$, $k=0,1,\cdots,d$ and $I\in\mathcal{A}_1(\bar{l}-1)$. Starting from the base estimate \eqref{step1mainresult}, an induction argument shows that
\begin{equation}\label{8}
\left\| \hat{a}^*_i\mathbf{J}_{t_{i-1}, \cdot}V_I(Y_\cdot)\right\|_{\infty,[t_{i-1},t_i]}\leqslant C \langle a,\gamma(F)a\rangle^{q_1} R_X^{q_2}
\end{equation}
for any $i=1,2$ and $I\in\mathcal{A}_1(\bar{l})$.

\textit{Step 3: Nondegeneracy of $\langle a,\gamma(F)a\rangle R_X$}. We are going to apply Hypothesis \ref{1.2} to find a constant lower bound for \eqref{8}. Indeed, if one evaluates $\hat{a}^*_i\mathbf{J}_{t_{i-1}, \cdot}V_I(Y_\cdot)$ at the starting points of every interval $[t_{i-1},t_i]$, the estimate \eqref{8} implies that there are constants $C,q_1,q_2>0$ such that
$$
\sup_{I\in\mathcal{A}_1(\bar{l})}| \hat{a}^*_1V_I(y_0)|^2+\sup_{I\in\mathcal{A}_1(\bar{l})}| \hat{a}^*_2V_I(Y_{t_1})|^2\leqslant C \langle a,\gamma(F)a\rangle^{q_1} R_X^{q_2}.
$$   
According to Hypothesis \ref{1.2}, one has 
$$
\sup_{I\in\mathcal{A}_1(\bar{l})}| \hat{a}^*_1V_I(y_0)|^2+\sup_{I\in\mathcal{A}_1(\bar{l})}| \hat{a}^*_2V_I(Y_{t_1})|^2\geqslant\lambda\left(\|\hat{a}_1\|^2_{\mathbb{R}^N}+\|\hat{a}_2\|^2_{\mathbb{R}^N}\right).
$$
Recalling $(\hat{a}^*_1,\hat{a}^*_2)=\left(a_1^*\mathbf{J}_{t_1, 0}+a_2^*\mathbf{J}_{t_2, 0},a_2^*\mathbf{J}_{t_2,t_1}\right)$, one can write
\begin{equation*}
\begin{bmatrix}
a_1\\
a_2\\
\end{bmatrix} 
=
\begin{bmatrix}
~~~\mathbf{J}_{0,t_1}& 0\\
-I& \mathbf{J}_{t_1,t_2}\\
\end{bmatrix}^*
\begin{bmatrix}
\hat{a}_1\\
\hat{a}_2\\
\end{bmatrix}:=\mathbf{Q}_{t_1,t_2}
\begin{bmatrix}
\hat{a}_1\\
\hat{a}_2\\
\end{bmatrix},
\end{equation*}
and one has
\begin{equation*}
1=\|a_1\|^2_{\mathbb{R}^N}+\|a_2\|^2_{\mathbb{R}^N}\leqslant \|\mathbf{Q}_{t_1,t_2}\|_{\mathbb{R}^{N\times N}}^2\left(\|\hat{a}_1\|^2_{\mathbb{R}^N}+\|\hat{a}_2\|^2_{\mathbb{R}^N}\right)
\end{equation*}
Assembling together the above estimates and taking infimum on both sides, one concludes that 
\begin{equation}\label{NondeagammaaRX}
\begin{aligned}
1\leqslant&~\lambda^{-1}C~\|\mathbf{Q}_{t_1,t_2}\|_{\mathbb{R}^{N\times N}}^2 R_X^{q_1}\inf_{\|a\|=1}\langle a,\gamma(F)a\rangle^{q_2}
\leqslant C_1~ R_X^{q_3}\inf_{\|a\|=1}\langle a,\gamma(F)a\rangle^{q_4}
\end{aligned}
\end{equation}
for some $C_1,q_1,q_2,q_3,q_4>0$, where we also used the fact that $\|\mathbf{Q}_{t_1,t_2}\|_{\mathbb{R}^{N\times N}}\leqslant C'R_X^p$ for some $p\geqslant 1$.

\textit{Step 4: Integrability of $\gamma(F)^{-1}$}. Let $q>1$ be given fixed. As a consequence of \eqref{NondeagammaaRX}, one obtains that
\begin{equation*}
\begin{aligned}
\mathbb{P}\left(\inf_{\|a\|=1}\langle a,\gamma(F)a\rangle<\varepsilon\right)\leqslant&~ \mathbb{P}\left(1\leqslant C_1R_X^{q_3}\varepsilon^{q_4}\right)
\leqslant~C_2\varepsilon^{q}\mathbb{E}[R_X^{q_5}]
\end{aligned}
\end{equation*}
for suitable constants $C_2,q_5>0$. The desired estimate thus follows.
\end{proof}

\begin{proof}[Proof of Theorem \ref{GW4}]
    Proposition \ref{nonden} implies that the Malliavin covariance matrix of $(Y_{t_1},\cdots,Y_{t_m})$ has inverse moments of all orders. The desired result thus follows from standard regularity theorems from Malliavin's calculus (see \cite{Nualart}).
\end{proof}


\section{Local upper estimate for the joint density of ($Y_s, Y_t$)}\label{section4}

In this section, we prove Theorem \ref{GW2}, which provides a precise local upper bound for the joint density of $(Y_s,Y_t)$ and will be essential for establishing capacity estimates for hitting probabilities in the next section.  

Our approach is inspired by Kusuoka-Stroock's technique in \cite{kusuoka1987applications}. The main idea is to approximate the actual solution by its stochastic Taylor expansion, whose nondegeneracy can be obtained from the nondegeneracy of the truncated signature process of $X$ together with the hypoellipticity of the vector fields $\mathcal{V}$.

\subsection{Free nilpotent Lie groups and Lie algebras}

To study the truncated signature process, we shall first recall some basic notions on free nilpotent Lie groups.

The truncated tensor algebra of order $l$ over $\mathbb{R}^d$ is denoted by $T^{(l)}$. Under addition and tensor product, $(T^{(l)}, +, \otimes)$ is an associative algebra. The set of homogeneous Lie polynomials of degree $k$ is denoted as $\mathcal{L}_k$, and the free nilpotent Lie algebra of order $l$ is denoted as $\mathfrak{g}^{(l)}$. The exponential map on $T^{(l)}$ is defined by
\begin{equation}\label{(2.6)}
    \exp(a) := \sum_{k=0}^\infty \frac{1}{k!} a^{\otimes k} \in T^{(l)},
\end{equation}
where the sum is indeed finite and hence well-defined. $T^{(l)}$ is equipped with a natural inner product induced by the Euclidean structure on $\mathbb{R}^d$ and the Hilbert-Schmidt tensor norm on each of the tensor products. To be more precise, the inner product on $(\mathbb{R}^d)^{\otimes k}$ is induced by
\[
\langle v_1 \otimes \cdots \otimes v_k, w_1 \otimes \cdots \otimes w_k \rangle_{(\mathbb{R}^d)^{\otimes k}} := \langle v_1, w_1 \rangle_{\mathbb{R}^d} \cdots \langle v_k, w_k \rangle_{\mathbb{R}^d}.
\]
The components $(\mathbb{R}^d)^{\otimes k}$ and $(\mathbb{R}^d)^{\otimes m}$ ($k \ne m$) are assumed to be orthogonal. The Hilbert-Schmidt tensor norm is denoted as $\|\cdot\|_\text{HS}$.

The following algebraic structure plays a fundamental role  in rough path theory.
\begin{definition}\label{def6.1}
The \textit{free nilpotent Lie group} $G^{(l)}$ of order $l$ is defined by
\[
G^{(l)} := \exp(\mathfrak{g}^{(l)}) \subseteq T^{(l)}.
\]
\end{definition}

Note that the exponential map is a diffeomorphism under which $\mathfrak{g}^{(l)}$ is the Lie algebra of $G^{(l)}$. We define the operation $\star$ to be the multiplication on $\mathfrak{g}^{(l)}$ induced from $G^{(l)}$ through the exponential map, namely
\begin{equation*}
    v \star u := \log(\exp(v) \otimes \exp(u)), \quad v, u \in \mathfrak{g}^{(l)}.
\end{equation*}

It will be useful in the sequel to have some basis elements available for the algebras introduced above. Let $\mathcal{A}(l)$ (respectively, $\mathcal{A}_1(l)$) denote the set of words (respectively, non-empty words) over $\{1, \dots, d\}$ of length at most $l$. We set $e_{\emptyset} := 1$, and for each word $I = (i_1, \dots, i_r) \in \mathcal{A}_1(l)$, we define
\[
e_{(I)} := e_{i_1} \otimes \cdots \otimes e_{i_r}, \quad \text{and} \quad e_{I} := [e_{i_1}, \cdots [e_{i_{r-2}}, [e_{i_{r-1}}, e_{i_r}]]\cdots],
\]
where $\{e_1, \dots, e_d\}$ denotes the canonical basis of $\mathbb{R}^d$. Then $\{e_{(I)} : I \in \mathcal{A}(l)\}$ forms an orthonormal basis of $T^{(l)}$ under the Hilbert-Schmidt tensor norm. In addition, one also has
\[
\mathcal{L}_k = \mathrm{Span}\{e_{I} :~|I|=k\},\quad\mathfrak{g}^{(l)} = \mathrm{Span}\{e_{I} : I \in \mathcal{A}_1(l)\}.
\]

As a closed subspace, $\mathfrak{g}^{(l)}$ inherits a canonical Hilbert structure from $T^{(l)}$ which makes it into a flat Riemannian manifold. The associated volume measure $du$ (the Lebesgue measure) on $\mathfrak{g}^{(l)}$ is left invariant with respect to the previously defined product $\star$. In addition, for each $\lambda > 0$, there is a dilation operation $\delta_\lambda : T^{(l)} \to T^{(l)}$ induced by $\delta_\lambda(a) \triangleq \lambda^k a$ if $a \in (\mathbb{R}^d)^{\otimes k}$, which satisfies the relation $\delta_\lambda \circ \exp = \exp \circ \delta_\lambda$.  One can easily show that
\begin{equation*}
    d u \circ \delta^{-1}_\lambda = \lambda^{-\nu} d u, \quad \text{where} \quad \nu := \sum_{k=1}^l k \dim(\mathcal{L}_k).
\end{equation*}
The number $\nu$ is known as the \textit{homogeneous dimension} of $G^{(l)}$.

We always fix the Euclidean norm on $\mathbb{R}^d$ in the remainder of the article. As far as the free nilpotent group $G^{(l)}$ is concerned, there are several useful metric structures. Among them we will use the extrinsic Hilbert-Schmidt metric $\rho_{\mathrm{HS}}$ induced from $T^{(l)}$, namely  we set
\begin{equation*}
    \rho_{\mathrm{HS}}(g_1, g_2) := \| g_2 - g_1 \|_{\text{HS}}, \quad g_1, g_2 \in G^{(l)}.
\end{equation*}Another important metric which we will  use is the intrinsic \textit{Carnot-Carath\'eodory metric} (or just \textit{CC metric}). For a bounded variation (BV) path \( w \colon [0,1] \to \mathbb{R}^d \), its \textit{truncated signature} up to level \( l \) is defined as
\[ 
\mathbf{w}^{(l)}_{0,1} := \sum_{k=0}^l \int_{0<t_1<\cdots<t_k<1} dw_{t_1} \otimes \cdots \otimes dw_{t_k}\in G^{(l)}.
\]It is known that every element in $G^{(l)}$ is the truncated signature of some BV path. The CC-norm of $g\in G^{(l)}$, denoted as $\|g\|_{\mathrm{CC}}$, is the minimial length of all those BV paths whose truncated signature is $g$. Given $g_1,g_2\in G^{(l)}$, their CC-distance is defined by $$\rho_{\mathrm{CC}}(g_1,g_2):=\|g_1^{-1}\otimes g_2\|_{\mathrm{CC}}.$$For $u\in\mathfrak{g}^{(l)}$, we set $\|u\|_{\mathrm{CC}}:=\|\exp(u)\|_{\mathrm{CC}}.$

\subsection{Truncated logarithmic signatures of fractional Brownian motion} 

To define the truncated signature path of fBM, we need some definitions from rough path theory. Recall that the order-$l$ \textit{truncated signature path} of a BV path \( w \colon [0,T] \to \mathbb{R}^d \) is the functional defined by
\[ 
(s,t)\mapsto\mathbf{w}_{s,t} = \sum_{k=0}^l \int_{s<t_1<\cdots<t_k<t} dw_{t_1} \otimes \cdots \otimes dw_{t_k}\in G^{(l)}.
\]It is a \textit{multiplicative functional} in the sense that $\mathbf{w}_{s,u}\otimes\mathbf{w}_{u,t}=\mathbf{w}_{s,t}$ for any $s<u<t$. 

\begin{definition}
Let $\alpha \in (0,1)$. A multiplicative functional $\mathbf{z} : \{(s,t)\in [0,T]^2 : s\leqslant t\} \to T^{([1/\alpha])}$ is called a \textit{weakly geometric $\alpha$-Hölder rough path}
if it takes values in $G^{([1/\alpha])}$ and satisfies 
\begin{equation*}
\|\mathbf{z}\|_{\alpha;\mathrm{HS}} := 
\sum_{i=1}^{[1/\alpha]} 
\sup_{0 \leqslant s < t \leqslant 1} 
\frac{\|z^i_{s,t}\|_{\mathrm{HS}}}{|t-s|^{i\alpha}} < \infty.
\end{equation*}
\end{definition}

It is a basic result in rough path theory (Lyons' extension theorem) that given a weakly geometric rough path $\mathbf{z}$, its truncated signature paths of all orders are well-defined (see \cite{lyons1998differential}).

Now let $(X_t)_{0\leqslant t\leqslant T}$ be a $d$-dimensional fBM with Hurst parameter \( H \in (1/4,1) \) (viewed as a $\gamma$-H\"older weakly geometric rough path for $\gamma<H$; see Theorem \ref{thm:GLift}). Let $l\geqslant 1$ be given fixed. We use $\mathbf{X}^{(l)}_{s,t}$ to denote its truncated signature path of order $l$ and also set
\[
\mathbf{U}_{s,t} := \exp^{-1}(\mathbf{X}^{(l)}_{s,t}) \in \mathfrak{g}^{(l)}.
\]The process $\mathbf{U}_{s,t}$ is known as the \textit{truncated logarithmic signature path} of order $l$. It is well-known that $\mathbf{U}_{s,t}$ satisfies a hypoelliptic RDE on the Lie algebra (see \cite[Remark 7.43]{FV}) $\mathfrak{g}^{(l)}$ and thus admits a smooth density with respect to the Lebesgue measure $du$ on $\mathfrak{g}^{(l)}$ (equivalently, the Haar measure on $G^{(l)}$). 

The aim of this section is to establish the following upper bound for the conditional density of the logarithmic signature 
\( \mathbf{U}_{s,t} \).

\begin{proposition}\label{densiofrohst}
Let 
\[
\rho_{s,t}(u) := \mathbb{P}\left( \mathbf{U}_{s,t} \in du ~\big|~ \mathcal{F}^W_s \right), \quad u \in \mathfrak{g}^{(l)}(\mathbb{R}^d)
\]be the conditional density of \( \mathbf{U}_{s,t} \) given \( \mathcal{F}^W_s \).
There exist a constant \( C > 0 \), depending only on \( d \) and \( H \), and a family of positive random variables \( \{\Psi_{s,t}\}_{0 \leqslant s < t} \) such that
\[
\rho_{s,t}(u) \leqslant \frac{C}{(t-s)^{H\nu}} \exp\left( -\frac{\|u\|_{\mathrm{CC}}^2}{C(t-s)^{2H}} \right) \Psi_{s,t},
\]
for all \( 0 \leqslant s < t \leqslant T \), where the Malliavin-Sobolev norm of \( \Psi_{s,t} \) does not depend on \( s \) or \( t \), and \( \nu=\sum_{k=1}^l k \dim(\mathcal{L}_k) \) is the homogeneous dimension of \(\mathfrak{g}^{(l)}(\mathbb{R}^d) \).
\end{proposition}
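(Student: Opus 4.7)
My plan is to follow the Kusuoka--Stroock philosophy, splitting the argument into a scaling reduction, a Malliavin integration-by-parts formula, and a sub-Gaussian tail estimate on the CC-norm of the signature.

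\textbf{Step 1 (scaling to the unit interval).} Using the decomposition $B_{s,u}=L_{s,u}+Q_{s,u}$ together with Lemma~\ref{decom}, one checks that the rescaled process $(\mathbf{X}^{(l)}_{s,s+(t-s)r})_{0\leqslant r\leqslant 1}$, conditioned on $\mathcal{F}_s^W$, has the same law as the dilation $\delta_{(t-s)^H}\bigl((\widetilde{\mathbf{X}}^{(l)}_{0,r})_{0\leqslant r\leqslant 1}\bigr)$, where $\widetilde{\mathbf{X}}^{(l)}$ is the truncated signature of $\widetilde B_r := L_{0,r}+\widetilde Q_r$ with $\widetilde Q$ an $\mathcal{F}_s^W$-measurable deterministic path. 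Applying $\exp^{-1}$ and the change of variables $u=\delta_{(t-s)^H}(v)$, whose Jacobian is $(t-s)^{H\nu}$, reduces the claim to proving
\[
\widetilde\rho(v)\leqslant C\exp\bigl(-\|v\|_{\mathrm{CC}}^2/C\bigr)\Psi,\qquad v\in\mathfrak{g}^{(l)},
\]
where $\widetilde\rho(v)\,dv$ denotes the conditional law of $\widetilde{\mathbf{U}}_{0,1}:=\log\widetilde{\mathbf{X}}^{(l)}_{0,1}$ given $\mathcal{F}_s^W$, and $\Psi$ is a positive random variable whose Malliavin-Sobolev norms are independent of $s,t$ thanks to Lemma~\ref{equnorm}.

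\textbf{Step 2 (Malliavin integration by parts).} The signature process $\widetilde{\mathbf{X}}^{(l)}$ solves a canonical hypoelliptic RDE on $G^{(l)}$ whose generating left-invariant vector fields span the whole Lie algebra $\mathfrak{g}^{(l)}$, so Hypothesis~\ref{1.2} is trivially satisfied. Proposition~\ref{nonden} then implies that the Malliavin covariance matrix of $\widetilde{\mathbf{U}}_{0,1}$ is non-degenerate with all inverse moments finite, uniformly in $s$. A standard Nualart-type integration-by-parts formula yields, for any $p>1$,
\[
\widetilde\rho(v) \leqslant C_{p}\,\mathbb{P}\bigl(\widetilde{\mathbf{U}}_{0,1}\in Q(v)\,\big|\,\mathcal{F}_s^W\bigr)^{1/p}\|\Psi\|_{k,p,s},
\]
where $Q(v)$ is an orthant in $\mathfrak{g}^{(l)}$ anchored at $v$.

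\textbf{Step 3 (sub-Gaussian tail in CC-norm).} When $\|v\|_{\mathrm{CC}}$ is large, membership of $\widetilde{\mathbf{U}}_{0,1}$ in $Q(v)$ forces at least one of the homogeneous components $\widetilde U^{(k)}_{0,1}$ to have HS-norm at least $c\|v\|_{\mathrm{CC}}^k$. The Weibull-type tail estimate for signature increments of Gaussian rough paths (cf.~\cite{CLL,FV}) gives
\[
\mathbb{P}\bigl(\|\widetilde U^{(k)}_{0,1}\|_{\mathrm{HS}}\geqslant M\,\big|\,\mathcal{F}_s^W\bigr) \leqslant C\exp(-cM^{2/k}),
\]
uniformly in $s$, so that $\mathbb{P}(\widetilde{\mathbf{U}}_{0,1}\in Q(v)\,|\,\mathcal{F}_s^W)\leqslant C\exp(-c\|v\|_{\mathrm{CC}}^2)$. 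Combining this with Step~2 produces the claimed Gaussian exponent $\exp(-\|v\|_{\mathrm{CC}}^2/C)$ and completes the proof.

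\textbf{Main obstacle.} The crux is Step~3: transferring the (classical) unconditional Fernique-type tail estimate for Gaussian rough paths to a conditional version with constants independent of $s\in[0,T]$, while simultaneously capturing the sub-Riemannian homogeneity of the CC-norm through the distinct Weibull exponents $2/k$ of the level-$k$ signature components. The scaling identity of Lemma~\ref{decom} together with the Malliavin-equivalence framework of Definition~\ref{defequalm} and Lemma~\ref{equnorm} is precisely what allows one to absorb the $\mathcal{F}_s^W$-measurable drift $\widetilde Q$ and secure both the conditional tail bound and the $s,t$-uniformity of the Malliavin-Sobolev norms of $\Psi_{s,t}$.
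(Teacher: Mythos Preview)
Your three-step architecture (scaling reduction, Malliavin integration by parts, sub-Gaussian tail) is exactly the route the paper takes, and the role you assign to Lemmas~\ref{decom} and~\ref{equnorm} in securing the $s,t$-uniformity of $\Psi_{s,t}$ is correct. However, Steps~2 and~3 both contain genuine gaps.

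In Step~2, invoking Proposition~\ref{nonden} does not suffice. That proposition controls the \emph{unconditional} Malliavin matrix $\langle\mathbf{D}_B\cdot,\mathbf{D}_B\cdot\rangle_{\mathcal{H}_B}$, whereas the integration-by-parts inequality~\eqref{generalbound} requires the \emph{conditional} Malliavin--Sobolev norm $\|\gamma(\mathbf{U}_{s,t})^{-1}\|_{n,p,s}$, taken with respect to $\mathbf{D}_W$, the restricted Hilbert space $\mathcal{H}_{W,s}=L^2([s,T])$, and conditional expectation given $\mathcal{F}_s^W$. After your scaling, $\widetilde{\mathbf{U}}_{0,1}$ is driven by $L_{0,\cdot}+\widetilde Q$ with $\widetilde Q$ an unbounded $\mathcal{F}_s^W$-measurable drift; showing that the conditional inverse Malliavin matrix has moments bounded \emph{uniformly in this drift} is precisely the hard part. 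The paper isolates this as Lemma~\ref{logsig3}, whose proof (in the appendix) uses the Baudoin--Ouyang--Zhang decomposition on the nilpotent group together with the conditional small-ball estimates of Proposition~\ref{6.5}. Proposition~\ref{nonden} does not short-circuit this.

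In Step~3, the displayed bound $\mathbb{P}(\|\widetilde U^{(k)}_{0,1}\|_{\mathrm{HS}}\geqslant M\,|\,\mathcal{F}_s^W)\leqslant C\exp(-cM^{2/k})$ with deterministic $C,c$ is false for the same reason: the drift $\widetilde Q$ shifts the conditional law and is not uniformly bounded. The paper's Lemma~\ref{logsig1} instead produces a \emph{random} bound $\Psi^1_{s,t}$, namely the conditional expectation of $\exp\big(\eta(\mathcal{N}^{0,1}_{\gamma,2q}(\mathbf{B}))^{1/q}\big)$, and then verifies that its Malliavin--Sobolev norms are $s,t$-independent via Lemma~\ref{equnorm}. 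Your ``Main obstacle'' paragraph correctly flags the issue but contradicts the formula you actually wrote; the CLL/FV Weibull estimates you cite are unconditional and do not directly deliver what is needed. Finally, the factor $\|\mathbf{D}_W\mathbf{U}_{s,t}\|_{n,p,s}$ in~\eqref{generalbound} also requires its own estimate (Lemma~\ref{logsig2} in the paper), which you absorb into $\Psi$ without justification.
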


The starting point for proving Proposition \ref{densiofrohst} is the following inequality: \begin{equation}\label{generalbound}
\mathbb{P}\left(\mathbf{U}_{s,t}\in d u\middle|\mathcal{F}^W_s \right)\leqslant C_n\,\mathbb{P}\left(\mathbf{U}_{s,t}\geqslant u\middle|\mathcal{F}^W_s \right)^{1/2}\left\|\gamma(\mathbf{U}_{s,t})^{-1}\right\|^n_{n,2^{n+2},s} \left\|\mathbf{D}_W\mathbf{U}_{s,t}\right\|^n_{n,2^{n+2},s}.
\end{equation}This is obtained  in the same way as \cite[Inequality (24)]{BNOT}. Here $n$ is arbitrary and $C_n$ is a positive constant depending on $n$. The event $\{\mathbf{U}_{s,t}\geqslant u\}$ is understood coordinate-wisely with respect to a given basis of $\mathfrak{g}^{(l)}$. Our task is now reduced to estimating each individual terms on the right hand side of (\ref{generalbound}).

\subsubsection{Estimate of $\mathbb{P}\left( \mathbf{U}_{s,t} \geqslant u \,\middle|\, \mathcal{F}_s^W \right)$}

Let \( \mathbf{X} \) denote the canonical lifting of \( X \) as a (random) geometric rough path. We first define its Besov norm which is served as a smooth upper bound for the H\"older norm. 
\begin{definition}\label{defBesovnorm}
Let \( \gamma < H \) and \( q \geqslant 1 \) be given fixed.  The $(\gamma,q)$-Besov norm of $\mathbf{X}$ over $[s,t]$ is defined to be
\[
\mathcal{N}_{\gamma,q}^{s,t}(\mathbf{X}) := \int_s^t \int_s^t \frac{D(\mathbf{X}_{u,v}, \mathbf{X}_{v})^q}{|v - u|^{\gamma q}} \, du \, dv,\quad 0\leqslant s<t\leqslant T,
\]
where 
\[
D(\mathbf{X}_{u,v}) := \sum_{k=1}^{\lfloor 1/H \rfloor} \left\| \int_{u \leqslant s_1 \leqslant \dots \leqslant s_k \leqslant v} dX_{s_1}\otimes\cdots\otimes dX_{s_k}\right\|^{1/k}_{\mathrm{HS}}.
\] 
\end{definition}
A benefit of working with these Besov norms is that they are smooth in the sense of Malliavin. In addition, standard Fernique-type estimate shows that there exists a constant \( \eta_0 > 0 \) such that
\begin{equation}\label{Besovnorminte}
\mathbb{E}\left[ \exp\left(\eta_0 \left( \mathcal{N}_{\gamma,2q}^{0,1}(\mathbf{X}) \right)^{1/q}\right) \right] < \infty.
\end{equation}
By using the Garsia-Rodemich-Rumsey inequality in Carnot groups, for any \( \varepsilon \in(0,H-\gamma) \) one has the following estimate
\begin{equation}\label{Besovnormboun}
\|\mathbf{X}\|_{\gamma\text{-H\"ol};[s,t]} \leqslant C \left( \mathcal{N}^{s,t}_{\gamma+\varepsilon,2q}(\mathbf{X}) \right)^{1/2q}, \qquad 0 \leqslant s \leqslant t \leqslant T.
\end{equation}See \cite{FV} for a discussion of these facts. 

The goal of  this part is to prove the following lemma. 
\begin{lemma}\label{logsig1}
There exist a constant $C > 0$ and a family of positive random variables $(\Psi^1_{s,t})_{0 \leqslant s < t}$, where the Malliavin-Sobolev norm of each $\Psi^1_{s,t}$ does not depend on $s$ or $t$, such that
\begin{equation}\label{eq:logsig1}
\mathbb{P}\left( \mathbf{U}_{s,t} \geqslant u \,\middle|\, \mathcal{F}_s^W \right) \leqslant \exp\left(-\frac{\|u\|_{\mathrm{CC}}^2}{C(t-s)^{2H}}\right) \Psi^1_{s,t}
\end{equation}
for all $u \in \mathfrak{g}^{(l)}$ and $0\leqslant s<t\leqslant T$. 
\end{lemma}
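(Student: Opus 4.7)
The plan is to reduce the coordinate-wise event $\{\mathbf{U}_{s,t}\geqslant u\}$ to a Gaussian-type tail event on the sub-Riemannian norm $\|\mathbf{U}_{s,t}\|_{\mathrm{CC}}$, and then leverage the self-similarity of fBM (Lemma~\ref{decom}) together with the Fernique-type integrability \eqref{Besovnorminte} of the Besov norm of $\mathbf{X}$ to produce the desired Gaussian decay in $(t-s)^{2H}$. The random multiplier $\Psi^1_{s,t}$ will ultimately be expressed as a conditional expectation of an exponential Besov functional on the reference interval $[0,1]$.

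For the first step, I use the fact that the CC-norm on $\mathfrak{g}^{(l)}$ is equivalent to the homogeneous norm $\max_{1\leqslant k\leqslant l}\|\pi_k(\cdot)\|_{\mathrm{HS}}^{1/k}$. Selecting the basis index $I^{*}$ realising this maximum for $u$, the coordinate-wise event forces $\mathbf{U}_{s,t,I^{*}}\geqslant u_{I^{*}}$; combining this with appropriate sign-handling (via the parity-graded structure of the signature and the symmetry of the Gaussian driver) yields $\{\mathbf{U}_{s,t}\geqslant u\}\subseteq\{\|\mathbf{U}_{s,t}\|_{\mathrm{CC}}\geqslant c\|u\|_{\mathrm{CC}}\}$ for some constant $c=c(d,l)>0$. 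For the second step, Lemma~\ref{decom} identifies the conditional law of $(\varepsilon^{-H}L_{s,s+\varepsilon\cdot},\varepsilon^{-H}Q_{s,s+\varepsilon\cdot})$ given $\mathcal{F}_s^W$ (with $\varepsilon=t-s$) with that of $(L_{0,\cdot},Q_{0,\cdot})$ given $\mathcal{F}_0^W$. Since the canonical rough-path lift intertwines with the Carnot dilation $\delta_{\varepsilon^H}$, one obtains in conditional law $\|\mathbf{U}_{s,t}\|_{\mathrm{CC}}\stackrel{d}{=}\varepsilon^H\|\tilde{\mathbf{U}}_{0,1}\|_{\mathrm{CC}}$, where $\tilde{\mathbf{U}}$ denotes the rescaled logarithmic signature. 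A conditional exponential Markov inequality based on \eqref{Besovnormboun} and \eqref{Besovnorminte} then delivers
\[
\mathbb{P}\bigl(\|\tilde{\mathbf{U}}_{0,1}\|_{\mathrm{CC}}\geqslant R \,\big|\, \mathcal{F}_s^W\bigr)\leqslant \Psi^1_{s,t}\, e^{-\eta R^2}
\]
for some $\eta>0$, with $\Psi^1_{s,t}:=\mathbb{E}[e^{2\eta\|\tilde{\mathbf{U}}_{0,1}\|_{\mathrm{CC}}^2}\mid\mathcal{F}_s^W]$, and substituting $R=c\|u\|_{\mathrm{CC}}/\varepsilon^H$ produces \eqref{eq:logsig1}.

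The hardest step will be verifying that $\Psi^1_{s,t}$ has Malliavin-Sobolev norm uniformly bounded in $(s,t)$. The Besov functional $\mathcal{N}^{0,1}_{\gamma+\varepsilon,2q}(\tilde{\mathbf{X}})$ is smooth in the underlying Brownian driver $W$ via Proposition~\ref{MS}, but this smoothness must be propagated carefully through the $\mathcal{F}_s^W$-measurable drift $Q$ appearing in the rescaling. My plan is to split the rescaled logarithmic signature into its $L$-part (independent of $\mathcal{F}_s^W$) and $Q$-part, controlling the $Q$-contribution through the conditional small-ball estimates of Proposition~\ref{6.5}, and using the isometric action of the rescaling map on the relevant Cameron-Martin spaces (via the kernel $\mathscr{K}$) to ensure that the resulting Malliavin-Sobolev norms do not depend on $s$ or $t$.
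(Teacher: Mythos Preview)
Your overall strategy matches the paper's: reduce the coordinate-wise event to a CC-norm tail, rescale via Lemma~\ref{decom}, and apply a conditional exponential Chebyshev inequality using the Fernique bound~\eqref{Besovnorminte}. Two points, however, need correction.

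First, the reduction $\{\mathbf{U}_{s,t}\geqslant u\}\subseteq\{\|\mathbf{U}_{s,t}\|_{\mathrm{CC}}\geqslant c\|u\|_{\mathrm{CC}}\}$ does not follow from your argument. Picking the index $I^{*}$ realising $\max_\alpha |u^\alpha|^{1/q_\alpha}$ only gives $\mathbf{U}_{s,t,I^{*}}\geqslant u_{I^{*}}$, which is vacuous when $u_{I^{*}}<0$. The ``symmetry of the Gaussian driver'' you invoke (presumably $B\mapsto -B$) is destroyed by conditioning on $\mathcal{F}_s^W$: the drift $Q_{s,\cdot}$ is fixed and generically non-zero. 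The paper handles this differently: it first reduces to the case $u^\alpha\geqslant 0$ for all $\alpha$ (deferring to \cite[Theorem~3.13]{baudoin2014upper}), and then the inclusion $\{\Xi\geqslant u\}\subseteq\{\|\delta_{\|u\|_{\mathrm{CC}}^{-1}}\Xi\|\geqslant\|\delta_{\|u\|_{\mathrm{CC}}^{-1}}u\|\}$ is immediate from monotonicity of the homogeneous norm in the non-negative orthant.

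Second, and more substantively, your plan for the $(s,t)$-uniformity of the Malliavin--Sobolev norm of $\Psi^1_{s,t}$ invokes the wrong tools. Proposition~\ref{6.5} is a conditional small-ball estimate and has no bearing on Malliavin derivatives of conditional expectations. Moreover, defining $\Psi^1_{s,t}:=\mathbb{E}[e^{2\eta\|\tilde{\mathbf{U}}_{0,1}\|_{\mathrm{CC}}^2}\mid\mathcal{F}_s^W]$ is problematic because $\|\cdot\|_{\mathrm{CC}}$ is a sup-type functional and not Malliavin-smooth. The paper instead takes
\[
\Psi^1:=\mathbb{E}\Bigl[\exp\bigl(\eta\,(\mathcal{N}^{0,1}_{\gamma,2q}(\mathbf{B}))^{1/q}\bigr)\,\Big|\,\mathcal{F}_0^W\Bigr],
\]
with the \emph{Besov} functional of Definition~\ref{defBesovnorm} (an integral, hence smooth), observes that $\Psi^1=F(Q_{0,\cdot})$ for a deterministic path functional $F$, and then \emph{defines} $\Psi^1_{s,t}:=F\bigl((t-s)^{-H}Q_{s,s+(t-s)\cdot}\bigr)$. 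The uniformity is then a one-line consequence of Lemma~\ref{equnorm}, which gives $\Psi^1\overset{M}{=}\Psi^1_{s,t}$ directly. No $L/Q$ splitting or small-ball input is needed at this stage; the isometry you allude to is precisely the content of Lemma~\ref{equnorm}, and you should use it rather than rebuild it.
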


\begin{proof}
For any given $\varepsilon>0,s\geqslant0$, by using the decomposition \eqref{defXY}  the conditional probability $\mathbb{P}\left( \mathbf{U}_{s,s+\varepsilon} \geqslant u \,\middle|\, \mathcal{F}_s^W \right)$ can be viewed as a functional of $(Q_{s,s+\varepsilon u})_{0\leqslant u\leqslant 1}$. According to Lemma \ref{decom}, there exists a functional $F:C([0,1])\to\mathbb{R}$ such that 
$$
\mathbb{P}\left( \mathbf{U}_{s,s+\varepsilon} \geqslant u \,\middle|\, \mathcal{F}_s^W \right)=F(Q_{s,s+\varepsilon\cdot})\quad\text{and}\quad\mathbb{P}\left( \delta_{\varepsilon^H}\mathbf{U}_{0,1} \geqslant u \,\middle|\, \mathcal{F}_0^W \right)=F(\varepsilon^HQ_{0,\cdot})
$$
By applying Lemma \ref{equnorm}, one has 
$$
\mathbb{P}\left( \mathbf{U}_{s,t} \geqslant u \,\middle|\, \mathcal{F}_s^W \right)\overset{M}{=}\mathbb{P}\left( \delta_{(t-s)^H}\mathbf{U}_{0,1} \geqslant u \,\middle|\, \mathcal{F}_0^W \right)
$$for any $0\leqslant s<t$,
where the equivalence notion $\overset{M}{=}$ is introduced in Definition \ref{defequalm}.  

Now it suffices to show
\begin{equation}\label{toprovelemm6.5}
\mathbb{P}\left( \delta_{(t-s)^H}\mathbf{U}_{0,1} \geqslant u \,\middle|\, \mathcal{F}_0^W \right)\leqslant  \exp\left(-\frac{\|u\|_{\mathrm{CC}}^2}{C(t-s)^{2H}}\right) \Psi^1
\end{equation}
for certain random variable $\Psi^1$ with bounded Malliavin-Sobolev norms. We assume without loss of generality that each coordinate $u^\alpha\geqslant 0$. An argument justifying this reduction can be found in the proof of \cite[Theorem 3.13]{baudoin2014upper}. We also assume that $\|u\|_{\mathrm{CC}}>0$, for otherwise there is nothing to prove.

Let us define the homogeneous norm on $\mathfrak{g}^{(l)}$ by $\|u\|\overset{\Delta}{=}\text{max}_{\alpha}|u^\alpha|^{1/q_\alpha}$, where $q_\alpha=k$ if $u^\alpha$ is the coordinate of $u$ along a basis element in $\mathcal{L}_k$. For any random variable $\Xi:\Omega\to\mathfrak{g}^{(l)}$, one has
\[
\left\{\Xi \geqslant u\right\} = \left\{ \delta_{\|u\|_{\mathrm{CC}}^{-1}} \Xi \geqslant \delta_{\|u\|_{\mathrm{CC}}^{-1}} u \right\} \subseteq \left\{ \|\delta_{\|u\|_{\mathrm{CC}}^{-1}} \Xi \| \geqslant \| \delta_{\|u\|_{\mathrm{CC}}^{-1}} u \| \right\}.
\]
The above inclusion uses the assumption $u^\alpha\geqslant0$ for all $\alpha$. Since all homogeneous norms on $\mathfrak{g}^{(l)}$ are equivalent, there are positive constants $C_1,C_2$ such that 
\begin{equation*}
C_1||\cdot||_{\mathrm{CC}}\leqslant \|\cdot\| \leqslant C_2||\cdot||_{\mathrm{CC}}\quad\text{on~}\mathfrak{g}^{(l)}.
\end{equation*}
In particular, $\|\delta_{\|u\|_{\mathrm{CC}}^{-1}}u\|\geqslant C_1$. It follows that 
\begin{equation}\label{generalinclusion}
\begin{aligned}
\big\{\Xi\geqslant u\big\}\subseteq\left\{\|\delta_{||u||_{\mathrm{CC}}^{-1}}\Xi\|\geqslant C_1\right\}
=\left\{||u||_{\mathrm{CC}}^{-1}\|\Xi\|\geqslant C_1\right\}
\subseteq ~\Big\{C_2||\Xi||_{\mathrm{CC}}\geqslant C_1||u||_{\mathrm{CC}}\Big\}.
\end{aligned}
\end{equation}

Now let us apply (\ref{generalinclusion}) to  $\Xi:=\delta_{(t-s)^H}\mathbf{U}_{0,1}$. One finds that
\begin{equation*}
\begin{aligned}
\mathbb{P}\left( \delta_{(t-s)^H}\mathbf{U}_{0,1} \geqslant u \,\middle|\, \mathcal{F}_0^W \right)\leqslant&~\mathbb{P}\left( \|\delta_{(t-s)^H}\mathbf{U}_{0,1}\|_{\mathrm{CC}} \geqslant \frac{C_1}{C_2}\|u\|_{\mathrm{CC}} \,\middle|\, \mathcal{F}_0^W \right)\\
=&~\mathbb{P}\left( \|\mathbf{U}_{0,1}\|_{\mathrm{CC}} \geqslant \frac{C_1\|u\|_{\mathrm{CC}}}{C_2(t-s)^H} \,\middle|\, \mathcal{F}_0^W \right).
\end{aligned}
\end{equation*} 
Recall from  our convention that $\|\mathbf{U}_{0,1}\|_{\mathrm{CC}}=\|\mathbf{X}_{0,1}\|_{\mathrm{CC}}$. By the Chebyshev inequality and the Besov-type bound \eqref{Besovnormboun}, one concludes that 
\begin{align*}
&\mathbb{P}\left( \|\mathbf{U}_{0,1}\|_{\mathrm{CC}}\geqslant \frac{C_1 \|u\|_\mathrm{CC}}{C_2 (t-s)^H} \,\middle|\, \mathcal{F}_0^W \right)\\
&\leqslant C\exp\left( -\frac{\|u\|_{\mathrm{CC}}^2}{C(t-s)^{2H}} \right)
\mathbb{E}\left[ \exp\left( \eta \left( \mathcal{N}^{0,1}_{\gamma,2q}(\mathbf{B}) \right)^{1/q} \right) \bigg| \mathcal{F}_0^W \right],
\end{align*}
for some \( C > 0 \), \( q\geqslant 1 \), \( \eta \in (0, \eta_0] \) and $\gamma<H$. 
The estimate (\ref{toprovelemm6.5}) thus follows by taking
\begin{equation}\label{checksmoothness}
\Psi^1=\mathbb{E}\left[ \exp\left( \eta \left( \mathcal{N}^{0,1}_{\gamma,2q}(\mathbf{B}) \right)^{1/q} \right) \bigg| \mathcal{F}_0^W \right].
\end{equation}

To complete the poof of the lemma, observe that \( \Psi^1 \) is a functional of the process \( (Q_{0,u})_{0 \leqslant u \leqslant 1} \), i.e.
$\Psi^1 = F(Q_{0,\cdot})$ with some path functional \( F : C([0,1]) \to \mathbb{R} \). We then set 
\begin{equation}\label{defPsii}
\Psi^1_{s,t} := F\left( (t-s)^{-H} Q_{s,\,s+(t-s)\cdot} \right).
\end{equation}
According  to Lemma \ref{equnorm},  $\Psi^1$ and $\Psi^1_{s,t}$ have the same Malliavin-Sobolev norms for all $s,t\in[0,T]$. The desired estimate (\ref{eq:logsig1}) thus follows. 
\end{proof}

\subsubsection{Estimate of \(\| \mathbf{D}_W \mathbf{U}_{s,t} \|_{k,p,s} \)}

Our next step is to estimate \( ||\mathbf{D}_W\mathbf{U}_{s,t}||_{k,p,s} \) and the main result is stated as follows.

\begin{lemma}\label{logsig2}
For any integer $k \geqslant 1$ and any real number $p > 0$, there exist a family of random variables $(\Psi^2_{s,t})_{0\leqslant s < t}$ (depending on $k$ and $p$) whose Malliavin-Sobolev norms do not depend on $s$ or $t$, such that
\[
\|\mathbf{D}_W \mathbf{U}_{s,t}\|_{k,p,s} \leqslant (t-s)^H \Psi^2_{s,t}
\]for all $0 \leqslant s < t \leqslant T$.
\end{lemma}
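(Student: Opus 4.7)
The plan is to exploit the self-similarity of fractional Brownian motion together with the Malliavin-equivalence Lemma \ref{equnorm} to reduce every norm to the unit interval $[0,1]$. As a preliminary step, observe that $\mathbf{U}_{s,t}=\log(\mathbf{X}^{(l)}_{s,t})$ is a finite polynomial (the truncated logarithm series in the tensor algebra) in the iterated integral components
$$I^{\mathbf{i}}_{s,t}:=\int_{s<t_1<\cdots<t_m<t}dX^{i_1}_{t_1}\cdots dX^{i_m}_{t_m},\qquad \mathbf{i}=(i_1,\dots,i_m),\ 1\leqslant m\leqslant l.$$
By the Leibniz rule for $\mathbf{D}_W$, controlling $\|\mathbf{D}_W\mathbf{U}_{s,t}\|_{k,p,s}$ reduces to controlling $\|\mathbf{D}_W I^{\mathbf{i}}_{s,t}\|_{k,p,s}$ together with products of such norms.

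The key scaling step is to write
$$I^{\mathbf{i}}_{s,t}=\Phi_{\mathbf{i}}\!\left(L_{s,s+(t-s)\cdot}+Q_{s,s+(t-s)\cdot}\right),$$
where $\Phi_{\mathbf{i}}\colon C([0,1];\mathbb{R}^d)\to\mathbb{R}$ is a (rough-path continuous) functional of homogeneous degree $m=|\mathbf{i}|$ in its argument. Combining this identity with Lemma \ref{decom}, which provides the distributional identity $(L_{s,s+(t-s)u},Q_{s,s+(t-s)u})\overset{d}{=}((t-s)^HL_{0,u},(t-s)^HQ_{0,u})$, and then applying Lemma \ref{equnorm} in the Malliavin-equivalence sense, one obtains
$$\|\mathbf{D}_W I^{\mathbf{i}}_{s,t}\|_{k,p,s}=(t-s)^{mH}\,\Psi^{\mathbf{i}}_{s,t},\qquad \Psi^{\mathbf{i}}_{s,t}\overset{M}{=}\|\mathbf{D}_W I^{\mathbf{i}}_{0,1}\|_{k,p,0}.$$
In particular the Malliavin-Sobolev norms of $\Psi^{\mathbf{i}}_{s,t}$ are independent of $(s,t)$, and the factor $(t-s)^{mH}$ with $m\geqslant 1$ supplies the decisive scaling gain.

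Assembling the contributions from the truncated logarithm expansion, applying Leibniz to each product of iterated integrals (the product of level-$m_1$ and level-$m_2$ factors still scales as $(t-s)^{(m_1+m_2)H}$ and each factor is Malliavin-smooth), and using that $(t-s)^{(m-1)H}\leqslant T^{(l-1)H}$ for all $m\geqslant 1$ when $t-s\leqslant T$, one concludes
$$\|\mathbf{D}_W\mathbf{U}_{s,t}\|_{k,p,s}\leqslant (t-s)^H\sum_{m=1}^l T^{(m-1)H}\,\widetilde{\Psi}^{m}_{s,t}=:(t-s)^H\,\Psi^2_{s,t},$$
where $\widetilde{\Psi}^{m}_{s,t}$ collects the level-$m$ contributions (a finite sum of products of the $\Psi^{\mathbf{i}}_{s,t}$). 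By construction, $\Psi^2_{s,t}$ has Malliavin-Sobolev norms independent of $s$ and $t$, which is the claim.

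The main technical obstacle is ensuring that the $\overset{M}{=}$ relation transfers cleanly through products and through the truncated logarithm series. Although Lemma \ref{equnorm} is stated for scalar functionals of $L_{0,\cdot}$ and $Q_{0,\cdot}$ separately, here it must be applied jointly to the family $\{I^{\mathbf{i}}_{s,t}\}_{|\mathbf{i}|\leqslant l}$ so that products of iterated integrals rescale coherently. The resolution is to note that the entire vector of truncated iterated integrals is a single smooth functional of $(L_{s,s+(t-s)\cdot},Q_{s,s+(t-s)\cdot})$, so the equivalence lemma and its proof (which only used the joint Gaussian law of the underlying processes) extend verbatim to vector-valued inputs, preserving the homogeneity through the full polynomial expansion of $\log$.
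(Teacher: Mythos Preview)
Your scaling argument is correct and matches the paper's first step: the relation $\|I^{\mathbf{i}}_{s,t}\|_{k,p,s}\overset{M}{=}(t-s)^{mH}\|I^{\mathbf{i}}_{0,1}\|_{k,p,0}$ follows exactly as you indicate from Lemmas \ref{decom} and \ref{equnorm}, and the reduction from $\mathbf{U}_{s,t}$ to the iterated integrals via the polynomial logarithm is also fine. However, there is a genuine gap at the point where you conclude that ``$\Psi^2_{s,t}$ has Malliavin-Sobolev norms independent of $s$ and $t$''. What you have shown is that these norms are \emph{equal} for all $(s,t)$, but nowhere do you verify that they are \emph{finite}. In other words, you have not checked that the random variable $\|\mathbf{D}_W I^{\mathbf{i}}_{0,1}\|_{k,p,0}$, which is $\mathcal{F}^W_0$-measurable, is itself Malliavin-smooth. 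This is not automatic: it is a conditional expectation of a $p$-th power of a tensor norm of higher Malliavin derivatives, and for non-even $p$ there is no immediate chaos-based argument.

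This is precisely the technical content of the paper's proof. Following Inahama, the paper introduces an independent copy $\tilde W$ of $W$ (with associated $\tilde B$) and the mixed iterated integrals $\Theta_r$ obtained by replacing $r$ of the $dB$'s by $d\tilde B$'s. The key identity $\mathbf{D}^1_W I^m=\mathbf{D}^1_{\tilde W}\Theta_1$, together with hypercontractivity on a fixed inhomogeneous chaos, yields the bound $\|I^m_{0,1}\|_{k,p,0}\leqslant C_{k,p}\sum_{r=1}^k(\mathbb{E}\tilde{\mathbb{E}}[\Theta_r^p\mid\mathcal{F}^W_0\vee\mathcal{F}^{\tilde W}_0])^{1/p}$. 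Taking a further $L^p(\,\cdot\,|\mathcal{F}^W_0)$-norm removes $\tilde W$ and leaves $\Psi^2:=C_{k,p}(\mathbb{E}[\Theta_r^p\mid\mathcal{F}^W_0])^{1/p}$, which is explicitly a conditional moment of a component of the signature of the $2d$-dimensional fBM $(B,\tilde B)$ and is therefore easily seen to be Malliavin-smooth. Without an argument of this type, your $\Psi^2_{s,t}$ is only known to have $(s,t)$-independent Sobolev norms, not bounded ones, so the lemma does not yet follow.
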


Recall that \( \mathbf{U}_{s,t} = \exp^{-1}(\mathbf{X}^{(l)}_{s,t}) \). In particular, each component of \( \mathbf{U}_{s,t} \) is a polynomial of components of \( \mathbf{B}_{s,t} \). As a result, it suffices to estimate the components of \( \mathbf{B}_{s,t} \). Note that the $m$-th level  component of \( \mathbf{B}_{s,t} \) is
\[
I^m_{s,t}:=\int_{s< t_1<\cdots< t_m< t}d B_{t_1}\otimes\cdots\otimes d B_{t_m}
\]We first need the following lemma. 

\begin{lemma}
For any given $0\leqslant s<t\leqslant T$, one has 
\[
\|I^m_{s,t}\|_{k,p,s} \overset{M}{=} (t-s)^{mH}\|I^m_{0,1}\|_{k,p,0}.
\]
\end{lemma}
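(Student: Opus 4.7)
The plan is to combine the homogeneity of iterated integrals under scalar dilation with the joint scaling identity of Lemma \ref{decom}, and then upgrade the resulting distributional equality to Malliavin equivalence via the cylinder-functional argument of Lemma \ref{equnorm}. First, since the $m$-th level iterated integral is homogeneous of degree $m$ under multiplication of the driving path by a scalar (each $dB_{t_i}$ carries one power of the scalar, in both the BV and the weakly geometric rough-path sense), a change of time variable gives the pathwise identity
\[
I^m_{s,t}=(t-s)^{mH}\,\widetilde{I}^{\,m},\qquad \widetilde{I}^{\,m}:=\int_{0<u_1<\cdots<u_m<1}d\widetilde{B}_{u_1}\otimes\cdots\otimes d\widetilde{B}_{u_m},
\]
where $\widetilde{B}_u:=(t-s)^{-H}B_{s,\,s+(t-s)u}$ for $u\in[0,1]$. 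Decomposing $\widetilde{B}_u=\widetilde{L}_u+\widetilde{Q}_u$ with the natural choice $\widetilde{L}_u=(t-s)^{-H}L_{s,\,s+(t-s)u}$ and $\widetilde{Q}_u=(t-s)^{-H}Q_{s,\,s+(t-s)u}$, Lemma \ref{decom} yields the joint equality in law $(\widetilde{L}_\cdot,\widetilde{Q}_\cdot)\overset{d}{=}(L_{0,\cdot},Q_{0,\cdot})$ as processes on $[0,1]$.

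Next, I would extend the proof of Lemma \ref{equnorm} to functionals depending simultaneously on $L$ and $Q$. Approximating $\widetilde{I}^{\,m}$ by polynomial cylinder functionals of the form $f(\widetilde{L}_{u_1},\ldots,\widetilde{L}_{u_l},\widetilde{Q}_{v_1},\ldots,\widetilde{Q}_{v_m})$, the Malliavin tensor $\mathbf{D}_W^k$ becomes a weighted sum of tensor products of the deterministic $L^2$-valued derivatives $\mathbf{D}_W\widetilde{L}_{u_i}$ and $\mathbf{D}_W\widetilde{Q}_{v_j}$, which are given by appropriately shifted and rescaled Volterra kernels. A direct change of variables $r\mapsto s+(t-s)r$ shows that for any choice of indices, the tensor norms in $(L^2([s,T]))^{\otimes k}$ of these deterministic tensors coincide with the corresponding $(L^2([0,T]))^{\otimes k}$-norms obtained from $\mathbf{D}_W L_{0,u_i}$ and $\mathbf{D}_W Q_{0,v_j}$. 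Combining this with the joint distributional identity from the previous paragraph produces $\widetilde{I}^{\,m}\overset{M}{=}I^m_{0,1}$, and therefore
\[
\|I^m_{s,t}\|_{k,p,s}=(t-s)^{mH}\,\|\widetilde{I}^{\,m}\|_{k,p,s}\overset{M}{=}(t-s)^{mH}\,\|I^m_{0,1}\|_{k,p,0}.
\]

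The main obstacle is the careful bookkeeping in this extension: one must verify that the shift-and-dilation $u\mapsto s+(t-s)u$, accompanied by the change of conditioning from $\mathcal{F}_0^W$ to $\mathcal{F}_s^W$, preserves not just the finite-dimensional distributions of $(L,Q)$ but also the $L^2$-inner product structure that enters the Malliavin tensor norms. This reduces to an explicit calculation with the fBM kernel $k_H$, using the stationarity of increments of $W$; once that is in place, the density of polynomial cylinder functionals in the space of smooth Malliavin functionals closes the argument.
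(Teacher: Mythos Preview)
Your approach is in the right spirit, and the pieces you identify---the scaling homogeneity $I^m_{s,t}=(t-s)^{mH}\widetilde I^{\,m}$ and the joint law identity $(\widetilde L,\widetilde Q)\overset{d}{=}(L_{0,\cdot},Q_{0,\cdot})$---are exactly what drives the result. However, there is a genuine gap in the logical chain, and the paper organises the same ingredients more cleanly.

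The gap is the step
\[
\widetilde I^{\,m}\overset{M}{=}I^m_{0,1}\quad\Longrightarrow\quad \|\widetilde I^{\,m}\|_{k,p,s}\overset{M}{=}\|I^m_{0,1}\|_{k,p,0}.
\]
Recall that $\overset{M}{=}$ (Definition~\ref{defequalm}) is a statement about the laws of the \emph{full} Malliavin tensor norms $\|\mathbf D_W^k\cdot\|_{\mathcal H_W^{\otimes k}}$, whereas $\|\cdot\|_{k,p,s}$ involves the \emph{restricted} space $\mathcal H_{W,s}=L^2([s,T])$ together with a conditional expectation given $\mathcal F^W_s$. Malliavin equivalence of $\widetilde I^{\,m}$ and $I^m_{0,1}$ in the first sense says nothing about the second object without further argument. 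In particular, your change-of-variables check for $\mathbf D_W\widetilde Q_{v_j}$ is not the relevant point: those derivatives are supported on $(-\infty,s]$ and vanish under the $\mathcal H_{W,s}$-norm, so only the $L$-derivatives survive in $\|\mathbf D_W^k\widetilde I^{\,m}\|_{\mathcal H_{W,s}^{\otimes k}}$. What you actually need is that the resulting conditional norm, viewed as a functional of $\widetilde Q$, is the \emph{same} functional as $\|I^m_{0,1}\|_{k,p,0}$ is of $Q_{0,\cdot}$---and only then can you invoke Lemma~\ref{equnorm} (applied to $Q$) to obtain the desired $\overset{M}{=}$.

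This is precisely how the paper proceeds, and it avoids your proposed two-variable extension of Lemma~\ref{equnorm} altogether. The paper writes the conditional norm directly as a functional of the $Q$-process: $\varepsilon^{-Hmp}\,\mathbb E[\|\mathbf D_W^k I^m_{s,s+\varepsilon}\|^p_{\mathcal H_{W,s}^{\otimes k}}\mid\mathcal F^W_s]=\Phi^1(\varepsilon^{-H}Q_{s,s+\varepsilon\cdot})$ and likewise $\Phi^2(Q_{0,\cdot})$ for the $s=0,\,t=1$ case. The crucial observation is that $\Phi^1=\Phi^2$ as path functionals: once one conditions on $\{Q=\eta\}$, the remaining randomness is a functional $F_\eta$ of the $L$-process alone, and Lemma~\ref{equnorm} applied to $L$ (with $Q$ frozen) shows $F_\eta(\varepsilon^{-H}L_{s,s+\varepsilon\cdot})\overset{M}{=}F_\eta(L_{0,\cdot})$, hence the conditional expectations coincide. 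With $\Phi^1=\Phi^2$ established, a second application of Lemma~\ref{equnorm}---this time to $Q$---gives the Malliavin equivalence of the two conditional norms. So the paper uses Lemma~\ref{equnorm} twice (once for $L$, once for $Q$), each time in its original single-process form, rather than building a joint extension.
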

\begin{proof}
Clearly, it is equivalent to proving that  
\begin{equation}\label{ImHeHmIm}
\left( \mathbb{E} \left[ \|\mathbf{D}_W^k I^m_{s,s+\varepsilon}\|^p_{(\mathcal{H}_{W,s})^{\otimes k}} \middle| \mathcal{F}^W_s \right] \right)^{1/p} \overset{M}{=} \varepsilon^{Hm} \left( \mathbb{E} \left[ \|\mathbf{D}_W^k I^m_{0,1}\|^p_{(\mathcal{H}_{W,0})^{\otimes k}} \middle| \mathcal{F}^W_0 \right] \right)^{1/p}.
\end{equation}for any \( s \geqslant 0 \) and \( \varepsilon > 0 \). To this end, let us write
\[
\begin{aligned}
\Phi^1(\varepsilon^{-H} Q_{s,s+\varepsilon\cdot}) &= \varepsilon^{-Hmp} \mathbb{E}\left[ \left\|\mathbf{D}_W^k I^m_{s,s+\varepsilon}\right\|^p_{(\mathcal{H}_{W,s})^{\otimes k}} \middle| \mathcal{F}^W_s \right] \\
\Phi^2(Q_{0,\cdot}) &= \mathbb{E}\left[ \left\|\mathbf{D}_W^k I^m_{0,1}\right\|^p_{(\mathcal{H}_{W,0})^{\otimes k}} \middle| \mathcal{F}^W_0 \right]
\end{aligned}
\]with two path functionals  \( \Phi^1, \Phi^2 : C([0,1]) \to \mathbb{R} \), 
where \( Q_t,L_t \) are defined in \eqref{defXY}. We prove \eqref{ImHeHmIm} by showing
\begin{equation}\label{PsiMPsi}
\Phi^1(\varepsilon^{-H} Q_{s,s+\varepsilon\cdot}) \overset{M}{=} \Phi^2(Q_{0,\cdot}).
\end{equation}
Indeed, according to Lemma \ref{equnorm}, it suffices to show that \( \Phi^1 = \Phi^2 \). This is straightforward, since for any \( \eta \in C([0,1]) \),
\begin{equation}\label{secondinstar}
\begin{aligned}
\Phi^1(\eta_\cdot) &= \mathbb{E}\left[ \|\mathbf{D}_W^k (\varepsilon^{-Hm} I^m_{s,s+\varepsilon}) \|^p_{(\mathcal{H}_{W,s})^{\otimes k}} \middle| \varepsilon^{-H} Q_{s,s+\varepsilon\cdot} = \eta_\cdot \right] \\
&= \mathbb{E}\left[ \|\mathbf{D}_W^k I^m_{0,1} \|^p_{(\mathcal{H}_{W,0})^{\otimes k}} \middle| Q_{0,\cdot} = \eta_\cdot \right] = \Phi^2(\eta_\cdot).
\end{aligned}
\end{equation}
The second equality holds because conditional on \( \{Q_{0,\cdot} = \eta_\cdot\} \) and \( \{\varepsilon^{-H} Q_{s,s+\varepsilon\cdot} = \eta_\cdot\} \), there exists a functional \( F_\eta : C([0,1]) \to \mathbb{R} \) such that
\[
F_\eta(\varepsilon^{-H} L_{s,s+\varepsilon\cdot}) = \varepsilon^{-Hm} I^m_{s,s+\varepsilon}, \quad \text{and} \quad F_\eta(L_{0,\cdot}) = I^m_{0,1}.
\]
Since \( (Q_u)_{u \geqslant 0} \) and \( (L_u)_{u \geqslant 0} \) are independent, Lemma \ref{equnorm} implies that
\[
\varepsilon^{-Hm} I^m_{s,s+\varepsilon} = F_\eta(\varepsilon^{-H} L_{s,s+\varepsilon\cdot}) \overset{M}{=} F_\eta(L_{0,\cdot}) = I^m_{0,1}
\]
on the event \( \{Q_{0,\cdot} = \eta_\cdot\} \cap \{\varepsilon^{-H} Q_{s,s+\varepsilon\cdot} = \eta_\cdot\} \). This completes the proof of \eqref{secondinstar}.
\end{proof}
Now we give the proof of Lemma \ref{logsig2}.

\begin{proof}[Proof of Lemma \ref{logsig2}]
It suffices to prove that for any fixed \( k, p \), there exist a family of random variables \( (\Psi^2_{s,t})_{0\leqslant s < t} \) whose Malliavin-Sobolev norms do not depend on $s$ or $t$, such that
\[
\|I^m_{s,t}\|_{k,p,s} \leqslant (t-s)^{mH} \Psi^2_{s,t}.
\]

To this end, we adopt a similar idea as in \cite{I}. Let \( \tilde{W} \) be an independent copy of \( W \), and we use $\tilde{\mathbb{E}}$ and $\mathcal{F}_s^{\tilde{W}}$ to represent the expectation and filtration with respect to $\tilde{W}$. Define \( \tilde{B} \) by
\[
\tilde{B}^i_t = \int_{-\infty}^t k_H(t,u)\, d\tilde{W}^i_u, \quad 0 \leqslant t \leqslant T,\quad 1\leqslant i\leqslant d.
\]
We also set
\[
\Theta_1
=\sum_{i=1}^m \int_{0< t_1<\cdots< t_m< 1}
\cdots dB_{t_{i-1}}\otimes\underset{\substack{\text{$i$-th}}}{d\tilde{B}_{t_i}}\otimes dB_{t_{i+1}}\cdots 
\]
To simplify notation, we denote $I^m:=I^m_{0,1}$. For any \( h\in\mathcal{H}_B \), one has
\[
\left(\mathbf{D}^1_{B}I^m\right)(h)
=\sum_{i=1}^m \int_{0< t_1<\cdots< t_m< 1}
\cdots dB_{t_{i-1}}\otimes\underset{\substack{\text{$i$-th}}}{d\bar{h}_{t_i}}\otimes dB_{t_{i+1}}\cdots =\left(\mathbf{D}^1_{\tilde{B}}\Theta_1\right)(h),
\] 
where $\bar{h}$ is the corresponding element of $h$ in the intrinsic Cameron-Martin space $\bar{\mathcal{H}}_B$.
Therefore, \( \mathbf{D}^1_{B}I^m=\mathbf{D}^1_{\tilde{B}}\Theta_1\). It follows from Proposition \ref{DWLDB} that
\[
\mathbf{D}^1_{W}I^m=\mathscr{L}\left(\mathbf{D}^1_{B}I^m\right)=\mathscr{L}\left(\mathbf{D}^1_{\tilde{B}}\Theta_1\right)=\mathbf{D}^1_{\tilde{W}}\Theta_1
\]
holds almost surely as a functional from $\Omega\to L^2([0,1])$. 

Since all $\mathbb{D}^{k,2}_W$-norms are equivalent on a fixed inhomogeoneous Wiener chaos, one has
\[
||\mathbf{D}^1_{W}I^m||_{\mathcal{H}_{W,0}}=||\mathbf{D}^1_{\tilde{W}}\Theta_1||_{\mathcal{H}_{W,0}}\lesssim \left(\tilde{\mathbb{E}}\left[\Theta_{1}^2\middle|\mathcal{F}^{\tilde{W}}_0\right]\right)^{1/2}.
\]
In addition, by H\"older's inequality, one has
\[
\left(\mathbb{E}\left[||\mathbf{D}^1_{W}I^m||_{\mathcal{H}_{W,0}}^p\middle|\mathcal{F}^W_0 \right]\right)^{1/p}\leqslant \left(\mathbb{E}\tilde{\mathbb{E}}\left[\Theta_1^p\middle|\mathcal{F}^{W}_0\vee\mathcal{F}^{\tilde{W}}_0 \right]\right)^{1/p}
\]for all $p\geqslant 2$.
As in Inahama \cite{I}, one can similarly define \( \Theta_k \) for \( 1\leqslant k\leqslant m \) and show that
\[
\left(\mathbb{E}\left[||\mathbf{D}^k_{W}I^m||_{\mathcal{H}_{W,0}}^p\middle|\mathcal{F}^W_0 \right]\right)^{1/p}\leqslant \left(\mathbb{E}\tilde{\mathbb{E}}\left[\Theta_k^p\middle|\mathcal{F}^{W}_0\vee\mathcal{F}^{\tilde{W}}_0 \right]\right)^{1/p}
\]for all $p\geqslant 2$.
It follows that for any \( k\geqslant1 \), \( p\geqslant2 \), there exists a constant \( C_{k,p} \) such that
\begin{equation}\label{keyboundforIm}
\|I^m\|_{k,p,0}\leqslant C_{k,p}\sum_{r=1}^k\left(\mathbb{E}\tilde{\mathbb{E}}\left[\Theta_r^p\middle|\mathcal{F}^{W}_0\vee\mathcal{F}^{\tilde{W}}_0 \right]\right)^{1/p},
\end{equation}
where 
\begin{equation}\label{defthetar}
\Theta_r
=\sum_{1\leqslant i_1<\cdots < i_r\leqslant m} \int_{0< t_1<\cdots< t_m< 1}
\cdots 
\otimes \underset{\text{$i_1$-th}}{d\tilde{B}_{t_{i_1}}}
\cdots \otimes \underset{\text{$i_2$-th}}{d\tilde{B}_{t_{i_2}}}
\otimes
\underset{\raisebox{-1.0ex}{$\cdots$}}{\cdots}
\otimes \underset{\text{$i_r$-th}}{d\tilde{B}_{t_{i_r}}}
\cdots
\end{equation} and the omitted terms are filled with \( \otimes~dB_{t_i} \).

To eliminate the dependence on \( \tilde{W} \), we take  \( L^p \)-norm with respect to the conditional probability \( \mathbb{P}(~\cdot~| \mathcal{F}_0^{W}) \) on both sides of \eqref{keyboundforIm}. This gives 
\begin{equation}\label{DifferenIm}
\|I^m_{s,t}\|_{k,p,s} \overset{M}{=} (t-s)^{mH}\|I^m_{0,1}\|_{k,p,0} \leqslant C_{k,p} (t-s)^{mH} \sum_{r=1}^k \left( \mathbb{E} \left[ \Theta_r^p \middle| \mathcal{F}^W_0 \right] \right)^{1/p}.
\end{equation}
We thus define \( \Psi^2 := C_{k,p}\left( \mathbb{E} \left[ \Theta_r^p \middle| \mathcal{F}^W_0 \right] \right)^{1/p} \). According to the definition of \( \Theta_i \) in \eqref{defthetar}, each \( \Theta_i \) is in fact a component of the signature lifted from the \( 2d \)-dimensional fractional Brownian motion \( (B_u, \tilde{B}_u)_{u \geqslant 0} \). It is easily seen that \( \Psi^2 \) is smooth in the Malliavin sense. The random variable $\Psi^2_{s,t}$ is now defined in a similar way as in \eqref{defPsii} from $\Psi^2$. This completes the proof of the lemma.
\end{proof}

\subsubsection{Estimate of $\gamma(\mathbf{U}_{s,t})^{-1}$}

The last ingredient for estimating the right hand side of \eqref{generalbound} is the inverse Malliavin matrix of $\mathbf{U}_{s,t}$. The main result for this part is summarised in the lemma below. 

\begin{lemma}\label{logsig3}
 For any integer $k \geqslant 1$ and any real number $p > 0$, there exist a constant $\alpha>0$ and a family of random variables $(\Psi^3_{s,t})_{0\leqslant s < t}$ (depending on $k$ and $p$) such that
\[
\left\|\gamma(\mathbf{U}_{s,t})^{-1} \right\|_{k,p,s} \leqslant \frac{1}{(t-s)^\alpha} \Psi^3_{s,t}, \quad 0 \leqslant s < t \leqslant T,
\]
where the Malliavin-Sobolev norm of each $\Psi^3_{s,t}$ does not depend on $s$ or $t$.
\end{lemma}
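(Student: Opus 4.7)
The plan is to follow the same scaling-plus-equivalence template used in Lemmas \ref{logsig1} and \ref{logsig2}: reduce the estimate on a general interval $[s,t]$ to the reference interval $[0,1]$ by exploiting the self-similarity of fBM together with the dilation structure of $\mathfrak{g}^{(l)}$, and then obtain the reference bound from the non-degeneracy result of Section \ref{section3}.

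First observe that each coordinate $\mathbf{U}^I_{s,t}$ of the truncated logarithmic signature along the basis vector $e_I$ ($I \in \mathcal{A}_1(l)$) is a universal polynomial in the iterated integrals $I^m_{s,t}$ ($1\leqslant m \leqslant |I|$), in which only terms of weighted degree $|I|$ under the dilation grading survive. The scaling identity established in the proof of Lemma \ref{logsig2}, combined with Lemma \ref{equnorm}, therefore yields the entrywise equivalence
\[
\mathbf{U}^I_{s,t} \overset{M}{=} (t-s)^{H|I|}\mathbf{U}^I_{0,1}.
\]
Setting $D_{s,t} := \mathrm{diag}\bigl((t-s)^{H|I|}\bigr)_{I\in\mathcal{A}_1(l)}$, this lifts at the matrix level to
\[
\gamma(\mathbf{U}_{s,t}) \overset{M}{=} D_{s,t}\,\gamma(\mathbf{U}_{0,1})\,D_{s,t},\qquad \gamma(\mathbf{U}_{s,t})^{-1} \overset{M}{=} D_{s,t}^{-1}\,\gamma(\mathbf{U}_{0,1})^{-1}\,D_{s,t}^{-1}.
\]
Since every diagonal entry of $D_{s,t}^{-1}$ is bounded by $(t-s)^{-Hl}$ on $(0,T]$, an entrywise application of the conditional Malliavin-Sobolev norm then gives
\[
\|\gamma(\mathbf{U}_{s,t})^{-1}\|_{k,p,s} \leqslant (t-s)^{-2Hl}\,\Psi^3_{s,t},
\]
where $\Psi^3_{s,t}$ is constructed from the functional representation of $\|\gamma(\mathbf{U}_{0,1})^{-1}\|_{k,p,0}$ via the substitution $Q_{0,\cdot} \mapsto (t-s)^{-H}Q_{s,s+(t-s)\cdot}$ as in \eqref{defPsii}. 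By Lemma \ref{equnorm}, the Malliavin-Sobolev norms of $\Psi^3_{s,t}$ coincide with those of $\|\gamma(\mathbf{U}_{0,1})^{-1}\|_{k,p,0}$, hence are independent of $(s,t)$.

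The last task is to show that $\|\gamma(\mathbf{U}_{0,1})^{-1}\|_{k,p,0} < \infty$ for every $k,p$. Regarding $\mathbf{U}_{0,1}$ as the terminal value of the RDE on $G^{(l)}$ driven by $\mathbf{X}$ with left-invariant vector fields $\{V_i\}_{i=1}^d$ corresponding to the canonical basis of $\mathbb{R}^d$, these vector fields satisfy the uniform H\"ormander condition trivially with hypoellipticity constant $\bar{l}=l$: at the identity their iterated Lie brackets span all of $\mathfrak{g}^{(l)}$, and the property propagates to every point by left-invariance. Proposition \ref{nonden} applied to this system (with $m=1$) then delivers negative moments of $\gamma(\mathbf{U}_{0,1})^{-1}$ of all orders, and a standard Malliavin chain-rule argument upgrades this to finiteness of every Sobolev norm $\|\gamma(\mathbf{U}_{0,1})^{-1}\|_{k,p,0}$. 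The proof is then complete with $\alpha := 2Hl$.

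The main technical obstacle is to make the scaling identity rigorous at the level of the conditional Malliavin-Sobolev norm $\|\cdot\|_{k,p,s}$: since $\overset{M}{=}$ is defined through the distributions of Malliavin derivative norms rather than pointwise equality, one must verify that matrix inversion, conditional expectation against $\mathcal{F}^W_s$ and the relevant matrix norm all interact correctly with the substitution used to pass from $(s,t)$ back to $(0,1)$. The key idea, directly analogous to that of Lemma \ref{logsig2}, is to realise the whole quantity as a path functional of $Q_{s,s+(t-s)\cdot}$ and then invoke Lemma \ref{equnorm}. A secondary subtlety is that Proposition \ref{nonden} is stated for RDEs on $\mathbb{R}^N$ with $C_b^\infty$ vector fields, whereas the signature lives on $G^{(l)}$ with left-invariant polynomial vector fields; this can be handled either by smooth truncation outside a large ball (exploiting the fact that $\mathbf{U}_{0,1}$ has moments of all orders) or by invoking the classical non-degeneracy of the signature of fBM for $H \in (1/4,1)$ in the Gaussian rough path literature \cite{CHLT,cass2010densities}.
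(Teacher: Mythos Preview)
Your approach is valid and shares the paper's overall scaling-reduction template, but the route to nondegeneracy at the reference interval is genuinely different. The paper does not invoke Proposition~\ref{nonden}; instead it passes through the Baudoin--Ouyang--Zhang decomposition (Lemma~\ref{lamleq2l}), bounding $\lambda_{\max}(\gamma(\mathbf{U}^\varepsilon_1)^{-1})$ by $\varepsilon^{-2Hl}$ times a Jacobian factor (handled explicitly in Proposition~\ref{lamJJ}) and the inverse of a reduced Malliavin matrix $\mathbf{M}^\varepsilon$. The conditional nondegeneracy of $\mathbf{M}^\varepsilon$ is then proved from scratch (Proposition~\ref{lamMleqPsi}) via \emph{conditional} small-ball estimates for linear combinations of iterated integrals (Lemma~\ref{5.16}, building on Proposition~\ref{6.5}). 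This yields an explicit smooth $\mathcal{F}^W_0$-measurable upper bound. Your shortcut is to exploit the exact identity $\gamma(\mathbf{U}^\varepsilon_1)=D_\varepsilon\gamma(\mathbf{U}_{0,1})D_\varepsilon$ coming from the dilation structure of the signature, thereby reducing everything to the smoothness of $\|\gamma(\mathbf{U}_{0,1})^{-1}\|_{k,p,0}$, which you obtain from Proposition~\ref{nonden} plus the chain rule and the fact that conditional expectation is a contraction on $\mathbb{D}^{k,p}_W$. This is more economical and conceptually cleaner for the signature, whereas the paper's route is more constructive and avoids the $C_b^\infty$ issue entirely by working with the explicit $\beta$-coefficients (Lemma~\ref{indofepsi}).

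One point you should make explicit: the Malliavin matrix appearing in \eqref{generalbound} is $\gamma_W:=\langle\mathbf{D}_W\mathbf{U},\mathbf{D}_W\mathbf{U}\rangle_{\mathcal{H}_{W,0}}$, whereas Proposition~\ref{nonden} controls $\gamma_B:=\langle\mathbf{D}_B\mathbf{U},\mathbf{D}_B\mathbf{U}\rangle_{\mathcal{H}_B}$. These are not the same matrix, but by Proposition~\ref{DWLDB} and the two-sided norm comparison in Lemmas~\ref{6.7}--\ref{6.6} one has $\langle a,\gamma_W a\rangle\geqslant c\,\langle a,\gamma_B a\rangle$ uniformly in $a$, hence $\gamma_W^{-1}\leqslant c^{-1}\gamma_B^{-1}$ as quadratic forms and the negative moments transfer. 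With this bridge in place, together with the adaptation of Proposition~\ref{nonden} to the polynomial left-invariant fields (for which the Jacobian and the quantity $R_X$ are in fact explicit polynomials in iterated integrals, so the integrability needed there holds directly), your argument goes through and recovers the same exponent $\alpha=2Hl$.
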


The proof of Lemma~\ref{logsig3} is quite technical and therefore postponed to the appendix. 

\subsubsection{Proof of Proposition \ref{densiofrohst}}
We now combine Lemmas~\ref{logsig1}, \ref{logsig2}, and \ref{logsig3} to derive an upper bound for the conditional density of the logarithmic signature process \( \mathbf{U}_{s,t} \).
\begin{proof}[Proof of Proposition \ref{densiofrohst}]
Based on Lemma~\ref{equnorm}, one can apply a similar argument as in the derivation of~\eqref{ImHeHmIm} to show that
\[
\mathbb{P}\left( \mathbf{U}_{s,s+\varepsilon} \in du ~\big|~ \mathcal{F}^W_s \right) \overset{M}{=} \mathbb{P}\left( \delta_{\varepsilon^H} \mathbf{U}_{0,1} \in du ~\big|~ \mathcal{F}^W_0 \right).
\]for any \( \varepsilon > 0 \) and \( s \geqslant 0 \).
By the change of variable \( u = \delta_{\varepsilon^H} \hat{u} \), one obtains
\[
\mathbb{P}\left( \delta_{\varepsilon^H} \mathbf{U}_{0,1} \in du ~\big|~ \mathcal{F}^W_0 \right) = \varepsilon^{-H\nu} \mathbb{P}\left( \mathbf{U}_{0,1} \in d\hat{u} ~\big|~ \mathcal{F}^W_0 \right).
\]
It follows that
\[
\rho_{s,s+\varepsilon}(u) = \mathbb{P}\left( \mathbf{U}_{s,s+\varepsilon} \in du ~\big|~ \mathcal{F}^W_s \right) \overset{M}{=} \varepsilon^{-H\nu} \rho_{0,1}(\hat{u})
\]
for all \( u \in \mathfrak{g}^{(l)}(\mathbb{R}^d) \) with \( \hat{u} = \delta_{\varepsilon^{-H}} u \).

By applying Lemmas~\ref{logsig1}, \ref{logsig2} and~\ref{logsig3} to the inequality (\ref{generalbound}), one finds that
\[
\rho_{0,1}(\hat{u}) \leqslant C \exp\left( -\frac{\|\hat{u}\|_{\mathrm{CC}}^2}{C} \right) \prod_{i=1}^3 (1 + \Psi^i)^k
\]
for some constants \( C, k > 0 \). As a result, for any \( 0 \leqslant s < t \leqslant T \), one has
\[
\rho_{s,t}(u) \leqslant \frac{C}{(t-s)^{H\nu}} \exp\left( -\frac{\|u\|_{\mathrm{CC}}^2}{C(t-s)^{2H}} \right) \prod_{i=1}^3 \left( 1 + \Psi^i_{s,t} \right)^k,
\]
where we define
\[
\Psi := \prod_{i=1}^3 \left( 1 + \Psi^i \right)^k,
\]
and \( \Psi_{s,t} \) is defined in a similar way as in \eqref{defPsii} from $\Psi$. It is clear that the Malliavin-Sobolev norm of \( \Psi \) is finite, which completes the proof of the proposition.
\end{proof}

\subsection{Proof of Theorem \ref{GW2}}

We now proceed to develop the proof of the local upper bound for the joint density of $(Y_s,Y_t)$ (Theorem \ref{GW2}). More specifically, we consider the RDE \eqref{density1}. Given smooth vector fields $\{V_i\}_{i=1}^d$, we  set $V_\emptyset=1$ and for each word $I=(i_1,\cdots,i_r)$,
\[
V_{(I)} := V_{i_1}( \cdots V_{i_{r-1}}(V_{i_r})\cdots), \quad \text{and} \quad V_{I} := [V_{i_1}, \cdots [V_{i_{r-2}}, [V_{i_{r-1}}, V_{i_r}]]\cdots].
\]
Here we identify a vector field with a first order differential operator  (directional derivative) in the obvious way. The following definition plays an essential role in our analysis. 
\begin{definition}
For each $l\geqslant 1$, we define the \textit{Taylor approximation function} \( F_l : \mathfrak{g}^{(l)} \times \mathbb{R}^N \to \mathbb{R}^N \) of order \( l \) associated with the RDE \eqref{density1} by
\begin{equation}\label{TaylorFct}
F_l(u,x) := \sum_{\alpha \in \mathcal{A}_1(l)} V_{(\alpha)}(x) \cdot (\exp u)^\alpha, \quad (u,x) \in \mathfrak{g}^{(l)} \times \mathbb{R}^N,
\end{equation}
where the exponential function is defined on \( T^{(l)} \) by \eqref{(2.6)} and \( (\exp u)^\alpha \) is the coefficient of \( \exp u \) with respect to the tensor basis element \( e_{(\alpha)} \). We  say that \( u \in \mathfrak{g}^{(l)} \) \emph{joins \( x \) to \( y \) in the sense of Taylor approximation} if \( y = x + F_l(u,x) \).
\end{definition}

Let $l\geqslant \bar{l}$ be given fixed, where we recall that $\bar{l}$ is the hypoelliptic constant appearing in Hypothesis \ref{1.2}. According to  \cite[Corollary 4.9]{geng2022precise}, there exists a constant $r>0$ such that for each \( x \in \mathbb{R}^N \) and \( y \in \{x + F_l(u,x) : \|u\|_{\text{HS}} < r\} \), the "bridge space"
\begin{equation*}
M_{x,y} := \{ u \in \mathfrak{g}^{(l)} : \|u\|_{\text{HS}} < r \text{ and } x + F_l(u,x) = y \}
\end{equation*}
is a submanifold of \( \{ u \in \mathfrak{g}^{(l)} : \|u\|_{\text{HS}} < r \} \) with dimension \( \dim \mathfrak{g}^{(l)} - N \). In addition, since both \( \mathfrak{g}^{(l)} \) and \( \mathbb{R}^N \) are oriented Riemannian manifolds, the manifold \( M_{x,y} \) carries a natural orientation and hence a volume form which will be denoted as \( m_{x,y} \). The following result is a standard disintegration formula in Riemannian geometry.

\begin{proposition}
For any \( \varphi \in C_c^\infty(\{ u \in \mathfrak{g}^{(l)} : \|u\|_{\mathrm{HS}} < r \}) \), one has
\begin{equation*}
\int_{\mathfrak{g}^{(l)}} \varphi(u)\, du = \int_{\mathbb{R}^N} dy \int_{M_{x,y}} K(v,x)\varphi(v)\, m_{x,y}(dv),
\end{equation*}
where the kernel \( K \) is given by
\begin{equation}\label{defKvx}
K(v,x) = \left( \det(JF_l(v,x) \cdot JF_l(v,x)^*) \right)^{-1/2},
\end{equation}
$JF_l(v,x):\mathfrak{g}^{(l)}\to \mathbb{R}^N$ is the Jacobian of $F_l(\cdot,x)$ at $v\in\mathfrak{g}^{(l)}$ and we define \( m_{x,y} = 0 \) if \( M_{x,y} = \emptyset \).
\end{proposition}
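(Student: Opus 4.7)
The plan is to recognise the identity as a direct application of the smooth coarea formula to the map $\Phi_x : \{u \in \mathfrak{g}^{(l)} : \|u\|_{\mathrm{HS}} < r\} \to \mathbb{R}^N$ defined by $\Phi_x(v) := x + F_l(v,x)$. By the very definition of $M_{x,y}$, the fibre $\Phi_x^{-1}(y)$ coincides with $M_{x,y}$, so the bridge manifolds appearing in the statement are precisely the level sets of this single map, and the disintegration formula is of the shape one expects from coarea.

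The first step is to verify that $\Phi_x$ is a smooth submersion on the whole open set $U := \{u \in \mathfrak{g}^{(l)} : \|u\|_{\mathrm{HS}} < r\}$. Given any $v \in U$, setting $y := \Phi_x(v)$ gives $v \in M_{x,y}$, and by the cited Corollary 4.9 of \cite{geng2022precise} the latter is a smooth submanifold of $U$ of codimension $N$. The constant-rank/submersion theorem then forces $d\Phi_x(v) = JF_l(v,x)$ to be surjective. In particular the Gram determinant $\det\bigl(JF_l(v,x)\,JF_l(v,x)^*\bigr)$ is strictly positive throughout $U$, so $K(v,x)$ defined by \eqref{defKvx} is a smooth positive function on $U$.

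Next I would invoke the classical smooth coarea formula in its Riemannian form: for any smooth submersion $f : U \to \mathbb{R}^N$ from an open subset $U$ of a Riemannian manifold, and any integrable function $g$ on $U$,
$$\int_U g(v)\,J_f(v)\,dv \;=\; \int_{\mathbb{R}^N}\!\! dy \int_{f^{-1}(y)} g(v)\,dV_{f^{-1}(y)}(v),$$
where $J_f(v) := \sqrt{\det\bigl(df(v)\,df(v)^*\bigr)}$ and $dV_{f^{-1}(y)}$ denotes the Riemannian volume form induced on the fibre by restriction of the ambient metric. Applying this with $f = \Phi_x$ and the test integrand $g(v) := \varphi(v)\,K(v,x)$, one obtains $g(v)\,J_{\Phi_x}(v) = \varphi(v)$ pointwise on $U$, which yields precisely the claimed identity.

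The only point that genuinely needs checking — and this is where the main (mild) obstacle lies — is the identification of the fibre-volume form produced by the coarea formula with the natural volume form $m_{x,y}$ in the statement. This reduces to two standard facts: $M_{x,y}$ inherits its Riemannian metric by restriction of the Hilbert--Schmidt inner product on $\mathfrak{g}^{(l)}$, and its canonical orientation is determined by the short exact sequence $0 \to T_vM_{x,y} \to T_v\mathfrak{g}^{(l)} \xrightarrow{d\Phi_x} T_y\mathbb{R}^N \to 0$; both conventions coincide with those underlying the coarea formula. Finally, for $y \notin \Phi_x(U)$ the fibre $M_{x,y}$ is empty and by the convention $m_{x,y} = 0$ in the statement such $y$ contribute nothing to the right-hand side, so the formula holds for all $y \in \mathbb{R}^N$.
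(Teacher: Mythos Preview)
Your proposal is correct and is precisely the standard argument one would give: the paper itself does not prove this proposition but merely states it as ``a standard disintegration formula in Riemannian geometry.'' Your identification of the result as the smooth coarea formula applied to the submersion $\Phi_x(v)=x+F_l(v,x)$, together with the observation that $K(v,x)^{-1}$ is exactly the coarea Jacobian $J_{\Phi_x}(v)$, is the canonical justification.
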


The above disintegration formula immediately leads to the following formula for the localised conditional density of the Taylor approximation process \( Y_l(s,t,x) := x + F_l(\mathbf{U}_{s,t},x) \).

\begin{lemma}\label{lem:DenTay}
Let $r>0$ be given as before. Let $\eta:\mathfrak{g}^{(l)}\rightarrow[0,1]$ be a smooth bump function such that $\eta(u)=1$ when $\|u\|_{\mathrm{HS}}<r/2$ and $\eta(u)=0$ when $\|u\|_{\mathrm{HS}}>r$. Let \( \mathbb{P}_l^\eta(s,t,\cdot) \) be the conditional measure defined by
\[
\mathbb{P}_l^\eta(s,t,A):= \mathbb{E} \left[ \eta(\mathbf{U}_{s,t}) \mathbf{1}_{\left\{ Y_l(s,t,x) \in A\right\}}\middle|\mathcal{F}^W_s \right]\big|_{x=Y_s},\ \ \ A\in\mathcal{B}(\mathbb{R}^N).
\]Then \( \mathbb{P}_l^\eta(s,t,\cdot) \) is absolutely continuous with respect to the Lebesgue measure and its density is given by
\begin{equation}\label{densitypleta(s,t,y)}
p_l^\eta(s,t,y) = \int_{M_{x,y}} \eta(u) K(u,x) \rho_{s,t}(u) m_{x,y}(du)\big|_{x=Y_s},
\end{equation}
where \( \rho_{s,t} \) is the density of \( \mathbf{U}_{s,t} \) conditional on $\mathcal{F}^W_s$ and \( K \) is given by \eqref{defKvx}.
\end{lemma}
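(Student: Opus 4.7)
The plan is to test both sides against an arbitrary $\varphi \in C_c^\infty(\mathbb{R}^N)$ and reduce the assertion to the disintegration formula of the preceding proposition. Since $Y_s$ is $\mathcal{F}_s^W$-measurable and $\rho_{s,t}$ is the conditional density of $\mathbf{U}_{s,t}$ given $\mathcal{F}_s^W$ (which exists and is smooth in $u$ by the discussion preceding Proposition~\ref{densiofrohst}, using that $\mathbf{U}_{s,t}$ satisfies a hypoelliptic RDE on $\mathfrak{g}^{(l)}$), I would first freeze $x = Y_s$ after conditioning and then integrate against $\rho_{s,t}(u)\,du$ to obtain
\[
\int_{\mathbb{R}^N} \varphi(y)\, \mathbb{P}_l^\eta(s,t,dy)
= \int_{\mathfrak{g}^{(l)}} \eta(u)\, \varphi\bigl(x+F_l(u,x)\bigr)\, \rho_{s,t}(u)\, du\, \bigg|_{x=Y_s}.
\]
The integrand is supported in $\{\|u\|_{\mathrm{HS}} < r\}$ thanks to $\eta$ and is smooth in $u$, so it is a legitimate test function for the disintegration formula.

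Next I would apply the disintegration formula with $\psi(u) := \eta(u)\, \varphi(x+F_l(u,x))\, \rho_{s,t}(u)$. Since $x + F_l(v,x) = y$ whenever $v \in M_{x,y}$, the factor $\varphi(x+F_l(v,x))$ equals $\varphi(y)$ on $M_{x,y}$ and can be pulled out of the inner integral. This yields
\[
\int_{\mathbb{R}^N} \varphi(y)\, \mathbb{P}_l^\eta(s,t,dy)
= \int_{\mathbb{R}^N} \varphi(y) \left(\int_{M_{x,y}} \eta(v)\, K(v,x)\, \rho_{s,t}(v)\, m_{x,y}(dv)\right) dy\, \bigg|_{x=Y_s}.
\]
As $\varphi$ is arbitrary, this identifies the Radon--Nikodym density of $\mathbb{P}_l^\eta(s,t,\cdot)$ with respect to Lebesgue measure as the claimed expression \eqref{densitypleta(s,t,y)}.

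The main subtle point to check is the legitimacy of ``freezing $x = Y_s$'' and of treating $\rho_{s,t}(\cdot)$ as a genuine smooth function when the conditioning is $\mathcal{F}_s^W$-dependent; this is handled by a standard regular conditional probability argument, using the $\mathcal{F}_s^W$-measurability of $Y_s$ together with the fact that $\rho_{s,t}$ is smooth on $\mathfrak{g}^{(l)}$ and jointly measurable in $(\omega,u)$. Once these measurability issues are in place, the argument is purely formal: the disintegration identity holds pointwise in $\omega$ for each fixed value of $x$, and the substitution $x = Y_s$ at the end is justified by Fubini on the product of the Wiener space and $\mathbb{R}^N$.
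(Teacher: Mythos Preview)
Your proposal is correct and is precisely the argument the paper has in mind: the lemma is stated as an immediate consequence of the preceding disintegration formula, and your test-function computation spells out exactly how that implication works. The only cosmetic point is that the disintegration proposition is stated for $C_c^\infty$ test functions on $\{\|u\|_{\mathrm{HS}}<r\}$, so strictly speaking you need a one-line density or monotone-class remark to pass from smooth $\psi$ to the integrable $\psi(u)=\eta(u)\varphi(x+F_l(u,x))\rho_{s,t}(u)$, but this is routine and does not affect the substance of your argument.
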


The following result is taken from  \cite[Lemma 3.23]{kusuoka1987applications}.
\begin{lemma}\label{lem:CoVBridge}
There are constants $0<r_1<r_2$ and  $\zeta\in(0,1]$ such that for any $x\in\mathbb{R}^N$ and $y\in\left\{x+F_l(u,x):~u\in \mathfrak{g}^{(l)},~\|u\|_{\mathrm{HS}}<r_1\right\}$, there is a diffeomorphism $S_{x,y}$ between $M_{x,y}\cap\left\{u\in\mathfrak{g}^{(l)}:||u||_{\mathrm{HS}}<r_1\right\}$ and $M_{x,x}\cap\left\{u\in\mathfrak{g}^{(l)}:||u||_{\mathrm{HS}}<r_2\right\}$, which satisfies
\begin{equation}\label{measure}
\zeta\cdot m_{x,x} \leqslant \left(K(\cdot,x)~m_{x,y}\right) \circ S_{x,y}^{-1} \leqslant \frac{1}{\zeta} \cdot m_{x,x} \   \text{on}\ \left\{S_{x,y}(u);~u\in M_{x,y}~\text{and}~||u||_{\mathrm{HS}}<r_1\right\}
\end{equation}
\textit{and}
\begin{equation}\label{distance}
\zeta \cdot (\|S_{x,y}(u)\|_{\mathrm{CC}} + d(x, y)) \leqslant \| u \|_{\mathrm{CC}} \leqslant \frac{1}{\zeta} \cdot (\|S_{x,y}(u)\|_{\mathrm{CC}} + d(x, y))
\end{equation}
for any $u \in M_{x,y}$ with $\|u\|_{\mathrm{HS}}<r_1$.
\end{lemma}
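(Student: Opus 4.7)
The plan is to construct $S_{x,y}$ as the time-$(-1)$ flow of a carefully chosen transverse vector field on $\mathfrak{g}^{(l)}$, and to then verify the measure and distance estimates separately. All constants should be uniform in $x\in\mathbb{R}^N$ thanks to Hypotheses \ref{h1} and \ref{1.2}.

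First I would establish a uniform implicit function theorem for the submersion $F_l(\cdot,x)$. The differential at $u=0$ is the linear map $u\mapsto \sum_{\alpha\in\mathcal{A}_1(l)}V_{(\alpha)}(x)\,u^{\alpha}$; since Hypothesis \ref{1.2} ensures that the iterated brackets $V_I(x)$ (which appear, up to order-preserving combinatorial reshuffling, among the $V_{(\alpha)}(x)$) span $\mathbb{R}^N$ with a uniform lower bound $\lambda$ on the smallest singular value of $JF_l(0,x):\mathfrak{g}^{(l)}\to\mathbb{R}^N$, and since Hypothesis \ref{h1} gives uniform bounds on higher derivatives of $F_l$, a quantitative implicit function theorem produces a radius $r_0>0$, independent of $x$, on which $F_l(\cdot,x)$ is a smooth submersion and its fibers are smooth codimension-$N$ submanifolds. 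In particular, $M_{x,y}\cap\{\|u\|_\mathrm{HS}<r_0\}$ is a smooth submanifold for every $y$ with $\|y-x\|$ sufficiently small.

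Next I would build a smooth section $\sigma_{x,y}\in\mathfrak{g}^{(l)}$ with $F_l(\sigma_{x,y},x)=y-x$, $\sigma_{x,x}=0$, and the crucial CC-bound $\|\sigma_{x,y}\|_\mathrm{CC}\leqslant C\,d(x,y)$. The natural candidate is obtained by taking a near-optimal control path $h:[0,1]\to\mathbb{R}^d$ with $\Pi_1(x,h)=y$ and $\|\dot h\|_{L^2}\leqslant 2d(x,y)$, setting $\hat\sigma:=\log(\mathbf{h}^{(l)}_{0,1})\in\mathfrak{g}^{(l)}$, and correcting $\hat\sigma$ by a higher-order term through the quantitative IFT. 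The stochastic Taylor formula gives $F_l(\hat\sigma,x)=y-x$ up to a remainder of order $d(x,y)^{l+1}$, which the correction absorbs; the CC-bound then follows because $\|\hat\sigma\|_\mathrm{CC}$ is controlled by the length of $h$ and the homogeneity of the CC-norm, while the correction has higher-order CC-norm by the ball-box estimate. Now define $S_{x,y}$ via a transverse flow: pick a smooth right-inverse $\Pi(u,x):\mathbb{R}^N\to\mathfrak{g}^{(l)}$ of $JF_l(u,x)$, let $\xi(u):=\Pi(u,x)(y-x)$, and let $\Phi_t$ be its flow on $\mathfrak{g}^{(l)}$. Then $F_l(\Phi_t(u),x)=F_l(u,x)+t(y-x)$, so setting $S_{x,y}(u):=\Phi_{-1}(u)$ yields a diffeomorphism $M_{x,y}\to M_{x,x}$ (after shrinking $r_1$ so that the image sits in the HS-ball of radius $r_2$). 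The measure estimate \eqref{measure} is then a consequence of the co-area formula: $K(\cdot,x)m_{x,y}$ is precisely the conditional measure obtained by disintegrating Lebesgue measure along $F_l(\cdot,x)$, and the pushforward under $\Phi_{-1}$ is controlled by the uniform continuity of $JF_l$ and of the flow Jacobian, yielding a uniform bi-Lipschitz constant $\zeta\in(0,1]$.

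The hardest part, and the one I would expect to be the main obstacle, is the distance estimate \eqref{distance}. The upper bound $\|u\|_\mathrm{CC}\leqslant \zeta^{-1}(\|S_{x,y}(u)\|_\mathrm{CC}+d(x,y))$ follows from the quasi-subadditivity of the CC-norm under the BCH product $\star$ on the nilpotent group, once one verifies that $\|u\star S_{x,y}(u)^{-1}\|_\mathrm{CC}\lesssim d(x,y)$; this in turn is obtained by integrating $\|\xi(\Phi_t(u))\|_\mathrm{CC}\lesssim d(x,y)$ along the flow and invoking homogeneity. The lower bound requires simultaneously showing that $\|u\|_\mathrm{CC}\gtrsim d(x,y)$ and $\|u\|_\mathrm{CC}\gtrsim \|S_{x,y}(u)\|_\mathrm{CC}$. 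For the first, any $u\in M_{x,y}$ represents the log-signature of a candidate horizontal curve whose order-$l$ Taylor approximation lands at $y$, so the CC-norm is bounded below by the control distance modulo higher-order error, controlled via the ball-box theorem and equiregularity. For the second, the flow $\Phi_{-1}$ perturbs the CC-norm by at most $O(d(x,y))$, so $\|u\|_\mathrm{CC}\geqslant \|S_{x,y}(u)\|_\mathrm{CC}-Cd(x,y)$, which combined with the first lower bound closes the estimate. The delicate interplay between the intrinsic CC-geometry on $\mathfrak{g}^{(l)}$ and the extrinsic sub-Riemannian geometry on $\mathbb{R}^N$ — mediated precisely by $F_l$ — is where the full strength of uniform hypoellipticity is used.
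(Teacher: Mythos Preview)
The paper does not give a proof of this lemma: it is quoted verbatim from Kusuoka--Stroock \cite[Lemma~3.23]{kusuoka1987applications}, and the authors simply invoke it. So there is no ``paper's proof'' to compare against beyond the original reference.

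Your outline is broadly in the spirit of Kusuoka--Stroock's construction, and the submersion/co-area part is fine, but there is a genuine gap in the CC-distance estimate~\eqref{distance}. You define $S_{x,y}$ as the time-$(-1)$ map of the flow of $\xi(u)=\Pi(u,x)(y-x)$ with $\Pi$ a generic smooth right-inverse of $JF_l(u,x)$. That flow displaces $u$ by a vector of Hilbert--Schmidt size $\lesssim|y-x|\lesssim d(x,y)$, but what you need for $\|u\star S_{x,y}(u)^{-1}\|_{\mathrm{CC}}\lesssim d(x,y)$ is control on the \emph{Carnot--Carath\'eodory} displacement. On $\mathfrak{g}^{(l)}$ the CC-norm scales like $\max_k\|u^{(k)}\|_{\mathrm{HS}}^{1/k}$, so an HS-perturbation of size $d(x,y)$ landing in the $k$-th stratum contributes $d(x,y)^{1/k}\gg d(x,y)$ to the CC-norm. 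A generic right-inverse $\Pi$ will have components in all strata (indeed it must, since under Hypothesis~\ref{1.2} alone the first-stratum map $e_i\mapsto V_i(x)$ need not be surjective onto $\mathbb{R}^N$), so the bound $\|\xi\|_{\mathrm{CC}}\lesssim d(x,y)$ you assert does not follow, and ``invoking homogeneity'' does not repair it. You already built the key object that \emph{does} carry the right CC-scale, namely the section $\sigma_{x,y}$ with $\|\sigma_{x,y}\|_{\mathrm{CC}}\lesssim d(x,y)$, but you never use it in defining $S_{x,y}$. The Kusuoka--Stroock construction is organised around exactly this: $S_{x,y}$ is built via left translation by (essentially) $\sigma_{x,y}$ in the group, together with an approximate flow-composition identity for $F_l$ that controls the correction needed to land in $M_{x,x}$; the CC-estimate then comes directly from sub-additivity of $\|\cdot\|_{\mathrm{CC}}$ under $\star$ and $\|\sigma_{x,y}\|_{\mathrm{CC}}\lesssim d(x,y)$. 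Reworking your diffeomorphism to use $\sigma_{x,y}$ rather than an HS-orthogonal transverse flow would close the gap.
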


We are now in a position to prove Theorem \ref{GW2}. In fact, we will prove the following more general result which will be used in the next section.
\begin{theorem}\label{thm:JntDenUp}
Suppose that the vector fields $\{V_\alpha\}_{\alpha=1}^d$ satisfy Hypothesis \ref{h1} and Hypothesis \ref{1.2}. Let $p_{s,t}(x,y)$ be the joint density of the random vector $(Y_s,Y_t)$ defined by the RDE (\ref{density1}). For any $\delta>0$, there exist constants $C_1,C_2,\tau>0$ depending only on $\delta,H,N$ and the vector fields $\{V_\alpha\}_{\alpha=1}^d$ such that
\begin{equation}\label{upper}
p_{s,t}(x,y)\leqslant \frac{C_1}{|B_d(x,(t-s)^H)|_{\mathrm{Vol}}}\exp\left(-\frac{d(x,y)^2}{C_1(t-s)^{2H}}\right)+C_2|t-s|
\end{equation}
for all $(s,t,x,y)\in(\delta,T]\times \mathbb{R}^N\times \mathbb{R}^N$. Moreover, if one restricts to the cone region
$$
d(x,y)\leqslant (t-s)^H~~\text{and}~~(t-s)<\tau,
$$
then $C_2$ can be taken to be $0$.
\end{theorem}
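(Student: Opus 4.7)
The plan is to follow the Kusuoka-Stroock scheme by approximating $Y_t$ with its stochastic Taylor expansion $\widetilde Y_t := Y_s + F_l(\mathbf U_{s,t}, Y_s)$ at an order $l \geqslant \bar l$ chosen sufficiently large, controlling the density of this approximation via the disintegration in Lemma \ref{lem:DenTay}, and then absorbing the Taylor remainder into the additive error. Since $Y_s$ is $\mathcal F_s^W$-measurable, one has $p_{s,t}(x,y) = \mathbb E[\delta_x(Y_s)\,q_{s,t}(y)]$, where $q_{s,t}(y)$ is the $\mathcal F_s^W$-conditional density of $Y_t$. The task therefore reduces to producing a pathwise upper bound for $q_{s,t}(y)$ of the desired form, with the constants controlled by $L^p$-norms of smooth functionals whose Malliavin-Sobolev norms do not depend on $(s,t)$.

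Fix a smooth cutoff $\eta$ as in Lemma \ref{lem:DenTay}. To estimate the cutoff Taylor density $p_l^\eta(s,t,y)$ given by \eqref{densitypleta(s,t,y)}, first apply the bridge diffeomorphism $S_{x,y}$ of Lemma \ref{lem:CoVBridge} to transfer the integration from $M_{x,y}$ to $M_{x,x}$: the measure bound \eqref{measure} absorbs the Jacobian kernel $K(u,x)$, while the distance bound \eqref{distance} combined with Proposition \ref{densiofrohst} factors out $\exp(-d(x,y)^2/C(t-s)^{2H})$. This gives
\begin{equation*}
p_l^\eta(s,t,y) \,\leqslant\, \frac{C\,\Psi_{s,t}}{(t-s)^{H\nu}}\,\exp\!\left(-\frac{d(x,y)^2}{C(t-s)^{2H}}\right)\int_{M_{x,x}}\!\exp\!\left(-\frac{\|u\|_{\mathrm{CC}}^2}{C(t-s)^{2H}}\right)\,m_{x,x}(du)\bigg|_{x=Y_s}.
\end{equation*}
A standard dilation argument on the Carnot group $G^{(l)}$, combined with a ball-box type comparison between the CC-Gaussian integral over the fibre $M_{x,x}$ and the $d$-volume of $B_d(x,(t-s)^H)$ (via the nilpotent approximation of $\mathcal V$ at $x$), converts the right-hand side into $C\,\Psi_{s,t}\,|B_d(Y_s,(t-s)^H)|_{\mathrm{Vol}}^{-1}\,\exp(-d(Y_s,y)^2/C(t-s)^{2H})$.

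Two errors then need to be dealt with. First, the cutoff contribution $(1-\eta)(\mathbf U_{s,t})$ is supported on $\{\|\mathbf U_{s,t}\|_{\mathrm{HS}} > r/2\}$, an event whose $\mathcal F_s^W$-conditional probability decays exponentially in $(t-s)^{-2H}$ by Lemma \ref{logsig1}, and is absorbed into the main term uniformly for $s > \delta$. Second, the Taylor remainder $Y_t - \widetilde Y_t$ has $L^p$-norm of order $(t-s)^{H(l+1)}$; through a Malliavin integration by parts against $\delta_y$, justified by the moment bounds on $\gamma(Y_t)^{-1}$ obtained as in Proposition \ref{nonden}, this produces an additive correction in $q_{s,t}(y)$ of order $(t-s)^{H(l+1)}$. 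Choosing $l$ large enough so that $H(l+1)\geqslant 1$ bounds this correction by $C_2|t-s|$, and integrating against $\delta_x(Y_s)$ using the uniform $L^p$-control on $\Psi_{s,t}$ yields \eqref{upper}. In the cone $d(x,y) \leqslant (t-s)^H$ with $(t-s) < \tau$, the exponential factor is bounded below by a positive constant, so the main term is at least of order $|B_d(x,(t-s)^H)|_{\mathrm{Vol}}^{-1}$, which diverges as $t-s \to 0$; hence the additive $C_2|t-s|$ can be absorbed into the main term by enlarging $C_1$, giving $C_2=0$ in this regime.

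The main obstacle is the Malliavin IBP step used to pass from $p_l^\eta$ to the pointwise bound on $q_{s,t}$: one must simultaneously estimate $\Psi_{s,t}$, the inverse Malliavin matrix of $Y_t$, the Jacobian flow and the Taylor remainder, all in $L^p$-norms that are uniform in $(s,t)$ with $s>\delta$ and compatible with the hypoelliptic scaling $(t-s)^H$. A secondary technical point, classical in sub-Riemannian analysis but requiring the nilpotent approximation of $\mathcal V$ at the base point, is the identification $\int_{M_{x,x}} \exp(-\|u\|_{\mathrm{CC}}^2/C(t-s)^{2H})\,m_{x,x}(du) \asymp (t-s)^{H\nu}/|B_d(x,(t-s)^H)|_{\mathrm{Vol}}$, which is what converts the Carnot-group normalisation factor into the geometric ball volume.
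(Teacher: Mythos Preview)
Your proposal is correct and follows essentially the same route as the paper's proof. The paper executes the fibre-integral comparison by splitting $\int_{M_{x,x}}e^{-\|v\|_{\mathrm{CC}}^2/C(t-s)^{2H}}m_{x,x}(dv)$ into $\{\|v\|_{\mathrm{CC}}\leqslant 1\}$ and its complement and then invoking the explicit Kusuoka--Stroock estimates \cite[(3.32)--(3.33)]{kusuoka1987applications} rather than an abstract dilation/ball-box argument; and for the passage from $p_l^\eta$ to the conditional density $q_{s,t}$ it cites \cite[Proposition~4.23]{geng2022precise} (adapted to conditioning on $\mathcal F_s^W$) to obtain directly $|q_{s,t}(y)-p_l^\eta(s,t,y)|\leqslant |t-s|\,\Phi_{s,t}$, rather than running the Malliavin IBP from scratch. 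Your remark that $H(l+1)\geqslant 1$ suffices is slightly optimistic, since the IBP against $\delta_y$ also costs negative powers of $(t-s)$ through the conditional inverse Malliavin matrix; the cited reference absorbs this bookkeeping, and in your framework one simply takes $l$ larger still.
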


\begin{proof}
According to Lemma \ref{lem:DenTay} and Lemma \ref{lem:CoVBridge}, one has
\begin{equation}\label{firstestipletasty}
\begin{aligned}
p_l^\eta(s,t,y)=&\int_{M_{x,y}\cap\left\{u\in\mathfrak{g}^{(l)}:~||u||_{\mathrm{HS}}<r_1\right\}}\eta(u)\rho_{s,t}(u)K(u,x)m_{x,y}(du)\Big|_{x=Y_s}\\
\leqslant&\int_{M_{x,y}\cap\left\{u\in\mathfrak{g}^{(l)}:~||u||_{\mathrm{HS}}<r_1\right\}}\rho_{s,t}(u)K(u,x)m_{x,y}(du)\Big|_{x=Y_s}\\
=&\int_{M_{x,x}\cap\left\{v\in\mathfrak{g}^{(l)}:~||v||_{\mathrm{HS}}<r_2\right\}\bigcap\text{Im}(S_{x,y})}\rho_{s,t}(S_{x,y}^{-1}v)m_{x,x}(dv)\Big|_{x=Y_s},
\end{aligned}
\end{equation}
Thanks to Proposition \ref{densiofrohst}, one has
\begin{equation*}
\rho_{s,t}(u)\leqslant\frac{C}{(t-s)^{Hv}}\text{exp}(-\frac{||u||^2_{\mathrm{CC}}}{C(t-s)^{2H}})\Psi_{s,t}
\end{equation*}
with some $\mathcal{F}^W_{s}$-measurable $\Psi_{s,t}$ which is smooth in the sense of Malliavin and whose Malliavin-Sobolev norms do not depend on $s,t$. By applying $(\ref{distance})$, one obtains that
\begin{equation}\label{secondestipleta}
 \rho_{s,t}(S_{x,y}^{-1}v)\leqslant\frac{C}{(t-s)^{H\nu}}e^{-\frac{\zeta^2 d(x,y)^2}{C(t-s)^{2H}}}e^{-\frac{\zeta^2||v||^2_{\mathrm{CC}}}{C(t-s)^{2H}}}\Psi_{s,t}
\end{equation}for all $v\in \mathrm{Im}(S_{x,y}).$
Combining \eqref{firstestipletasty} and \eqref{secondestipleta}, it follows that
\begin{equation}\label{secondestipleta+1}
p_l^\eta(s,t,y)\leqslant\frac{C}{(t-s)^{H\nu}}e^{-\frac{\zeta^2 d(x,y)^2}{C(t-s)^{2H}}}\Psi_{s,t} \int_{M_{x,x}\cap\left\{v\in\mathfrak{g}^{(l)}:~||v||_{\mathrm{HS}}<r_2\right\}}e^{-\frac{\zeta^2||v||^2_{\mathrm{CC}}}{C(t-s)^{2H}}}m_{x,x}(dv)\Big|_{x=Y_s}.
\end{equation}

The integral on the right hand side can be estimated as
\begin{equation*}
\begin{aligned}
&~\int_{M_{x,x}\cap\left\{v\in\mathfrak{g}^{(l)}:~||v||_{\mathrm{HS}}<r_2\right\}}e^{-\frac{\zeta^2||v||^2_{\mathrm{CC}}}{C(t-s)^{2H}}}m_{x,x}(dv)\Big|_{x=Y_s}\\
 &\leqslant\int_{M_{x,x}\cap\left\{\|v\|_{\mathrm{CC}}\leqslant 1\right\}}e^{-\frac{\zeta^2||v||^2_{\mathrm{CC}}}{C(t-s)^{2H}}}m_{x,x}(dv)\Big|_{x=Y_s}\\
 & \ \ \ \ \ +\int_{M_{x,x}\cap\left\{\|v\|_{\mathrm{CC}}> 1,~||v||_{\mathrm{HS}}<r_2\right\}}e^{-\frac{\zeta^2||v||^2_{\mathrm{CC}}}{C(t-s)^{2H}}}m_{x,x}(dv)\Big|_{x=Y_s}\\
&:=I_1(Y_s)~+~I_2(Y_s).
\end{aligned}
\end{equation*}
By Fubini's theorem, one has
\begin{equation*}
\begin{aligned}
I_1(x)=&~\frac{2\zeta^2}{C(t-s)^{2H}}\int_0^\infty u e^{-\frac{\zeta^2u^2}{C(t-s)^{2H}}}m_{x,x}\left(\{v\in M_{x,x}:||v||_{\mathrm{CC}}\leqslant u\wedge 1\}\right)du\\
=&~\frac{2\zeta^2}{C}\int_0^\infty r e^{-\frac{\zeta^2r^2}{C}}m_{x,x}(\{v\in M_{x,x}:||v||_{\mathrm{CC}}\leqslant (t-s)^Hr\wedge 1\})dr,
\end{aligned}
\end{equation*}
where we used the change of variable $u=(t-s)^Hr$ to obtain the second equality. According to \cite[Equation (3.33)]{kusuoka1987applications}, there exists $K\in(0,\infty)$ such that the estimate
\begin{align*}
&m_{x,x}(\{v\in M_{x,x}:||v||_{\mathrm{CC}}\leqslant (t-s)^Hr\wedge 1\})\\ &\leqslant K^{r\wedge(t-s)^{-H}+1}m_{x,x}(\{v\in M_{x,x}:||v||_{\mathrm{CC}}\leqslant (t-s)^H\})
\end{align*}
holds for all $r\in(0,\infty)$ and $x\in\mathbb{R}^N$. In addition, by \cite[Equation (3.32)]{kusuoka1987applications}, there exists $\delta\in(0,1)$ such that for all $(\eta,x)\in (0,1]\times \mathbb{R}^N$, one has
$$
\eta^{-\nu }m_{x,x}(\{v\in M_{x,x}:||v||_{CC}<\eta\})|B_d(x,\eta)|_{\mathrm{Vol}}\in [\delta,1/\delta].
$$
It follows that
\begin{equation}\label{secondestipleta+2}
\begin{aligned}
    I_1(x)\leqslant&~  C_{\zeta,K}~m_{x,x}(\{v\in M_{x,x}:||v||_{\mathrm{CC}}\leqslant (t-s)^H\})\\
    \leqslant&~C_{\zeta,K,\delta}~(t-s)^{H\nu}|B_d(x,(t-s)^H)|^{-1}_{\mathrm{Vol}}.
\end{aligned}
\end{equation}
On the other hand, note that $e^{-\frac{\zeta^2||v||^2_{\mathrm{CC}}}{C(t-s)^{2H}}}$ is upper bounded by $e^{-\frac{\zeta^2}{C(t-s)^{2H}}}$ on $\left\{\|v\|_{\mathrm{CC}}> 1\right\}$. As a result, $I_2(x)\leqslant e^{-\frac{\zeta^2}{C(t-s)^{2H}}}$ holds for some strictly positive constant $\zeta$, and in particular, $I_2(x)$ is upper bounded by $C_{\alpha}(t-s)^\alpha$ for all $\alpha>0$ and $s,t\in[0,T]$. By choosing a small $\alpha$, it follows that 
\[
I_2(x)\lesssim (t-s)^{H\nu}|B_d(x,(t-s)^H)|^{-1}_{\mathrm{Vol}},
\]
where we used the simple relation $r^\alpha\lesssim|B_d(x,r)|_{\mathrm{Vol}}$ for some small $\alpha$ when $r$ is small. 
By substituting \eqref{secondestipleta+2} into \eqref{secondestipleta+1}, one concludes that
\begin{equation}\label{eq:TayDenEst}
p_l^\eta(s,t,y)\leqslant\frac{C}{|B_d(x,(t-s)^H)|_{\mathrm{Vol}}}e^{-\frac{d(x,y)^2}{C(t-s)^{2H}}}\Psi_{s,t}\Big|_{x=Y_s}.
\end{equation}

On the other hand, by applying a similar argument as in \cite[Proposition 4.23]{geng2022precise} but with  $\mathbb{P}(~\cdot~)$ replaced by $\mathbb{P}(~\cdot~|\mathcal{F}^W_s)$, one can show that
\begin{equation}\label{eq:DenApprox}
\left|~\mathbb{P}(Y_t\in dy~|~\mathcal{F}^W_s)-p_l^\eta(s,t,y)\right|\leqslant |t-s|\times\Phi_{s,t},
\end{equation}
where $\Phi_{s,t}$ is smooth in the sense of Malliavin and its Malliavin-Sobolev norms do not depend on $s,t$. 
By combining the estimates \eqref{eq:TayDenEst} and \eqref{eq:DenApprox}, one obtains that
$$
\mathbb{E}[\delta_y(Y_t)|\mathcal{F}_s^W]=\mathbb{P}(Y_t\in dy~|~\mathcal{F}^W_s)\leqslant\frac{C}{|B_d(x,(t-s)^H)|_{\mathrm{Vol}}}e^{-\frac{d(x,y)^2}{C(t-s)^{2H}}}\Psi_{s,t}\Big|_{x=Y_s}+|t-s|\times\Phi_{s,t}.
$$
It follows that
\begin{equation}\label{localuppersmo}
\begin{aligned}
p_{s,t}(x,y)=& \mathbb{E}[\delta_{x}(Y_s)\mathbb{E}[\delta_{y}(Y_t)~|~\mathcal{F}^W_s]]\\
\leqslant &~\frac{C}{|B_d(x,(t-s)^H)|_{\mathrm{Vol}}}e^{-C\frac{d(x,y)^2}{(t-s)^{2H}}}\mathbb{E}[\delta_x(Y_s)\Psi_{s,t}]~+~(t-s)\mathbb{E}[\delta_x(Y_s)\Phi_{s,t}],
\end{aligned}
\end{equation}
where $\Psi_{s,t}$ and $\Phi_{s,t}$ are two random variables whose Malliavin-Sobolev norms do not depend on $s,t$.

Following the notation and the integration by parts formula in  \cite[Propositions 2.1.4, 2.1.5]{Nualart}, one has
\[
\mathbb{E}[\delta_x(Y_s)\Psi_{s,t}]=\mathbb{E}[\mathbf{1}_{\{Y_s>x\}}H_{(1,\cdots,N)}(Y_s,\Psi_{s,t})].
\]
For $1\leqslant p<q<\infty$, there exist constants $\beta, \gamma > 1$ and integers $n, m$ such that
\[
\| H_{\alpha}(F, G) \|_{p}
\leqslant c_{p, q} \, \| \det \gamma_{F}^{-1} \|_{\beta}^{m} \, \| DF \|_{k, \gamma}^{n} \, \| G \|_{k, q}.
\]
Because the Malliavin-Sobolev norms of $\Psi_{s,t}$ are independent of $s,t$, $\| \Psi_{s,t} \|_{k, q}$ only depends on $k,p$. In addition, for any $s\in[\delta,T]$, it is well-known that (see \cite{CHLT}) the following three quantities
$$
\mathbb{P}(Y_s>x),\quad\| \det \gamma_{Y_s}^{-1} \|_{\beta}^{m},\quad\|\mathbf{D}_W Y_s \|_{k, \gamma}^{n},\quad 
$$are uniformly bounded above by a constant depending on $\delta$.  The same argument applies to the other term $\mathbb{E}[\delta_x(Y_s)\Phi_{s,t}]$. Now one can integrate \eqref{localuppersmo} by parts to obtain that
$$
p_{s,t}(x,y)\leqslant\frac{C}{|B_d(x,(t-s)^H)|_{\mathrm{Vol}}}e^{-\frac{d(x,y)^2}{C(t-s)^{2H}}}~+~C(t-s).
$$If $d(x,y)\leqslant (t-s)^H$, the first term grows faster than the second one, hence yielding  the desired  upper estimate \eqref{upper123}.

Now the proof of the theorem is complete. 
\end{proof}

\section{Hitting probabilities and Newtonian-type capacities}\label{section5}

In this section, we develop the proof of Theorem \ref{GW3}, which establishes a two-sided estimate for hitting probabilities of a hypoelliptic SDE driven by fBM with Hurst parameter $H\in(1/4,1)$ in terms of Newtonian-type capacities.  A main novel point here is that the capacities used to control the hitting probabilities effectively are the ones induced by the \textit{sub-Riemannian control distance} $d$ (rather than the Euclidean metric as in the elliptic case). We recall from Definition \ref{def:Cap} that they are defined by \begin{equation}\label{alphadimenenrgy1}
\mathrm{Cap}_\alpha(A) := \left[\inf_{\mu \in \mathcal{P}(A)} \mathcal{E}_\alpha(\mu)\right]^{-1}
\end{equation}for $\alpha\in\mathbb{R}$, where \begin{equation}\label{alphadimenenrgy}
\mathcal{E}_\alpha(\mu) := \int\!\!\int K_\alpha(d(x,y))\,\mu(dx)\,\mu(dy)
\end{equation}and $K_\alpha(d(x,y))$ is the kernel defined by \eqref{Kalpha}.

The strategies for proving the upper and lower bounds in \eqref{gcupplowboun123} are quite different, and we present them in separate subsections. The two main theorems are stated in Theorem \ref{theoremlowerbound}
 and Theorem \ref{theoremupperbound} respectively.
 
As a preliminary tool, here we recall a standard ball-box volume estimate from sub-Riemannian geometry which will be used in both parts of the argument. A detailed proof can be found in \cite[Lemma 20.17]{subRiemanbook2019}. 

\begin{lemma}\label{lemmvolcompari}
Let \( \mathcal{V} = \{V_\alpha\}_{\alpha=1}^d \subseteq C_b^\infty(\mathbb{R}^N; \mathbb{R}^N) \) be a family of equiregular $C^\infty_b$-vector fields on \( \mathbb{R}^N \). For every compact set \( K \subseteq \mathbb{R}^N \), there exist positive constants \( \varepsilon_0\) and \( C_{1,\mathcal{V},K} < C_{2,\mathcal{V},K} \), such that
\begin{equation}\label{volcompari}
    C_{1,\mathcal{V},K} \varepsilon^Q \leqslant |B_d(q,\varepsilon)|_{\mathrm{Vol}} \leqslant C_{2,\mathcal{V},K} \varepsilon^Q
\end{equation}
for all \( q \in K \) and \( \varepsilon < \varepsilon_0 \), where 
$Q$ denotes the homogeneous dimension (see Definition \ref{defofhomodim}). 
\end{lemma}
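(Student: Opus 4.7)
The plan is to reduce this classical ball-box volume estimate to the Nagel--Stein--Wainger Ball-Box Theorem, applied pointwise and then made uniform via equiregularity and compactness of $K$. The key geometric object is a system of \emph{privileged coordinates} adapted to the flag $\mathcal{D}^1_q \subseteq \cdots \subseteq \mathcal{D}^{r}_q = \mathbb{R}^N$, constructed from iterated Lie brackets of $\{V_\alpha\}$ of length at most $r$. Equiregularity is what lets one speak of a single integer $r$ and a single weight vector $(w_1,\ldots,w_N)$ (with $w_i = k$ whenever $\dim\mathcal{D}^{k-1}_q < i \leqslant \dim\mathcal{D}^k_q$), valid for every $q\in\mathbb{R}^N$, and satisfying $\sum_i w_i = Q$ by the very definition of the homogeneous dimension.

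First I would fix $q\in K$ and construct privileged coordinates $(y^1,\ldots,y^N)$ centered at $q$. In these coordinates, introduce the anisotropic box $\mathrm{Box}(q,\varepsilon):=\{y:|y^i|<\varepsilon^{w_i}\}$, whose Lebesgue measure is exactly $2^N\varepsilon^{w_1+\cdots+w_N}=2^N\varepsilon^Q$. The Ball-Box Theorem then provides constants $0<c_1(q)<c_2(q)$ and $\varepsilon_0(q)>0$ with
\[
\mathrm{Box}(q,c_1(q)\varepsilon)\subseteq B_d(q,\varepsilon)\subseteq \mathrm{Box}(q,c_2(q)\varepsilon) \qquad \text{for } \varepsilon<\varepsilon_0(q),
\]
which immediately yields the pointwise two-sided bound $2^N c_1(q)^Q\,\varepsilon^Q \leqslant |B_d(q,\varepsilon)|_{\mathrm{Vol}}\leqslant 2^N c_2(q)^Q\,\varepsilon^Q$. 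At the pointwise level, the underlying heuristic is that in privileged coordinates the vector fields $V_\alpha$ are well-approximated to first order by their nilpotent approximation, for which one has exact scaling and the box identification of the ball follows from Carnot-group homogeneity.

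The main obstacle, and the last step, is passing from a pointwise statement to a statement that is uniform for all $q\in K$, i.e.\ promoting $c_1(q),c_2(q),\varepsilon_0(q)$ to constants $C_{1,\mathcal{V},K}$, $C_{2,\mathcal{V},K}$, $\varepsilon_0$ that depend only on $K$ and $\mathcal{V}$. Here equiregularity plus the $C^\infty_b$ hypothesis is essential: equiregularity means the growth vector does not jump, so a single weight assignment works globally, and one can construct an adapted frame of iterated Lie brackets that depends continuously on the base point on a neighborhood of any $q\in K$ (shrinking the neighborhood so that a chosen selection of $\dim\mathcal{D}^k_q - \dim\mathcal{D}^{k-1}_q$ brackets of length $k$ remains linearly independent transversally to $\mathcal{D}^{k-1}$). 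The change-of-coordinates Jacobian relating privileged coordinates based at $q$ and at $q'$ is then continuous in $(q,q')$, so the constants $c_i(q)$ and $\varepsilon_0(q)$ are continuous on $K$. Compactness of $K$, combined with a finite covering by such neighborhoods and a partition-of-unity gluing of the local frames, produces the desired uniform constants and completes the proof. The subtle point I would need to verify carefully is that the Ball-Box constants in each coordinate chart can indeed be controlled only in terms of $C^\infty_b$-norms of the vector fields on a uniform neighborhood of $K$; this is standard but requires writing down explicit estimates on the coordinate-change and on the solution of the defining ODEs for privileged coordinates.
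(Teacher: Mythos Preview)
The paper does not provide its own proof of this lemma; it is stated as a recalled fact with the sentence ``A detailed proof can be found in \cite[Lemma 20.17]{subRiemanbook2019}'' immediately preceding the statement. Your outline via privileged coordinates, the Ball--Box Theorem, and a compactness argument to upgrade pointwise constants to uniform ones on $K$ is exactly the standard route taken in that reference (and in the Nagel--Stein--Wainger literature), so there is nothing to compare. Your sketch is correct; the only caveat is that the careful verification you flag at the end---that the Ball--Box constants depend continuously on the base point under equiregularity and $C^\infty_b$ control---is indeed where all the work lies, and is precisely what the cited reference carries out in detail.
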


\subsection{Lower bound for hitting probabilities}

We first demonstrate in a general way how certain estimates on joint densities will lead to a lower bound of hitting probablities in terms of capacities. After that, we show that our hypoelliptic SDE satisfy the presumed density estimates.

Let $(u_{t})_{0\leqslant t\leqslant T}$ be a stochastic process in $\mathbb{R}^N$ with continuous sample paths. Suppose that $(u_s,u_t)$ has a joint density function $p_{s,t}(\cdot,\cdot)$ with respect to the Lebesgue measure for all $s\neq t\in(0,T]$.
We make the following assumptions on the process $u_t$. 

\vspace{2mm}\noindent $\mathbf{(A1)}$ For any $0<a<b\leqslant T$ and $M>0$, there exists a  positive constant $C_1=C(a,b,T,M)$ such that 
$$
\int_a^b p_t(y)dt\geqslant C_1
$$holds for all $y\in B_d(o,M)$.

\vspace{2mm}\noindent $\mathbf{(A2)}$ Let $\alpha < Q$ be given fixed ($Q$ is the homogeneous dimension as before). For any $0<\delta< T$ and $M>0$, there exists $C_2=C(\delta,T,M,\alpha)>0$ such that
$$
\int_a^b \int_a^b p_{s,t}(x,y)\ ds\ dt \leqslant \ C_2\ K_\alpha(d(x,y))
$$holds  for all $\delta\leqslant a<b\leqslant T$ and $x,y\in B_d(o,M)$.
\begin{lemma}\label{lemmaafterA1A2}
Let \( (u_t)_{0 \leqslant t \leqslant T} \) be a stochastic process satisfying assumptions $\mathbf{(A1)}$ and $\mathbf{(A2)}$. Let \( 0 < a < b \leqslant T \) and \( M > 0 \) be given fixed. Then for any \( \eta \in (0, Q - \alpha) \) (if \( \alpha < 0 \), we further require \( \alpha + \eta < 0 \)), there exists a positive constant \( C = C(a, b, T, M, \alpha, \eta, V) \) such that
\begin{equation}\label{lowerboundforhit}
    \mathbb{P}\big(u([a, b]) \cap A \neq \emptyset\big) \geqslant C\, \mathrm{Cap}_{\alpha + \eta}(A)
\end{equation}holds for all compact sets \( A \subseteq B_d(o,M) \)
\end{lemma}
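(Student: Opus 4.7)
The plan is to apply the classical second-moment (energy) method. Fix a compact set $A\subseteq B_d(o,M)$ and a probability measure $\mu\in\mathcal{P}(A)$; it suffices to establish $\mathbb{P}(u([a,b])\cap A\neq\emptyset)\geqslant C/\mathcal{E}_{\alpha+\eta}(\mu)$ (assuming the right-hand energy is finite, else there is nothing to prove) and then take the infimum over $\mu$. Let $g_\varepsilon:\mathbb{R}^N\to[0,\infty)$ be a standard Euclidean smooth mollifier of scale $\varepsilon$, set $\mu_\varepsilon:=g_\varepsilon\ast\mu$, and consider the test random variable
$$
J_\varepsilon:=\int_a^b\mu_\varepsilon(u_t)\,dt.
$$
By Fubini and the existence of the marginal density, $\mathbb{E}[J_\varepsilon]=\int_a^b\int p_t(y)\mu_\varepsilon(y)\,dy\,dt$. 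Combining Assumption $\mathbf{(A1)}$ with the weak convergence $\mu_\varepsilon\to\mu$ and the continuity of $p_t(\cdot)$ (which in the RDE setting is provided by Theorem \ref{GW1}), one obtains $\mathbb{E}[J_\varepsilon]\geqslant C_1/2$ for all sufficiently small $\varepsilon>0$, uniformly in $\mu$.

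For the second moment, Fubini together with Assumption $\mathbf{(A2)}$ gives
$$
\mathbb{E}[J_\varepsilon^2]=\int\!\!\int\mu_\varepsilon(x)\mu_\varepsilon(y)\Big(\int_a^b\!\!\int_a^b p_{s,t}(x,y)\,ds\,dt\Big)dx\,dy\leqslant C_2\,\mathcal{E}_\alpha(\mu_\varepsilon).
$$
The central analytic step is the energy comparison
$$
\mathcal{E}_\alpha(\mu_\varepsilon)\leqslant C(\eta,\alpha,M,\mathcal{V})\,\mathcal{E}_{\alpha+\eta}(\mu)\quad\text{uniformly in }\varepsilon\in(0,1),
$$
which is where the extra $\eta$ becomes necessary. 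In the elliptic case, translation invariance of the Euclidean metric gives $\mathcal{E}_\alpha(\mu_\varepsilon)\leqslant C\mathcal{E}_\alpha(\mu)$ directly from Fourier analysis, but as noted in contribution (D) of the introduction, this fails in the hypoelliptic setting because $K_\alpha(d(\cdot,\cdot))$ is not translation invariant when $Q>N$; this is the principal obstacle of the proof. The plan for the comparison is to split the double integral into a near-diagonal regime $\{d(x_0,y_0)\lesssim\varepsilon^{1/\bar l}\}$ (with $\bar l$ the hypoellipticity constant of Hypothesis \ref{1.2}) and its complement. In the far regime, the triangle inequality for $d$ combined with the local H\"older bound $d(\cdot,\cdot)\leqslant C|\cdot-\cdot|^{1/\bar l}$ on the compact set $B_d(o,M)$ forces $d(x_0+u,y_0+v)\geqslant\tfrac12 d(x_0,y_0)$ on the mollifier support, so the smoothed kernel at $(x_0,y_0)$ is bounded by $C\,K_\alpha(d(x_0,y_0))\leqslant C\,K_{\alpha+\eta}(d(x_0,y_0))$, using the boundedness of $d$ on the compact. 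In the near-diagonal regime, the singular contribution is controlled by combining the ball-box volume estimate of Lemma \ref{lemmvolcompari}, the universal bound $d(\cdot,\cdot)\geqslant c|\cdot-\cdot|$, and a Frostman-type observation that finite $\mathcal{E}_{\alpha+\eta}(\mu)$ prevents $\mu$ from concentrating too strongly on small $d$-balls; the strict positivity of $\eta$ is precisely the room needed to absorb the mismatch between Euclidean mollification and sub-Riemannian scaling. The degenerate cases $\alpha\leqslant 0$ (the logarithmic case $\alpha=0$ and the trivial case $\alpha+\eta<0$) are handled by direct inspection of the kernels.

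Putting the two moment estimates together, the Paley-Zygmund inequality gives
$$
\mathbb{P}(J_\varepsilon>0)\geqslant\frac{(\mathbb{E}[J_\varepsilon])^2}{\mathbb{E}[J_\varepsilon^2]}\geqslant\frac{C}{\mathcal{E}_{\alpha+\eta}(\mu)}
$$
for all sufficiently small $\varepsilon$. To pass from $\{J_\varepsilon>0\}$ to $\{u([a,b])\cap A\neq\emptyset\}$, observe that $J_\varepsilon>0$ forces $u([a,b])$ to meet the Euclidean $\varepsilon$-neighbourhood $A_\varepsilon$ of $\operatorname{supp}(\mu)\subseteq A$; compactness of $A$ combined with continuity of sample paths then yields
$$
\mathbb{P}(u([a,b])\cap A\neq\emptyset)=\lim_{n\to\infty}\mathbb{P}(u([a,b])\cap A_{1/n}\neq\emptyset)\geqslant\liminf_{\varepsilon\to 0}\mathbb{P}(J_\varepsilon>0)\geqslant\frac{C}{\mathcal{E}_{\alpha+\eta}(\mu)}.
$$
Taking the infimum over $\mu\in\mathcal{P}(A)$ then yields \eqref{lowerboundforhit}. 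The only genuinely new difficulty compared to the elliptic argument of \cite{BNOT} is the uniform energy comparison highlighted above; the remainder is a routine adaptation of the classical framework.
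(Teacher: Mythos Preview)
Your overall architecture (second-moment method via a mollified occupation integral, then Paley--Zygmund) matches the paper, but the central step---your uniform energy comparison $\mathcal{E}_\alpha(g_\varepsilon\ast\mu)\leqslant C\,\mathcal{E}_{\alpha+\eta}(\mu)$ for a \emph{Euclidean} mollifier and \emph{arbitrary} $\mu$---does not go through for small $\eta$. In the near-diagonal regime, a Euclidean $\varepsilon$-ball has $d$-diameter $\sim\varepsilon^{1/\bar l}$, so smoothing at Euclidean scale $\varepsilon$ produces a density of order $\varepsilon^{-N}$ on a set whose natural sub-Riemannian volume is only $\sim\varepsilon^{Q/\bar l}$. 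A direct layer-cake computation using the ball-box estimate gives, for $|x_0-y_0|\lesssim\varepsilon$,
\[
\int\!\!\int K_\alpha(d(x,y))\,g_\varepsilon(x-x_0)g_\varepsilon(y-y_0)\,dx\,dy\;\lesssim\;\varepsilon^{-N+(Q-\alpha)/\bar l},
\]
while the Chebyshev-type control you invoke, $\mu\!\times\!\mu(\{d\leqslant C\varepsilon^{1/\bar l}\})\leqslant C\varepsilon^{(\alpha+\eta)/\bar l}\mathcal{E}_{\alpha+\eta}(\mu)$, only kills this if $-N+(Q+\eta)/\bar l\geqslant 0$, i.e.\ $\eta\geqslant N\bar l-Q$. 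In any genuinely hypoelliptic example (Heisenberg: $N\bar l-Q=2$) this forces a large $\eta$, contradicting the lemma's claim that $\eta$ is arbitrarily small. Your remark that ``the strict positivity of $\eta$ is precisely the room needed'' is therefore not correct at this level of the argument. Note also that finite $(\alpha+\eta)$-energy does \emph{not} give the uniform Frostman bound $\mu(B_d(x,r))\leqslant Cr^{\alpha+\eta}$ you implicitly need; it only gives the averaged Chebyshev inequality above.

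The paper avoids both pitfalls. First, it mollifies with indicators of \emph{control-distance} balls, $\mu_\varepsilon(x)=\int |B_d(z,\varepsilon)|^{-1}\mathbf{1}_{B_d(z,\varepsilon)}(x)\,\mu(dz)$, so the geometry of the smoothing matches that of the kernel and no $\bar l$-power mismatch appears. Second, it does not attempt the energy comparison for general $\mu$: it first passes to a near-minimising $\nu$, restricts $\nu$ to the ``good set'' $B=\{x:\int d(x,y)^{-(\alpha+\eta)}\nu(dy)\leqslant 3\mathcal{E}_{\alpha+\eta}(\nu)\}$ (which carries at least half the mass), and thereby manufactures a genuine Frostman bound $\bar\mu(B_d(x,r))\leqslant C\,\mathcal{E}_{\alpha+\eta}(\nu)\,r^{\alpha+\eta}$. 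This Frostman bound is stable under $d$-mollification via the ball-box volume estimate, and the $\alpha$-energy of $\bar\mu_\varepsilon$ is then computed by a straightforward layer-cake in $r$, with the $\eta$ providing exactly the integrability of $\int_0^{2M}r^{\eta-1}dr$. That last integral is where the arbitrarily small $\eta>0$ is actually used.
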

\begin{proof} Let $A\subseteq B_d(o,M)$ be a given compact subset such that \( \mathrm{Cap}_{\alpha+\eta}(A) > 0 \).

\vspace{2mm}\noindent \underline{\textit{Case 1}: $\alpha<0$.} 

\vspace{2mm} In this case, \( \alpha + \eta < 0 \) and by the definition \eqref{Kalpha} of the kernel $K_\alpha$, one has \( \mathrm{Cap}_{\alpha+\eta}(A) = 1 \). Therefore, one only needs to prove that
\begin{equation}\label{hittingprolargerthanc}
\mathbb{P}\big(u([a, b]) \cap A \neq \emptyset\big) \geqslant C
\end{equation}with some positive constant \( C \) depending only on \( a, b, M, \alpha, \eta \). In what follows, we always use $C$ to denote such a constant which may differ from line to line. 

Let us consider the following random variable 
\[
J_\varepsilon(A) := \frac{1}{|A_\varepsilon|_{\mathrm{Vol}}} \int_a^b \mathbf{1}_{A_\varepsilon}(u_t)\,dt,\ \ \ \varepsilon\in(0,1),
\]
where \( A_\varepsilon := \{ x \in \mathbb{R}^N : d(x, A) \leqslant \varepsilon \} \). We claim that the estimate
\[
\mathbb{P}(J_\varepsilon(A) > 0) \geqslant C
\]
holds uniformly for all \( \varepsilon \in (0,1) \) and compact sets \( A \subseteq B_d(o,M) \), where the constant \( C > 0 \) does not depend on \( \varepsilon \) or the set \( A \). Indeed, by Assumption $\mathbf{(A1)}$ one has 
\[
\mathbb{E}[J_\varepsilon(A)] 
= \frac{1}{|A_\varepsilon|_{\mathrm{Vol}}} \int_a^b \int_{A_\varepsilon} p_t(y)\,dy\,dt 
\geqslant C.
\]
On the other hand, Assumption $\mathbf{(A2)}$ (also noting that $\alpha<0$) shows that
\[
\begin{aligned}
\mathbb{E}[J_\varepsilon(A)^2] 
&= \frac{1}{|A_\varepsilon|_{\mathrm{Vol}}^2} \int_a^b \int_a^b \int_{A_\varepsilon} \int_{A_\varepsilon} p_{s,t}(x,y)\,dx\,dy\,ds\,dt \\
&\leqslant \frac{C_2}{|A_\varepsilon|_{\mathrm{Vol}}^2} \int_{A_\varepsilon} \int_{A_\varepsilon} K_\alpha(d(x,y))\,dx\,dy \leqslant C.
\end{aligned}
\]By the Cauchy-Schwarz inequality, one obtains that
\[
\mathbb{P}(J_\varepsilon(A) > 0) = \mathbb{E}[\mathbf{1}_{\{J_\varepsilon(A) > 0\}}^2] 
\geqslant \frac{\mathbb{E}[J_\varepsilon(A)]^2}{\mathbb{E}[J_\varepsilon(A)^2]} 
\geqslant C.
\]
Observe that 
\[
\{ \omega : J_\varepsilon(A) > 0 \} \subseteq \{ \omega : u([a,b]) \cap A_\varepsilon \neq \emptyset \}.
\]
Therefore, one has
\[
\mathbb{P}(u([a,b]) \cap A_\varepsilon \neq \emptyset) 
\geqslant \mathbb{P}(J_\varepsilon(A) > 0) \geqslant C.
\]
The inequality \eqref{hittingprolargerthanc} now follows by taking $\varepsilon\downarrow 0$.

\vspace{2mm}\noindent \underline{\textit{Case 2: $0<\alpha<Q$}.}

\vspace{2mm} For any \( \varepsilon \in (0,1) \) and \( \mu \in \mathcal{P}(A) \), we define the random variable
\[
J_\varepsilon(\mu) := \int_{\mathbb{R}^N} \mu(dz) \int_a^b \frac{\mathbf{1}_{B_d(z,\varepsilon)}(u_t)}{|B_d(z,\varepsilon)|_{\mathrm{Vol}}} \,dt 
= \int_a^b \mu_\varepsilon(u_t) \,dt,
\]
where the mollified density \( \mu_\varepsilon \colon \mathbb{R}^N \to \mathbb{R} \) is given by
\[
\mu_\varepsilon(x) := \int_{\mathbb{R}^N} \frac{\mathbf{1}_{B_d(z,\varepsilon)}(x)}{|B_d(z,\varepsilon)|_{\mathrm{Vol}}} \mu(dz).
\]
It follows from \eqref{volcompari} and assumption $\mathbf{(A1)}$ that there exists a positive constant \( C = C(a, b, M, \mathcal{V}) \) such that for all \( \varepsilon \in (0,1) \),
\begin{equation}\label{case2lowerboun}
\mathbb{E}[J_\varepsilon(\mu)] 
= \int_{\mathbb{R}^N} \frac{\mu(dz)}{|B_d(z,\varepsilon)|_{\mathrm{Vol}}} \int_{B_d(z,\varepsilon)} \left( \int_a^b p_t(y)\,dt \right) dy 
\geqslant C.
\end{equation}
Moreover, one has
\begin{equation}\label{case2capuppboun}
\begin{aligned}
\mathbb{E}[J_\varepsilon(\mu)^2] 
&= \int_{\mathbb{R}^N} \int_{\mathbb{R}^N} \mu_\varepsilon(x)\, \mu_\varepsilon(y) \left( \int_a^b\int_a^b p_{s,t}(x,y)\,ds\,dt \right) dx\,dy \\
&\leqslant C_2 \int_{\mathbb{R}^N} \int_{\mathbb{R}^N} K_\alpha(d(x,y))\, \mu_\varepsilon(dx)\, \mu_\varepsilon(dy) = C_2~\mathcal{E}_\alpha(\mu_\varepsilon),
\end{aligned}
\end{equation}
where the inequality follows from Assumption $\mathbf{(A2)}$ and the last equality is due to the definition of the energy functional (by abuse of notation we write $\mu_\varepsilon$ for both the measure and the density).

Recall that \( \mathrm{Cap}_{\alpha+\eta}(A) > 0 \). The main observation is that there exists a probability measure \( \bar{\mu} \in \mathcal{P}(A) \) such that
\begin{equation}\label{case2capuppboun1}
\mathcal{E}_\alpha(\bar{\mu}_\varepsilon) \leqslant \frac{C_{M,\alpha,\mathcal{V},\eta}}{\mathrm{Cap}_{\alpha+\eta}(A)}
\end{equation}holds for all $\varepsilon\in(0,1)$, 
where $C_{M,\alpha,\mathcal{V},\eta}$ is a positive constant independent of $\varepsilon$ and the set $A$. We postpone the proof of this technical fact in Lemma \ref{lemmcase2capuppboun} below. Presuming the correctness of \eqref{case2capuppboun1}, one can deduce by using the Cauchy-Schwarz inequality along with the estimates (\ref{case2lowerboun}, \ref{case2capuppboun}, \ref{case2capuppboun1}) that
\begin{equation}\label{jmuepsi}
\mathbb{P}(u([a,b]) \cap A_\varepsilon \neq \emptyset)\geqslant \mathbb{P}(J_\varepsilon(\bar{\mu}) > 0) 
= \mathbb{E}[\mathbf{1}_{\{J_\varepsilon(\bar{\mu}) > 0\}}^2] 
\geqslant \frac{\mathbb{E}[J_\varepsilon(\bar{\mu})]^2}{\mathbb{E}[J_\varepsilon(\bar{\mu})^2]} 
\geqslant C\, \mathrm{Cap}_{\alpha+\eta}(A)
\end{equation}
for all $\varepsilon\in(0,1)$ with some positive constant \( C = C(a, b, M, n, \alpha, \mathcal{V}, \eta) \). The desired estimate \eqref{lowerboundforhit} follows by taking $\varepsilon\downarrow 0$.

\vspace{2mm}\noindent \underline{\textit{Case 3: $\alpha=0$}.}

\vspace{2mm} By definition, one has \( K_0(d(x,y)) = \log(N_0 / d(x,y)) \). It is straightforward to see that there exists a positive constant \( C = C_{M,\eta,N_0} \) such that
\[
K_0(d(x,y)) \leqslant C\, d(x,y)^{-\eta/2} = C\, K_{\eta/2}(d(x,y))
\]
for all \( x, y \in B_d(o, M) \). The desired estimate follows from the same argument as in Cast II. 
\end{proof}

Now we come back to prove the existence of $\bar{\mu}$ used in the above proof.

\begin{lemma}\label{lemmcase2capuppboun}
Fix a real number $M>0$ and a compact set $A\subseteq B_d(o,M)$. Suppose that $\alpha\geqslant0,\eta>0$, $\alpha+\eta<Q$ and \( \mathrm{Cap}_{\alpha+\eta}(A) > 0 \). Then there exists a probability measure \( \bar{\mu} \in \mathcal{P}(A) \) such that
\begin{equation}\label{case2capuppbounapp}
\mathcal{E}_\alpha(\bar{\mu}_\varepsilon) \leqslant \frac{C_{M,\alpha,\mathcal{V},\eta}}{\mathrm{Cap}_{\alpha+\eta}(A)}
\end{equation}holds for all $\varepsilon\in(0,1),$
where $C_{M,\alpha,\mathcal{V},\eta}$ is a positive constant independent of $\varepsilon$ and the set $A$.
\end{lemma}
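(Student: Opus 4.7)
The plan is to take $\bar{\mu}\in\mathcal{P}(A)$ to be a near-minimizer of the $(\alpha+\eta)$-energy: since $\mathrm{Cap}_{\alpha+\eta}(A)>0$, one may choose $\bar{\mu}$ with $\mathcal{E}_{\alpha+\eta}(\bar{\mu})\leqslant 2/\mathrm{Cap}_{\alpha+\eta}(A)$. Substituting the definition of $\bar{\mu}_\varepsilon(x) = \int |B_d(z,\varepsilon)|_{\mathrm{Vol}}^{-1}\mathbf{1}_{B_d(z,\varepsilon)}(x)\,\bar{\mu}(dz)$ into $\mathcal{E}_\alpha(\bar{\mu}_\varepsilon)$ and applying Fubini, the problem reduces to establishing the uniform pointwise kernel comparison
\begin{equation*}
\mathcal{K}_\varepsilon(z,w) := \frac{1}{|B_d(z,\varepsilon)|_{\mathrm{Vol}}|B_d(w,\varepsilon)|_{\mathrm{Vol}}}\int_{B_d(z,\varepsilon)}\!\!\int_{B_d(w,\varepsilon)}\!\! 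K_\alpha(d(x,y))\,dx\,dy \leqslant C\, K_{\alpha+\eta}(d(z,w))
\end{equation*}
for all $\varepsilon\in(0,1)$ and $z,w\in B_d(o,M)$, with $C = C(M,\mathcal{V},\alpha,\eta)$. Integrating against $\bar{\mu}(dz)\bar{\mu}(dw)$ then yields $\mathcal{E}_\alpha(\bar{\mu}_\varepsilon)\leqslant C\,\mathcal{E}_{\alpha+\eta}(\bar{\mu})\leqslant 2C/\mathrm{Cap}_{\alpha+\eta}(A)$, which is the desired bound.

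I would establish the kernel comparison by splitting into a far regime $d(z,w)\geqslant 3\varepsilon$ and a near regime $d(z,w)<3\varepsilon$, and first assume $\alpha>0$. In the far regime, the triangle inequality gives $d(x,y)\geqslant d(z,w)-2\varepsilon\geqslant d(z,w)/3$ for all $x\in B_d(z,\varepsilon),\,y\in B_d(w,\varepsilon)$, hence $\mathcal{K}_\varepsilon(z,w)\leqslant 3^\alpha K_\alpha(d(z,w))$; since $d(z,w)\leqslant 2M$, the elementary identity $K_\alpha(r) = r^\eta K_{\alpha+\eta}(r)\leqslant (2M)^\eta K_{\alpha+\eta}(r)$ closes this case. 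In the near regime, $y\in B_d(w,\varepsilon)\subseteq B_d(x,5\varepsilon)$ for any $x\in B_d(z,\varepsilon)$, so it suffices to bound $\int_{B_d(x,5\varepsilon)}d(x,y)^{-\alpha}\,dy$. This integral is controlled by $C\varepsilon^{Q-\alpha}$ via a dyadic annular decomposition $A_k := B_d(x,5\varepsilon/2^k)\setminus B_d(x,5\varepsilon/2^{k+1})$ combined with the ball-volume equivalence of Lemma~\ref{lemmvolcompari}, the resulting geometric series converging thanks to $\alpha<Q$. Dividing by $|B_d(z,\varepsilon)|_{\mathrm{Vol}}|B_d(w,\varepsilon)|_{\mathrm{Vol}}\asymp\varepsilon^{2Q}$ and multiplying by one factor $|B_d(z,\varepsilon)|_{\mathrm{Vol}}\asymp\varepsilon^Q$ gives $\mathcal{K}_\varepsilon(z,w)\leqslant C\varepsilon^{-\alpha}$. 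The identity $\varepsilon^{-\alpha} = 3^{\alpha+\eta}\varepsilon^\eta K_{\alpha+\eta}(3\varepsilon)\leqslant 3^{\alpha+\eta}K_{\alpha+\eta}(d(z,w))$, which uses $d(z,w)<3\varepsilon$, $\varepsilon\leqslant 1$, and monotonicity of $K_{\alpha+\eta}$, finishes the near regime.

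The main obstacle is the near-regime estimate, which is a sub-Riemannian analogue of the local integrability of the Euclidean Riesz kernel $|x-y|^{-\alpha}$ for $\alpha<Q$, and depends crucially on the ball-volume equivalence of Lemma~\ref{lemmvolcompari} being uniform over the compact set $\overline{B_d(o,M)}$. A minor technical issue is that this equivalence holds only for $\varepsilon<\varepsilon_0$; the remaining range $\varepsilon\in[\varepsilon_0,1)$ is absorbed into the constant via compactness, since $|B_d(z,\varepsilon)|_{\mathrm{Vol}}$ is then bounded between two positive constants. The boundary case $\alpha=0$ is handled by the elementary comparison $K_0(r) = \log(N_0/r)\leqslant C_{M,\eta}\,K_{\eta/2}(r)$ for $r\leqslant 2M$, which reduces it to the $\alpha=\eta/2>0$ case with residual exponent $\eta/2$, yielding $\mathcal{E}_0(\bar{\mu}_\varepsilon)\leqslant C\mathcal{E}_{\eta/2}(\bar{\mu}_\varepsilon)\leqslant C'/\mathrm{Cap}_\eta(A)$ as required.
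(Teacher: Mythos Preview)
Your argument is correct and takes a genuinely different route from the paper. The paper does not work with an arbitrary near-minimiser: starting from a near-minimiser $\nu$ of $\mathcal{E}_{\alpha+\eta}$, it restricts $\nu$ to the ``good set'' $B=\{x:\int d(x,y)^{-(\alpha+\eta)}\nu(dy)\leqslant 3\mathcal{E}_{\alpha+\eta}(\nu)\}$ and renormalises to obtain $\bar{\mu}$, which then satisfies a Frostman-type growth bound $\bar{\mu}(B_d(x,r))\leqslant C\,\mathcal{E}_{\alpha+\eta}(\nu)\,r^{\alpha+\eta}$. This growth bound is shown to pass to the mollified measure $\bar{\mu}_\varepsilon$ (again via the ball-volume equivalence of Lemma~\ref{lemmvolcompari}, splitting into $r<\varepsilon$ and $r\geqslant\varepsilon$), and the $\alpha$-energy of $\bar{\mu}_\varepsilon$ is then estimated from the Frostman bound by the layer-cake formula.

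Your approach bypasses the Frostman restriction entirely by establishing the pointwise averaged-kernel inequality $\mathcal{K}_\varepsilon(z,w)\leqslant C\,K_{\alpha+\eta}(d(z,w))$ directly, so that \emph{any} near-minimiser of $\mathcal{E}_{\alpha+\eta}$ works as $\bar{\mu}$. This is cleaner and more self-contained; the near-regime step is exactly the sub-Riemannian Riesz-kernel integrability statement $\int_{B_d(x,R)}d(x,y)^{-\alpha}\,dy\lesssim R^{Q-\alpha}$, which is the natural replacement for the Euclidean fact and follows immediately from the doubling property encoded in Lemma~\ref{lemmvolcompari}. The paper's route, on the other hand, has the mild advantage that the intermediate Frostman bound on $\bar{\mu}_\varepsilon$ is a reusable object (a parallel construction appears later in Lemma~\ref{8.7}), and it never requires isolating the diagonal singularity of $K_\alpha$ explicitly. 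Both proofs ultimately hinge on the same volume comparison and on the gap $\eta>0$ between the two energy exponents.
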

\begin{proof}
Without loss of generality, we only consider the case when $M\geqslant2$ and $A\subseteq B_d(o,M-1)$. According to the definition of the capacity $\mathrm{Cap}_{\alpha+\eta}(A)$, there exists a probability measure $\nu \in \mathcal{P}(A)$ such that
\begin{equation}\label{eq:ExistMuPf}
0 < \mathcal{E}_{\alpha+\eta}(\nu) \leqslant 2 \inf_{\mu \in \mathcal{P}(A)} \mathcal{E}_{\alpha+\eta}(\mu) = \frac{2}{\mathrm{Cap}_{\alpha+\eta}(A)}.
\end{equation}
Let us write
\[
F := \mathcal{E}_{\alpha+\eta}(\nu) = \iint d(x, y)^{-(\alpha+\eta)} \, \nu(dy) \nu(dx).
\]

\textit{Step 1.} In this step, we provide the following sufficient condition for \eqref{case2capuppbounapp} to hold. More precisely, suppose that there exists a probability measure $\bar{\mu} \in \mathcal{P}(A)$ such that
\begin{equation}\label{estiformuepsi}
\bar{\mu}_\varepsilon(B_d(x, r)) \leqslant C_{1,\mathcal{V}}^{-1} \cdot C_{2,\mathcal{V}} \cdot 6F \cdot 4^{\alpha+\eta} r^{\alpha+\eta} 
\quad \text{for all } x \in \mathbb{R}^N,~ r > 0,~ \varepsilon > 0
\end{equation}
and
\begin{equation}\label{uppbounmubar}
\bar{\mu}_\varepsilon(\mathbb{R}^N) \leqslant C_{1,\mathcal{V}}^{-1} \cdot C_{2,\mathcal{V}} 
\quad \text{for all } \varepsilon \in (0, 1),
\end{equation}
where $C_{1,\mathcal{V}}$, $C_{2,\mathcal{V}}$ are given by the volume comparison \eqref{volcompari}. Then we claim that \eqref{case2capuppbounapp} follows.

Indeed, one first notes that 
\begin{equation*}
\begin{aligned}
\int_{\mathbb{R}^N} d(x, y)^{-\alpha} \bar{\mu}_\varepsilon(dy) 
&= \int_0^\infty \bar{\mu}_\varepsilon\left(\left\{ y : d(x, y)^{-\alpha} \geqslant u \right\} \right) du \\
&= \int_0^\infty \bar{\mu}_\varepsilon\left( B_d(x, u^{-1/\alpha}) \right) du \\
&= \alpha \int_0^\infty r^{-\alpha - 1} \bar{\mu}_\varepsilon\left( B_d(x, r) \right) dr \\
&= \alpha \int_0^{2M} r^{-\alpha - 1} \bar{\mu}_\varepsilon\left( B_d(x, r) \right) dr 
  + \alpha \int_{2M}^\infty r^{-\alpha - 1} \bar{\mu}_\varepsilon\left( B_d(x, r) \right) dr \\
&=: I_1(x) + I_2(x),
\end{aligned}
\end{equation*}for all $x\in\mathbb{R}^n$,
where the first equality follows from Fubini's theorem and the third equality is due to the change of variables \( r = u^{-1/\alpha} \). By the assumption \eqref{estiformuepsi}, one has
\[
\frac{I_1(x)}{C_{1,V}^{-1} \cdot C_{2,V} \cdot 6F \cdot 4^{\alpha+\eta} \cdot \alpha} 
\leqslant \int_0^{2M} r^{\eta - 1} dr = \frac{2^\eta M^\eta}{\eta}
\]for all $x\in\mathbb{R^N}$. If one further assumes \( x \in A_\varepsilon \), then
\[
\frac{I_2(x)}{C_{1,V}^{-1} \cdot C_{2,V} \cdot 6F \cdot 4^{\alpha+\eta} \cdot \alpha} 
\leqslant \int_{2M}^\infty r^{-\alpha - 1} (2 M)^{\alpha + \eta} dr = \frac{2^\eta M^\eta}{\alpha}.
\]
To reach the above inequality, we  used the fact that \( A_\varepsilon \subseteq B_d(x, 2M) \) and thus
\[
\bar{\mu}_\varepsilon(B_d(x, r)) = \bar{\mu}_\varepsilon(B_d(x, 2M))
\]for all $r \geqslant 2M$. It follows from \eqref{uppbounmubar} and \eqref{eq:ExistMuPf} that
\begin{equation*}
\begin{aligned}
\mathcal{E}_\alpha(\bar{\mu}_\varepsilon)=&\int_{A_\varepsilon}\left(I_1(x)+I_2(x)\right)\bar{\mu}_\varepsilon(dx)\\
\leqslant &~6\cdot C_{1,V}^{-2} \cdot C_{2,V}^2\cdot 2^{2\alpha+3\eta}\cdot(\alpha\eta^{-1}+1)\cdot M^\eta\cdot F\\
\leqslant &~\frac{12\cdot C_{1,V}^{-2} \cdot C_{2,V}^2\cdot 2^{2\alpha+3\eta}\cdot(\alpha\eta^{-1}+1)\cdot M^\eta}{\mathrm{Cap}_{\alpha+\eta}(A)}
\end{aligned}
\end{equation*}for all $\varepsilon\in(0,1)$. This proves the desired estimate \eqref{case2capuppbounapp}.

\textit{Step 2.} We will verify \eqref{uppbounmubar} and \eqref{estiformuepsi} in this step. Define the set
\[
B := \left\{ x \in A : \int_{\mathbb{R}^N} d(x, y)^{-(\alpha+\eta)} \, \nu(dy) \leqslant 3F \right\}\subseteq A.
\]
We claim that $\nu(B) \geqslant 1/2$. Assuming the contrary, one has 
\begin{align*}
F &= \iint d(x, y)^{-(\alpha+\eta)} \, \nu(dy) \nu(dx) \\
&\geqslant \int_{B^c} \nu(dx) \int_{\mathbb{R}^N} d(x, y)^{-(\alpha+\eta)} \, \nu(dy) \\
&\geqslant \int_{B^c} 3F \, \nu(dx) \geqslant 3F \cdot \frac{1}{2} = \frac{3}{2} F,
\end{align*}
which is a contradiction. Therefore, one must have $\nu(B) \geqslant 1/2$. 

Let $\mu := \nu(\cdot \cap B)$ be the restriction of $\nu$ to the set $B$. For all $x \in B$ and $r > 0$, one has
\begin{equation*} 
r^{-(\alpha+\eta)} \mu(B_d(x, r)) 
\leqslant \int_{B_d(x, r)} d(x, y)^{-(\alpha+\eta)} \, \nu(dy) 
\leqslant 3F.
\end{equation*}
Now we define
\[
\bar{\mu}(\cdot) := \frac{\nu(\cdot \cap B)}{\nu(B)} = \frac{\mu(\cdot)}{\nu(B)}.
\]
Since $\nu(B) \geqslant 1/2$, it follows that $\bar{\mu} \in \mathcal{P}(B)$ and
\begin{equation}\label{eq:bar_mu_bound}
\bar{\mu}(B_d(x, r)) = \nu(B)^{-1} \mu(B_d(x, r)) 
\leqslant 2 \cdot 3F \cdot r^{\alpha+\eta} = 6F r^{\alpha+\eta}
\end{equation}for all $x \in B,~ r > 0$.
In fact, the estimate \eqref{eq:bar_mu_bound} holds for all $x \in \mathbb{R}^N$ and $r > 0$ which is seen as follows.

\begin{itemize}
    \item If $B_d(x, r) \cap B = \emptyset$, then $\bar{\mu}(B_d(x, r)) = 0$ trivially.
    \item Otherwise, there exists some $z \in B_d(x, r) \cap B$. By applying \eqref{eq:bar_mu_bound} at $z$ with radius $2r$ and noting that
    $
    B_d(x, r) \subseteq B_d(z, 2r)$, 
    one has
    \begin{equation} \label{targetedmeasureonB}
    \bar{\mu}(B_d(x, r)) \leqslant \bar{\mu}(B_d(z, 2r)) \leqslant 6F \cdot 2^{\alpha + \eta} r^{\alpha + \eta}.
    \end{equation}
\end{itemize}
Thus, the estimate \eqref{targetedmeasureonB} holds uniformly for all $x \in \mathbb{R}^N$ and $r > 0$. Now we proceed to prove the estimates \eqref{estiformuepsi} and \eqref{uppbounmubar} for the mollified measure $\bar{\mu}_\varepsilon$. 

\begin{itemize}
\item \underline{\textit{Case 1: $0 < r < \varepsilon$.}} In this case, one has
\begin{align}
\bar{\mu}_\varepsilon(B_d(x, r)) 
&= \int_{B_d(x, r)} dy \int_{\mathbb{R}^N} \frac{\mathbf{1}_{B_d(z, \varepsilon)}(y)}{|B_d(z, \varepsilon)|_{\mathrm{Vol}}} \, \bar{\mu}(dz) \nonumber  \\
&\leqslant C_{2,V} r^Q \cdot C_{1,V}^{-1} \varepsilon^{-Q} \cdot 6F \cdot 2^{\alpha+\eta} \varepsilon^{\alpha+\eta} \nonumber \\
&= C_{2,V} \cdot C_{1,V}^{-1} \cdot 6F \cdot 2^{\alpha+\eta} \cdot r^{\alpha+\eta}, \label{eq:small_r_estimate}
\end{align}
where we used the volume comparison \eqref{volcompari} and the fact that $\alpha + \eta < Q$.

\item \underline{\textit{Case 2: $r \geqslant \varepsilon$.}} By using Fubini's theorem and the triangle inequality, one has
\begin{align}
\bar{\mu}_\varepsilon(B_d(x, r)) 
&= \int_{B_d(x, r)} dy \int_{\mathbb{R}^N} \frac{\mathbf{1}_{B_d(z, \varepsilon)}(y)}{|B_d(z, \varepsilon)|_{\mathrm{Vol}}} \, \bar{\mu}(dz) \nonumber \\
&= \int_{B_d(x, 2r)} \frac{\bar{\mu}(dz)}{|B_d(z, \varepsilon)|_{\mathrm{Vol}}} \int_{B_d(x, r)} \mathbf{1}_{B_d(z, \varepsilon)}(y) \, dy \nonumber \\
&\leqslant \bar{\mu}(B_d(x, 2r)) \cdot C_{1,\mathcal{V}}^{-1} \varepsilon^{-Q} \cdot C_{2,\mathcal{V}} \varepsilon^Q \nonumber \\
&\leqslant C_{1,\mathcal{V}}^{-1} \cdot C_{2,\mathcal{V}} \cdot 6F \cdot 4^{\alpha+\eta} r^{\alpha+\eta}, \label{eq:large_r_estimate}
\end{align}
where we used \eqref{targetedmeasureonB} and the volume comparison in \eqref{volcompari} to reach the third line.
\end{itemize}

\noindent
Since $\bar{\mu}$ is a probability measure, one also has from the third line of \eqref{eq:large_r_estimate} that
\[
\bar{\mu}_\varepsilon(\mathbb{R}^N) \leqslant C_{1,\mathcal{V}}^{-1} \cdot C_{2,\mathcal{V}}.
\]
Combining \eqref{eq:small_r_estimate} and \eqref{eq:large_r_estimate}, one obtains the estimate
\begin{equation}\label{eq:smalllarge_r_estimate}
\bar{\mu}_\varepsilon(B_d(x, r)) \leqslant C_{1,\mathcal{V}}^{-1} \cdot C_{2,\mathcal{V}} \cdot 6F \cdot 4^{\alpha+\eta} r^{\alpha+\eta}
\end{equation}for all $x\in\mathbb{R}^N$ and $r>0$, which completes the proof.
\end{proof}

We now specialise in our SDE context and prove the following main result of this section. 

\begin{theorem}\label{theoremlowerbound}
Assume that the family of vector fields \( \mathcal{V}=\{V_\alpha\}_{\alpha=1}^d \) is equiregular on \( \mathbb{R}^N \) and satisfies Hypotheses~\ref{h1} and~\ref{1.2}. Let \( Y_t \) be the solution to the RDE~\eqref{density1} where \( \mathbf{B} \) is the canonical lift of a fractional Brownian motion \( (B_t)_{0 \leqslant t \leqslant T} \) with Hurst parameter $H\in(1/4,1)$. Fix \( 0 < a < b \) and \( M > 0 \). Then for any \( \eta \in \left( 0, 1/H \right) \), there exists a constant \( C = C(a, b, M, Q, H, \mathcal{V}, \eta) > 0 \) such that the estimate
\[
\mathbb{P}\left( Y([a, b]) \cap A \neq \emptyset \right) \geqslant C  \mathrm{Cap}_{Q - 1/H + \eta}(A)
\]holds for all compact sets \( A \subseteq B_d(o, M) \).
Here \( Q \) denotes the homogeneous dimension.
\end{theorem}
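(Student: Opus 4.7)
The overall strategy is to invoke Lemma~\ref{lemmaafterA1A2} with $u_t = Y_t$ and $\alpha = Q - 1/H$. Once Assumptions $\mathbf{(A1)}$ and $\mathbf{(A2)}$ are verified for the solution $Y_t$ of the RDE \eqref{density1}, the theorem follows by choosing $\eta \in (0, 1/H)$ in the lemma. Note that the case $Q \leqslant 1/H$ is covered by the ``$\alpha \leqslant 0$'' branch of the lemma (for which $\mathrm{Cap}_{\alpha+\eta}(A)$ is bounded by $1$), so all regimes of $Q$ versus $1/H$ are handled uniformly. The only genuine work consists of verifying $\mathbf{(A1)}$ and $\mathbf{(A2)}$.

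To verify $\mathbf{(A2)}$, I would combine Theorem~\ref{thm:JntDenUp} with the ball--box volume estimate \eqref{volcompari}. Since $\{V_\alpha\}$ is equiregular and $x,y\in B_d(o,M)$ lie in a fixed compact set, Lemma~\ref{lemmvolcompari} yields $|B_d(x,(t-s)^H)|_{\mathrm{Vol}}^{-1} \asymp (t-s)^{-HQ}$ for $t-s$ small, so Theorem~\ref{thm:JntDenUp} gives
\[
p_{s,t}(x,y) \;\leqslant\; \frac{C}{(t-s)^{HQ}}\exp\!\left(-\frac{d(x,y)^2}{C(t-s)^{2H}}\right) + C(t-s).
\]
Writing the double integral as $\int_a^b\!\int_a^b f(|t-s|)\,ds\,dt = 2\int_0^{b-a} f(r)(b-a-r)\,dr$, the change of variable $u = d(x,y)/r^H$ (hence $r = (d(x,y)/u)^{1/H}$) converts the integral of the Gaussian part into
\[
d(x,y)^{1/H - Q}\int_{d(x,y)/(b-a)^H}^{\infty} u^{\,Q-1/H-1}\,e^{-u^2/C}\,du,
\]
up to constants. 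When $Q > 1/H$ the $u$-integral is bounded uniformly; when $Q = 1/H$ it produces a logarithmic factor; and when $Q < 1/H$ the lower endpoint exactly cancels the prefactor, yielding a bounded quantity. In all three regimes the bound matches $C\,K_{Q-1/H}(d(x,y))$, which is exactly Assumption $\mathbf{(A2)}$ with $\alpha = Q-1/H$. The additive $C(t-s)$ term contributes only a bounded constant and is absorbed.

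To verify $\mathbf{(A1)}$, I would use the local density lower bound of Geng--Ouyang--Tindel \cite{geng2022precise}, namely $p_t(x,y) \geqslant C/|B_d(x,t^H)|_{\mathrm{Vol}}$ whenever $d(x,y) \leqslant t^H$ and $t$ is small, in combination with Theorem~\ref{GW4} (smoothness of the one-dimensional density) and a support/Chow--Rashevski argument to conclude that $p_t(y)$ is continuous and strictly positive on $(0,\infty)\times\mathbb{R}^N$. By equiregularity, $\overline{B_d(o,M)}$ is compact in $\mathbb{R}^N$, so $p_t(y)$ attains a strictly positive minimum $C_0 > 0$ on the compact set $[a,b]\times\overline{B_d(o,M)}$. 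Consequently $\int_a^b p_t(y)\,dt \geqslant (b-a)C_0$, uniformly in $y\in B_d(o,M)$, establishing $\mathbf{(A1)}$.

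The main obstacle is the uniform positivity statement needed in $\mathbf{(A1)}$: the local lower bound of \cite{geng2022precise} is effective only in the cone $d(y_0,y) \leqslant t^H$ with $t$ small, whereas a generic $y \in B_d(o,M)$ may be far from the starting point $y_0$ in control distance. To bridge this gap one should invoke the rough-path support theorem together with uniform H\"ormander hypoellipticity: Chow--Rashevski ensures that every point is reachable by a control path, and coupling this with Malliavin positivity and smoothness (from Theorem~\ref{GW4}) forces $p_t(y) > 0$ everywhere on $(0,\infty) \times \mathbb{R}^N$; continuity and compactness then upgrade this to the required uniform bound. Once $\mathbf{(A1)}$ and $\mathbf{(A2)}$ are in place, Lemma~\ref{lemmaafterA1A2} yields $\mathbb{P}(Y([a,b])\cap A \neq \emptyset) \geqslant C\,\mathrm{Cap}_{Q-1/H+\eta}(A)$ for every compact $A\subseteq B_d(o,M)$, completing the proof.
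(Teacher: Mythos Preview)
Your proposal is correct and follows essentially the same route as the paper: reduce to verifying $\mathbf{(A1)}$ and $\mathbf{(A2)}$ and then apply Lemma~\ref{lemmaafterA1A2} with $\alpha=Q-1/H$. Your treatment of $\mathbf{(A2)}$ (joint density bound from Theorem~\ref{thm:JntDenUp}, ball--box comparison, change of variable in the time integral, three regimes for $\alpha$) is exactly what the paper does.

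The only point where you work harder than necessary is $\mathbf{(A1)}$. The paper simply cites two facts from the literature: continuity of $p_t(\cdot)$ from \cite{CHLT} and strict positivity of $p_t(\cdot)$ \emph{everywhere} on $\mathbb{R}^N$ from \cite[Theorem 1.5]{geng2022precise}. Those two facts plus compactness of $[a,b]\times\overline{B_d(o,M)}$ give $\mathbf{(A1)}$ in one line. You instead propose to bootstrap global positivity from the local cone estimate via a support theorem and Chow--Rashevskii. This can be made to work, but note that support plus smoothness alone does not force a continuous density to be zero-free; you do need the Malliavin-type positivity criterion you allude to, and at that point you are essentially re-proving \cite[Theorem 1.5]{geng2022precise}. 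Citing that result directly is cleaner.
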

\begin{proof}
According to Lemma~\ref{lemmaafterA1A2}, it remains to verify conditions $\mathbf{(A1)}$ and $\mathbf{(A2)}$. The density function \( p_t(\cdot) \) of \( Y_t \) is known to be continuous (see \cite[Theorem 3.5]{CHLT}) and everywhere strictly positive (see \cite[Theorem 1.5]{geng2022precise}). In particular, one has
\[
\int_a^b p_t(y)\,dt \geqslant \inf_{y \in B_d(o, M)} \int_a^b p_t(y)\,dt =: C(a, b, M) > 0.
\]
This verifies condition $\mathbf{(A1)}$.

To establish the upper bound in condition $\mathbf{(A2)}$, recall from the joint density upper estimate (\ref{upper}) that there exist constants \( c_1, c_2, c_3 > 0 \) such that
\begin{equation*}
p_{s,t}(x,y) \leqslant \frac{c_1}{|B_d(x,(t-s)^H)|_{\mathrm{Vol}}} \exp\left(-c_2 \frac{d(x,y)^2}{(t-s)^{2H}}\right) + c_3 |t-s|
\end{equation*}holds for all \( s, t \in [a, b] \) and \( x, y \in \mathbb{R}^N \).
If one further restricts to \( x \in B_d(o, M) \),  by Lemma~\ref{lemmvolcompari} the volume of the ball \( B_d(x,(t-s)^H) \) is comparable to $|t-s|^{HQ}$. Namely, there exist constants \( c_4, c_5 > 0 \) depending only on \( M \) and the vector fields \( \mathcal{V} \), such that
\begin{equation}\label{controlballvolume}
c_4\, (t-s)^{QH} \leqslant |B_d(x,(t-s)^H)| \leqslant c_5\, (t-s)^{QH}.
\end{equation}
It follows that 
\[
p_{s,t}(x,y) \leqslant \frac{C_{1,M,\mathcal{V}}}{(t-s)^{QH}} \exp\left( -C_{2,M,\mathcal{V}} \frac{d(x,y)^2}{(t-s)^{2H}} \right) + C_{3,M,\mathcal{V}} |t-s|
\] holds uniformly for all \( s, t \in [a, b] \) and \( x, y \in B_d(o, M) \), where  \( C_{i,M,\mathcal{V}}\) ($i=1,2,3$) are positive constants depending on $a,b,M,\mathcal{V}$. 

Our aim is to show that
\[
\int_a^b \int_a^b p_{s,t}(x,y)\, ds\, dt \leqslant C\, K_{Q - 1/H}(d(x,y)),
\]
with some positive constant \( C = C(a, b, M, Q, H,\mathcal{V})\).

First of all, one observes that \begin{equation}\label{verity1}
\int_a^b \int_a^b |t - s|\, ds\, dt \leqslant C\, K_{Q - 1/H}(d(x,y)).
\end{equation}This is a trivial consequence of the fact that $K_{Q-1/H}(d(x,y))$ is bounded away from zero for all $x,y\in B_d(o,M)$ (in the case $Q=1/H$, one needs to choose a suitaly large $N_0$ depending on $M$; recall (\ref{Kalpha}) for the definition of the kernel $K$).
Now it suffices to establish the following estimate:
\begin{equation}\label{verity2}
\int_a^b \int_a^b \frac{1}{(t - s)^{QH}} \exp\left( -C_{2,M,\mathcal{V}} \frac{d(x,y)^2}{(t - s)^{2H}} \right) ds\, dt \leqslant C\, K_{Q - 1/H}(d(x,y)),
\end{equation}
where \( C = C(a, b, M, Q, H, \mathcal{V}) > 0 \). To this end, one first obtains by some simple changes of variables that 
\begin{align}
    &\int_a^b \int_a^b \frac{1}{(t-s)^{QH}}\text{exp}(-C_{2,M,\mathcal{V}}\frac{d(x,y)^2}{(t-s)^{2H}})\ ds\ dt\nonumber\\ &\leqslant \ 2\ (b-a)\int_0^{b-a}\frac{1}{u^{QH}}\text{exp}(-C_{2,M,\mathcal{V}}\frac{r^2}{u^{2H}}) \ du\nonumber\\
     &=\ \frac{2(b-a)}{H r^\alpha}\int_0^{\frac{(b-a)^H}{r}} q^{-Q-1+1/H}\text{exp}(-C_{2,M,\mathcal{V}}\frac{1}{q^2})\ dq\nonumber\\
    &= \ \frac{2(b-a)}{H r^\alpha}\int_0^{\frac{(b-a)^H}{r}} q^{-\alpha-1}\text{exp}(-C_{2,M,\mathcal{V}}\frac{1}{q^2})\ dq,\label{eq:HitLowPf}
\end{align}
where we set $r:=d(x,y)\leqslant 2M$ and $\alpha:=Q-1/H$.
We now verify the inequality \eqref{verity2} by considering three separate cases.

\begin{itemize}
\item \underline{\textit{Case 1}: \( Q < 1/H \) (i.e. \( \alpha < 0 \)).} We split the integral \eqref{eq:HitLowPf} as follows:
\begin{equation*}
\begin{aligned}
&\frac{2(b-a)}{H r^\alpha} \int_0^{\frac{(b-a)^H}{r}} q^{-\alpha - 1} \exp\left( -C_{2,M,\mathcal{V}} \frac{1}{q^2} \right) dq \\
&\leqslant\ \frac{2(b-a)}{H r^\alpha} \left[ \int_0^{\frac{(b-a)^H}{2M}} q^{-\alpha - 1} \exp\left( -C_{2,M,\mathcal{V}} \frac{1}{q^2} \right) dq + \int_{\frac{(b-a)^H}{2M}}^{\frac{(b-a)^H}{r}} q^{-\alpha - 1} dq \right] \\
&= \frac{2(b-a)}{H r^\alpha} \left[ \xi_{a,b,M,\alpha,H,\mathcal{V}} + \frac{1}{-\alpha} \left( \frac{r^\alpha}{(b-a)^{\alpha H}} - \frac{(2M)^\alpha}{(b-a)^{\alpha H}} \right) \right] \\
&\leqslant \frac{2(b-a)}{H} \left[ \xi_{a,b,M,\alpha,H,\mathcal{V}} (2M)^{-\alpha} + \frac{1}{-\alpha \cdot (b-a)^{\alpha H}} \right],
\end{aligned}
\end{equation*}
where
\[
\xi_{a,b,M,\alpha,H,\mathcal{V}} := \int_0^{\frac{(b-a)^H}{2M}} q^{-\alpha - 1} \exp\left( -C_{2,M,\mathcal{V}} \frac{1}{q^2} \right) dq.
\]
Since \( K_\alpha(d(x,y)) \equiv 1 \) in this case, the estimate \eqref{verity2} holds with
\[
C = \frac{2(b-a)}{H} \left[ \xi_{a,b,M,\alpha,H,\mathcal{V}} (2M)^{-\alpha} + \frac{1}{(-\alpha)  (b-a)^{\alpha H}} \right].
\]

\item \underline{\textit{Case 2}: \( Q = 1/H \) (i.e. \( \alpha = 0 \)).} In this case, one has
\begin{equation*}
\begin{aligned}
&\frac{2(b-a)}{H} \int_0^{\frac{(b-a)^H}{r}} q^{-1} \exp\left( -C_{2,M,\mathcal{V}} \frac{1}{q^2} \right) dq \\
&\leqslant\  \frac{2(b-a)}{H} \left[ \int_0^{\frac{(b-a)^H}{2M}} q^{-1} \exp\left( -C_{2,M,\mathcal{V}} \frac{1}{q^2} \right) dq + \int_{\frac{(b-a)^H}{2M}}^{\frac{(b-a)^H}{r}} q^{-1} dq \right] \\
&= \frac{2(b-a)}{H} \left[ \zeta_{a,b,M,H,\mathcal{V}} + \log\left( \frac{2M}{r} \right) \right],
\end{aligned}
\end{equation*}
where
\[
\zeta_{a,b,M,H,\mathcal{V}} := \int_0^{\frac{(b-a)^H}{2M}} q^{-1} \exp\left( -C_{2,M,\mathcal{V}} \frac{1}{q^2} \right) dq.
\]
By setting \( N_0 := e^{\zeta_{a,b,M,H,\mathcal{V}}} \cdot 2M \), one obtains that
\[
\int_a^b \int_a^b \frac{1}{(t-s)^{QH}} \exp\left( -C_{2,M,\mathcal{V}} \frac{d(x,y)^2}{(t-s)^{2H}} \right) ds\, dt \leqslant \frac{2(b-a)}{H} \log\left( \frac{N_0}{d(x,y)} \right),
\]
which matches the definition of \( K_0(d(x,y)) \). Therefore, the estimate \eqref{verity2} holds with \( C = \frac{2(b-a)}{H} \).

\item \underline{\textit{Case 3}: \( Q > 1/H \) (i.e. \( \alpha > 0 \)).} In this case, one has
\begin{equation*}
\begin{aligned}
&\frac{2(b-a)}{H r^\alpha} \int_0^{\frac{(b-a)^H}{r}} q^{-\alpha - 1} \exp\left( -C_{2,M,\mathcal{V}} \frac{1}{q^2} \right) dq\\
&\leqslant \frac{2(b-a)}{H r^\alpha} \int_0^{\infty} q^{-\alpha - 1} \exp\left( -C_{2,M,\mathcal{V}} \frac{1}{q^2} \right) dq
= \frac{2(b-a)}{H} \eta_{\alpha,M,\mathcal{V}} \cdot r^{-\alpha},
\end{aligned}
\end{equation*}
where
\[
\eta_{\alpha,M,\mathcal{V}} := \int_0^{\infty} q^{-\alpha - 1} \exp\left( -C_{2,M,\mathcal{V}} \frac{1}{q^2} \right) dq.
\]
Since \( K_\alpha(d(x,y)) = d(x,y)^{-\alpha} \), the estimate \eqref{verity2} holds with \( C = \frac{2(b-a)}{H} \eta_{\alpha,M,\mathcal{V}} \).
\end{itemize}
Now the verification of the condition (\textbf{A2}) and thus the proof of the theorem is complete.
\end{proof}

\subsection{Upper bound for hitting probabilities}

We now proceed to establish the upper bound
in Theorem \ref{GW3}. As in the lower bound argument, we first establish a general result and restrict to the RDE context. 

According to \cite[Theorem 5.12]{BNOT}, the upper bound stated in \eqref{gcupplowboun123} can be obtained under a general criterion involving  Hausdorff measures. We first recall some basic definitions. We continue to assume that the collection of vector fields \( \{V_\alpha\}_{\alpha=1}^d \), each belonging to \( C_b^\infty(\mathbb{R}^N; \mathbb{R}^N) \), is equiregular throughout \( \mathbb{R}^N \), with the corresponding homogeneous dimension denoted by \( Q \). The metric \( d(\cdot, \cdot) \) refers to the control distance defined by \eqref{eq:ODE}. Given any non-negative real number \( \alpha \), the \textit{Hausdorff measure of dimension} \( \alpha \) for a subset \( A \) of \( \mathbb{R}^N \) is defined by
\begin{equation}\label{Hausdorff}
\begin{aligned}
\mathcal{H}_\alpha(A) &:= \lim_{\delta \downarrow 0} \mathcal{H}_\alpha^\delta(A); \\
\mathcal{H}_\alpha^\delta(A) &:= \inf\left\{ \sum_{i} (2r_i)^\alpha : A \subseteq \bigcup_{i} B_d(x_i, r_i),\ \sup_i r_i \leqslant \delta \right\}.
\end{aligned}
\end{equation}
For any Borel subset \( A \subseteq \mathbb{R}^N \), we define the \textit{order-\( \alpha \) weighted Hausdorff measure} as
\begin{equation}\label{WeightedHausdorff}
\begin{aligned}
\lambda_\alpha(A) &= \lim_{\delta \downarrow 0} \lambda_\alpha^\delta(A), \\
\lambda_\alpha^\delta(A) &= \inf\left\{ \sum_i c_i\, d(E_i)^\alpha : 1_A \leqslant \sum_i c_i\, 1_{E_i},\ c_i > 0,\ \sup_i d(E_i) \leqslant \delta \right\}.
\end{aligned}
\end{equation}
where for each set \( E \), the diameter \( d(E) \) refers to the greatest distance between any two points in \( E \) with respect to the metric \( d \).

In order to apply Hausdorff-type criteria to control hitting probabilities, we will need a generalised version of Frostman's lemma adapted to compact metric spaces. We refer the reader to  \cite[Theorem 8.17]{mattila1999geometry} for its proof.
\begin{lemma}[Frostman's Lemma]\label{Frostman}
Let \( U \) be a compact metric space and fix \( 0 \leqslant \alpha < \infty \), \( 0 < \delta \leqslant \infty \). Then there exists a Radon measure \( \mu \) on \( U \) such that \( \mu(U) = \lambda^\delta_\alpha(U) \) and
\begin{equation}\label{cononmu}
\mu(E) \leqslant d(E)^\alpha \quad \text{for all } E \subseteq U \text{ with } d(E) < \delta.
\end{equation}
In particular, if \( \mathcal{H}_\alpha(U) > 0 \), then there exist \( \delta > 0 \) and a Radon measure \( \mu \) satisfying \eqref{cononmu} with \( \mu(U) > 0 \).
\end{lemma}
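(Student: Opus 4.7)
The plan is to establish Frostman's lemma by viewing it as an LP duality between the covering problem defining $\lambda^\delta_\alpha(U)$ and the Frostman-measure problem, combined with a weak-$*$ compactness argument to produce a Radon measure attaining the bound. The essential ingredient is a discretisation of $U$ into a hierarchical partition that reduces the infinite-dimensional problem to a sequence of finite-dimensional LPs amenable to strong duality.

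First I would construct a hierarchical partition on $U$. Since $U$ is a compact metric space it is separable, and by Christ's scheme (or by iterated finite subcovering with small balls and a tie-breaking rule) one obtains a nested family $\{\mathcal{Q}_n\}_{n\geq n_0}$ of finite measurable partitions whose cells have diameter comparable to $2^{-n}$ and nest cleanly from one scale to the next. Crucially, every set $E$ with $d(E) < \delta$ can be enclosed in a uniformly bounded number of dyadic cells of diameter comparable to $d(E)$, so restricting the admissible covering sets in \eqref{WeightedHausdorff} to this hierarchy perturbs $\lambda^\delta_\alpha(U)$ only by a universal multiplicative constant depending on the structure constants of the partition.

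Next, for each depth $N$ with $2^{-N}<\delta$ the discretised covering problem becomes a finite-dimensional LP, whose dual is the maximisation of $\mu(U)=\sum_Q m_Q$ over nonnegative weights on cells of depth at most $N$, subject to $\mu(Q) \leqslant d(Q)^\alpha$ for every such cell $Q$. By strong LP duality in finite dimensions this dual is attained by a discrete measure $\mu_N$ whose total mass equals the primal optimum, hence is comparable to $\lambda^\delta_\alpha(U)$. The family $\{\mu_N\}$ is bounded in total variation and supported on the compact space $U$, so by Prokhorov's theorem a subsequence converges weak-$*$ to a Radon measure $\mu$ on $U$. The uniform local control from the hierarchy shows $\mu(E) \leqslant d(E)^\alpha$ for every $E$ with $d(E) < \delta$ (after absorbing structural constants into a final rescaling), while a direct pairing against admissible coverings shows $\mu(U) = \lambda^\delta_\alpha(U)$: one inequality is duality, the other from $\mu(U) \leqslant \sum c_i \mu(E_i) \leqslant \sum c_i d(E_i)^\alpha$ for any admissible $\{(E_i,c_i)\}$.

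The final assertion is then immediate: whenever $\mathcal{H}_\alpha(U)>0$ one has $\lambda^\delta_\alpha(U)>0$ for all sufficiently small $\delta$ by standard comparison between Hausdorff and weighted Hausdorff measures, so the measure supplied by the first part has positive mass. The principal obstacle is the initial step: producing a sufficiently well-behaved Christ-type hierarchy on a compact metric space with no doubling or regularity assumption, and then tracking the dependence of constants through the discretisation so that the Frostman estimate $\mu(E)\leqslant d(E)^\alpha$ holds sharply and not merely up to a multiplicative constant. All later steps (finite LP duality, weak-$*$ compactness, transfer of local bounds to the limit) are essentially soft once this hierarchy is in place.
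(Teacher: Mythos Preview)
The paper does not give its own proof of this lemma: it simply states the result and writes ``We refer the reader to \cite[Theorem 8.17]{mattila1999geometry} for its proof.'' So there is no in-paper argument to compare against.

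Your outline is in the right spirit --- the weighted Hausdorff content $\lambda^\delta_\alpha$ was introduced precisely so that a duality/Hahn--Banach argument (due to Howroyd, and reproduced in Mattila's Theorem~8.17) yields the Frostman measure directly on an arbitrary compact metric space. The key difference is that Mattila's proof works in infinite dimensions from the start: one views $\lambda^\delta_\alpha$ as a sublinear functional on $C(U)$, applies Hahn--Banach to dominate the linear functional $c\mapsto c\cdot\lambda^\delta_\alpha(U)$, and then Riesz representation produces the Radon measure with the exact mass $\mu(U)=\lambda^\delta_\alpha(U)$ and the exact Frostman bound. No hierarchy, no discretisation, no constants to track.

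This matters because your proposed route has a genuine weak point exactly where you flag it. The Christ dyadic decomposition requires geometric doubling; on a general compact metric space (which is all the lemma assumes) you cannot guarantee that an arbitrary set of diameter $r$ is covered by a \emph{bounded} number of cells at the comparable scale, so the reduction ``restricting to the hierarchy perturbs $\lambda^\delta_\alpha$ only by a universal multiplicative constant'' can fail. One can salvage the discretisation approach with cruder nested Borel partitions and a recursive mass-distribution argument (as in the classical $\mathbb{R}^n$ proof), but then recovering the \emph{sharp} equality $\mu(U)=\lambda^\delta_\alpha(U)$ rather than a two-sided comparison is delicate. The infinite-dimensional Hahn--Banach route in Mattila avoids all of this and is both shorter and sharper; I would recommend that as the reference argument here.
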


Let \( (u_t)_{0 \leqslant t \leqslant T} \) be a continuous stochastic process taking values in \( \mathbb{R}^N \). We consider the following assumption.

\vspace{2mm}\noindent $\mathbf{(A3)}$ The  estimate 
\begin{equation}\label{WTS2}
\mathbb{P}\left(u([s, t]) \cap B_d(z, \varepsilon) \neq \emptyset\right) \leqslant C \varepsilon^\beta
\end{equation}
holds uniformly for all $z\in B_d(o,M)$, $\varepsilon>0$, $s,t\in [a,b]$ with $|t-s|=\varepsilon^{1/H}$. Here $0<a<b\leqslant T$, $\beta>0$, $M>0$, $H\in(0,1)$ are suitable constants and $C>0$ possibly depends on them but not on $s,t,z$.

\vspace{2mm}Assuming condition $\mathbf{(A3)}$, we will first derive the desired upper bound for the hitting probabilities. The justification of $\mathbf{(A3)}$ in the RDE context will be addressed in a later part. 


\begin{lemma}\label{8.7}
Let \( (u_t)_{0 \leqslant t \leqslant T} \) be a continuous stochastic process satisfying assumption $\mathbf{(A3)}$. Fix \( \beta > 0 \), \( 0 < a < b \leqslant T \) and \( M > 0 \). Then the following statements hold true.
\begin{itemize}
\item There exists a constant \( C_1 > 0 \) such that
\begin{equation}\label{WTS}
\mathbb{P}\big(u([a, b]) \cap A \neq \emptyset\big) \leqslant C_1\, \mathcal{H}_{\beta - 1/H}(A)
\end{equation}for all Borel sets \( A \subseteq B_d(o, M) \).

\item For every \( \eta > 0 \), there exists a constant \( C_2 > 0 \) depending additionally on 
\( \eta \), such that
\begin{equation}\label{WTS1}
\mathbb{P}\big(u([a, b]) \cap A \neq \emptyset\big) \leqslant C_2\, \textnormal{Cap}_{\beta - 1/H - \eta}(A)
\end{equation}for all compact sets \( A \subseteq B_d(o, M) \).
\end{itemize}
\end{lemma}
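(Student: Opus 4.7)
The plan is to establish part (1) via a direct covering argument that feeds $\mathbf{(A3)}$ into the definition of $\mathcal{H}_{\beta-1/H}$, and then to deduce part (2) from part (1) by combining Frostman's lemma (Lemma \ref{Frostman}) with an energy estimate that converts the Hausdorff bound into a capacity bound.

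For part (1), I would fix $\delta > 0$ small (eventually sent to zero) and consider any cover $\{B_d(x_i, r_i)\}_i$ of $A$ with $\sup_i r_i \leqslant \delta$. A standard doubling argument (replacing each ball by one of twice the radius centred at a point of $A \cap B_d(x_i, r_i)$, and discarding balls that do not meet $A$) allows one to assume $x_i \in B_d(o, M)$, so that $\mathbf{(A3)}$ is applicable. For each $i$, I would partition $[a,b]$ into at most $\lceil (b-a)/r_i^{1/H}\rceil$ subintervals of length $r_i^{1/H}$; applying $\mathbf{(A3)}$ with $\varepsilon = r_i$ to each subinterval and performing a union bound yields
\[
\mathbb{P}\bigl(u([a,b]) \cap B_d(x_i, r_i) \neq \emptyset\bigr) \leqslant C(b-a)\, r_i^{\beta - 1/H}.
\]
Summing over $i$, taking the infimum over all admissible covers, and sending $\delta \downarrow 0$ then produces the bound (\ref{WTS}).

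For part (2), set $\alpha := \beta - 1/H$ and assume first $\alpha > 0$. If $\mathcal{H}_\alpha(A) = 0$, then (\ref{WTS1}) is immediate from part (1). Otherwise, Lemma \ref{Frostman} (applied to the compact metric space $(A,d)$) supplies $\delta_0 > 0$ and a Radon measure $\mu$ on $A$ with $\mu(A) > 0$ (comparable to $\mathcal{H}_\alpha(A)$ via the standard $\lambda_\alpha \asymp \mathcal{H}_\alpha$ equivalence in metric spaces) and $\mu(E) \leqslant d(E)^\alpha$ whenever $d(E) < \delta_0$. I would then estimate the $(\alpha-\eta)$-energy of the normalised probability measure $\tilde\mu := \mu/\mu(A) \in \mathcal{P}(A)$ by the layer-cake formula
\[
\int_{\mathbb{R}^N} d(x,y)^{-(\alpha-\eta)}\, \mu(dy) = (\alpha-\eta)\int_0^\infty r^{-(\alpha-\eta)-1}\, \mu(B_d(x,r))\, dr,
\]
splitting the integral at $r = \delta_0/2$, invoking $\mu(B_d(x,r)) \leqslant (2r)^\alpha$ for small $r$ and $\mu(B_d(x,r)) \leqslant \mu(A)$ otherwise, and using $A \subseteq B_d(o,M)$ to cap the effective range of $r$. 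The inner integral is then uniformly bounded in $x$, giving $\mathcal{E}_{\alpha-\eta}(\tilde\mu) \leqslant C/\mu(A)$. Hence $\mathrm{Cap}_{\alpha-\eta}(A) \geqslant c\mu(A) \geqslant c'\mathcal{H}_\alpha(A)$, and combining with part (1) delivers (\ref{WTS1}).

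The cases $\alpha - \eta \leqslant 0$ require only minor adjustments by the definition (\ref{Kalpha}) of the kernel: when $\alpha - \eta < 0$ the kernel $K_{\alpha-\eta} \equiv 1$, so $\mathrm{Cap}_{\alpha-\eta}(A) = 1$ for nonempty $A$ and (\ref{WTS1}) reduces to the trivial bound $\mathbb{P}(u([a,b]) \cap A \neq \emptyset) \leqslant 1$; when $\alpha - \eta = 0$ one performs the analogous layer-cake computation with $\log(N_0/r)$ in place of the power-law integrand, and the argument goes through unchanged. The principal technical point will be the quantitative comparison $\mu(A) \asymp \mathcal{H}_\alpha(A)$ together with the uniform energy estimate, both of which rely on the metric-space formulation of Frostman's lemma; fortunately Lemma \ref{Frostman} holds in this generality, so the sub-Riemannian control distance $d$ introduces no essential complications beyond those already absorbed by $\mathbf{(A3)}$.
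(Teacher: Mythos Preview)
Your argument for part (1) is essentially identical to the paper's.

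For part (2), the overall strategy (Frostman measure plus layer-cake energy estimate) is also the paper's, but there is a real gap in your chain
\[
\mathbb{P}\big(u([a,b])\cap A\neq\emptyset\big)\;\leqslant\;C_1\,\mathcal{H}_\alpha(A)\;\lesssim\;\mu(A)\;\lesssim\;\mathrm{Cap}_{\alpha-\eta}(A).
\]
The middle inequality $\mathcal{H}_\alpha(A)\lesssim\mu(A)$ is the one you flag as ``the principal technical point'', and it does not follow from the stated Frostman lemma. The measure produced by Lemma~\ref{Frostman} satisfies $\mu(A)=\lambda^{\delta_0}_\alpha(A)$, and Mattila's comparison only gives $\mathcal{H}^{30\delta_0}_\alpha(A)\leqslant 30^\alpha\mu(A)$; since $\mathcal{H}_\alpha(A)\geqslant\mathcal{H}^{30\delta_0}_\alpha(A)$, you cannot conclude $\mathcal{H}_\alpha(A)\lesssim\mu(A)$. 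In particular, when $\mathcal{H}_\alpha(A)=+\infty$ (which is generic), the inequality fails outright. You might try to rescue it by taking $\delta_0\downarrow 0$, but then the constant in your energy estimate---which involves $(\delta_0/2)^{-(\alpha-\eta)}$ from the ``large $r$'' part of the split---blows up.

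The paper sidesteps this by never passing to the full Hausdorff measure. Instead it keeps the $\delta$-content bound \eqref{Hdelta}, namely $\gamma\leqslant C_1\,\mathcal{H}^\delta_\alpha(A)$ for \emph{every} $\delta>0$ (which you yourself derive before ``sending $\delta\downarrow 0$''). Fixing a single large $\delta\geqslant 2M$, one chains
\[
\gamma\;\leqslant\;C_1\,\mathcal{H}^{30\delta}_\alpha(A)\;\leqslant\;C_1\cdot30^\alpha\,\lambda^\delta_\alpha(A)\;=\;C_1\cdot30^\alpha\,\mu(A),
\]
so the normalised measure $\bar\mu$ satisfies $\bar\mu(E)\leqslant \gamma^{-1}C_1\cdot30^\alpha\,d(E)^\alpha$ for all $E\subseteq A$ (since $\delta\geqslant 2M\geqslant d(A)$). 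The energy estimate then splits at $r=2M$ rather than $r=\delta_0/2$, the growth bound applies throughout the small-$r$ range, and the constants are uniform in $A$. This yields $\mathcal{E}_{\alpha-\eta}(\bar\mu)\leqslant C_2/\gamma$ directly, hence $\gamma\leqslant C_2\,\mathrm{Cap}_{\alpha-\eta}(A)$. Your proposal becomes correct once you replace $\mathcal{H}_\alpha$ by $\mathcal{H}^\delta_\alpha$ at a fixed large scale and adjust the split point accordingly.
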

\begin{proof}
We assume that \( A \neq \emptyset \) for otherwise there is nothing to prove. We proceed by considering two cases.

\underline{\textit{Case 1: \( \beta \leqslant 1/H \)}.} From the definitions of the Hausdorff measure and capacities in \eqref{Hausdorff} and \eqref{Kalpha}, one has
\[
\textnormal{Cap}_{\beta - 1/H - \eta}(A) = 1 \quad \text{and} \quad \mathcal{H}_{\beta - 1/H}(A) = \infty
\]for any \( A \subseteq B_d(o,M) \) and any \( \eta > 0 \). Hence, both upper bounds in \eqref{WTS} and \eqref{WTS1} are  satisfied in this case, with constants $C_1=C_2=1$.

\underline{\textit{Case 2: \( \beta > 1/H\)}.} We first establish \eqref{WTS}, which is an easier  consequence of $\mathbf{(A3)}$. Fix \( \varepsilon \in (0,1) \) and consider a partition \( \{I_k\}_{k \geqslant 1} \) of the interval \( [a, b] \), where each subinterval \( I_k \) has length \( \varepsilon^{1/H} \) and the intervals have pairwise disjoint interiors. Then
\begin{equation*}
\begin{aligned}
\mathbb{P}\big(u([a, b]) \cap B_d(z, \varepsilon) \neq \emptyset\big)
&\leqslant \sum_{k: I_k \cap [a, b] \neq \emptyset} \mathbb{P}\big(u(I_k) \cap B_d(z, \varepsilon) \neq \emptyset\big) \\
&\leqslant \left( \left\lfloor \frac{b - a}{\varepsilon^{1/H}} \right\rfloor + 1 \right) C_H\, \varepsilon^\beta \leqslant C_{a,b,\beta,H}\, \varepsilon^{\beta - 1/H}.
\end{aligned}
\end{equation*}
It follows that for any countable collection of open balls \( \{B_d(x_i, r_i)\}_{i=1}^\infty \) such that \( A \subseteq \bigcup_{i=1}^\infty B_d(x_i, r_i) \) and \( \sup_i r_i \leqslant \delta \), one has
\begin{equation}\label{Hdelta0}
\begin{aligned}
\mathbb{P}\big(u([a, b]) \cap A \neq \emptyset\big)
&\leqslant \sum_{i=1}^\infty \mathbb{P}\big(u([a, b]) \cap B_d(x_i, r_i) \neq \emptyset\big) \\
&\leqslant C_{a,b,\beta,H} \cdot 2^{1/H - \beta} \sum_{i=1}^\infty (2r_i)^{\beta - 1/H}.
\end{aligned}
\end{equation}
Taking the infimum of both sides of \eqref{Hdelta0} over all such coverings \( \{B_d(x_i, r_i)\}_{i=1}^\infty \) with \( \sup_i r_i \leqslant \delta \), one obtains that
\begin{equation}\label{Hdelta}
\mathbb{P}\big(u([a, b]) \cap A \neq \emptyset\big)\leqslant C_1~\mathcal{H}^\delta_{\beta - 1/H}(A),
\end{equation}
where the constant is given by \( C_1 = 2^{1/H - \beta} C_{a,b,\beta,H} \). Since the left hand side of \eqref{Hdelta} is independent of \( \delta \), sending \( \delta \downarrow 0 \) yields
\[
\mathbb{P}\big(u([a, b]) \cap A \neq \emptyset\big) \leqslant C_1 \, \mathcal{H}_{\beta - 1/H}(A).
\]
This establishes \eqref{WTS}.

Next, we proceed to establish \eqref{WTS1}. We assume that
\[
\gamma := \mathbb{P}\big(u([a, b]) \cap A \neq \emptyset\big) > 0.
\]
To prove \eqref{WTS1}, we propose the following sufficient condition. Suppose that for some $\delta\geqslant 2M$, there exists a probability measure \( \bar{\mu} \in \mathcal{P}(A) \) such that
\begin{equation}\label{suffcond}
\bar{\mu}(E) \leqslant \gamma^{-1} C_1 \cdot 30^{\beta - 1/H} \cdot d(E)^{\beta - 1/H} \quad \text{for all } E \subseteq \mathbb{R}^N \text{ with } d(E) < \delta.
\end{equation}
We claim that the estimate \eqref{WTS1} follows from this assumption.

\begin{itemize}
\item \textbf{Sufficiency of \eqref{suffcond}.}  
Fix \( \eta > 0 \) such that \( \alpha := \beta - 1/H - \eta > 0 \). Recall that the \( \alpha \)-energy of the measure \( \bar{\mu} \) is defined by
\[
\mathcal{E}_\alpha(\bar{\mu}) = \int \int d(x, y)^{-\alpha} \, \bar{\mu}(dx) \, \bar{\mu}(dy).
\]
One has
\begin{equation*}
\begin{aligned}
    &\ \int d(x,y)^{-\alpha}~\bar{\mu}(dy)=\int_0^\infty\bar{\mu}(\{y:d(x,y)^{-\alpha}\geqslant u\})du\\
    &=\ \int_0^\infty\bar{\mu}(B_d(x,u^{-1/\alpha}))\ du\ \overset{u^{-1/\alpha}=r}{=} \alpha \int_0^\infty r^{-\alpha-1}\ \bar{\mu}(B_d(x,r))\ dr\\
    &=\alpha \int_0^{2M} r^{-\alpha-1}\ \bar{\mu}(B_d(x,r))\ dr +\alpha\int_{2M}^\infty r^{-\alpha-1}\ \bar{\mu}(B_d(x,r))\ dr\\
    &:=U_1(x)+U_2(x).
\end{aligned}
\end{equation*}
For the  $U_1$-term, \eqref{suffcond} implies that
$$
\frac{U_1(x)}{\gamma^{-1} C_1 \cdot 30^{\alpha+\eta} \cdot\alpha}\leqslant\int_0^{2M} r^{-\alpha-1}\cdot (2r)^{\alpha+\eta}\ dr = \frac{2^{\alpha+2\eta} M^\eta}{\eta},
$$for all $x\in\mathbb{R}^N$, where we used the fact that $d(B_d(x,r))\leqslant 2r$. For the $U_2$-term, \eqref{suffcond} implies that
$$
\frac{U_2(x)}{\gamma^{-1} C_1 \cdot 30^{\alpha+\eta} \cdot\alpha}\leqslant\int_{2M}^\infty r^{-\alpha-1}\cdot (4M)^{\alpha+\eta}\ dr = \frac{2^{\alpha+2\eta} M^\eta}{\alpha}
$$for all $x\in A$, where we used the fact that  \( A \subseteq B_d(x, 2M) \) for any $x\in A$, so that 
\[
\bar{\mu}(B_d(x, r)) = \bar{\mu}(B_d(x, 2M)) \quad \text{for all } r \geqslant 2M.
\] As a result, one has
\begin{equation*}
\begin{aligned}
\mathcal{E}_\alpha(\bar{\mu})=&\int_{A}\left(U_1(x)+U_2(x)\right)\bar{\mu}(dx)\\
\leqslant &~\gamma^{-1} C_1 \cdot 30^{\alpha+\eta} \cdot(\alpha\eta^{-1}+1)\cdot2^{\alpha+2\eta} M^\eta\\
:=&~\gamma^{-1}C_2,
\end{aligned}
\end{equation*}
where $C_2:=C_1 \cdot 30^{\alpha+\eta} \cdot(\alpha\eta^{-1}+1)\cdot2^{\alpha+2\eta} M^\eta$. It follows that
\[
\mathbb{P}\big(u([a, b]) \cap A \neq \emptyset\big) = \gamma 
\leqslant C_2 \left[ \inf_{\mu \in \mathcal{P}(A)} \mathcal{E}_{\beta - 1/H - \eta}(\mu) \right]^{-1} 
= C_2 \, \mathrm{Cap}_{\beta - 1/H - \eta}(A).
\]
This completes the proof of \eqref{WTS1}.
\end{itemize}

Now we come back to construct a measure \( \bar{\mu} \) that satisfies \eqref{suffcond}. According to \eqref{Hdelta}, one has
\begin{equation}\label{H30delta1}
0 < \gamma = \mathbb{P}\big(u([a, b]) \cap A \neq \emptyset\big) \leqslant C_1\, \mathcal{H}^{\delta}_{\beta - 1/H}(A), \quad \text{for all } \delta > 0.
\end{equation}
A key tool is  \cite[Lemma 8.16]{mattila1999geometry}, which states that for any compact metric space \( U \) and real numbers \( 0<\delta\leq\infty, \ 0 \leqslant \alpha < \infty \), one has
\begin{equation}\label{H30delta2}
\mathcal{H}_\alpha^{30\delta}(U) \leqslant 30^\alpha\, \lambda_\alpha^\delta(U).
\end{equation}
Combining \eqref{H30delta1} and \eqref{H30delta2}, one finds that 
\begin{equation}\label{H30delta3}
0 < \gamma \leqslant C_1\, \mathcal{H}^{30\delta}_{\beta - 1/H}(A) \leqslant 30^{\beta - 1/H} C_1\, \lambda_{\beta - 1/H}^\delta(A), \quad \text{for all } \delta > 0.
\end{equation}

Fix a \( \delta \) sufficiently large (say, \( \delta \geqslant 2M \)). Applying Lemma \ref{Frostman} to the set \( A \) and the measure \( \lambda_{\beta - 1/H}^\delta \), one obtains a Radon measure \( \mu \) such that
\begin{equation*}
\begin{aligned}
&\mu(A) = \lambda_{\beta - 1/H}^\delta(A), \quad \text{and} \quad \text{supp}(\mu) \subseteq A,\\  
&\mu(E) \leqslant d(E)^{\beta - 1/H} \quad \text{for all } E \subseteq \mathbb{R}^N \text{ with } d(E) < \delta.
\end{aligned}
\end{equation*}
Since \( A \subseteq B_d(o, M) \) is a non-empty bounded set, the definition of \( \lambda^\delta_\alpha \) ensures
\[
0 < \mu(A) = \lambda_{\beta - 1/H}^\delta(A) < \infty.
\]
One then define a probability measure $\bar{\mu}\in\mathcal{P}(A)$ by
\[
\bar{\mu}(\cdot) := \frac{\mu(\cdot \cap A)}{\mu(A)}.
\]
Duo to \eqref{H30delta3} and the relation $\mu(A) = \lambda_{\beta - 1/H}^\delta(A)$, one has
\begin{equation*}
\bar{\mu}(E) \leqslant \gamma^{-1} C_1 \cdot 30^{\beta - 1/H} \cdot d(E)^{\beta - 1/H} 
\end{equation*}for all $E\subseteq\mathbb{R}^N$ with $d(E)<\delta$. 
This justifies the condition \eqref{suffcond} and therefore completes the proof of the lemma.
\end{proof}

Now we restrict to the RDE context and establish the main upper bound in Theorem \ref{GW3}. We first present a moment continuity estimate with respect to the control distance $d$ which may also be of independent interest.

\begin{lemma}\label{controlmoment}
Under the assumptions in Theorem \ref{theoremlowerbound}, there exists a constant $C_1=C(H,\mathcal{V},p,T)$ such that
$$
\mathbb{E}\left[d(Y_s,Y_t)^p\right]\leqslant C_1|t-s|^{pH}\,\quad\forall~s,t\in[0,T].
$$
Moreover, for any $\eta\in(0,H)$ and $p\geqslant1$,  there exists a constant $C_2=C(H,\mathcal{V},p,T,\eta)$, such that 
$$
\mathbb{E}\left[\sup_{s,t\in[0,T]}\frac{d(Y_s,Y_t)^p}{|t-s|^{p\eta}}\right]\leqslant C_2 .
$$
\end{lemma}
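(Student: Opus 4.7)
The plan is to bound $d(Y_s,Y_t)$ by combining the sharp CC-norm estimate on the truncated logarithmic signature $\mathbf{U}_{s,t}$ with a stochastic Taylor expansion of the flow: loosely speaking, the main geometric contribution is $\|\mathbf{U}_{s,t}\|_{\mathrm{CC}}$ (with $(t-s)^H$ scaling) and a small residual obtained from the Taylor remainder via a local ball-box inequality. Once the sharp $L^p$ bound is established, the second statement follows from a Kolmogorov-type argument.

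I would fix an integer $l\geqslant\bar{l}$ (to be chosen) and first prove that $\mathbb{E}[\|\mathbf{U}_{s,t}\|_{\mathrm{CC}}^p]\leqslant C_p(t-s)^{pH}$ for every $p\geqslant1$. This follows from self-similarity of fBM, namely $\mathbf{U}_{s,t}\stackrel{d}{=}\delta_{(t-s)^H}\mathbf{U}_{0,1}$, together with the Besov control \eqref{Besovnormboun} on the CC-norm and the exponential integrability \eqref{Besovnorminte}. Next, a standard stochastic Taylor expansion (see e.g.\ \cite[Proposition 4.23]{geng2022precise} and its proof) provides a remainder
\[
R_{s,t}:=\bigl\|Y_t-Y_s-F_l(\mathbf{U}_{s,t},Y_s)\bigr\|_{\mathbb{R}^N},\qquad \mathbb{E}[R_{s,t}^p]\leqslant C_{p,l,\varepsilon}(t-s)^{p((l+1)H-\varepsilon)}
\]
for any small $\varepsilon>0$.

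On the event $\{\|\mathbf{U}_{s,t}\|_{\mathrm{HS}}<r_1\}$ (with $r_1$ from Lemma \ref{lem:CoVBridge}), the bridge space $M_{Y_s,Y_s+F_l(\mathbf{U}_{s,t},Y_s)}$ is non-empty and the distance comparison \eqref{distance} yields $d(Y_s,Y_s+F_l(\mathbf{U}_{s,t},Y_s))\leqslant \zeta^{-1}\|\mathbf{U}_{s,t}\|_{\mathrm{CC}}$. The residual displacement from $Y_s+F_l(\mathbf{U}_{s,t},Y_s)$ to $Y_t$ has Euclidean size $R_{s,t}$, and the standard hypoelliptic ball-box inequality (cf.\ \cite{subRiemanbook2019}) gives $d(x,y)\leqslant C\|x-y\|_{\mathbb{R}^N}^{1/\bar{l}}$ for $\|x-y\|$ sufficiently small. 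Combining these with the triangle inequality for $d$ gives
\[
d(Y_s,Y_t)\ \leqslant\ \zeta^{-1}\|\mathbf{U}_{s,t}\|_{\mathrm{CC}}+CR_{s,t}^{1/\bar{l}}
\]
on this event. Choosing $l$ large enough so that $(l+1)H/\bar{l}\geqslant H$ and taking $L^p$-norms yields the desired $(t-s)^{pH}$ bound on this event. On the complementary event $\{\|\mathbf{U}_{s,t}\|_{\mathrm{HS}}\geqslant r_1\}$, the probability decays superpolynomially in $(t-s)^{-H}$ via self-similarity and the Fernique-type tail of $\|\mathbf{U}_{0,1}\|_{\mathrm{CC}}$, while $d(Y_s,Y_t)$ admits a crude polynomial bound in terms of $\|Y\|_{\infty;[0,T]}$, whose Weibull-type tails follow from \cite{CLL}; this contribution is therefore negligible.

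For the second claim, given the moment bound $\mathbb{E}[d(Y_s,Y_t)^p]\leqslant C_p|t-s|^{pH}$ holding for arbitrary $p\geqslant1$, Kolmogorov's continuity theorem applied to the two-parameter random field $(s,t)\mapsto d(Y_s,Y_t)$ on $[0,T]^2$ furnishes, for any $\eta<H$ and $p$ satisfying $p(H-\eta)>2$, an $L^p$-bound on the Hölder constant; Jensen's inequality extends this to all $p\geqslant1$. I expect the main obstacle to lie in the careful treatment of the bad event in Step 3: the Taylor approximation ceases to be meaningful when $\|\mathbf{U}_{s,t}\|_{\mathrm{HS}}$ is large, and one must combine the superpolynomial small-ball decay (from rescaled Fernique) with the Weibull tails of the solution itself to ensure the polynomial growth of $d(Y_s,Y_t)$ is tamed by the tail probability.
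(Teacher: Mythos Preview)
Your proposal is correct and follows essentially the same approach as the paper: both combine the Kusuoka--Stroock estimate $d(x, x+F_l(u,x)) \lesssim \|u\|_{\mathrm{CC}}$ (you via Lemma~\ref{lem:CoVBridge}, the paper via \cite[Equation (3.21)]{kusuoka1987applications}) with the Euler/Taylor remainder bound and the ball-box inequality $d(x,y)\leqslant K\|x-y\|^{1/\bar{l}}$, then invoke Kolmogorov for the second part. The paper organizes the computation slightly differently---reducing first to $s=0$ via uniformity of all constants in the starting point, rescaling to time~$1$, and localizing on $\|Y_1^t - y_0\|_{\mathbb{R}^N}$ rather than on $\|\mathbf{U}_{s,t}\|_{\mathrm{HS}}$---but the core ingredients are identical.
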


\begin{proof}
We assume without loss of generality that $s=0$ and aim at proving the following estimate:
\begin{equation}\label{WTSscaling1}
\mathbb{E}\left[d(y_0, Y_t)^p\right] \leqslant C t^{pH}, \quad \text{for all } t \in [0, T],
\end{equation}
where the constant \( C \) depends only on \( H \), \( \mathcal{V} \), \( p \), and \(T\).
To this end, consider the rescaled rough differential equation
\begin{equation}\label{scalinequation}
dZ_r = \varepsilon^{H} \sum_{i=1}^d V_i(Z_r) \, dB_r, \quad Z_0 = z_0 \in \mathbb{R}^N,
\end{equation}
and let \( (Y_r^\varepsilon)_{r \geqslant 0} \) denote its solution. By the scaling property of fBM, it is obvious that 
\[
d(y_0, Y_t) \overset{d}{=} d(y_0, Y_1^t)
\]for all $t\in[0,T]$
where \( Y_1^t \) is the solution at time \( 1 \) to the equation~\eqref{scalinequation} with \( \varepsilon = t \). Therefore, the estimate~\eqref{WTSscaling1} is equivalent to the following bound:
\begin{equation}\label{WTSscaling2}
\mathbb{E}\left[d(y_0, Y_1^t)^p\right] \leqslant C t^{pH} \quad \forall t \in [0, T].
\end{equation}To establish this estimate, we localise the expectation on the events \( \{\omega:\|Y^t_1 - y_0\|_{\mathbb{R}^N} \geqslant 1\} \) and \( \{\omega:\|Y^t_1 - y_0\|_{\mathbb{R}^N} < 1\} \) separately.

\underline{\textit{Case 1 : Localisation on \(  \{\omega:\|Y^t_1 - y_0\|_{\mathbb{R}^N} \geqslant 1\} \).}} To proceed, we need a comparison result between the control distance \( d(x, y) \) and the Euclidean distance \( \|x - y\|_{\mathbb{R}^N} \) in \( \mathbb{R}^N \). According to \cite[Equation (3.22)]{kusuoka1987applications}, there exists a constant \( K \geqslant 1 \), depending only on the vector fields, such that
\begin{equation}\label{(3.22)}
\frac{1}{K}\|x - y\|_{\mathbb{R}^N} \leqslant d(x, y) \leqslant K \|x - y\|^{1/\bar{l}}_{\mathbb{R}^N}
\end{equation}for all $x,y\in\mathbb{R^N}$ with $\|x - y\|_{\mathbb{R}^N} \wedge d(x, y) \leqslant 1$, 
where \( \bar{l} \) is the hypoellipticity constant appearing in Hypothesis \ref{1.2}. 

Using \eqref{(3.22)}, one can derive an upper bound for \( d(x, y) \) when \( \|x - y\|_{\mathbb{R}^N} \geqslant 1 \). Fix such $x,y$ and let $m\geqslant 1$ satisfy \( m \leqslant \|x - y\|_{\mathbb{R}^N} < m + 1 \).
Consider a partition \( \{x_i\}_{i=0}^m \subseteq \overline{xy} \), the line segment between \( x \) and \( y \), such that \( x_0 = x \), \( \|x_k - x_{k-1}\|_{\mathbb{R}^N} = 1 \) for \( 1 \leqslant k \leqslant m \), and \( \|x_m - y\|_{\mathbb{R}^N} < 1 \). It follows from the triangle inequality and the upper bound in \eqref{(3.22)} that
\begin{equation*}
\begin{aligned}
d(x, y) &\leqslant \sqrt{m+1} \left( \sum_{i=0}^{m-1} d(x_{i}, x_{i+1}) + d(x_m, y) \right) \\
&\leqslant \sqrt{m+1} (Km + K) = K (m+1)^{3/2}.
\end{aligned}
\end{equation*}
Since \( m+1 \leqslant 2\|x - y\|_{\mathbb{R}^N} \), the right hand side can be further bounded by \( 2^{3/2} K \|x - y\|^{3/2}_{\mathbb{R}^N} \). It follows that
\[
d(x, y) \leqslant 2^{3/2} K \|x - y\|^{3/2}_{\mathbb{R}^N}
\]for all $x,y$ with $\|x - y\|_{\mathbb{R}^N} \geqslant 1$.
By applying this bound to the random variable \( Y^t_1 \), one obtains that
\begin{equation}\label{star1}
\begin{aligned}
d(y_0, Y^t_1)^p \mathbf{1}_{\{\|Y^t_1 - y_0\|_{\mathbb{R}^N} \geqslant 1\}} 
&\leqslant (2^{3/2} K)^p \|Y^t_1 - y_0\|^{\frac{3p}{2}}_{\mathbb{R}^N} \mathbf{1}_{\{\|Y^t_1 - y_0\|_{\mathbb{R}^N} \geqslant 1\}} \\
&\leqslant (2^{3/2} K)^p t^{\frac{3pH}{2}} \left| \int_0^1 V(Y^t_r) \, dB_r \right|^{\frac{3p}{2}},
\end{aligned}
\end{equation}
where the last inequality is due to the definition of $(Y^t_r)_{r\geqslant 1}$ in \eqref{scalinequation}.

\underline{\textit{Case 2: Localisation on the event \( \{ \omega : \|Y^t_1 - y_0\|_{\mathbb{R}^N} < 1 \} \).}}  
By \eqref{(3.22)}, one has
\[
d(y_0, Y^t_1) \leqslant K\, \|Y^t_1 - y_0\|^{1/\bar{l}}_{\mathbb{R}^N} < K.
\]
Hence,
\begin{equation}\label{(1)}
d(y_0, Y^t_1)^p \mathbf{1}_{\{\|Y^t_1 - y_0\|_{\mathbb{R}^N} < 1\}} \leqslant d(y_0, Y^t_1)^p \mathbf{1}_{\{d(y_0, Y^t_1) < K\}}.
\end{equation}

We now insert the Euler scheme approximation $\mathcal{E}_{(\mathcal{V})}(y_0, \delta_{t^H}\mathbf{B})$ for \( Y^t_1 \), which starts from the initial point \( y_0 \) and runs up to time $1$ (see \cite[Definition 10.1]{friz2010multidimensional}) to obtain that
\begin{equation}\label{(3)}
d(y_0, Y^t_1) \leqslant  d(y_0, y_0 + \mathcal{E}_{(\mathcal{V})}(y_0, \delta_{t^H}\mathbf{B})) + d(y_0 + \mathcal{E}_{(\mathcal{V})}(y_0, \delta_{t^H}\mathbf{B}), Y^t_1),
\end{equation}
where \( \mathbf{B} \) denotes the step-\( \bar{l} \) signature of the fBM \( (B_r)_{0 \leqslant r \leqslant 1} \) and \( \delta_{t^H} \) is the dilation operator. According to \cite[Corollary 10.15]{friz2010multidimensional}, one has
\[
\left| y_0 + \mathcal{E}_{(\mathcal{V})}(y_0, \delta_{t^H}\mathbf{B}) - Y^t_1 \right| 
\leqslant C_{\mathcal{V},l} \|\delta_{t^H}\mathbf{B}\|_{p\text{-var};[0,1]}^{\bar{l}+1} 
= C_{\mathcal{V},\bar{l}} \|\mathbf{B}\|_{p\text{-var};[0,1]}^{\bar{l}+1} t^{H(\bar{l}+1)}.
\]
To obtain an appropriate upper bound in \eqref{(3)}, we localise on the event
\[
L := \left\{ \|\mathbf{B}_{0,1}\|_{\mathrm{CC}} \cdot t^H \leqslant 1 \right\} \cap 
\left\{ C_{\mathcal{V},\bar{l}} \|\mathbf{B}\|_{p\text{-var};[0,1]}^{\bar{l}+1} \cdot t^{H(\bar{l}+1)} \leqslant 1 \right\}.
\]
On this event, by using \cite[Equation 3.21]{kusuoka1987applications} and \eqref{(3.22)} one finds that
\begin{equation}\label{(3).1}
d(y_0, y_0 + \mathcal{E}_{(\mathcal{V})}(y_0, \delta_{t^H}\mathbf{B})) 
\leqslant C_\mathcal{V} \|\delta_{t^H}\mathbf{B}_{0,1}\|_{\mathrm{CC}} = C_\mathcal{V} \|\mathbf{B}_{0,1}\|_{\mathrm{CC}} \cdot t^H,
\end{equation}
and
\begin{equation}\label{(3).2}
\begin{aligned}
d(y_0 + \mathcal{E}_{(\mathcal{V})}(y_0, \delta_{t^H}\mathbf{B}), Y^t_1) 
&\leqslant K \left| y_0 + \mathcal{E}_{(\mathcal{V})}(y_0, \delta_{t^H}\mathbf{B}) - Y^t_1 \right|^{1/\bar{l}} \\
&\leqslant K C_{\mathcal{V},\bar{l}}^{1/\bar{l}} \|\mathbf{B}\|_{p\text{-var};[0,1]}^{(\bar{l}+1)/\bar{l}} t^{H(\bar{l}+1)/\bar{l}}.
\end{aligned}
\end{equation}

Combining \eqref{(1)}, \eqref{(3)}, \eqref{(3).1} and \eqref{(3).2}, one obtains that
\begin{equation*}
\begin{aligned}
d(y_0, Y^t_1)^p \mathbf{1}_{\{|Y^t_1 - y_0| < 1\}} 
&\leqslant d(y_0, Y^t_1)^p \mathbf{1}_{\{d(y_0, Y^t_1) < K\}} \\
&\leqslant d(y_0, Y^t_1)^p \mathbf{1}_{\{d(y_0, Y^t_1) < K\} \cap L} + K^p \mathbf{1}_{L^c} \\
&\leqslant 2^{p/2} \left( C_\mathcal{V} \|\mathbf{B}_{0,1}\|_{\mathrm{CC}} t^H + K C_{\mathcal{V},\bar{l}}^{1/\bar{l}} \|\mathbf{B}\|_{p\text{-var};[0,1]}^{(\bar{l}+1)/\bar{l}} t^{H(\bar{l}+1)/\bar{l}} \right)^p \\
&\quad + K^p \left( \|\mathbf{B}_{0,1}\|_{\mathrm{CC}}^p t^{Hp} + C_{\mathcal{V},\bar{l}}^p \|\mathbf{B}\|_{p\text{-var};[0,1]}^{(\bar{l}+1)p} t^{H(\bar{l}+1)p} \right).
\end{aligned}
\end{equation*}
Since \( 0 \leqslant t \leqslant T \) and \( \|\mathbf{B}_{0,1}\|_{\mathrm{CC}} \leqslant \|\mathbf{B}\|_{p\text{-var};[0,1]} \), there exists a constant \( C_3 := C(H, \mathcal{V}, p, T) \)  such that
\begin{equation}\label{star2}
d(y_0, Y^t_1)^p \mathbf{1}_{\{\|Y^t_1 - y_0\|_{\mathbb{R}^N} < 1\}} 
\leqslant C_3 \left( 1 + \|\mathbf{B}\|_{p\text{-var};[0,1]}^{(\bar{l}+1)p} \right) t^{Hp}.
\end{equation}

Now one concludes from \eqref{star1} and \eqref{star2} that
\begin{equation*}
\begin{aligned}
d(y_0, Y^t_1)^p 
&= d(y_0, Y^t_1)^p \mathbf{1}_{\{|Y^t_1 - y_0| < 1\}} + d(y_0, Y^t_1)^p \mathbf{1}_{\{|Y^t_1 - y_0| \geqslant 1\}} \\
&\leqslant (2^{3/2} K)^p t^{\frac{3pH}{2}} \left| \int_0^1 V(Y^t_r) \, dB_r \right|^{\frac{3p}{2}} 
+ C_3 \left( 1 + \|\mathbf{B}\|_{p\text{-var};[0,1]}^{(\bar{l}+1)p} \right) t^{Hp}.
\end{aligned}
\end{equation*}
Clearly, \( \|\mathbf{B}\|_{p\text{-var};[0,1]} \) has finite moments of all orders.  
The rough integral \( \int_0^1 V(Y^t_r) \, dB_r \) also has finite moments of all orders (uniformly in $t\in[0,T]$) due to \cite[Theorem 10.36, Theorem 10.47]{friz2010multidimensional}. One therefore arrives at the following estimate
\[
\mathbb{E}\left[ d(y_0, Y_t)^p \right] = \mathbb{E}\left[ d(y_0, Y^t_1)^p \right] 
\leqslant C_{H, \mathcal{V}, p, T} \, t^{Hp}.
\]
This proves the first part of the lemma. The second part follows  from Kolmogorov's continuity theorem.
\end{proof}

We are now in a position to state and prove the main result of this section. 


\begin{theorem}\label{theoremupperbound}
Under the assumptions in Theorem \ref{theoremlowerbound}, let $0<a<b$ and $M>0$ be given fixed. For any \( \eta >0 \) (if $Q-1/H>0$, we further require $\eta<Q-1/H$), there exists a positive constant \( C = C(a, b, M, Q, H, \mathcal{V}, \eta) \) such that
\begin{equation}
\mathbb{P}\Big(Y([a,b])\cap A\neq\emptyset\Big)~\leqslant C\ \mathrm{Cap}_{Q-1/H-\eta}(A)
\end{equation}for all compact sets \( A \subseteq B_d(o, M) \).
\end{theorem}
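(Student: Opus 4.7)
The plan is to invoke Lemma \ref{8.7}, but only after verifying a slightly weakened form of condition $\mathbf{(A3)}$: for each $\delta>0$ we aim to show that
\begin{equation*}
\mathbb{P}\big(Y([s,t]) \cap B_d(z,\varepsilon) \neq \emptyset\big) \leqslant C_\delta\, \varepsilon^{Q-\delta}
\end{equation*}
uniformly in $z \in B_d(o,M)$, $\varepsilon \in (0,1)$ and $s<t$ in $[a,b]$ with $|t-s| = \varepsilon^{1/H}$. Once this is in hand, Lemma \ref{8.7} applied with $\beta = Q - \delta$ yields $\mathbb{P}(Y([a,b]) \cap A \neq \emptyset) \leqslant C\, \mathrm{Cap}_{Q - 1/H - (\delta+\eta')}(A)$ for any $\eta'>0$. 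Given the target exponent $\eta$ in the theorem, we then split $\eta = \delta + \eta'$ (for instance $\delta = \eta' = \eta/2$), which produces the stated estimate. The regime $Q \leqslant 1/H$ is trivial since $\mathrm{Cap}_{Q-1/H-\eta}(A) \equiv 1$ there.

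The weakened form of $\mathbf{(A3)}$ will be obtained from the elementary path decomposition
\begin{equation*}
\{Y([s,t]) \cap B_d(z,\varepsilon) \neq \emptyset\} \subseteq \{Y_s \in B_d(z,K\varepsilon)\} \cup \Big\{\sup_{r\in [s,t]} d(Y_r,Y_s) > (K-1)\varepsilon\Big\}
\end{equation*}
with a tuning parameter $K>1$. For the first event we use that the marginal density $p_s$ of $Y_s$ is smooth (Theorem \ref{GW4}) and jointly continuous in $(s,x)$, so that $\sup_{s\in[a,b],\, x\in B_d(o,2M)} p_s(x) < \infty$; combined with the volume comparison in Lemma \ref{lemmvolcompari}, this gives $\mathbb{P}(Y_s \in B_d(z,K\varepsilon)) \leqslant C_{a,b,M,\mathcal{V}}(K\varepsilon)^Q$ whenever $K\varepsilon$ is below the threshold of \eqref{volcompari}. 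For the second event we appeal to the second part of Lemma \ref{controlmoment}: for every $\eta_0 \in (0,H)$ and every $p\geqslant 1$, the Hölder modulus $X_{\eta_0} := \sup_{s,t\in[0,T]} d(Y_s,Y_t)/|t-s|^{\eta_0}$ has a finite $L^p$-norm. Since $\sup_{r\in[s,t]} d(Y_r,Y_s) \leqslant X_{\eta_0}|t-s|^{\eta_0}$, Markov's inequality combined with $|t-s| = \varepsilon^{1/H}$ produces
\begin{equation*}
\mathbb{P}\Big(\sup_{r\in[s,t]} d(Y_r,Y_s) > (K-1)\varepsilon\Big) \leqslant C_{\eta_0,p}\, (K-1)^{-p}\, \varepsilon^{-p(1-\eta_0/H)}.
\end{equation*}

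The endgame is a two-parameter optimisation. Writing $\alpha := 1 - \eta_0/H>0$ and taking $K-1 = \varepsilon^{-\gamma}$, the two contributions scale as $\varepsilon^{Q(1-\gamma)}$ and $\varepsilon^{p(\gamma-\alpha)}$ respectively. To push both below $\varepsilon^{Q-\delta}$ we require $\gamma \leqslant \delta/Q$ together with $p(\gamma-\alpha) \geqslant Q - \delta$; this is feasible provided $\eta_0$ is first chosen close enough to $H$ so that $\alpha < \delta/Q$, after which $p$ is taken sufficiently large. The main obstacle in this plan is precisely the absence of a Fernique-type exponential moment for the Hölder modulus of $Y$ with respect to the control distance: with only polynomial moments available from Lemma \ref{controlmoment}, one cannot avoid paying a small power $\delta$ in the exponent of $\varepsilon$, and this arbitrarily small loss is exactly what forces the inevitable $\eta$ appearing in the statement of Theorem \ref{theoremupperbound}.
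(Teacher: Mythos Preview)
Your proposal is correct and follows essentially the same approach as the paper: verify condition $\mathbf{(A3)}$ with exponent $\beta = Q-\delta$ for arbitrary $\delta>0$ by combining (i) the uniform marginal density bound together with the ball-box volume comparison \eqref{volcompari}, and (ii) the finite $L^p$-moments of the control-distance H\"older modulus from Lemma~\ref{controlmoment}, then invoke Lemma~\ref{8.7}. The only cosmetic difference is the splitting: you use the two-term decomposition $\{Y_s\in B_d(z,K\varepsilon)\}\cup\{\sup_{r\in[s,t]}d(Y_r,Y_s)>(K-1)\varepsilon\}$ with $K-1=\varepsilon^{-\gamma}$, whereas the paper introduces an intermediate scale $\varepsilon^\alpha$ and obtains three terms $\mathbb{P}(U<2\varepsilon)+\mathbb{P}(U/2\leqslant\varepsilon^\alpha)+\mathbb{P}(R>\varepsilon^\alpha)$; setting $1-\gamma=\alpha$ makes the two parameterisations equivalent.
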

\begin{proof}
According to Lemma \ref{8.7}, it suffices to verify that $(Y_t)_{0\leqslant t\leqslant T}$ satisfies Assumption $\mathbf{(A3)}$ for any fixed $\beta\in(0,Q)$. Fix \( z \in B_d(o, M) \). For any \( \varepsilon \in (0,1) \) and \( s, t \in [a,b] \) satisfying \( |t - s| = \varepsilon^{1/H} \), we define
\[
U := d(Y_s, z), \quad \text{and} \quad R := \sup_{r,v \in [s,t]} d(Y_r, Y_v).
\]One easily checks by using the triangle inequality that
\begin{equation}\label{UR}
\mathbb{P}\big(Y([s,t]) \cap B_d(z, \varepsilon) \neq \emptyset\big) \leqslant \mathbb{P}\left(U\leqslant \varepsilon + R\right).
\end{equation}
The right hand side is estimated as
\begin{equation}\label{split1}
\begin{aligned}
\mathbb{P}\left(U\leqslant \varepsilon + R\right)
&\leqslant \mathbb{P}\left(U\leqslant \varepsilon + R,\; U \geqslant 2\varepsilon \right) + \mathbb{P}\left(U< 2\varepsilon \right) \\
&\leqslant \mathbb{P}\left(U/2\leqslant R \right) + \mathbb{P}\left(U< 2\varepsilon \right),
\end{aligned}
\end{equation}
where the second inequality uses the observation that if \( U \leqslant \varepsilon + R \) and \( U \geqslant 2\varepsilon \), then \( \varepsilon \leqslant R \), which further implies \( U \leqslant 2R \) and hence \( U/2 \leqslant R \). Now for fix \( \alpha \in (0,1) \), one has \eqref{split1}:
\begin{equation}\label{split2}
\begin{aligned}
\mathbb{P}\left(U/2\leqslant R\right)
&\leqslant \mathbb{P}\left(U/2 \leqslant R,\; R \leqslant \varepsilon^\alpha \right) + \mathbb{P}\left(R > \varepsilon^\alpha\right) \\
&\leqslant \mathbb{P}\left(U/2\leqslant \varepsilon^\alpha \right) + \mathbb{P}\left(R > \varepsilon^\alpha\right).
\end{aligned}
\end{equation}
Combining \eqref{split1} and \eqref{split2}, one obtains the following estimate:
\begin{equation*}
\mathbb{P}\left(U \leqslant \varepsilon + R\right)
\leqslant \mathbb{P}\left(U/2 \leqslant \varepsilon^\alpha \right) + \mathbb{P}\left(R > \varepsilon^\alpha\right) + \mathbb{P}\left(U < 2\varepsilon \right).
\end{equation*}
We will estimate the above three terms separately.

First of all, recall that \( Y_s\) admits a smooth density for all $0 < s \leqslant T$. Then there exists a constant \( C_1 = C_{a,b,M} > 0 \) such that
\[
\sup_{x \in B_d(o, M)} |p_s(x)| \leqslant C_1
\]holds for any \( s \in [a,b] \).
By using Lemma \ref{lemmvolcompari}, one therefore obtains that
\begin{equation*}
\begin{aligned}
&\mathbb{P}\left(U/2 \leqslant \varepsilon^\alpha \right) \leqslant C_1 \, |B_d(z, 2 \varepsilon^\alpha)| \leqslant C_1 C_{2,\mathcal{V},M}\cdot 2^{Q} \varepsilon^{\alpha Q}, \\
&\mathbb{P}\left(U< 2\varepsilon \right) \leqslant C_1 \, |B_d(z, 2 \varepsilon)| \leqslant C_1 C_{2,\mathcal{V},M}\cdot 2^{Q}  \varepsilon^{Q},
\end{aligned}
\end{equation*}
where \( C_{2,\mathcal{V},M} \) is the constant appearing in the volume comparison \eqref{volcompari}.

In addition, let \( q \in (0, 1 - \alpha) \) and $p\geqslant 1$. By using Lemma \ref{controlmoment} and Chebyshev's inequality, one finds that
\begin{equation*}
\mathbb{P}\left(R > \varepsilon^\alpha\right)
\leqslant \mathbb{P}\left(\sup_{r,v \in [s,t]} \frac{d(Y_r, Y_v)}{|v - r|^{H(\alpha + q)}} > \varepsilon^{-q} \right)
\leqslant C_2 \, \varepsilon^{q p},
\end{equation*}
where \( |s - t| = \varepsilon^{1/H} \) and \( C_2 \) depends only on $H,\mathcal{V},p,b,\eta$. Now fix any \( \beta \in (0, Q) \). By choosing \( \alpha \) and \( p \) sufficiently large such that \( \alpha Q \geqslant \beta \) and \( q p \geqslant \beta \), one concludes that
\begin{equation*}
\begin{aligned}
\mathbb{P}\left(Y([s,t]) \cap B_d(z, \varepsilon) \neq \emptyset\right)
&\leqslant \mathbb{P}\left(U \leqslant \varepsilon + R\right) \\
&\leqslant \mathbb{P}\left(U/2 \leqslant \varepsilon^\alpha \right)
+ \mathbb{P}\left(R > \varepsilon^\alpha\right)
+ \mathbb{P}\left(U< 2\varepsilon\right) \\
&\leqslant 2^{Q}C_1 C_{2,\mathcal{V},M} (\varepsilon^{\alpha Q} + \varepsilon^Q) + C_2 \varepsilon^{q p} \\
&\leqslant C \, \varepsilon^\beta,
\end{aligned}
\end{equation*}
where \( C \) depends on \( C_1 \), \( C_2 \), and \( C_{2,\mathcal{V},M} \). This yields the desired result.
\end{proof}
\appendix

\renewcommand{\thetheorem}{\Alph{section}.\arabic{theorem}}

\section{Proof of Lemma \ref{logsig3}}

In this appendix, we prove Lemma \ref{logsig3}. 
As a starting point, we make the following observations.

\begin{itemize}
  \item Using the stationarity of increments proved by Lemma \ref{decom}, we may fix \( s = 0 \) without loss of generality. More precisely, Lemma \ref{decom}, combined with Lemma \ref{equnorm}, gives that
  \[
  \left\| \gamma(\mathbf{U}_{s,s+\varepsilon})^{-1} \right\|_{k,p,s}~\overset{M}{=}~\left\| \gamma(\mathbf{U}_{0,\varepsilon})^{-1} \right\|_{k,p,0}.
  \]
  \item By the scaling property of fBm, it suffices to consider \( t = 1 \) by introducing the rescaled dynamics over  $r\in[0,1]$:
\begin{equation}\label{rephrase}
d\mathbf{U}^{\varepsilon,x}_r = \sum_{i=1}^d A^\varepsilon_i(\mathbf{U}^{\varepsilon,x}_r)\, dB^i_r= \sum_{i=1}^d \varepsilon^H A_i(\mathbf{U}^{\varepsilon,x}_r)\, dB^i_r,\quad \mathbf{U}^{\varepsilon,x}_0=x\in\mathfrak{g}^{(l)}(\mathbb{R}^d).
\end{equation}
Here $A_i$ is the left invariant vector field on $\mathfrak{g}^{(l)}(\mathbb{R}^d)$ induced by the canonical basis vector $e_i$ and note that the vector fields $\{A_i\}_{i=1}^d$ satisfy H\"ormander's condition on $\mathfrak{g}^{(l)}(\mathbb{R}^d)$. The vector fields $A^\varepsilon_i$ are defined by $A^\varepsilon_i(u)=\varepsilon^H A_i(u)$. We use $o\in\mathfrak{g}^{(l)}(\mathbb{R}^d)$ to denote the origin and to ease notation, we will use $\mathbf{U}^{\varepsilon}_t$ instead of $\mathbf{U}^{\varepsilon,o}_t$ when the initial value is $o$. Note that \( \mathbf{U}^{\varepsilon}_1 =\delta_{\varepsilon^H} \mathbf{U}_{0,1}\overset{d}{=}\mathbf{U}_{0,\varepsilon}\).
This formulation allows one to reduce the analysis to the unit time interval \([0,1]\), where the dependence on \( \varepsilon \) encodes small-time behaviour in the original problem.
\end{itemize}

Based on Lemma~\ref{equnorm}, one can apply a similar argument as in the derivation of~\eqref{ImHeHmIm} to show that 
\[
\left\| \gamma(\mathbf{U}_{s,s+\varepsilon})^{-1} \right\|_{k,p,s}~\overset{M}{=}~\left\| \gamma(\mathbf{U}_{1}^\varepsilon)^{-1} \right\|_{k,p,0}
\]for any $s\geqslant 0,\varepsilon>0$, $k\geqslant 1$ and $p>0$.
Now the task is reduced to showing that
\begin{equation}\label{toshowapp1}
\left\|\gamma(\mathbf{U}_{1}^\varepsilon)^{-1} \right\|_{k,p,0} \leqslant C\varepsilon^{-\alpha} \Psi
\end{equation}
for all $0<\varepsilon\leqslant T$ with some smooth random variable $\Psi$.

First of all, it is well-known that for any $\varepsilon\in(0,1]$ and $t>0$, the map $x\to \mathbf{U}^{\varepsilon,x}_t$ is a flow of $C^\infty$ diffeomorphism. 
We denote the Jacobian of $\mathbf{U}^{\varepsilon,x}_t$ by 
$
\mathbf{J}^{\varepsilon,x}_t = \partial_x \mathbf{U}^{\varepsilon,x}_t$
and let $\beta_I^{J,\varepsilon}(t,x)$ ($I,J\in\mathcal{A}_1(l)$) satisfy
\begin{equation}\label{defJtep}
(\mathbf{J}^{\varepsilon,x}_t)^{-1}A^\varepsilon_I(\mathbf{U}^{\varepsilon,x}_t)= \sum_{J\in\mathcal{A}_1(l)}\beta_I^{J,\varepsilon}(t,x)A^\varepsilon_J(x),\quad 0\leqslant t\leqslant 1.
\end{equation}
The precise definition of $\{\beta_I^{J,\varepsilon}(\cdot,\cdot)\}_{I,J\in\mathcal{A}(l)}$ can be found in \cite[Equation (3.5)]{BOZ}. We define the Malliavin matrix $\mathbf{M}^\varepsilon(x)$ by setting
\begin{equation}\label{defMIJepx}
M^\varepsilon_{I,J}=(\mathbf{M}^\varepsilon(x))_{I,J}=\langle\mathscr{L}(\beta^{\varepsilon,I}(\cdot,x)),\mathscr{L}(\beta^{\varepsilon,J}(\cdot,x))\rangle_{\mathcal{H}_{W,0}}
\end{equation}for any $1\leqslant I,J\leqslant \mathcal{A}_1(l)$,
where $\mathscr{L}$ is defined in  \eqref{innerprodKtoL} and $\beta^{\varepsilon,I}(\cdot,x)$ means the column vector $(\beta^{\varepsilon,I}_1(\cdot,x),\cdots,\beta^{\varepsilon,I}_d(\cdot,x))$. 
The following result is taken from \cite[Lemma 3.9]{BOZ}.
\begin{lemma}\label{lamleq2l}
Let $\mathbf{U}^{\varepsilon,x}$, $\beta_I^{J,\varepsilon}$ and $M^\varepsilon_{I,J}$ defined by \eqref{rephrase}, \eqref{defJtep} and \eqref{defMIJepx} respectively. Then one has
$$
\lambda_{\max}(\gamma(\mathbf{U}^{\varepsilon,x}_1)^{-1})\leqslant C\varepsilon^{-2Hl} \lambda_{\max}((\mathbf{M}^\varepsilon(x))^{-1})\lambda_{\max}((\mathbf{J}^{\varepsilon,x}_1(\mathbf{J}^{\varepsilon,x}_1)^*)^{-1}),
$$
where $C$ is a positive constant independent of $\varepsilon$ and $\lambda_{\max}(\cdot)$ means taking the maximal eigenvalue of a matrix.
\end{lemma}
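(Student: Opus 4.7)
The plan is to factor $\gamma(\mathbf{U}^{\varepsilon,x}_1)$ as a sandwich product of three symmetric nonnegative matrices, after which the bound on $\lambda_{\max}(\gamma^{-1})$ follows from elementary eigenvalue inequalities applied to that product. The power $\varepsilon^{-2Hl}$ will arise solely from the homogeneity of the rescaled vector fields $A^\varepsilon_I=\varepsilon^{H|I|}A_I$, together with the fact that the word length in $\mathcal{A}_1(l)$ is bounded by $l$.

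First I would compute the Malliavin derivative of $\mathbf{U}^{\varepsilon,x}_1$. The chain-rule formula for RDEs from Proposition~\ref{3.1}, combined with the conversion $\mathbf{D}_W=\mathscr{L}\mathbf{D}_B$ from Proposition~\ref{DWLDB}, yields
$$(\mathbf{D}^{(k)}_W\mathbf{U}^{\varepsilon,x}_1)(s)=\mathbf{J}^{\varepsilon,x}_1\,\mathscr{L}\bigl((\mathbf{J}^{\varepsilon,x}_\cdot)^{-1}A^\varepsilon_k(\mathbf{U}^{\varepsilon,x}_\cdot)\bigr)(s),\quad k=1,\dots,d,$$
where I have used $\mathbf{J}^{\varepsilon,x}_{1,s}=\mathbf{J}^{\varepsilon,x}_1(\mathbf{J}^{\varepsilon,x}_s)^{-1}$ and pulled the $s$-independent Jacobian $\mathbf{J}^{\varepsilon,x}_1$ outside the linear operator $\mathscr{L}$. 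Expanding the inner expression through \eqref{defJtep} applied to $I=k$ and assembling $\gamma=\sum_{k=1}^d\int_0^1\mathbf{D}^{(k)}_W\otimes\mathbf{D}^{(k)}_W\,ds$, the definition \eqref{defMIJepx} of $\mathbf{M}^\varepsilon(x)$ appears naturally and one arrives at the factorization
$$\gamma(\mathbf{U}^{\varepsilon,x}_1)=\mathbf{J}^{\varepsilon,x}_1\,\mathbf{A}^\varepsilon(x)\,\mathbf{M}^\varepsilon(x)\,\mathbf{A}^\varepsilon(x)^*(\mathbf{J}^{\varepsilon,x}_1)^*,$$
where $\mathbf{A}^\varepsilon(x)$ denotes the matrix whose columns are $\{A^\varepsilon_I(x):I\in\mathcal{A}_1(l)\}$.

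With this factorization in hand, the desired inequality follows from a three-step eigenvalue chain. For a unit vector $v$, setting $u=(\mathbf{J}^{\varepsilon,x}_1)^*v$ and $w=\mathbf{A}^\varepsilon(x)^*u$ gives
$$\langle v,\gamma(\mathbf{U}^{\varepsilon,x}_1)v\rangle\geq\lambda_{\min}(\mathbf{M}^\varepsilon(x))\,\|w\|^2\geq\lambda_{\min}(\mathbf{M}^\varepsilon(x))\,\lambda_{\min}(\mathbf{A}^\varepsilon\mathbf{A}^{\varepsilon*})\,\lambda_{\min}(\mathbf{J}^{\varepsilon,x}_1(\mathbf{J}^{\varepsilon,x}_1)^*).$$
The key observation is the scaling identity
$$\mathbf{A}^\varepsilon(x)\mathbf{A}^\varepsilon(x)^*=\sum_{I\in\mathcal{A}_1(l)}\varepsilon^{2H|I|}A_I(x)A_I(x)^*\succeq\varepsilon^{2Hl}\sum_{I\in\mathcal{A}_1(l)}A_I(x)A_I(x)^*,$$
which holds because $|I|\leq l$ and $\varepsilon\leq 1$. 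Since $\{A_I(x):I\in\mathcal{A}_1(l)\}$ spans the tangent space of $G^{(l)}$ at every point (a consequence of left invariance and the graded structure of the free nilpotent Lie algebra), the sum on the right is strictly positive definite and its smallest eigenvalue can be absorbed into the constant $C$ of the statement. Taking reciprocals of the resulting lower bound on $\lambda_{\min}(\gamma(\mathbf{U}^{\varepsilon,x}_1))$ produces exactly the claimed inequality.

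I expect the main technical obstacle to lie in the opening step: making precise the interchange between $\mathscr{L}$ and the Malliavin derivative for the $\mathfrak{g}^{(l)}$-valued functional $\mathbf{U}^{\varepsilon,x}_1$, and establishing the expansion identity \eqref{defJtep} for iterated Lie-bracket vector fields. Both are standard in the Kusuoka--Stroock--type machinery and are rigorously justified in \cite{BOZ}; once these inputs are granted, the remainder of the argument is purely a matter of linear algebra combined with the elementary scaling relation $A^\varepsilon_I=\varepsilon^{H|I|}A_I$.
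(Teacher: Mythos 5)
Your argument is correct. Note that the paper does not actually prove this lemma --- it is quoted verbatim from \cite[Lemma 3.9]{BOZ} --- so there is no in-text proof to compare against; your factorization $\gamma(\mathbf{U}^{\varepsilon,x}_1)=\mathbf{J}^{\varepsilon,x}_1\,\mathbf{A}^\varepsilon(x)\,\mathbf{M}^\varepsilon(x)\,\mathbf{A}^\varepsilon(x)^*(\mathbf{J}^{\varepsilon,x}_1)^*$, obtained by combining Propositions \ref{3.1} and \ref{DWLDB} with the expansion \eqref{defJtep}, together with the scaling $A^\varepsilon_I=\varepsilon^{H|I|}A_I$ and $|I|\leqslant l$, is exactly the standard Kusuoka--Stroock reduction that underlies the cited result, and the three-step eigenvalue chain is sound. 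The only point worth flagging is that the constant you absorb, namely $\lambda_{\min}\bigl(\sum_{I\in\mathcal{A}_1(l)}A_I(x)A_I(x)^*\bigr)^{-1}$, depends on $x$ through the left-translation differential; this is harmless here because the statement only asserts independence of $\varepsilon$ and the paper applies the lemma solely at $x=o$, but it should be said explicitly if one wants the bound uniform over $x$.
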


In view of the above Lemma, it remains to control the eigenvalues $\lambda_{\max}((\mathbf{M}^\varepsilon(o))^{-1})$ and $\lambda_{\max}((\mathbf{J}^{\varepsilon,o}_1(\mathbf{J}^{\varepsilon,o}_1)^*)^{-1})$ individually. The latter is easier which we shall first address.

\begin{proposition}\label{lamJJ}
   There exists a smooth random variable $\Psi$ such that
    $$
    \lambda_{\max}((\mathbf{J}^{\varepsilon,o}_1(\mathbf{J}^{\varepsilon,o}_1)^*)^{-1})\leqslant\Psi,
    $$
    for any $\varepsilon\in(0,1]$.
    \begin{proof}
     It is enough to prove that each entry of $(\mathbf{J}^{\varepsilon,o}_1)^{-1}$ is bounded by certain smooth random variable $\Psi$. 
     Note that 
     $
     \mathbf{U}^{\varepsilon,x}_t = x~\star~\mathbf{U}^{\varepsilon,o}_t
     $
     where $\star$ is the group law induced from $G^{(l)}(\mathbb{R}^d)$. Since the inverse of $x~\mapsto~x~\star~\mathbf{U}^{\varepsilon,o}_1$ is $y~\mapsto~y~\star~(\mathbf{U}^{\varepsilon,o}_1)^{-1}=: H(y)$, it follows that 
     $$
     (\mathbf{J}^{\varepsilon,o}_1)^{-1} = \frac{H(y)}{\partial y}\Big|_{y=\mathbf{U}^{\varepsilon,o}_1}
     $$
    Observe that every component of $\mathbf{U}^{\varepsilon,o}_1$ and $(\mathbf{U}^{\varepsilon,o}_1)^{-1}$ is a polynomial of
    $$
    \varepsilon^{Hm}\int_{0< t_1<\cdots< t_m< 1}d B^{i_1}_{t_1}\cdots d B^{i_d}_{t_m}.
    $$
    As in the argument of Lemma \ref{logsig1}, this can be bounded above by the random variable
\(\exp( \eta \left( \mathcal{N}^{0,1}_{\gamma,2q}(\mathbf{B}) \right)^{1/q})\) 
for some \(q \geqslant 1\), \(\eta \in (0, \eta_0]\) and \(\gamma < H\).
\end{proof}
\end{proposition}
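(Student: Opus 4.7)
The plan is to exploit the left-invariant structure of the vector fields $\{A^\varepsilon_i\}_{i=1}^d$ on the free nilpotent Lie group $G^{(l)}(\mathbb{R}^d)$ in order to express $(\mathbf{J}^{\varepsilon,o}_1)^{-1}$ explicitly in terms of the coordinates of the single random variable $\mathbf{U}^{\varepsilon,o}_1$. The key observation is that because the $A^\varepsilon_i$ are left invariant, the flow of the RDE \eqref{rephrase} commutes with left translation, so one has the identity $\mathbf{U}^{\varepsilon,x}_t = x \star \mathbf{U}^{\varepsilon,o}_t$ for all $x \in \mathfrak{g}^{(l)}(\mathbb{R}^d)$. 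Differentiating in $x$ and evaluating at $x=o$ identifies $\mathbf{J}^{\varepsilon,o}_1$ with the Jacobian at $o$ of the right-translation map $x\mapsto x\star \mathbf{U}^{\varepsilon,o}_1$, and consequently $(\mathbf{J}^{\varepsilon,o}_1)^{-1}$ is the Jacobian of the inverse map $y \mapsto y \star (\mathbf{U}^{\varepsilon,o}_1)^{-1}$ evaluated at $y=\mathbf{U}^{\varepsilon,o}_1$. In particular, all randomness entering $(\mathbf{J}^{\varepsilon,o}_1)^{-1}$ is channeled through the single endpoint $\mathbf{U}^{\varepsilon,o}_1$.

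Next I would use nilpotency: since $G^{(l)}$ is a step-$l$ nilpotent Lie group, both the group law $\star$ and the inversion $u \mapsto u^{-1}$ are globally polynomial in the linear coordinates on $\mathfrak{g}^{(l)}$ (the Baker--Campbell--Hausdorff series truncates at level $l$). Therefore every entry of $(\mathbf{J}^{\varepsilon,o}_1)^{-1}$ is a polynomial in the coordinates of $\mathbf{U}^{\varepsilon,o}_1$, and the inequality
\[
\lambda_{\max}\!\left((\mathbf{J}^{\varepsilon,o}_1(\mathbf{J}^{\varepsilon,o}_1)^*)^{-1}\right) \leqslant \|(\mathbf{J}^{\varepsilon,o}_1)^{-1}\|_{\mathrm{HS}}^{2}
\]
reduces the proof to bounding such a polynomial by a smooth random variable. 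By construction $\mathbf{U}^{\varepsilon,o}_1 \stackrel{d}{=} \delta_{\varepsilon^H}\mathbf{U}_{0,1}$, so each of its coordinates is a linear combination of rescaled iterated rough integrals of the form
\[
\varepsilon^{Hm}\int_{0<t_1<\cdots<t_m<1} dB^{i_1}_{t_1}\otimes \cdots\otimes dB^{i_m}_{t_m},\qquad 1\leqslant m\leqslant l.
\]

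The final step is to dominate all of these iterated integrals by a single, $\varepsilon$-independent random variable that is smooth in the sense of Malliavin. Since $\varepsilon\in(0,1]$ and $m\geqslant 1$, the prefactor $\varepsilon^{Hm}$ is bounded by $1$, and each iterated integral is controlled pathwise by a suitable power of the Besov-type norm $\mathcal{N}^{0,1}_{\gamma,2q}(\mathbf{B})$ through the Garsia--Rodemich--Rumsey inequality \eqref{Besovnormboun}. Mimicking the argument used in Lemma \ref{logsig1}, the Fernique-type integrability \eqref{Besovnorminte} then produces a majorising random variable of the form $\Psi = C\bigl(\exp(\eta(\mathcal{N}^{0,1}_{\gamma,2q}(\mathbf{B}))^{1/q})\bigr)^{k}$ which dominates the full polynomial expression uniformly in $\varepsilon$. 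The main subtlety I anticipate is that $(\mathbf{J}^{\varepsilon,o}_1)^{-1}$ is a polynomial of possibly high degree in the iterated integrals, so one must choose $k$ large enough (depending on $l$ and $d$ but not on $\varepsilon$) to absorb every term; once this combinatorial bookkeeping is done, the smoothness of the Besov functional established in the proof of Lemma \ref{logsig1} immediately transfers to $\Psi$ and gives the required estimate.
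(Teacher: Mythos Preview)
Your proposal is correct and follows essentially the same route as the paper: both exploit the left-invariance identity $\mathbf{U}^{\varepsilon,x}_t = x\star\mathbf{U}^{\varepsilon,o}_t$ to express $(\mathbf{J}^{\varepsilon,o}_1)^{-1}$ as the Jacobian of $y\mapsto y\star(\mathbf{U}^{\varepsilon,o}_1)^{-1}$, observe that nilpotency makes every entry a polynomial in the rescaled iterated integrals $\varepsilon^{Hm}\int dB^{i_1}\cdots dB^{i_m}$, and then dominate these uniformly in $\varepsilon\in(0,1]$ by the smooth Besov-type random variable $\exp(\eta(\mathcal{N}^{0,1}_{\gamma,2q}(\mathbf{B}))^{1/q})$ via the argument of Lemma~\ref{logsig1}. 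Your write-up is in fact more explicit than the paper's on several points (the BCH truncation, the reduction $\lambda_{\max}\leqslant\|(\mathbf{J}^{\varepsilon,o}_1)^{-1}\|_{\mathrm{HS}}^2$, and the use of $\varepsilon^{Hm}\leqslant 1$).
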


The analysis of $\lambda_{\max}((\mathbf{M}^\varepsilon(o))^{-1})$ is more involved and we need several technical lemmas.

\begin{lemma}\label{indofepsi}
Let $I,J \in \mathcal{A}_1(l)$ with $|I|\leqslant |J|$. Then there is no $(\varepsilon,x)$-dependence in $\beta^{J,\varepsilon}_I$ and more specifically, one has
\[
\beta^{J,\varepsilon}_I(t,x)=
\begin{cases}
(-1)^{|K|}B^K_t, & \text{if } J=(I,K)\ \text{for some } K\in\mathcal{A}_1(l),\\[6pt]
0, & \text{otherwise}.
\end{cases}
\]
Here $B^K_t$ denotes the projection of $\mathbf{B}_{0,t}$ onto the tensor basis 
$e_{k_1}\otimes\cdots\otimes e_{k_m}$ for any word $K=(k_1,\ldots,k_m)$.
\end{lemma}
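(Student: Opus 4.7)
The plan is to derive a rough differential equation for $\beta^{J,\varepsilon}_I(t,x)$, viewed as a function of $t$, with known initial condition, and then to solve this RDE recursively via downward induction on the length $|I|$. The key observation will be that the resulting RDE is independent of both $\varepsilon$ and $x$, which will immediately imply the main assertion of the lemma; the explicit formula $(-1)^{|K|}B^K_t$ will then emerge from integrating against $dB$ at each level of the recursion.

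Concretely, I would first differentiate in $t$ the defining relation
\[
X^{I,\varepsilon,x}_t := (\mathbf{J}^{\varepsilon,x}_t)^{-1} A^\varepsilon_I(\mathbf{U}^{\varepsilon,x}_t) = \sum_{J\in\mathcal{A}_1(l)} \beta^{J,\varepsilon}_I(t,x)\, A^\varepsilon_J(x).
\]
Using the product rule for geometric rough paths, the RDE \eqref{rephrase} for $\mathbf{U}^{\varepsilon,x}$ and the linear RDE for its Jacobian $\mathbf{J}^{\varepsilon,x}$, combined with the identity $[V,W] = DW\cdot V - DV\cdot W$ for vector fields, a short computation will yield
\[
dX^{I,\varepsilon,x}_t = \sum_{j=1}^d (\mathbf{J}^{\varepsilon,x}_t)^{-1}\,[A^\varepsilon_j, A^\varepsilon_I](\mathbf{U}^{\varepsilon,x}_t)\, dB^j_t.
\]
Rewriting the Lie bracket via the iterated-bracket convention adopted in \cite{BOZ} (so that $[A^\varepsilon_j, A^\varepsilon_I]$ is expressible in terms of $A^\varepsilon_{(I,j)}$ up to a sign), I would then obtain the recursive RDE
\[
d\beta^{J,\varepsilon}_I(t,x) = -\sum_{j=1}^d \beta^{J,\varepsilon}_{(I,j)}(t,x)\, dB^j_t,\qquad \beta^{J,\varepsilon}_I(0,x) = \delta_{IJ},
\]
with the understanding that $\beta^{J,\varepsilon}_I \equiv 0$ whenever $|I| > l$, reflecting the step-$l$ nilpotency of $\mathfrak{g}^{(l)}$.

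The crucial structural feature of this RDE is that its right hand side involves neither $\varepsilon$ nor $x$ -- only the driving fractional Brownian motion $B$. Proceeding by downward induction on $|I|$ starting from $|I|=l$, one would therefore see at once that $\beta^{J,\varepsilon}_I$ is independent of $\varepsilon$ and $x$ throughout the range $|I|\leqslant|J|$. The base case $|I|=l$ yields $\beta^{J,\varepsilon}_I(t,x) = \delta_{IJ}$, matching the claimed formula with $K=\emptyset$. Each inductive step contributes one further integration against $dB^{k_i}$; after $|K|$ iterations the explicit form $(-1)^{|K|} B^K_t$ emerges for $J=(I,K)$. If on the other hand $J$ is not of the form $(I,K)$ for any $K$, then at some level of the recursion every Kronecker delta $\delta_{(I,j),\bullet}$ on the right hand side vanishes identically, forcing $\beta^{J,\varepsilon}_I\equiv 0$.

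The main technical subtlety I anticipate is that the spanning set $\{A^\varepsilon_J(x)\}_{J\in\mathcal{A}_1(l)}$ is not linearly independent in $T_x\mathfrak{g}^{(l)}$, so the identity $\sum_J \beta^{J,\varepsilon}_I(t,x) A^\varepsilon_J(x) = X^{I,\varepsilon,x}_t$ does not by itself uniquely pin down the coefficients. The cleanest way I would bypass this ambiguity is to \emph{define} $\beta^{J,\varepsilon}_I$ directly through the recursive RDE above, and then to verify separately, by showing that both sides satisfy the same RDE with the same initial condition, that the expansion does reproduce $(\mathbf{J}^{\varepsilon,x}_t)^{-1} A^\varepsilon_I(\mathbf{U}^{\varepsilon,x}_t)$. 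This is consistent with the normalisation used in \cite{BOZ}, and once it is adopted the lemma will follow from the explicit inductive computation described above.
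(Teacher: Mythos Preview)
Your approach is correct and gives a self-contained derivation. The paper's proof is a one-line citation of Lemma~4.2 and Remark~4.3 in \cite{BOZ}, noting that the general expansion there specialises cleanly to the truncated signature process because the left-invariant fields $A_i$ on $\mathfrak{g}^{(l)}$ are step-$l$ nilpotent (so the remainder term $\gamma_I^{\varepsilon,J}$ in \cite{BOZ} vanishes identically). Your argument is essentially a reconstruction of that lemma in this special setting: the nilpotency forces the recursion to terminate after finitely many steps, and the $(\varepsilon,x)$-independence drops out because the RDE for $\beta$ involves only $B$ and not the vector fields evaluated at the running point. The advantage of your route is that the mechanism is transparent without appeal to an external result; the paper's citation is terser but hides this. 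Your observation that the spanning set $\{A^\varepsilon_J(x)\}$ is not a basis, so that $\beta$ must be \emph{defined} via the recursion rather than read off from the expansion, is exactly the point resolved by \cite[Equation~(3.5)]{BOZ}.

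One bookkeeping point to watch: the paper's own bracket convention elsewhere is right-nested, $V_{(j,I)}=[V_j,V_I]$, under which the direct computation of $dX^I_t$ yields $d\beta^J_I=+\sum_j\beta^J_{(j,I)}\,dB^j_t$ with \emph{prepending}. The form stated in the lemma, $J=(I,K)$ with sign $(-1)^{|K|}$, is consistent instead with the left-nested convention used in \cite{BOZ}. You have correctly deferred to that convention (``the iterated-bracket convention adopted in \cite{BOZ}''), but it would be worth stating explicitly which nesting is in force so that your recursive RDE visibly matches the claimed formula.
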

\begin{proof}
This is just Lemma 4.2 and Remark 4.3 of \cite{BOZ} applied to the special context of the truncated signature process. 
\end{proof}

We define the supreme norm 
$$
\|f\|_{\infty,[s,t]}:=\sup_{u\in[s,t]}\|f(u)\|_{\mathrm{HS}}
$$
for any continuous $f$ taking values in $T^{(l)}(\mathbb{R}^d)$.
\begin{lemma}\label{5.16}
For any $m \in\mathbb{N}$ and $p \geqslant 1$, there exists a smooth function $\Psi_{m,p}$ such that
\[
\sup_{\sum_{I\in \mathcal{A}(m)} a_I^2 = 1} 
\mathbb{P}\left(
\Big\|\sum_{I\in \mathcal{A}(m)} a_I B^I_t\Big\|_{\infty,[0,1]}<\varepsilon\,\middle|\, \mathcal{F}_0^W
\right)
\leqslant \varepsilon^p\ \Psi_{m,p}
\]for all $\varepsilon>0$.
\end{lemma}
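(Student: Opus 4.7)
The plan is to argue by induction on $m$. For $m=0$ the sum reduces to the constant $a_\emptyset$ with $|a_\emptyset|=1$, so the small-ball event is empty whenever $\varepsilon<1$. For the inductive step, assume the assertion at level $m-1$ and set $\Phi(t):=\sum_{I\in\mathcal{A}(m)}a_IB^I_t$ with $\sum_I a_I^2=1$. Since $B^\emptyset\equiv 1$ and $B^I_0=0$ for every $|I|\geqslant 1$, the event $A_\varepsilon:=\{\|\Phi\|_{\infty,[0,1]}<\varepsilon\}$ forces $|a_\emptyset|<\varepsilon$. Assume $\varepsilon\leqslant 1/(2\sqrt d)$ (otherwise the asserted bound is trivial upon enlarging $\Psi_{m,p}$). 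Then $\sum_{|I|\geqslant 1}a_I^2\geqslant 3/4$, and writing every non-empty word as $I=(I',j)$ with $I'\in\mathcal{A}(m-1)$ and $j\in\{1,\ldots,d\}$, the pigeonhole principle yields an index $j_*=j_*(a)\in\{1,\ldots,d\}$ with
\[
\beta_{j_*}^2:=\sum_{I'\in\mathcal{A}(m-1)}a_{(I',j_*)}^2\geqslant \frac{1}{4d}.
\]

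The key structural observation uses the recursion $B^{(I',j)}_t=\int_0^tB^{I'}_s\,dB^j_s$ to rewrite
\[
\Phi(t)-a_\emptyset=\sum_{j=1}^d\int_0^tH^j(s)\,dB^j_s, \qquad H^j(s):=\sum_{I'\in\mathcal{A}(m-1)}a_{(I',j)}B^{I'}_s.
\]
Since the coefficients of each $H^j$ are bounded by $1$ in absolute value, the $\gamma$-H\"older norm ($\gamma<H$) of $H^j$ is dominated by a fixed constant (depending only on $m,d$) times the corresponding H\"older/rough-path norm of $\mathbf{B}$. Applying the pathwise Norris-type lemma for rough integrals (Theorem 5.6 of \cite{CHLT}) to the displayed expression, one obtains an exponent $\rho>0$ and a random variable $R$, measurable with respect to $\mathbf{B}$ and independent of $a$, such that on $A_\varepsilon$
\[
\|H^{j_*}\|_{\infty,[0,1]}\leqslant R\cdot\varepsilon^{\rho}.
\]
Moreover, $R$ is a polynomial expression in the rough-path norms of $\mathbf{B}$, and hence by the Fernique-type control of its Besov norm (cf.\ \eqref{Besovnorminte}) together with a scaling argument analogous to Lemma~\ref{equnorm}, admits $\mathcal{F}_0^W$-conditional moments of all orders.

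The final step combines the pathwise bound with the induction hypothesis. Set $\tilde H^{j_*}:=H^{j_*}/\beta_{j_*}$; this is a unit-norm linear combination of iterated integrals $B^{I'}$ with $|I'|\leqslant m-1$. Splitting on the size of $R$,
\[
\mathbb{P}(A_\varepsilon\mid\mathcal{F}_0^W)\leqslant \mathbb{P}\bigl(\|\tilde H^{j_*}\|_{\infty}\leqslant 2\sqrt{d}\,\varepsilon^{\rho/2}\,\big|\,\mathcal{F}_0^W\bigr)+\mathbb{P}\bigl(R>\varepsilon^{-\rho/2}\,\big|\,\mathcal{F}_0^W\bigr).
\]
By the induction hypothesis with integrability index $q:=2p/\rho$, the first term is bounded by $C(2\sqrt{d}\,\varepsilon^{\rho/2})^q\Psi_{m-1,q}\leqslant C'\varepsilon^p\Psi_{m-1,q}$; by Markov's inequality, the second term is bounded by $\varepsilon^{q\rho/2}\mathbb{E}[R^q\mid\mathcal{F}_0^W]=\varepsilon^p\mathbb{E}[R^q\mid\mathcal{F}_0^W]$. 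Since all bounds are uniform in $a$, setting $\Psi_{m,p}:=C'\Psi_{m-1,q}+\mathbb{E}[R^q\mid\mathcal{F}_0^W]$ yields the required estimate.

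The main obstacle is verifying that the Norris-type bound above holds with a remainder $R$ whose $\mathcal{F}_0^W$-conditional Malliavin-Sobolev norms are finite and independent of $a$. Once this is in place the induction proceeds cleanly, with the conditional small-ball bound of Proposition~\ref{6.5} underpinning the base case $m=1$ (where $\Phi(0)=a_\emptyset$ and the residual $\sum_i a_i B^i_t$ projects onto a unit direction in $\mathbb{R}^d$, whose small-ball probability decays as $\exp(-c\varepsilon^{-1/H})$, beating any polynomial rate).
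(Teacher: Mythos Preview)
Your approach is essentially the same as the paper's: induction on $m$, writing the non-constant part of $\Phi$ as a rough integral $\sum_j\int_0^tH^j(s)\,dB^j_s$, applying the pathwise Norris lemma to control $\|H^{j_*}\|_\infty$, and splitting on the size of the Norris remainder $R$.

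There is one inaccuracy you should fix. You describe $R$ as ``a polynomial expression in the rough-path norms of $\mathbf{B}$''. This is not correct: the constant in the Norris-type estimate from \cite[Theorem~5.6]{CHLT} also involves the \emph{inverse} of the H\"older-roughness modulus $L_\theta(B)$ (Definition~\ref{holderroughness}), which is not dominated by any rough-path norm. In the paper's notation one has
\[
R^\eta\;\lesssim\;1+L_\theta^{-\eta}+\exp\!\bigl(\zeta\,(\mathcal N^{0,1}_{\gamma,2q}(\mathbf B))^{1/q}\bigr).
\]
Consequently the Fernique/Besov argument you invoke handles only the last term. To obtain finite $\mathcal{F}_0^W$-conditional moments of $R$ (which you correctly flag as ``the main obstacle''), you must also control $\mathbb{E}[L_\theta^{-q}\mid\mathcal{F}_0^W]$, and this is precisely where the conditional small-ball estimate of Proposition~\ref{6.5} enters: it gives $\mathbb{P}(L_\theta<x\mid\mathcal{F}_0^W)\leqslant C_1\exp(-C_2x^{-1/H})$, hence $\mathbb{E}[L_\theta^{-q}\mid\mathcal{F}_0^W]$ is bounded by a deterministic constant. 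So Proposition~\ref{6.5} is needed not only for the base case $m=1$ but at every inductive step, via the Norris remainder. Once you make this correction, your argument matches the paper's.
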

\begin{proof}
When $m=0,1$, one can use the same argument as in \cite[Lemma 4.4]{BOZ}, with the small ball estimate replaced by the corresponding conditional version given by Lemma \ref{6.4}, to show that 
\[
\sup_{\sum_{I\in \mathcal{A}(m)} a_I^2 = 1} 
\mathbb{P}\left(
\Big\|\sum_{I\in \mathcal{A}(m)} a_I B^I_t\Big\|_{\infty,[0,1]}<\varepsilon\,\middle|\, \mathcal{F}_0^W
\right)
\leqslant C_{p,H}\,\varepsilon^p
\]
with some constant $C_{p,H}$ only depending on $p,H$.

We prove the general case by induction. Assume that the result holds for $k=0,1,\cdots,m$. We only consider the case $a_\emptyset=0$; the case $a_\emptyset\neq0$ can be handled similarly as in \cite[Lemma 4.4]{BOZ}. Let $f(t)=\sum_{I\in \mathcal{A}_1(m+1)}a_I B^I_t$ with $\sum_{I\in \mathcal{A}_1(m+1)}a_I^2=1$. One can write
    $$
    f(t)=\int_0^t A_s d B_s,
    $$
    where $B_t = (B^1_t,\cdots,B^d_t)$ and $A_t = (\sum_{J\in \mathcal{A}(m)}a_{J*1}B^J_t,\cdots,\sum_{J\in \mathcal{A}(m)}a_{J*d}B^J_t)$. We choose a $k$, $1\leqslant k\leqslant d$, such that $\sum_{J\in \mathcal{A}(m)}a^2_{J*k}\geqslant 1/d$. According to the pathwise Norris' lemma (\cite[Theorem 5.6]{CHLT}), 
    $$
    \Big\|\sum_{J\in \mathcal{A}(m)}a_{J*k}B^J_t\Big\|_{\infty,[0,1]}\leqslant C R^\eta \, \big\|f(t)\big\|^r_{\infty,[0,1]}
    $$
    where $C,\eta>0$, 
    $$
    R^\eta\lesssim 1 + L_\theta^{-\eta}+\exp\left( \zeta \left( \mathcal{N}^{0,1}_{\gamma,2q}(\mathbf{B}) \right)^{1/q} \right)
    $$
    for some $\zeta\in(0,\eta_0]$ (see \eqref{Besovnorminte} for $\eta_0$) and $L_\theta$ is the $\theta$-H\"older roughness of $B$ with $2\gamma=\theta\in(H,2H)$.
    It follows that 
\begin{equation*}
\begin{aligned}
&\mathbb{P}\left(
\big\| f(t) \big\|_{\infty,[0,1]} < \varepsilon 
\,\middle|\, \mathcal{F}_0^W
\right)\\
&\leqslant 
\mathbb{P}\left(
\frac{\big\|\sum_{J\in \mathcal{A}(m)} a_{J*k} B^J_t\big\|_{\infty,[0,1]}}{C R^q} 
< \varepsilon^r 
\,\middle|\, \mathcal{F}_0^W
\right)\\
&\leqslant 
\mathbb{P}\left(
\Big\|\sum_{J\in \mathcal{A}(m)} a_{J*k} B^J_t\Big\|_{\infty,[0,1]} 
< \varepsilon^{r/2} 
\,\middle|\, \mathcal{F}_0^W
\right)
+ 
\mathbb{P}\left(
C R^\eta \geqslant \varepsilon^{-r/2} 
\,\middle|\, \mathcal{F}_0^W
\right)\\
&\leqslant 
\mathbb{P}\left(
\Big\|\sum_{J\in \mathcal{A}(m)} \frac{a_{J*k}}{\sqrt{\sum a_{J*k}^2}} B^J_t\Big\|_{\infty,[0,1]} 
< \sqrt{d}\varepsilon^{r/2} 
\,\middle|\, \mathcal{F}_0^W
\right)
+ 
\mathbb{P}\left(
C R^\eta \geqslant \varepsilon^{-r/2} 
\,\middle|\, \mathcal{F}_0^W
\right).
\end{aligned}
\end{equation*}
The first term is bounded above by $\varepsilon^p\ \Psi_p$ due to the induction hypothesis.

For the second term, one has
\begin{equation*}
\begin{aligned}
&\mathbb{P}\left(
C R^\eta \geqslant \varepsilon^{-r/2} 
\,\middle|\, \mathcal{F}_0^W
\right)\lesssim \varepsilon^{p}\mathbb{E}\left[ R^{\frac{2p\eta}{r}}\middle|\, \mathcal{F}_0^W\right]\\
 & \lesssim ~\varepsilon^{p}\left(1 +\mathbb{E}\left[ L_\theta^{-\frac{2p\eta}{r}}\middle|\, \mathcal{F}_0^W\right]+\mathbb{E}\left[\exp\left( \frac{2p\zeta}{r} \left( \mathcal{N}^{0,1}_{\gamma,2q}(\mathbf{B}) \right)^{1/q} \right)\middle|\, \mathcal{F}_0^W\right]\right) \\
\end{aligned}
\end{equation*}
One can use Proposition $\ref{6.5}$ (in the same way as Lemma 5.8 and Corollary 5.10 in \cite{CHLT}) to conclude that
$$
\mathbb{P}\left(L_{\theta}<x \,\middle|\, \mathcal{F}_0^W\right)\leqslant C_{1,p,H}\exp\left(-C_{2,p,H}x^{-1/H}\right),\quad 0<x<1
$$
for some constants $C_{1,p,H},C_{2,p,H}>0$. This further implies that 
$$
\mathbb{E}\left[ L_\theta^{-\frac{2p\eta}{r}}\middle|\, \mathcal{F}_0^W\right]\leqslant C_{p,H}.
$$
Note that  $\mathbb{E}\left[\exp\left( \frac{2p\zeta}{r} \left( \mathcal{N}^{0,1}_{\gamma,2q}(\mathbf{B}) \right)^{1/q} \right)\middle|\, \mathcal{F}_0^W\right]$ is similar to $\Psi^1$ in Lemma \ref{logsig1},  so one can choose $0<\zeta\leq\frac{\eta_0r}{2p}$ and use the same argument as the one leading to  \eqref{checksmoothness} to show that it is a smooth random variable. Therefore, the conclusion holds for $k=m+1$, which completes the induction step.
\end{proof}

\begin{corollary}
For any $m \geqslant 0$ and $p > 1$, one has
\begin{equation}\label{eq:InvMom}
\mathbb{E}\left[\inf\left\{\int_0^1\left(\sum_{I\in\mathcal{A}(m)}a_I B^I_t\right)^2 dt \; ; \; \sum_{I\in\mathcal{A}(m)} a_I^2 = 1\right\}^{-p} \,\middle|\, \mathcal{F}^W_0\right] \leqslant \Psi_{m,p},
\end{equation}
where $\Psi_{m,p}$ is a smooth random variable depending only on $m,p$.
\end{corollary}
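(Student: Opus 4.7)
Write $F$ for the infimum appearing on the left hand side. Since
\[
\mathbb{E}[F^{-p}\mid \mathcal{F}^W_0]=p\int_0^\infty \varepsilon^{-p-1}\,\mathbb{P}(F<\varepsilon\mid \mathcal{F}^W_0)\,d\varepsilon
\]
and $F$ is bounded above almost surely (by a polynomial of the rough path norm of $\mathbf{B}$), the problem reduces to producing, for some fixed $q>p$, an estimate of the form $\mathbb{P}(F<\varepsilon\mid \mathcal{F}^W_0)\leqslant \varepsilon^q\,\Psi'_{m,p}$ uniformly in $\varepsilon\in(0,1)$, with $\Psi'_{m,p}$ smooth in the Malliavin sense.

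The key ingredient is a deterministic interpolation inequality: if $f\colon[0,1]\to\mathbb{R}$ is continuous with $\theta$-H\"older constant $R$, then balancing the contribution of a neighbourhood of its maximum to the $L^2$-mass yields
\[
\|f\|_{\infty,[0,1]}\leqslant C\,(1+R)^{\kappa_1}\,\|f\|_{L^2([0,1])}^{\kappa_2},\qquad \kappa_2=\tfrac{2\theta}{2\theta+1},\ \kappa_1=\tfrac{1}{2\theta+1}.
\]
Applying this with $f=\sum_{I\in\mathcal{A}(m)}a_IB^I_\cdot$ (for $\theta<H$ fixed) requires control of the $\theta$-H\"older norm of each iterated integral $B^I_\cdot$. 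A standard Kolmogorov-type argument, together with the Besov/Fernique-type bound \eqref{Besovnorminte}, produces a positive random variable $R_{\mathbf{B}}$ with finite conditional moments of all orders such that $\|B^I\|_{\theta\text{-H\"ol};[0,1]}\leqslant R_{\mathbf{B}}$ for every $I\in\mathcal{A}(m)$.

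Next I would combine this with a covering argument on the sphere $S=\{a\in\mathbb{R}^{|\mathcal{A}(m)|}:\sum a_I^2=1\}$. Fix an $\varepsilon^{1/2}$-net $\{a^{(k)}\}_{k=1}^{N(\varepsilon)}$ of $S$ with $N(\varepsilon)\lesssim \varepsilon^{-|\mathcal{A}(m)|/2}$. On $\{F<\varepsilon\}$ there exists $a^*\in S$ such that $\int_0^1(\sum_Ia_I^*B^I_t)^2\,dt<\varepsilon$; approximating $a^*$ by the nearest $a^{(k)}$ and using the bound on $R_{\mathbf{B}}$, one obtains $\int_0^1(\sum_Ia^{(k)}_IB^I_t)^2\,dt\leqslant C\varepsilon(1+R_{\mathbf{B}}^2)$. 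Feeding this into the interpolation inequality yields
\[
\Big\|\sum_Ia^{(k)}_IB^I\Big\|_{\infty,[0,1]}\leqslant C(1+R_{\mathbf{B}})^{\kappa_1+\kappa_2/2}\,\varepsilon^{\kappa_2/2}=:\Lambda\,\varepsilon^{\kappa_2/2}
\]
for some random variable $\Lambda$ with finite conditional moments of all orders. A union bound together with Lemma \ref{5.16} (applied with an arbitrary exponent $p'$) and H\"older's inequality on the factor $\Lambda$ then gives
\[
\mathbb{P}(F<\varepsilon\mid \mathcal{F}^W_0)\leqslant N(\varepsilon)\cdot (\Lambda\,\varepsilon^{\kappa_2/2})^{p'}\,\Psi_{m,p'}\leqslant\varepsilon^{\,p'\kappa_2/2-|\mathcal{A}(m)|/2}\,\widetilde\Psi_{m,p'}.
\]
Choosing $p'$ large enough so that $p'\kappa_2/2-|\mathcal{A}(m)|/2>p$ finishes the proof after plugging back into the layer-cake formula.

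The main obstacle is bookkeeping: one has to verify that the various random multiplicative factors arising from the H\"older bound on iterated integrals and from Lemma \ref{5.16} can be absorbed into a single $\mathcal{F}^W_0$-conditionally smooth random variable. This follows from the same mechanism used in \eqref{checksmoothness}: each ingredient is controlled by (conditional) exponential moments of a Besov norm of $\mathbf{B}$, which are uniformly integrable and preserved under H\"older's inequality, so that the final bound $\Psi_{m,p}$ indeed belongs to every conditional Malliavin--Sobolev space independently of $\varepsilon$.
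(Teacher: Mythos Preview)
Your proposal is correct and follows essentially the same route as the paper: the same $L^\infty$--$L^2$--H\"older interpolation inequality, the same reduction to the small-ball estimate of Lemma~\ref{5.16}, and the same compactness/covering mechanism to pass from a fixed direction $a$ to the infimum over the sphere. The paper first establishes the pointwise bound \eqref{supprob} (handling the random H\"older factor by the three-term split $\{\|f\|_\infty<2\sqrt\varepsilon\}\cup\{\|f\|_\infty<\varepsilon^{r/(4r+2)}\}\cup\{\|f\|_r>\varepsilon^{-r/2}\}$) and then cites the standard compactness argument of \cite[Lemma~2.3.1]{Nualart}, whereas you carry out the $\varepsilon$-net argument explicitly; these are the same device. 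One small point to make precise: in your union-bound step the threshold $\Lambda\varepsilon^{\kappa_2/2}$ is random, so Lemma~\ref{5.16} cannot be applied verbatim---you need to first localise on $\{\Lambda\leqslant\varepsilon^{-\delta}\}$ for a small $\delta>0$ (and bound the complementary event by a conditional moment of $\Lambda$), which is exactly what the paper's three-term split accomplishes and what you allude to in your final paragraph.
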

\begin{proof}


An unconditional version of this result was obtained in \cite[Corollary 4.5]{BOZ}. Here we use a similar argument but with all the estimates replaced by the conditional ones. We first show that
\begin{equation}\label{supprob}
\sup_{\sum_{I\in \mathcal{A}(m)} a_I^2 = 1} 
\mathbb{P}\left(\int_0^1\left(\sum_{I\in \mathcal{A}(m)} a_I B^I_t\right)^2dt<\varepsilon\,\middle|\, \mathcal{F}_0^W
\right)
\leqslant \varepsilon^p\ \Psi_{m,p}
\end{equation}for all $\varepsilon>0$ with some smooth random variable $\Psi_{m,p}$.
Setting $f(t) = \sum_{I\in A(m)} a_I B_t^I$, one has
\[
\mathbb{P}\left( \int_0^1 \!\!\left( \sum_{I\in A(m)} a_I B_t^I \right)^2 dt < \varepsilon \,\middle|\, \mathcal{F}_0^W
\right)
= \mathbb{P}\!\left( \|f\|_{L^2}^2 < \varepsilon \,\middle|\, \mathcal{F}_0^W
\right)
= \mathbb{P}\!\left( \|f\|_{L^2} < \sqrt{\varepsilon} \,\middle|\, \mathcal{F}_0^W
\right).
\]
By using the interpolation inequality in \cite[Equation (6.5)]{CHLT}, i.e.
\[
\|f\|_\infty \leqslant 2\max\!\left\{\|f\|_{L^2}, \|f\|_{L^2}^{\frac{2r}{2r+1}} \|f\|_{r}^{\frac{1}{2r+1}} \right\},
\]
one finds that 
\begin{align*}
\left\{ \|f\|_{L^2} < \sqrt{\varepsilon} \right\}
\subseteq& \left\{ \frac{\|f\|_\infty}{2} < \sqrt{\varepsilon},\, \|f\|_{L^2} > \|f\|_{r} \right\}\\
&\ \ \ \bigcup\left\{~\left(\frac{\|f\|_\infty}{2\|f\|_{r}^{\frac{1}{2r+1}}}\right)^{\frac{2r+1}{2r}}<\sqrt{\varepsilon},\, \|f\|_{L^2} < \|f\|_{r} \right\}.
\end{align*}
As a consequence, 
\begin{equation*}
\begin{aligned}
&~\mathbb{P}\left(\|f\|_{L^{2}}<\sqrt{\varepsilon}~\middle|~\mathcal{F}_0^W\right)\\
&\leqslant~\mathbb{P}\left(\|f\|_{\infty}<2\sqrt{\varepsilon}~\middle|~\mathcal{F}_0^W\right)
+ \mathbb{P}\left(\|f\|_{\infty}^{\frac{2r+1}{2r}}<\varepsilon^{1/4}~\middle|~\mathcal{F}_0^W\right)
\\&\ \ \ + \mathbb{P}\left(\left(2\|f\|_{r}^{\frac{1}{2r+1}}\right)^{\frac{2r+1}{2r}}>\varepsilon^{-1/4}~\middle|~\mathcal{F}_0^W\right)\\
&\leqslant~\mathbb{P}\left(\|f\|_{\infty}<2\sqrt{\varepsilon}~\middle|~\mathcal{F}_0^W\right)
+ \mathbb{P}\left(\|f\|_{\infty}<\varepsilon^{\frac{r}{4r+2}}~\middle|~\mathcal{F}_0^W\right)
+ \mathbb{P}\left(\|f\|_{r}>2^{-2r-1}\varepsilon^{-r/2}~\middle|~\mathcal{F}_0^W\right)
\end{aligned}
\end{equation*}
The first  and  second terms on the right hand side are estimated by Lemma \ref{5.16} and the third term is upper bounded by smooth random variable due to \eqref{Besovnormboun}.

To obtain the desired estimate \eqref{eq:InvMom} from \eqref{supprob}, one only needs to apply a standard compactness argument as in \cite[Lemma 2.3.1]{Nualart}.
\end{proof}

Now we can derive the key nondegeneracy estimate for the Malliavin matrix $\mathbf{M}^\varepsilon(x)$.
\begin{proposition}\label{lamMleqPsi}
For any $p \in (1,\infty)$, there exists a smooth function $\Psi_{\bar{l},p}$ depending only on $\bar{l},p$ such that
\[
\mathbb{E}\left[
\lambda_{\max}\big((\mathbf{M}^\varepsilon(x))^{-1}\big)^{p}
\,\middle|\,
\mathcal{F}^W_{0}
\right]
\leqslant \Psi_{\bar{l},p}
\]holds for all $\varepsilon\in(0,1)$.
\end{proposition}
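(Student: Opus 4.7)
The plan adapts the $\mathbf{BOZ}$-style strategy for Malliavin nondegeneracy of the truncated logarithmic signature to our conditional framework, ultimately reducing everything to the signature-integral small-ball estimate \eqref{eq:InvMom}. By the layer-cake formula,
$$\mathbb{E}\big[\lambda_{\max}(\mathbf{M}^\varepsilon(x)^{-1})^p\mid\mathcal{F}^W_0\big]\leq 1+p\int_0^1 r^{-p-1}\mathbb{P}\Big(\inf_{\|a\|=1}\langle a,\mathbf{M}^\varepsilon(x)a\rangle<r\,\Big|\,\mathcal{F}^W_0\Big)\,dr,$$
so the target reduces to showing that for every $q\geqslant 1$ there exists a smooth $\Psi_q$ (with Malliavin-Sobolev norm independent of $\varepsilon\in(0,1)$ and $x$) such that
\begin{equation}\label{eq:plan-smallball}
\sup_{\|a\|=1}\mathbb{P}\big(\langle a,\mathbf{M}^\varepsilon(x)a\rangle<\delta\mid\mathcal{F}^W_0\big)\leqslant \Psi_q\delta^{q}.
\end{equation}

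Unfolding the inner product in \eqref{defMIJepx} and using linearity of $\mathscr{L}$ gives
$$\langle a,\mathbf{M}^\varepsilon(x)a\rangle=\sum_{k=1}^{d}\Big\|\mathscr{L}\Big(\sum_{I\in\mathcal{A}_1(l)}a_I\beta^{\varepsilon,I}_k(\cdot,x)\Big)\Big\|_{L^2([0,1])}^{2}.$$
The identity \eqref{innerprodKtoL}, combined with Lemmas \ref{6.6}--\ref{6.7}, gives the norm equivalence $\|\mathscr{L}(\cdot)\|_{L^2([0,1])}\asymp\|\cdot\|_{\mathcal{H}_B}$, and a further comparison between $\|\cdot\|_{\mathcal{H}_B}$ and the plain $L^2$-norm (direct for $H>1/2$, and via the variation embedding in Theorem~\ref{thm:GLift} together with Young complementary regularity for $H\in(1/4,1/2]$) yields a constant $c>0$ independent of $\varepsilon,x,a$ such that
$$\langle a,\mathbf{M}^\varepsilon(x)a\rangle\geqslant c\sum_{k=1}^{d}\int_0^1\Big(\sum_{I\in\mathcal{A}_1(l)}a_I\beta^{\varepsilon,I}_k(s,x)\Big)^{2}ds.$$

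The crucial ingredient is now Lemma~\ref{indofepsi}, which tells us that when words are ordered by length, the "upper triangular" coefficients $\beta^{\varepsilon,I}_{k}(s,x)$ are \emph{universal}: they are polynomials in the signature monomials $B^K_s$ that do not depend on $\varepsilon$ or $x$, while the diagonal is simply $1$. A linear-algebraic argument--choosing an appropriate coordinate $k^\star$ and exploiting the triangular structure to cancel the $(\varepsilon,x)$-dependent lower-triangular contributions--produces coefficients $\{c_K(a)\}_{K\in\mathcal{A}(l-1)}$ depending linearly on $a$, with $\sum_K c_K(a)^2\gtrsim \|a\|^2=1$, such that
$$\sum_{k=1}^d\int_0^1\Big(\sum_I a_I\beta^{\varepsilon,I}_k(s,x)\Big)^2 ds\ \geqslant\ \int_0^1\Big(\sum_{K\in\mathcal{A}(l-1)}c_K(a)B^K_s\Big)^2 ds.$$
The conditional inverse-moment bound \eqref{eq:InvMom}, together with Chebyshev's inequality (which converts an inverse $L^{q}$-moment into a $\delta^{q}$-small-ball estimate via a standard compactness argument on the unit sphere of $c$-coefficients), then delivers \eqref{eq:plan-smallball}. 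Feeding this back into Step~1 completes the proof.

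The main obstacle is the linear-algebraic step: although Lemma~\ref{indofepsi} provides an explicit triangular decomposition, the entries $\beta^{J,\varepsilon}_I$ with $|I|>|J|$ depend on $(\varepsilon,x)$ in a non-explicit way and could in principle conspire with the universal part to spoil nondegeneracy. Resolving this requires carefully selecting test coordinates so that every direction $a_I\in\mathbb{R}^{|\mathcal{A}_1(l)|}$ is detected by a universal signature monomial, uniformly in $\varepsilon\in(0,1)$ and $x\in\mathfrak{g}^{(l)}$. This is precisely the place where the triangularity (rather than mere generic nondegeneracy) of the $\beta$'s is essential, and it is the heart of the argument.
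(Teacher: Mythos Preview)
Your overall strategy---reduce to a small-ball estimate for the quadratic form and feed it into the signature inverse-moment bound~\eqref{eq:InvMom}---matches the paper's approach, but two points need correction.

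First, the ``main obstacle'' you identify is not actually present. In the quadratic form $\langle a,\mathbf{M}^\varepsilon(x)a\rangle$ the only coefficients that appear are $\beta^{\varepsilon,I}_k$ with $k\in\{1,\dots,d\}$ a \emph{single letter} as subscript and $I\in\mathcal{A}_1(l)$ as superscript. Since $|k|=1\leqslant |I|$ for every such pair, Lemma~\ref{indofepsi} already covers \emph{all} of them: $\beta^{\varepsilon,I}_k(t,x)=(-1)^{|I|-1}B^{K}_t$ if $I=(k,K)$ and $0$ otherwise. There is no $(\varepsilon,x)$-dependent ``lower-triangular'' part to cancel; the entire matrix $\mathbf{M}^\varepsilon(x)$ is literally independent of $\varepsilon$ and $x$. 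The paper simply records this identity, pigeonholes a direction $k^\star$ with $\sum_{J}a_{(k^\star,J)}^2\geqslant 1/d$, and invokes~\eqref{eq:InvMom}. Your proposed triangular linear-algebra step is unnecessary.

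Second, your norm comparison $\|f\|_{\mathcal{H}_B}\gtrsim\|f\|_{L^2}$ has the two regimes reversed and leaves a genuine gap. For $H\in(1/4,1/2]$ the inequality $\|f\|_{\mathcal{H}_B}\geqslant C\|f\|_{L^2}$ is indeed available (the paper cites \cite{BOZ}); no variation embedding is needed. For $H>1/2$, however, this inequality is \emph{false}: $\mathcal{H}_B$ contains $L^2$ and the inclusion goes the other way. The paper handles $H>1/2$ via the Baudoin--Hairer interpolation inequality $\|f\|_{\mathcal{H}_B}\gtrsim \|f\|_\infty^{3+1/\alpha}/\|f\|_\alpha^{2+1/\gamma}$, which then requires the small-ball form of Lemma~\ref{5.16} together with H\"older-norm control (via~\eqref{Besovnormboun}) rather than a direct application of~\eqref{eq:InvMom}.
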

\begin{proof}
Recalling the definition of $(\mathbf{M}^\varepsilon(x))_{I,J}$ in \eqref{defMIJepx} and the definition of $\beta_I^J(t,x)$ in Lemma \ref{indofepsi}, one has 
\begin{equation*}
\begin{aligned}
(\mathbf{M}^\varepsilon(x))_{I,J}~=&~\langle\mathscr{L}(\beta^{\varepsilon,I}(\cdot,x)),\mathscr{L}(\beta^{\varepsilon,J}(\cdot,x))\rangle_{\mathcal{H}_{W,0}}\\
=&~(-1)^{|I|+|J|}\delta_{i_1}^{j_1}\langle\mathscr{L}(B^I_{\cdot}),\mathscr{L}(B^J_{\cdot})\rangle_{\mathcal{H}_{W,0}}:=& M_{I,J}.
\end{aligned}
\end{equation*}

We first assume $1/4<H\leqslant1/2$. For any multi-index $a$ such that $\sum_{I\in\mathcal{A}_1(\bar{l})}a_I^2=1$,  one has
\begin{equation*}
\begin{aligned}
&\sum_{I,J\in \mathcal{A}_1(\bar{l})} (-1)^{|I|+|J|}a_I a_J M_{I,J}\\
&= \sum_{i=1}^d 
\Bigl\langle 
\mathscr{L}\Bigl(
\sum_{I\in \mathcal{A}(\bar{l}-1)} a_{(i,I)} B^{(i,I)}_{\cdot}
\Bigr),
\mathscr{L}\Bigl(
\sum_{J\in \mathcal{A}(\bar{l}-1)} a_{(i,J)} B^{(i,J)}_{\cdot}
\Bigr)
\Bigr\rangle_{\mathcal{H}_{W,0}} \\
&\geqslant C_H\sum_{i=1}^d 
\left\|
\sum_{J\in \mathcal{A}(\bar{l}-1)} a_{(i,J)} B^{(i,J)}_{\cdot}\right\|_{\mathcal{H}_B}
\geqslant C_H 
\sum_{i=1}^d 
\int_0^1 
\left(\sum_{J\in \mathcal{A}(\bar{l}-1)} a_{(i,J)} B^{(i,J)}_{t} 
\right)^2 
dt,
\end{aligned}
\end{equation*}
where the second line follows from Lemma \ref{6.7}, Lemma \ref{6.6} and the third line is implied by the  inequality $\|f\|_{\mathcal{H}_B}\geqslant C\|f\|_{L^2}$  for every $f\in\mathcal{H}_B$ (see \cite[P. 414]{BOZ}). Let $k$ be such that $\sum_{J\in \mathcal{A}(\bar{l}-1)} a_{(k,J)}^2\geqslant1/d$. Then one has 
$$
\sqrt{d}\sum_{I,J\in \mathcal{A}_1(\bar{l})} (-1)^{|I|+|J|}a_I a_J M_{I,J}\geqslant C_H\int_0^1 
\left(\sum_{J\in \mathcal{A}(\bar{l}-1)} \frac{a_{(k,J)}}{\sqrt{\sum_{J\in \mathcal{A}(\bar{l}-1)} a_{(k,J)}^2}} B^{(k,J)}_{t} 
\right)^2 
dt.
$$
It follows that 
\begin{equation*}
\begin{aligned}
&\mathbb{E}\left[
\lambda_{\max}\big((M^\varepsilon(x))^{-1}\big)^{p}
\,\middle|\,
\mathcal{F}^W_{0}
\right]\\
 &\lesssim \mathbb{E}\left[\inf\left\{\int_0^1\left(\sum_{I\in\mathcal{A}(\bar{l})}a_I B^I_t\right)^2 dt \; ; \; \sum_{I\in\mathcal{A}(\bar{l})} a_I^2 = 1\right\}^{-p} \,\middle|\, \mathcal{F}^W_0\right] \leqslant \Psi_{\bar{l},p}.
\end{aligned}
\end{equation*}

If $1/2<H<1$, one uses the interpolation inequality $\|f\|_{\mathcal{H}_B}\geqslant C\frac{\|f\|_{\infty}^{3+1/\alpha}}{\|f\|^{2+1/\gamma}_{\alpha}}$ instead (See  \cite[Lemma 4.4]{baudoin2007version}). 
The similar analysis will be omitted for conciseness.  
\end{proof}

\begin{proof}[Proof of Lemma \ref{logsig3}]The conclusion follows from Lemma \ref{lamleq2l}, Proposition \ref{lamJJ} and Proposition \ref{lamMleqPsi}.
\end{proof}
\section*{Acknowledgement}

XG gratefully acknowledges the support from ARC grant DE210101352. SW gratefully acknowledges the support from Melbourne Research Scholarship. 
\printbibliography
\end{document}